\numberwithin{equation}{section}
\newcommand{\C}{\mathbb{C}}
\newcommand{\D}{\mathbb{D}}
\newcommand{\N}{\mathbb{N}}
\newcommand{\Q}{\mathbb{Q}}
\newcommand{\R}{\mathbb{R}}
\newcommand{\BB}{\mathscr{B}}
\newcommand{\cE}{{\ensuremath{\mathcal E}}}
\newcommand{\mm}{{\mbox{\boldmath$m$}}}
\newcommand{\ggamma}{{\mbox{\boldmath$\gamma$}}}
\newcommand{\ppi}{{\mbox{\boldmath$\pi$}}}
\newcommand{\sggamma}{{\mbox{\scriptsize\boldmath$\gamma$}}}
\newcommand{\sfd}{{\sf d}}
\newcommand{\sfh}{{\sf h}}
\newcommand{\sfC}{{\sf C}}
\newcommand{\sfD}{{\sf D}}
\newcommand{\sfM}{{\sf M}}
\newcommand{\sfS}{{\sf S}}
\newcommand{\rme}{{\mathrm e}}
\newcommand{\Kliminf}{K\kern-3pt-\kern-2pt\mathop{\rm lim\,inf}\limits}  % Kuratowski liminf di insiemi
\newcommand{\supp}{\mathop{\rm supp}\nolimits}   % supporto 
\newcommand{\diam}{\mathop{\rm diam}\nolimits}   % diametro
\newcommand{\Lip}{\mathop{\rm Lip}\nolimits}          %Lipschitz constant
\renewcommand{\d}{{\mathrm d}}
\newcommand{\dt}{{\d t}}
\newcommand{\restr}[1]{\lower3pt\hbox{$|_{#1}$}}
\newcommand{\down}{\downarrow}              %frecce in su e in giu nei limiti
\newcommand{\eps}{\varepsilon}  
\newcommand{\nchi}{{\raise.3ex\hbox{$\chi$}}}
\newcommand{\media}{\mkern12mu\hbox{\vrule height4pt           %
          depth-3.2pt                                 %  da usare
          width5pt}\mkern-16.5mu\int\nolimits}        %  nelle formule
\newcommand{\forevery}{\text{for every }}
\newcommand{\Pc}[2]{\overline{#1}\kern-2pt^{\vphantom 0}_{#2}}
\newcommand{\EVI}[4]{\mathrm{EVI}_{#4}(#1,{#2}_W,#3)}
\newcommand{\BorelSets}[1]{\BB(#1)}
\newcommand{\Probabilities}[1]{\mathscr P(#1)}          % misure di probabilita'
\newcommand{\ProbabilitiesTwo}[1]{\mathscr P_2(#1)}     % misure di probabilita' con momento quadratico finito
\newenvironment{proof}{\removelastskip\par\medskip   % inizio e fine dimostrazione
\noindent{\em Proof.}
\rm}{\penalty-20\null\hfill$\square$\par\medbreak}
\newtheorem{theorem}{Theorem}[section]
\newtheorem{corollary}[theorem]{Corollary}
\newtheorem{lemma}[theorem]{Lemma}
\newtheorem{proposition}[theorem]{Proposition}
\newtheorem{definition}[theorem]{Definition}
\newtheorem{example}[theorem]{Example}
\newtheorem{remark}[theorem]{Remark}
\newcommand{\ent}[1]{{\rm Ent}_{\mm}(#1)}              % entropia. Ha come argomento la misura di cui si calcola l'entropia (quella di riferimento e' sempre $\mm$)
\newcommand{\entv}{{\rm Ent}_{\mm}}                    % entropia senza argomento. La uso principalmente quando scrivo la slope
\newcommand{\prob}{\Probabilities}
\newcommand{\probt}{\ProbabilitiesTwo}
\newcommand{\geo}{{\rm{Geo}}}                       % insieme delle geodetiche
\newcommand{\e}{{\rm{e}}}                           % mappa di valutazione, bisogna mettere `a mano' il tempo t
\newcommand{\gopt}{{\rm{GeoOpt}}}                   % displacement plans ottimali
\newcommand{\opt}[2]{{\rm{Opt}}(#1,#2)}
\newcommand{\adm}[2]{{\rm{Adm}}(#1,#2)}
\newcommand{\sppi}{{\mbox{\scriptsize\boldmath$\pi$}}}      % $\pi$ bold e piccolo (per i subscript). Mancava dalla lista.
\newcommand{\fr}{\hfill$\blacksquare$}                      %quadratino nero alla fine del remark, se non vi piace, la cosa migliore e' `svuotare' la macro, cosi' non bisogna intervenire sul testo
\newcommand{\X}{\mathbb X}
\newcommand{\relgrad}[1]{|\nabla #1|_*}                     % gradiente ottenuto per rilassamento
\newcommand{\weakgrad}[1]{|\nabla #1|_w}                % gradient ottenuto integrando lungo "quasi tutte" le geodetiche
\newcommand{\cartgrad}[1]{|\nabla #1|_c}
\newcommand{\res}{\mathop{\hbox{\vrule height 7pt width .5pt depth 0pt
\vrule height .5pt width 6pt depth 0pt}}\nolimits} % macro per la restrizione
\renewcommand{\C}{{\rm Ch}}
\newcommand{\entr}[2]{{\rm Ent}_{#2}(#1)}              % entropia con misura di riferimento da assegnare
\newcommand{\rcd}[2]{RCD(#1,#2)}
\newcommand{\muF}[1]{\mu_{F,#1}}
\newcommand{\heatl}{{\sf H}}
\newcommand{\heatw}{{\mathscr H}}
\newcommand{\weakgrado}[2]{|\nabla #1|_{w,#2}}                % gradient ottenuto integrando lungo "quasi tutte" le geodetiche
\newcommand{\ke}[2]{\heatw_{#2}(\delta_{#1})}
\newcommand{\ked}[2]{\rho_{#2}[#1]}\renewcommand{\C}{{\sf Ch}}
\newcommand{\Deltam}{\Delta}
\newcommand{\Deltamc}{\Delta^c}
\newcommand{\AC}[3]{\mathrm{AC}^{#1}(#2;#3)}
\newcommand{\CC}[2]{\mathrm{C}(#1;#2)}
\newcommand{\Cb}{\mathrm C_b}
\newcommand{\calT}{{\mathcal T}}
\renewcommand{\EVI}{\ensuremath{\mathrm{EVI}}}
\newcommand{\Di}{\mathsf{Di}}
\renewcommand{\mm}{\mathfrak m}
\title{Metric measure spaces with Riemannian Ricci \\ curvature bounded from below}
\begin{document}

\author{Luigi Ambrosio\
   \thanks{Scuola Normale Superiore, Pisa. email: \textsf{l.ambrosio@sns.it}}
   \and
   Nicola Gigli\
   \thanks{Nice University. email: \textsf{nicola.gigli@unice.fr}}
 \and
   Giuseppe Savar\'e\
   \thanks{Universit\`a di Pavia. email: \textsf{giuseppe.savare@unipv.it}}
   }

\maketitle

\begin{abstract} In this paper we introduce a synthetic notion of
Riemannian Ricci bounds from below for metric measure spaces
$(X,\sfd,\mm)$ which is stable under measured Gromov-Hausdorff
convergence and rules out Finsler geometries. It can be given in
terms of an enforcement of the Lott, Sturm and Villani geodesic
convexity condition for the entropy coupled with the linearity of
the heat flow. Besides stability, it enjoys the same tensorization,
global-to-local and local-to-global properties. In these spaces,
that we call $RCD(K,\infty)$ spaces, we prove that the heat flow
(which can be equivalently characterized either as the flow
associated to the Dirichlet form, or as the Wasserstein gradient
flow of the entropy) satisfies Wasserstein contraction estimates and
several regularity properties, in particular Bakry-Emery estimates
and the $L^\infty-{\rm Lip}$ Feller regularization. We also prove
that the distance induced by the Dirichlet form coincides with
$\sfd$, that the local energy measure has density given by the
square of Cheeger's relaxed slope and, as a consequence, that the
underlying Brownian motion has continuous paths. All these results
are obtained independently of Poincar\'e and doubling assumptions on
the metric measure structure and therefore apply also to spaces
which are not locally compact, as the infinite-dimensional ones.
\end{abstract}

\tableofcontents

\section{Introduction}

The problem of finding synthetic notions of Ricci curvature bounds
from below has been a central object of investigation in the last
few years. What became clear over time (see in particular
\cite{Fukaya87} and \cite[Appendix 2]{Cheeger-Colding97}), is that
the correct class of spaces where such a synthetic notion can be
given, is that of metric measure spaces, i.e. metric spaces equipped
with a reference measure which one might think of as volume measure.
The goal is then to find a notion consistent with the smooth
Riemannian case, which is sufficiently weak to be stable under
measured Gromov-Hausdorff limits. The problem of having stability is
of course in competition with the necessity to find a condition as
restrictive as possible, to describe efficiently the closure of the
class of Riemannian manifolds with Ricci curvature uniformly bounded
from below.

In their seminal papers Lott-Villani \cite{Lott-Villani09} and Sturm
\cite{Sturm06I} independently attacked these questions with tools
based on the theory of optimal transportation, devising stable and
consistent notions. In these papers, a metric measure space
$(X,\sfd,\mm)$ is said to have Ricci curvature bounded from below by
$K\in\R$ (in short: it is a $CD(K,\infty)$ space) if the relative
entropy functional
$$
\entr{\mu}{\mm}:=\int\rho\log\rho\,\d\mm\qquad\qquad\text{with
$\mu=\rho\mm$}
$$
is $K$ geodesically convex on the Wasserstein space $(\probt
X,W_2)$.

Also, in \cite{Lott-Villani09}, \cite{Sturm06II} a synthetic notion
$CD(K,N)$ of having Ricci curvature bounded from below  by $K$ and
dimension bounded above by $N$ was given (in \cite{Lott-Villani09}
only the case $CD(0,N)$ was considered, for $N<\infty$), and a
number of geometric consequences of these notions, like
Brunn-Minkowski and Bishop-Gromov inequalities, have been derived.
In \cite{Lott-Villani-Poincare} it was also proved that, at least
under the nonbranching assumption, the $CD(K,N)$ condition implies
also the Poincar\'e inequaliy, see also \cite{Rajala11} for some
recent progress in this direction.

An interesting fact, proved by Cordero-Erasquin, Sturm and Villani
(see the conclusions of \cite{Villani09}), is that $\R^d$ equipped
with any norm and with the Lebesgue measure, is a $CD(0,N)$ space.
More generally, Ohta showed in \cite{Ohta09} that any smooth compact
Finsler manifold is a $CD(K,N)$ space for appropriate finite $K,N$.
However, a consequence of the analysis of tangent spaces done in
\cite{Cheeger-Colding97}, is that a Finsler manifold arises as limit
of Riemannian manifolds with Ricci curvature uniformly bounded below
and dimension uniformly bounded from above, if and only if it is
Riemannian (the case of possibly unbounded dimension of the
approximating sequence is covered by the stability of the heat flow
proved in \cite{Gigli10} in conjunction with the fact that the heat
flow on a Finsler manifold is linear if and only if it is Riemannian
\cite{Sturm-Ohta-CPAM}).

Therefore it is natural to look for a synthetic and stable notion of
Ricci curvature bound which rules Finsler spaces out. This is the
scope of this paper. What we do, roughly said, is to add to the
$CD(K,\infty)$ condition the linearity of the heat flow, see below
for the precise definition.

Before passing to the description of the results of this paper, we
recall the main results of the ``calculus'' in the first paper of
ours \cite{Ambrosio-Gigli-Savare11}, needed for the development and
the understanding of this one. The main goals of
\cite{Ambrosio-Gigli-Savare11} have been the identification of two
notions of gradient and two gradient flows. \\ The first notion of
gradient, that we call \emph{minimal relaxed gradient} and denote by
$\relgrad{f}$, is inspired by Cheeger's work \cite{Cheeger00}: it is
the local quantity that provides integral representation to the
functional $\C(f)$ given by
\begin{equation}
\C(f):=\frac 12\inf\left\{\liminf_{n\to\infty}\int|\nabla
f_h|^2\,\d\mm:\ f_h\in {\rm Lip}(X),\,\,\int_X|f_h-f|^2\,\d\mm\to
0\right\}\label{eq:33}
\end{equation}
(here $|\nabla f_h|$ is the so-called local Lipschitz constant of
$f_h$), so that $\C(f)=\tfrac12\int\relgrad{f}^2\,\d\mm$. The second
notion of gradient, that we call \emph{minimal weak upper gradient}
and denote by $\weakgrad{f}$ is, instead, inspired by
Shanmugalingam's work \cite{Shanmugalingam00} and based on the
validity of the upper gradient property
$$
|f(\gamma_1)-f(\gamma_0)|\leq\int_\gamma\weakgrad{f}
$$
on ``almost all'' curves $\gamma$. We proved that the minimal weak
upper gradient and the minimal relaxed gradient coincide. In
addition, although our notions of null set of curves differs from
\cite{Shanmugalingam00} and the definition of $\C$ differs from
\cite{Cheeger00}, we prove, a posteriori, that the gradients
coincide with those in \cite{Cheeger00}, \cite{Shanmugalingam00}.
Since an approximation by Lipschitz functions is implicit in the
formulation \eqref{eq:33}, this provides a density result of
Lipschitz function in the weak Sobolev topology without any doubling
and Poincar\'e assumption on $(X,\sfd,\mm)$. In the context of the
present paper, where $\C$ will be a quadratic form even when
extended to $\mm$-measurable functions using weak upper gradients,
this approximation result yields
the density of Lipschitz functions in the strong Sobolev topology.\\
The concept of minimal relaxed gradient can be used in connection
with ``vertical'' variations of the form $\eps\mapsto f+\eps g$,
which occur in the study of the $L^2(X,\mm)$-gradient flow of $\C$,
whose semigroup we shall denote by $\heatl_t$. On the other hand,
the concept of minimal weak upper gradient is relevant in connection
with ``horizontal'' variations of the form $t\mapsto f(\gamma_t)$,
which play an important role when study the derivative of $\entv$
along geodesics. For this reason their identification is crucial, as
we will see in Section~\ref{se:formule}. Given this identification
for granted, in the present paper most results will be presented and
used at the
level of minimal weak upper gradients, in order to unify the exposition. \\
Finally, in $CD(K,\infty)$ spaces we identified the $L^2$-gradient
flow $\heatl_t$ of $\C$ (in the sense of the Hilbertian theory
\cite{Brezis73}) with the $W_2$-gradient flow of $\entv$ in the
Wasserstein space of probability measures $\mathscr P_2(X)$ (in the
sense of De Giorgi's metric theory, see
\cite{Ambrosio-Gigli-Savare08} and, at this level of generality,
\cite{Gigli10}), which we shall denote by $\heatw_t$. A byproduct of
this identification is an equivalent description of the entropy
dissipation rate along the flow, equal to
$4\int\weakgrad{\sqrt{\heatl_tf}}^2\,\d\mm$ and to the square of the
metric derivative of $\mu_t=\heatw_t (f\mm)$ w.r.t. $W_2$.\\ All
these results have been obtained in \cite{Ambrosio-Gigli-Savare11}
under very mild assumptions on $\mm$, which include all measures
such that $ \rme^{-c\,\sfd^2(x,x_0)}\mm$ is finite for some $c>0$
and $x_0\in X$. In this paper, in order to minimize the
technicalities, we assume that $\mm$ is a probability measure with
finite second moment. On the other hand, no local compactness
assumption on $(X,\sfd)$ will be needed, so that
infinite-dimensional spaces fit well into this theory.

Coming back to this paper, we say that a metric measure space
$(X,\sfd,\mm)$ has \emph{Riemannian Ricci} curvature bounded from
below by $K\in\R$, and write that $(X,\sfd,\mm)$ is $\rcd K\infty$,
if one of the following equivalent conditions hold:
\begin{itemize}
\item[(i)] $(X,\sfd,\mm)$ is a strong $CD(K,\infty)$ space and the
$W_2$-gradient flow of $\entv$ is additive.
\item[(ii)] $(X,\sfd,\mm)$ is a strong $CD(K,\infty)$ space and
$\C$ is a quadratic form in $L^2(X,\mm)$, so that the $L^2$-heat
flow of $\C$ is linear.
\item[(iii)] The gradient flow of $\entv$ exists for all initial
data $\mu$ with $\supp\mu\subset\supp\mm$ and satisfies the $\EVI_K$
condition.
\end{itemize}
The equivalence of these conditions is not at all obvious, and its
proof is actually one of the main results of this paper.

Observe that in $(i)$ and $(ii)$ the $CD(K,\infty)$ is enforced on
the one hand considering a stronger convexity condition (we describe
this condition in the end of the introduction, being this aspect
less relevant), on the other hand adding linearity of the heat flow.
A remarkable fact is that this combination of properties can be
encoded in a \emph{single} one, namely the $\EVI_K$ property. This
latter property can be expressed by saying that for all
$\nu\in\probt{X}$ with finite entropy the gradient flow
$\heatw_t(\mu)$ starting from $\mu$ satisfies
\begin{equation}\label{eq:EVIintro}
\frac \d{\d t}\frac{W_2^2(\heatw_t(\mu),\nu)}2+\frac
K2W^2_2(\heatw_t(\mu),\nu)+\entr{\heatw_t(\mu)}{\mm}\leq\entr{\nu}{\mm}\qquad\text{for
a.e. $t\in (0,\infty)$.}
\end{equation}

It is immediate to see that the $\rcd K\infty$ notion is consistent
with the Riemannian case: indeed, uniqueness of geodesics in
$(\probt M,W_2)$ between absolutely continuous measures and the
consistency of the $CD(K,\infty)$ notion, going back to
\cite{Cordero-McCann-Schmuckenschlager01,Sturm-VonRenesse05}, yield
that manifolds are strong $CD(K,\infty)$ spaces (see below for the
definition), and the fact that $\C$ is quadratic is directly encoded
in the Riemannian metric tensor, yielding the linearity of the heat
flow. On the other hand, the stability of $\rcd K\infty$ bounds with
respect to the measured Gromov-Hausdorff convergence introduced by
Sturm \cite{Sturm06I} is a consequence, not too difficult, of
condition $(iii)$ and the general stability properties of $\EVI_K$
flows (see also \cite{Savare07,Savare10} for a similar statement).
We remark also that thanks to the results in
\cite{Gigli-Ohta10,Ohta09,Petrunin,ZZ1}, compact and
finite-dimensional spaces with Alexandrov curvature bounded from
below are $\rcd K\infty$ spaces.

Besides this, we prove many additional properties of $\rcd K\infty$
spaces. Having \eqref{eq:EVIintro} at our disposal at the level of
measures, it is easy to obtain fundamental solutions, integral
representation formulas, regularizing and contractivity properties
of the heat flow, which exhibits a strong Feller regularization from
$L^\infty(X,\mm)$ to Lipschitz. Denoting by
$W^{1,2}(X,\sfd,\mm)\subset L^2(X,\mm)$ the finiteness domain of
$\C$, the identification of the $L^2$-gradient flow of $\C$ and of
the $W_2$-gradient flow of $\entv$ in conjunction with the
$K$-contractivity of $W_2$ along the heat flow, yields, as in
\cite{GigliKuwadaOhta10}, the Bakry-Emery estimate
$$
\weakgrad{(\heatl_t f)}^2\leq
e^{-2Kt}\heatl_t(\weakgrad{f}^2)\qquad\text{$\mm$-a.e. in $X$}
$$
for all $f\in W^{1,2}(X,\sfd,\mm)$. As a consequence of this, we
prove that functions $f$ whose minimal weak upper gradient
$\weakgrad{f}$ belongs to $L^\infty(X,\mm)$ have a Lipschitz version
$\tilde f$, with ${\rm Lip}(\tilde f)\leq\|\weakgrad{f}\|_\infty$.

In connection with the tensorization property, namely the stability
of $\rcd K\infty$ metric measure spaces with respect to product
(with squared product distance given by the sum of the squares of
the distances in the base spaces), we are able to achieve it
assuming that the base spaces are nonbranching. This limitation is
due to the fact that also for the tensorization of $CD(K,\infty)$
spaces the nonbranching assumption has not been ruled out so far
(see \cite[Theorem 4.17]{Sturm06I}). On the other hand, we are able
to show that the linearity of the heat flow tensorizes, when coupled
just with the strong $CD(K,\infty)$ condition. The nonbranching
assumption on the base spaces could be avoided with a proof of the
tensorization property directly at the level of $\EVI_K$, but we did
not succeed so far in tensorizing the $\EVI_K$ condition.

Since $\C$ is a quadratic form for $\rcd K\infty $ spaces, it is
tempting to take the point of view of Dirichlet forms and to
describe the objects appearing in Fukushima's theory
\cite{Fukushima80} of Dirichlet forms. In this direction, see also
the recent work \cite{Koskela-Zhou11} and
Remark~\ref{rem:KoskelaZhou}. Independently of any curvature bound
we show that, whenever $\C$ is quadratic, a Leibnitz formula holds
and there exists a ``local'' bilinear map $(f,g)\mapsto\nabla
f\cdot\nabla g$ from $[W^{1,2}(X,\sfd,\mm)]^2$ to $L^1(X,\mm)$, that
provides an integral representation to the Dirichlet form ${\cal
E}(u,v)$ associated to $\C$. This allows us to show that the local
energy measure $[u]$ of Fukushima's theory coincides precisely with
$\weakgrad{u}^2\mm$. If the space is $\rcd K\infty$ then the
intrinsic distance $\sfd_{{\mathcal E}}$, associated to the
Dirichlet form by duality with functions $u$ satisfying $[u]\leq\mm$
is precisely $\sfd$. The theory of Dirichlet forms can also be
applied to obtain the existence of a continuous \emph{Brownian
motion} in $\rcd K\infty$ spaces, i.e. a Markov process with
continuous sample paths and transition probabilities given by
$\heatw_t(\delta_x)$.

Besides the extension to more general classes of reference measures
$\mm$, we believe that this paper opens the door to many potential
developments: among them we would like to mention the dimensional
theory, namely finding appropriate ``Riemannian'' versions of the
$CD(K,N)$ condition, and the study of the tangent space. In
connection with the former question, since $CD(K,N)$ spaces are
$CD(K,\infty)$, a first step could be analyzing them with the
calculus tools we developed and to see the impact of the linearity
of the heat flow and of the $\EVI_K$ condition stated at the level
of $\entv$. Concerning  the latter question, it is pretty natural to
expect $\rcd K\infty$ spaces to have Hilbertian tangent space for
$\mm$-a.e. point. While the proof of this result in the genuine
infinite dimensional case seems quite hard to get, if the space is
doubling and supports a local Poincar\'e inequality, one can hope to
refine Cheeger's analysis (\cite[Section~11]{Cheeger00}) in order to
achieve it.

The paper is organized as follows. In Section~\ref{sec:prem} we
introduce our main notation and the preliminary results needed for
the development of the paper. With the exception of
Section~\ref{sub4}, where we quote from
\cite{Ambrosio-Gigli-Savare11} the basic results we already alluded
to, namely the identification of weak gradients and relaxed
gradients and the identification of $L^2$-gradient flow of $\C$ and
$W_2$-gradient flow of $\entv$, the material is basically known.
Particularly relevant for us will be the $\EVI_K$ formulation of
gradient flows, discussed in Section~\ref{sec:EVI}.

In Section~\ref{se:strongcd} we introduce a convexity condition,
that we call \emph{strong} $CD(K,\infty)$, intermediate between the
$CD(K,\infty)$ condition, where convexity is required along some
geodesic, and convexity along all geodesics. It can be stated by
saying that, given any two measures, there is always an optimal
geodesic plan $\ppi$ joining them such that the $K$-convexity holds
along all the geodesics induced by weighted plans of the form
$F\ppi$, where $F$ is a bounded, non negative function with $\int
F\,\d\ppi=1$. We also know from \cite{Daneri-Savare08} that the
$\EVI_K$ condition implies convexity along all geodesics supported
in $\supp\mm$, and therefore the strong $CD(K,\infty)$ property.\\
This enforcement of the $CD(K,\infty)$ condition is needed to derive
strong $L^\infty$ bounds on the interpolating measures induced by
the ``good'' geodesic plan. These ``good'' interpolating measures
provide large class of test plans and are used to show, in this
framework, that the ``metric Brenier'' theorem
\cite[Theorem~10.3]{Ambrosio-Gigli-Savare11} holds. Roughly
speaking, this theorem states that, when one transports in an
optimal way $\mu$ to $\nu$, the transportation distance
$\sfd(\gamma_0,\gamma_1)$ depends $\ppi$-almost surely only on the
initial point $\gamma_0$ (and in particular it is independent on the
final point $\gamma_1$). Furthermore, the proof of this result
provides the equality
$\sfd(\gamma_0,\gamma_1)=|\nabla^+\varphi|(\gamma_0)=\weakgrad{\varphi}(\gamma_1)$
for $\ppi$-a.e. $\gamma$, where $\varphi$ is any Kantorovich
potential relative to $(\mu,\nu)$. This equality will be crucial for
us when proving optimal bounds for the derivative of $\entv$ along
geodesics.

In Section~\ref{se:formule} we enter the core of the paper with two
basic formulas, one for the derivative of the Wasserstein distance
along the heat flow $(\rho_t\mm)$ (Theorem~\ref{thm:derw2}),
obviously important for a deeper understanding of
\eqref{eq:EVIintro}, the other one for the derivative of the entropy
(Theorem~\ref{thm:derentr}) along a geodesic $\mu_s=\rho_s\mm$. The
proof of the first one uses the classical duality method and relates
the derivative of $W_2^2(\rho_t\mm,\nu)$ to the ``vertical''
derivative of the density $\rho_t$ in the direction given by
Kantorovich potential from $\rho_t\mm$ to $\nu$. The second one
involves much more the calculus tools we developed. The key idea is
to start from the (classical) convexity inequality for the entropy,
written in terms of the optimal geodesic plan $\ppi$ from
$\rho_0\mm$ to $\rho_1\mm$
\begin{equation}\label{eq:Alica1}
\entr{\rho_s\mm}{\mm}-\entr{\rho_0\mm}{\mm}\geq\int\log\rho_0(\rho_s-\rho_0)\,\d\mm=
\int
(\log\rho_0(\gamma_s)-\log\rho_0(\gamma_0)\bigr)\,\d\ppi(\gamma)
\end{equation}
and then use the crucial Lemma~\ref{le:horver}, relating the
``horizontal'' derivatives appearing in \eqref{eq:Alica1} to the
``vertical'' ones. In section \ref{sec:quadratic} the same lemma,
applied to suitable plans generated by the heat flow, is the key to
deduce a local quadratic structure from a globally quadratic Cheeger
energy and to develop useful calculus tools, leading in particular
to the identification of $\weakgrad u^2\mm$ with the energy measure
$[u]$ provided by the general theory of Dirichlet forms.

Section~\ref{sec:rcd} is devoted to the proof of the equivalence of
the three conditions defining $\rcd K \infty$ spaces, while the
final Section~\ref{sec:rcdprop} treats all properties of $\rcd
K\infty$ spaces we already discussed: representation, contraction,
and regularizing properties of the heat flow, relations with the
theory of Dirichlet forms and existence of the Brownian motion,
stability, tensorization. We also discuss, in the last section, the
so-called global-to-local and local-to-global implications. We prove
that the first one always holds if the subset under consideration is
convex, with positive $\mm$-measure and $\mm$-negligible boundary.
We also prove a partial result in the other direction, from local to
global, comparable to those available within the $CD(K,\infty)$
theory.

\smallskip
\noindent {\bf Acknowledgement.} The authors acknowledge the support
of the ERC ADG GeMeThNES.

\section{Preliminaries}\label{sec:prem}

\subsection{Basic notation, metric and measure theoretic
concepts}\label{sub1}

Unless otherwise stated, all metric spaces $(Y,\sfd_Y)$ we will be
dealing with are complete and separable. Given a function
$E:Y\to\R\cup\{\pm\infty\}$, we shall denote its domain $\{y:\
E(y)\in\R\}$ by $D(E)$. The slope (also called local Lipschitz
constant) $|\nabla E|(y)$ of $E$ at $y\in D(E)$ is defined by
\begin{equation}\label{eq:loclip}
|\nabla E|(y):=\limsup_{z\to y}\frac{|E(y)-E(z)|}{\sfd_Y(y,z)}.
\end{equation}
By convention we put $|\nabla E|(y)=+\infty$ if $y\notin D(E)$ and
$|\nabla E|(y)=0$ if $y\in D(E)$ is isolated.

We shall also need the one-sided counterparts of this concept,
namely the \emph{descending slope} (in the theory of gradient flows)
and the \emph{ascending slope} (in the theory of Kantorovich
potentials). Their are defined at $y\in D(E)$ by
\[
|\nabla^-E|(y):=\limsup_{z\to y}\frac{(E(z)-E(y))^-}{\sfd_Y(y,z)},
\quad
|\nabla^+E|(y):=\limsup_{z\to y}\frac{(E(z)-E(y))^+}{\sfd_Y(z,y)},
\]
with the usual conventions if either $y$ is isolated or it does not
belong to $D(E)$.

We will denote by $\CC{[0,1]}{Y}$ the space of continuous curves on
$(Y,\sfd_Y)$; it is a complete and separable metric space when
endowed with the sup norm. We also denote with $\e_t: \CC{[0,1]}Y\to
Y$, $t\in[0,1]$, the evaluation maps
\[
\e_t(\gamma):=\gamma_t\qquad\qquad\forall\gamma\in \CC{[0,1]}Y.
\]
A curve $\gamma:[0,1]\to Y$ is said to be absolutely continuous if
\begin{equation}
\label{eq:curvaac} \sfd_Y(\gamma_t,\gamma_s)\leq\int_s^tg(r)\,\d r
\qquad\forall s,\,t\in [0,1],\,\,s\leq t,
\end{equation}
for some $g\in L^1(0,1)$. If $\gamma$ is absolutely continuous, the
\emph{metric speed} $|\dot\gamma|:[0,1]\to[0,\infty]$ is defined by
\[
|\dot\gamma|:=\lim_{h\to0}\frac{\sfd_Y(\gamma_{t+h},\gamma_t)}{|h|},
\]
and it is possible to prove that the limit exists for a.e. $t$, that
$|\dot\gamma|\in L^1(0,1)$, and that it is the minimal $L^1$
function (up to Lebesgue negligible sets) for which the bound
\eqref{eq:curvaac} holds (see
\cite[Theorem~1.1.2]{Ambrosio-Gigli-Savare08} for the simple proof).

We shall denote by $\AC2{[0,1]}Y$ the class of absolutely
continuous curves with metric derivative in $L^2(0,1)$; it is easily
seen to be a countable union of closed sets in $\CC{[0,1]}Y$ and in
particular a Borel subset.

A curve $\gamma\in \CC{[0,1]}Y$ is called constant speed geodesic if
$\sfd_Y(\gamma_t,\gamma_s)=|t-s|\sfd_Y(\gamma_0,\gamma_1)$ for all
$s,\, t\in [0,1]$. We shall denote by $\geo(Y)$ the space of
constant speed geodesics, which is a closed (thus complete and
separable) subset of $\CC{[0,1]}Y$.

$(Y,\sfd_Y)$ is called a \emph{length} space if
for any
$y_0,\,y_1\in Y$ and $\eps>0$ there exists $\gamma\in\AC{}{[0,1]}Y$
such that
\begin{equation}
\gamma_0=y_0,\
\gamma_1=y_1\quad\text{and}\quad
\mathrm{Length}(\gamma):=\int_0^1 |\dot \gamma_t|\,\d t\le
\sfd_Y(y_0,y_1)+\eps.\label{eq:2}
\end{equation}
If for any $y_0,\,y_1\in Y$ one can find $\gamma$ satisfying
\eqref{eq:2} with $\eps=0$ (and thus, up to a reparameterization,
$\gamma\in\geo(Y)$), we say that $(Y,\sfd_Y)$ is a \emph{geodesic}
space. We also apply the above definitions to (even non closed)
subsets $Z\subset Y$, always endowed with the distance $\sfd_Y$
induced by $Y$. It is worth noticing that if $Z$ is a length space
in $Y$, then $\overline Z$ is a length space in $Y$ as well
\cite[Ex.~2.4.18]{Burago-Burago-Ivanov01}.

We use standard measure theoretic notation, as $\Cb(X)$ for bounded
continuous maps, $f_\sharp$ for the push forward operator induced by
a Borel map $f$, namely $f_\sharp \mu(A):=\mu(f^{-1}(A))$, $\mu\res
A$ for the restriction operator, namely $\mu\res A(B)=\mu(A\cap B)$.

\subsection{Reminders on optimal transport}\label{sub2} We assume that the reader
is familiar with optimal transport, here we just recall the notation
we are going to use in this paper and some potentially less known
constructions. Standard references are
\cite{Ambrosio-Gigli11,Ambrosio-Gigli-Savare08,Villani09} and
occasionally we give precise references for the facts stated here.

Given a complete and separable space
 $(X,\sfd)$, $\probt X$ is the set of Borel
probability measures with finite second moment,  which we endow with
the Wasserstein distance $W_2$ defined by
\begin{equation}
\label{eq:w2} W_2^2(\mu,\nu):=\min\int\sfd^2(x,y)\,\d\ggamma(x,y),
\end{equation}
the minimum being taken among the collection $\adm\mu\nu$ of all
admissible plans (also called couplings) $\ggamma$ from $\mu$ to
$\nu$, i.e. all measures $\ggamma\in\prob{X\times X}$ such that
$\pi^1_\sharp\ggamma=\mu$, $\pi^2_\sharp\ggamma=\nu$. All the
minimizers of \eqref{eq:w2} are called optimal plans and their
collection (always non empty, since $\mu,\,\nu\in \probt X$) is
denoted by $\opt\mu\nu$. The metric space $(\probt X,W_2)$ is
complete and separable; it is also a length or a geodesic space if
and only if $X$ is, see for instance \cite[Theorem~2.10,
Remark~2.14]{Ambrosio-Gigli11}.

Given a reference measure $\mm$, we shall also use the notation
$$
\probt{X,\mm}:=\big\{\mu\in\probt{X}:\
\supp\mu\subset\supp\mm\big\}.
$$
The $c$-transform of a function $\psi:X\to\R\cup\{-\infty\}$,
relative to the cost $c=\tfrac{1}{2}\sfd^2$, is defined by
\[
\psi^c(x):=\inf_{y\in X}\frac{\sfd^2(x,y)}{2}-\psi(y).
\]
Notice that still $\psi^c$ takes its values in $\R\cup\{-\infty\}$,
unless $\psi\equiv -\infty$. A function
$\varphi:X\to\R\cup\{-\infty\}$ is said to be $c$-concave if
$\varphi=\psi^c$ for some $\psi:X\to\R\cup\{-\infty\}$. A set
$\Gamma\subset X\times X$ is $c$-cyclically monotone if
\[
\sum_{i=1}^n c(x_i,y_i)\leq\sum_{i=1}^n c(x_i,y_{\sigma(i)})
\qquad\forall n\geq 1,\,\,(x_i,y_i)\in\Gamma,\,\,\text{$\sigma$
permutation.}
\]
Given $\mu,\,\nu\in\probt X$ there exists a $c$-cyclically monotone
closed set $\Gamma$ containing the support of all optimal plans
$\ggamma$. In addition, there exists a (possibly non unique)
$c$-concave function $\varphi\in L^1(X,\mu)$ such that $\varphi^c\in
L^1(X,\nu)$ and $\varphi(x)+\varphi^c(y)=c(x,y)$ on $\Gamma$. Such
functions are called Kantorovich potentials. We remark that the
typical construction of $\varphi$ (see for instance
\cite[Theorem~6.1.4]{Ambrosio-Gigli-Savare08}) gives that $\varphi$
is locally Lipschitz in $X$ if the target measure $\nu$ has bounded
support. Conversely, it can be proved that $\ggamma\in\adm\mu\nu$
and $\supp\ggamma$ $c$-cyclically monotone imply that $\ggamma$ is
an optimal plan.

It is not hard to check that (see for instance
\cite[Proposition~3.9]{Ambrosio-Gigli-Savare11})
\begin{equation}\label{eq:propkant}
|\nabla^+\varphi|(x)\leq \sfd(x,y)\qquad\text{for $\ggamma$-a.e.
$(x,y)$,}
\end{equation}
for any optimal plan $\ggamma$ and Kantorovich potential $\varphi$
from $\mu$ to $\nu$.

If $\mu$ and $\nu$ are joined by a geodesic in $(\probt X,W_2)$, the
distance $W_2$ can be equivalently characterized by
\begin{equation}
\label{eq:w2din} W_2^2(\mu,\nu)=\min\int\int_0^1|\dot\gamma_t|^2\,\d
t\,\d\ppi(\gamma),
\end{equation}
among all measures $\ppi\in\prob{\CC{[0,1]}X}$ such that
$(\e_0)_\sharp \ppi=\mu$, $(\e_1)_\sharp\ppi=\nu$, where the
2-action $\int_0^1|\dot\gamma_t|^2\,\dt$ is taken by definition
$+\infty$ is $\gamma$ is not absolutely continuous. The set of
minimizing plans $\ppi$ in \eqref{eq:w2din} will be denoted by
$\gopt(\mu,\nu)$. It is not difficult to see that
$\ppi\in\gopt(\mu,\nu)$ if and only if
$\ggamma:=(\e_0,\e_1)_\sharp\ppi\in\prob{X\times X}$ is a minimizer
in \eqref{eq:w2} and $\ppi$ is concentrated on $\geo(X)$.
Furthermore, a curve $(\mu_t)$ is a constant speed geodesic from
$\mu_0$ to $\mu_1$ if and only if there exists
$\ppi\in\gopt(\mu_0,\mu_1)$ such that
\[
\mu_t=(\e_t)_\sharp\ppi\qquad\quad\forall t\in [0,1],
\]
see for instance \cite[Theorem~2.10]{Ambrosio-Gigli11} and notice
that the assumption that $(X,\sfd)$ is geodesic is never used in the
proof of $(i)\Leftrightarrow (ii)$.

The linearity of the transport problem immediately yields that the
squared Wasserstein distance $W_2^2(\cdot,\cdot)$ is jointly convex.
This fact easily implies that if $(\mu^1_t),\,(\mu^2_t)\subset\probt
X$ are two absolutely continuous curves, so is
$t\mapsto\mu_t:=(1-\lambda)\mu^1_t+\lambda\mu^2_t$ for any
$\lambda\in [0,1]$, with an explicit bound on its metric speed:
\begin{equation}
\label{eq:convw2}
|\dot\mu_t|^2\leq(1-\lambda)|\dot\mu^1_t|^2+\lambda|\dot\mu^2_t|^2\qquad\text{for
a.e. $t\in [0,1]$.}
\end{equation}

Finally, we recall the definition of \emph{push forward via a plan}, introduced in \cite{Sturm06I}
(with a different notation) and further studied in \cite{Gigli10}, \cite{Ambrosio-Gigli-Savare11}.

\begin{definition}[Push forward via a plan]\label{def:pushplan}
Let $\ggamma\in\prob{X\times Y}$. For $\mu\in\prob X$ such that
$\mu=\rho\,\big(\pi^X_\sharp\ggamma\big)\ll\pi^X_\sharp\ggamma$, the
push forward $\ggamma_\sharp\mu\in\prob Y$ of $\mu$ via $\ggamma$ is
defined by
$$\ggamma_\sharp\mu:=\pi^Y_\sharp\big((\rho\circ\pi^X)\ggamma\big).$$
\end{definition}

An equivalent representation of $\ggamma_\sharp\mu$ is
\begin{equation}\label{eq:equivalentpushplan}
\ggamma_\sharp\mu=\eta\,\pi^Y_\sharp\ggamma\qquad\text{where}\qquad
\eta(y):=\int_X\rho(x)\,\d\ggamma_y(x)
\end{equation}
and $\{\ggamma_y\}_{y\in Y}\subset\Probabilities{X}$ is the
disintegration of $\ggamma$ w.r.t. the projection on $Y$.

Defining $\ggamma^{-1}:=(\pi^Y,\pi^X)_\sharp\ggamma\in\prob{Y\times
X}$, we can define in a symmetric way the map
$\nu\mapsto\ggamma^{-1}_\sharp\nu\in\prob{X}$ for any
$\nu\ll\pi^Y_\sharp\ggamma^{-1}=\pi^Y_\sharp\ggamma$.

Notice that if $\ggamma$ is concentrated on the graph of a map
$T:X\to Y$, it holds $\ggamma_\sharp\mu=T_\sharp\mu$ for any
$\mu\ll\pi^X_\sharp\ggamma$, and that typically
$\ggamma^{-1}_\sharp(\ggamma_\sharp\mu)\neq\mu$. We collect in the
following proposition the basic properties of $\ggamma_\sharp$ in
connection with the Wasserstein distance.

\begin{proposition}\label{prop:basegammasharp}
The following properties hold:
\begin{itemize}
\item[(i)] $\mu\leq C\,\pi^1_\sharp\ggamma$ for some $C>0$ implies $\ggamma_\sharp\mu\leq C\,\pi^2_\sharp\ggamma$.
\item[(ii)] Let
  $\mu,\,\nu\in\probt X$ and $\ggamma\in\opt\mu\nu$. Then for every
$\tilde\mu\in\probt X$ such that $\tilde\mu\ll\mu$ it holds
\begin{equation}
\label{eq:optpush} W_2^2(\tilde\mu,\ggamma_\sharp\tilde\mu)=\int
\sfd^2(x,y)\frac{\d \tilde\mu}{\d\mu}(x)\,\d\ggamma(x,y)
\end{equation}
and, in particular, $\ggamma_\sharp\tilde\mu\in\probt{Y}$ and
$\frac{\d\tilde\mu}{\d\mu}\circ\pi^1\ggamma\in\opt{\tilde\mu}{\ggamma_\sharp\tilde\mu}$
if any of the two terms is finite.
\item[(iii)] Let $\ggamma\in\probt{X\times Y}$, $C>0$ and
$A_C:=\bigl\{\mu\in\probt X:\ \mu\leq C\,\pi^1_\sharp\ggamma\bigr\}$.
Then
\begin{equation}
\label{eq:uniforme} \mu\mapsto\ggamma_\sharp\mu \quad\text{is
uniformly continuous in $A_C$ w.r.t. the $W_2$ distances.}
\end{equation}
\item[(iv)] Let $\ggamma\in\probt{X\times X}$ and $\mu\leq C\,\pi^1_\sharp\ggamma$ for some constant $C$.
Then
\begin{equation}
\label{eq:boundlimitato}
W_2^2(\mu,\ggamma_\sharp\mu)\leq C\int\sfd^2(x,y)\,\d\ggamma(x,y).
\end{equation}
\end{itemize}
\end{proposition}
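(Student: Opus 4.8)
The plan is to prove the four items essentially in order, using the two equivalent representations \eqref{eq:equivalentpushplan} of $\ggamma_\sharp\mu$ and the optimality characterization via $c$-cyclical monotonicity. For item (i), write $\mu=\rho\,\pi^1_\sharp\ggamma$ with $0\le\rho\le C$ holding $\pi^1_\sharp\ggamma$-a.e. (this follows from $\mu\le C\pi^1_\sharp\ggamma$). By \eqref{eq:equivalentpushplan}, $\ggamma_\sharp\mu=\eta\,\pi^2_\sharp\ggamma$ with $\eta(y)=\int\rho(x)\,\d\ggamma_y(x)\le C$ for $\pi^2_\sharp\ggamma$-a.e.\ $y$, since $\ggamma_y$ is a probability measure. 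Hence $\ggamma_\sharp\mu\le C\,\pi^2_\sharp\ggamma$, which is exactly (i).

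For item (ii), set $r:=\d\tilde\mu/\d\mu$, so that $\tilde\mu\ll\mu\ll\pi^1_\sharp\ggamma$ gives $\tilde\mu=(r\circ\pi^1)(\rho\circ\pi^1)\ggamma$ as a marginal, and by definition $\ggamma_\sharp\tilde\mu=\pi^2_\sharp\big((r\circ\pi^1)(\rho\circ\pi^1)\ggamma\big)$. Consider the plan $\tilde\ggamma:=(r\circ\pi^1)\,\ggamma\in\prob{X\times Y}$ (we must first check $r\circ\pi^1\in L^1(\ggamma)$ when the right-hand side of \eqref{eq:optpush} is finite, or argue by truncation $r\wedge n$ and monotone convergence otherwise). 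Its marginals are exactly $\tilde\mu$ and $\ggamma_\sharp\tilde\mu$, so $\tilde\ggamma\in\adm{\tilde\mu}{\ggamma_\sharp\tilde\mu}$ and therefore $W_2^2(\tilde\mu,\ggamma_\sharp\tilde\mu)\le\int\sfd^2\,\d\tilde\ggamma=\int\sfd^2(x,y)\,r(x)\,\d\ggamma$. For the reverse inequality — which also yields that $\tilde\ggamma$ is optimal — note that $\ggamma$ is optimal, hence concentrated on a $c$-cyclically monotone set $\Gamma$; but $\tilde\ggamma\ll\ggamma$, so $\tilde\ggamma$ is concentrated on the same $\Gamma$, and by the converse statement recalled in Section~\ref{sub2} ($c$-cyclical monotonicity of the support plus admissibility implies optimality) $\tilde\ggamma\in\opt{\tilde\mu}{\ggamma_\sharp\tilde\mu}$. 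This gives \eqref{eq:optpush}, finiteness of either side forcing $\ggamma_\sharp\tilde\mu\in\probt Y$, and the claimed membership in $\opt{\tilde\mu}{\ggamma_\sharp\tilde\mu}$.

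Item (iii) is the analytic heart. Take $\mu_1,\mu_2\in A_C$ with densities $\rho_1,\rho_2$ w.r.t.\ $\varpi:=\pi^1_\sharp\ggamma$, both bounded by $C$. I would compare $\ggamma_\sharp\mu_1$ and $\ggamma_\sharp\mu_2$ through the common plan obtained by pushing the ``transport plan between $\mu_1$ and $\mu_2$'' forward by $\ggamma\times\ggamma$ appropriately; more concretely, using \eqref{eq:equivalentpushplan}, $\ggamma_\sharp\mu_i=\eta_i\,\pi^2_\sharp\ggamma$ with $\eta_i(y)=\int\rho_i\,\d\ggamma_y$, and one estimates $W_2(\ggamma_\sharp\mu_1,\ggamma_\sharp\mu_2)$ by building a coupling that transports $\eta_1\,\pi^2_\sharp\ggamma$ to $\eta_2\,\pi^2_\sharp\ggamma$ along fibers, bounding the cost by $C$ times the cost of an optimal coupling between $\mu_1$ and $\mu_2$ lifted through $\ggamma$; this should yield an estimate of the form $W_2^2(\ggamma_\sharp\mu_1,\ggamma_\sharp\mu_2)\le C\,\omega\big(W_2(\mu_1,\mu_2)\big)$ for a modulus $\omega$ depending only on $C$ and on the tails of $\ggamma$ (quantified via its finite second moment). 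Uniform continuity on $A_C$ follows. This is the step I expect to be the main obstacle: getting a genuinely \emph{uniform} modulus requires controlling the cost contribution from far-apart pairs $(x,y)$ uniformly in $\mu\in A_C$, which is where $\ggamma\in\probt{X\times Y}$ (finite second moment) is essential and where a careful truncation argument is needed.

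Finally, item (iv) is an immediate consequence of (ii): given $\mu\le C\,\varpi$, apply \eqref{eq:optpush} with $\tilde\mu=\mu$ and with the ambient first marginal itself — i.e.\ observe $\mu=\rho\,\varpi$ with $\rho\le C$, so taking in (ii) the measure $\mu$ in place of $\tilde\mu$ and $\pi^1_\sharp\ggamma$ in place of $\mu$, the identity \eqref{eq:optpush} reads $W_2^2(\mu,\ggamma_\sharp\mu)=\int\sfd^2(x,y)\,\rho(x)\,\d\ggamma(x,y)\le C\int\sfd^2(x,y)\,\d\ggamma(x,y)$, which is \eqref{eq:boundlimitato}. (If one prefers to avoid invoking (ii) with a non-probability ``$\mu$'', the same bound follows directly by using $\ggamma_\sharp\mu=\eta\,\pi^2_\sharp\ggamma$, the admissible plan $\rho\circ\pi^1\ggamma$ between $\mu$ and $\ggamma_\sharp\mu$, and $\rho\le C$.)
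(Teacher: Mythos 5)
Items (i) and (ii) are fine and essentially identical to the paper's argument: for (ii) the paper also takes the admissible plan $\tfrac{\d\tilde\mu}{\d\mu}\circ\pi^1\,\ggamma$, observes its support sits inside the $c$-cyclically monotone support of $\ggamma$, and concludes optimality and the equality \eqref{eq:optpush}. For (iv), note that $\ggamma$ is \emph{not} assumed optimal there, so your primary route ``apply (ii)'' is not legitimate (and the equality you write would be false in general); only the inequality survives. Your parenthetical fallback --- exhibit $\tfrac{\d\mu}{\d\pi^1_\sharp\ggamma}\circ\pi^1\,\ggamma$ as an admissible plan and bound its cost using $\rho\le C$ --- is exactly the paper's proof and is what you should keep.

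The genuine gap is in (iii). You propose to build a fiberwise coupling between $\eta_1\,\pi^2_\sharp\ggamma$ and $\eta_2\,\pi^2_\sharp\ggamma$ whose cost is controlled by $C$ times the optimal cost between $\mu_1$ and $\mu_2$, hoping for an explicit modulus $\omega$. This cannot work as stated: an optimal coupling of $\mu_1,\mu_2$ may move mass between points $x,x'$ that are close in $X$ but whose ``images'' under $\ggamma$ (think of $\ggamma$ concentrated on the graph of a merely Borel map $T$) are far apart, so the lifted coupling between $T_\sharp\mu_1$ and $T_\sharp\mu_2$ has uncontrolled cost; likewise a total-variation-type coupling would require controlling $\|\eta_1-\eta_2\|_{L^1}$ by $W_2(\mu_1,\mu_2)$, which bounded densities do not give. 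The paper avoids any quantitative estimate: since $\{\pi^1_\sharp\ggamma\}$ is tight and $2$-uniformly integrable, so is $A_C$, which is moreover $W_2$-closed and hence \emph{compact} in $(\probt X,W_2)$; uniform continuity then follows from plain sequential continuity. The latter is proved by noting that $W_2$-convergence $\mu_n\to\mu$ in $A_C$ forces $\rho_n\to\rho$ weakly$^*$ in $L^\infty(X,\pi^1_\sharp\ggamma)$, testing $\ggamma_\sharp\mu_n$ against $\varphi\in\Cb(Y)$ via the disintegration $x\mapsto\int\varphi\,\d\ggamma_x$, and using relative compactness of $(\ggamma_\sharp\mu_n)$ guaranteed by (i). You should replace your direct estimate by this compactness argument.
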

\begin{proof} (i) is obvious. \\* (ii)
Since $\ggamma$ is optimal, $\supp\big(\tfrac{\d
\tilde\mu}{\d\mu}\circ\pi^1\ggamma\big)\subset\supp\ggamma$ is
$c$-cyclically monotone. Moreover $\tfrac{\d
\tilde\mu}{\d\mu}\circ\pi^1\ggamma$ is an admissible plan from
$\tilde\mu$ to $\ggamma_\sharp\tilde\mu$, with cost equal to the
right hand side of \eqref{eq:optpush}. Hence, if the cost is finite,
from the finiteness of $W_2(\tilde\mu,\ggamma_\sharp\tilde\mu)$ we
infer that $\ggamma_\sharp\tilde\mu\in\probt{X}$, hence $c$-cyclical
monotonicity implies optimality and equality in \eqref{eq:optpush}.
The same argument works if we assume that
$W_2(\tilde\mu,\ggamma_\sharp\tilde\mu)$ is finite.\\*  (iii) Since
the singleton $\{\pi^1_\sharp\ggamma\}$ is both tight and
2-uniformly integrable, the same is true for the set $A_C$, which,
being $W_2$-closed, is compact (see
\cite[Section~5.1]{Ambrosio-Gigli-Savare08} for the relevant
definitions and simple proofs). Hence it is sufficient to prove the
continuity of the map. Let $(\mu_n)\subset A_C$ be $W_2$-converging
to $\mu\in A_C$ and let $\rho_n$, $\rho$ be the respective densities
w.r.t. $\pi^1_\sharp\ggamma$. Since $(\mu_n)$ converges to $\mu$ in
duality with $\Cb(X)$ and since the densities are equibounded, we
get that $\rho_n$ converge to $\rho$ weakly$^*$ in
$L^\infty(X,\pi^1_\sharp\ggamma)$. By $(i)$ and the same argument
just used we know that $(\ggamma_\sharp\mu_n)\subset \probt Y$ is
relatively compact w.r.t. the Wasserstein topology, hence to
conclude it is sufficient to show that $(\ggamma_\sharp\mu_n)$
converges to $\ggamma_\sharp\mu$ in duality with $\Cb(Y)$. To this
aim, fix $\varphi\in \Cb(Y)$ and notice that it holds
\[
\begin{split}
\int_Y\varphi(y)\,\d\ggamma_\sharp\mu_n(y)=
\int_{X\times Y}\varphi(y)\rho_n(x)\,\d\ggamma(x,y)=
\int_X\left(\int_Y\varphi(y)\,\d\ggamma_x(y)\right)\rho_n(x)\,\d\pi^1_\sharp\ggamma(x),
\end{split}
\]
where $\{\ggamma_x\}$ is the disintegration of $\ggamma$ w.r.t. the
projection on the first component. Since $\varphi$ is bounded, so is
the map $x\mapsto\int\varphi\,\d\ggamma_x$, and the claim
follows.\\* \noindent (iv) Just notice that
$\frac{\d\mu}{\d\pi^1_\sharp\sggamma}\circ\pi^1\ggamma\in\adm\mu{\ggamma_\sharp\mu}$.
\end{proof}

The operation of push forward via a plan has also interesting
properties in connection with the relative entropy functional
$\entv$. We recall that, given $\mm\in\prob X$, the functional
$\entv:\prob X\to[0,\infty]$ is defined by
\[
\entr\mu\mm:=\left\{
\begin{array}{ll}
\displaystyle{\int\frac{\d\mu}{\d\mm}\log\left(\frac{\d\mu}{\d\mm}\right)
\,\d\mm}&\qquad\textrm{ if }\mu\ll\mm,\\
&\\
+\infty&\qquad\text{ otherwise}.
\end{array}
\right.
\]

\begin{proposition}\label{prop:basegammapf}
For all $\ggamma\in\prob{X\times Y}$ the following properties hold:
\begin{itemize}
\item[(i)] For any $\mm,\mu\ll\pi^X_\sharp\ggamma$ it holds
$\entr{\ggamma_\sharp\mu}{\sggamma_\sharp\mm}\leq \entr\mu\mm$.
\item[(ii)] For any $\mm\ll\pi^X_\sharp\ggamma$, $C>0$, the map $\mu\mapsto\entr\mu\mm-
\entr{\ggamma_\sharp\mu}{\sggamma_\sharp\mm}$ is convex in
$\left\{\mu\in\Probabilities{X}:\ \mu\leq C\mm\right\}$.
\end{itemize}
\end{proposition}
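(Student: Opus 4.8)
The plan is to reduce everything to a fibrewise computation, using the disintegration $\{\ggamma_y\}_{y\in Y}$ of $\ggamma$ with respect to $\pi^Y$ together with the convex function $\varphi(t):=t\log t$ on $[0,\infty)$ (with $\varphi(0):=0$).

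For $(i)$, if $\mu\not\ll\mm$ there is nothing to prove, so assume $\mu\ll\mm$ and set $f:=\d\mu/\d\mm$, $\sigma:=\d\mm/\d\pi^X_\sharp\ggamma$, so that $\d\mu/\d\pi^X_\sharp\ggamma=f\sigma$. By \eqref{eq:equivalentpushplan} one has $\ggamma_\sharp\mm=\theta\,\pi^Y_\sharp\ggamma$ and $\ggamma_\sharp\mu=\eta\,\pi^Y_\sharp\ggamma$ with $\theta(y)=\int_X\sigma\,\d\ggamma_y$ and $\eta(y)=\int_X f\sigma\,\d\ggamma_y$; since $\theta(y)=0$ forces $\sigma=0$ $\ggamma_y$-a.e.\ and hence $\eta(y)=0$, we get $\ggamma_\sharp\mu\ll\ggamma_\sharp\mm$, and on $\{\theta>0\}$ the density is $\eta/\theta=\int_X f\,\d\lambda^y$, where $\lambda^y:=(\sigma/\theta(y))\,\ggamma_y$ is a probability measure on $X$. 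Applying Jensen's inequality fibrewise, $\varphi\big(\eta(y)/\theta(y)\big)\le\int_X\varphi(f)\,\d\lambda^y$; multiplying by $\theta(y)$, integrating in $\pi^Y_\sharp\ggamma$, and recollapsing the iterated integral via the defining property of the disintegration to $\int_X\varphi(f)\sigma\,\d\pi^X_\sharp\ggamma=\int_X\varphi(f)\,\d\mm$, we obtain $\entr{\ggamma_\sharp\mu}{\ggamma_\sharp\mm}\le\entr\mu\mm$.

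For $(ii)$, carrying out the same computation separately for the two entropies and subtracting — which is legitimate because $\mu\le C\mm$ makes $0\le f\le C$ and hence every quantity involved bounded — yields the representation
\[
\entr\mu\mm-\entr{\ggamma_\sharp\mu}{\ggamma_\sharp\mm}=\int_{\{\theta>0\}}\theta(y)\,\psi_{\lambda^y}(f)\,\d\pi^Y_\sharp\ggamma(y),\qquad \psi_\lambda(g):=\int_X\varphi(g)\,\d\lambda-\varphi\Big(\int_X g\,\d\lambda\Big).
\]
The key observation is that $\theta$ and the probability kernel $y\mapsto\lambda^y$ depend only on $\mm$ and $\ggamma$, not on $\mu$, while $\mu\mapsto f=\d\mu/\d\mm$ is affine on $\{\mu\le C\mm\}$; hence it is enough to prove that $g\mapsto\psi_\lambda(g)$ is convex on $\{0\le g\le C\}$ for each fixed probability $\lambda$, and then integrate against the nonnegative weights $\theta(y)$. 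To see this, write $m:=\int_X g\,\d\lambda$ and note $\psi_\lambda(g)=\int_X g\log(g/m)\,\d\lambda=\int_X\widehat\varphi\big(g(x),m\big)\,\d\lambda(x)$, where $\widehat\varphi(u,v):=u\log(u/v)$ is the perspective function of $\varphi$, jointly convex on $[0,\infty)\times(0,\infty)$ (its Hessian is positive semidefinite, with vanishing determinant). Since $g\mapsto\big(g,\int_X g\,\d\lambda\big)$ is affine and $(u,v)\mapsto\int_X\widehat\varphi(u(x),v)\,\d\lambda(x)$ is convex, $\psi_\lambda$ is convex (the degenerate case $m=0$, i.e.\ $g=0$ $\lambda$-a.e., being immediate), and $(ii)$ follows.

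The main obstacle is not a deep idea but careful bookkeeping: one must handle the $\mm$-null set where $f$ may be infinite and the set $\{\theta=0\}$, check measurability of the fibrewise objects $\theta,\eta,\lambda^y$, and — crucially for $(ii)$ — notice that after subtracting the two entropies the $\mu$-independent weight $\theta(y)$ factors out, so that what looks a priori like the difference of two convex functionals is in fact an average, with fixed weights, of the genuinely convex fibrewise Jensen-gap functionals $\psi_{\lambda^y}$.
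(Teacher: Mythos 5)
Your proof of part $(i)$ is correct and follows essentially the same route as the paper: you disintegrate $\ggamma$ with respect to $\pi^Y$, observe that the density of $\ggamma_\sharp\mu$ with respect to $\ggamma_\sharp\mm$ at $y$ is the average of $\d\mu/\d\mm$ against the reweighted fibre measure $\lambda^y$ (the paper's $\tilde\ggamma_y$), apply Jensen with $t\log t$ on each fibre, and recollapse the iterated integral. The only difference is cosmetic (you factor through densities with respect to $\pi^Y_\sharp\ggamma$ before dividing, the paper works directly with the density relative to $\ggamma_\sharp\mm$). Part $(ii)$ is where your write-up genuinely diverges: the paper does not prove it at all, but simply cites \cite[Lemma~7.7]{Ambrosio-Gigli-Savare11}. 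Your argument --- rewriting the entropy gap as $\int\theta(y)\,\psi_{\lambda^y}(f)\,\d\pi^Y_\sharp\ggamma(y)$ with $\mu$-independent weights $\theta(y)$ and kernels $\lambda^y$, noting that $\mu\mapsto f$ is affine on $\{\mu\le C\mm\}$ (where all quantities are finite, so the subtraction is legitimate), and then establishing convexity of the fibrewise Jensen gap $\psi_\lambda$ via the joint convexity of the perspective function $u\log(u/v)$ composed with the affine map $g\mapsto(g,\int g\,\d\lambda)$ --- is correct and self-contained; it is the standard mechanism behind the cited lemma, and it makes transparent why the difference of two convex functionals is itself convex here, namely because it is a fixed nonnegative superposition of convex fibrewise functionals rather than an accidental cancellation. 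The degenerate set $\{\theta=0\}$ and the case $\int g\,\d\lambda=0$ are handled correctly.
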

\begin{proof} (i) We follow \cite[Lemma~7.4]{Ambrosio-Gigli11} and
\cite[Lemma~4.19]{Sturm06I}. We can assume $\mu\ll\mm$, otherwise
there is nothing to prove. Then it is immediate to check from the
definition that $\ggamma_\sharp\mu\ll\ggamma_\sharp\mm$. Let
$\mu=\rho\mm$, $\mm=\theta\,\pi_\sharp^X\ggamma$,
$\ggamma_\sharp\mu=\eta\,\ggamma_\sharp\mm$, and $e(z):=z\log z$. By
disintegrating $\ggamma$ as in \eqref{eq:equivalentpushplan}, we
have that
$$
\eta(y)=\int {\rho(x)}\,\d\tilde\ggamma_y(x),\qquad \tilde\ggamma_y=
\biggl(\int {\theta(x)}\,\d\ggamma_y(x)\biggr)^{-1}
{\theta}\,\ggamma_y.
$$
Using the convexity of $e$ and Jensen's inequality with the
probability measures $\tilde\ggamma_y$ we get
$$
e(\eta(y))\leq\int e(\rho(x))\,\d\tilde\ggamma_y(x).
$$
Now, since $\tilde\ggamma_y$ are the conditional probability
measures of
$\tilde\ggamma:=[\mm/\pi^X_\sharp\gamma]\circ\pi^X\ggamma=
\theta\circ\pi^X\ggamma$, whose first marginal is $\mm$, by
integration of both sides with respect to the second marginal of
$\tilde\ggamma$, namely $\ggamma_\sharp\mm$, we get
\begin{eqnarray*}
\entr{\ggamma_\sharp\mu}{\sggamma_\sharp\mm}&=&\int
e(\eta(y))\,\d\ggamma_\sharp\mm(y)\leq \int\int
e(\rho(x))\,\d\tilde\ggamma_y(x)\d\ggamma_\sharp\mm(y)\\
&=& \int\int e(\rho(x))\,\d\tilde\ggamma(x,y)=\int e(\rho)\,\d\mm.
\end{eqnarray*}
%
% {\color{red} (i) We follow \cite[Lemma~7.4]{Ambrosio-Gigli11} and
% \cite[Lemma~4.19]{Sturm06I}. We can assume $\mu\ll\mm$, otherwise
% there is nothing to prove. Then it is immediate to check from the
% definition that $\ggamma_\sharp\mu\ll\ggamma_\sharp\mm$. Let
% $\mu=\rho\mm$, $\ggamma_\sharp\mu=\eta\ggamma_\sharp\mm$, and
% $u(z):=z\log z$. By disintegrating $\ggamma$ as in
% \eqref{eq:equivalentpushplan}, we have
% \[
% \begin{split}
% \entr{\ggamma_\sharp\mu}{\sggamma_\sharp\mm}&=
% \int u(\eta(y))\,\d\ggamma_\sharp\mm(y)=\int u\left(\int \rho(x)\,\d\ggamma_y(x)\right)\,\d\ggamma_\sharp\mm(y)\\
% &\leq \int\int
% u(\rho(x))\,\d\ggamma_y(x)\,\d\ggamma_\sharp\mm(y)=\int u(\rho(x))
% \frac{\d\mm}{\d\pi^1_\sharp\ggamma}(x)\,\d\ggamma(x,y)\\&=\int
% u(\rho(x))\,\d\mm(x)=\entr{\mu}{\mm}.
% \end{split}
% \]}
\noindent (ii) This is proved in
\cite[Lemma~7.7]{Ambrosio-Gigli-Savare11} (see also
\cite[Proposition~11]{Gigli10}). Notice that in
\cite{Ambrosio-Gigli-Savare11} we worked under the assumption $X=Y$,
but this makes no difference,
since the as one can work on the disjoint union
$X\sqcup Y$ endowed with a distance which extends those of $X,\,Y$.
\end{proof}

\begin{remark}{\rm
We remark that the property $(i)$ above is true for any internal
energy kind functional: as the proof shows, under the same
assumptions on $\mm,\,\mu,\,\ggamma$ it holds
\[
U_{\sggamma_\sharp\mm}(\ggamma_\sharp\mu)\leq U_\mm(\mu),
\]
where $U_\mm(\mu)$ is given by $\int
u(\rho)\,\d\mm+u'(\infty)\mu^s(X)$ for some convex continuous
function $U:[0,\infty)\to\R\cup\{+\infty\}$ and $\mu=\rho\mm+\mu^s$,
with $\mu^s\perp\mm$.

On the other hand, part $(ii)$ does \emph{not} always hold for these
functionals: in \cite{Gigli10} it has been shown that for
$U(z):=\frac{z^\alpha}{\alpha-1}$ one has that
\[
\mu\quad\mapsto\quad U_\mm(\mu)-U_{\sggamma_\sharp\mm}(\ggamma_\sharp\mu)
\]
is convex on $\left\{\mu\in\Probabilities{X}:\ \mu\leq C\mm\right\}$
for any $C>0$ if and only if $1<\alpha\leq 2$. In particular, convexity does
not hold for the functionals appearing in the definition of $CD(K,N)$ bounds.}\fr
\end{remark}

\subsection{Metric measure spaces and Sturm's distance $\D$}\label{sub3}

Throughout this paper we will always consider normalized metric
measure spaces with finite variance, according to \cite[\S
3.1]{Sturm06I}: in short, we will denote by $\X$ the set of
(isomorphism classes of) metric measure spaces that we will
consider, namely
\begin{equation}
  \label{eq:1}
\X:=\Big\{(X,\sfd,\mm)\ :\ (X,\sfd)\textrm{ is complete,
  separable, and }\mm\in\probt X\Big\}.
\end{equation}
We say that the two metric measure spaces $(X,\sfd_X,\mm_X)$ and
$(Y,\sfd_Y,\mm_Y)$ are \emph{isomorphic} if there exists a
\begin{equation}\label{eq:isosturm}
\text{bijective isometry\quad $f:\supp\mm_X\to\supp\mm_Y$\quad such
that}\quad
 f_\sharp\mm_X=\mm_Y.
\end{equation}
We say that $(X,\sfd,\mm)$ is \emph{length} or \emph{geodesic} if
$(\supp\mm,\sfd)$ is so, and these notions are invariant in the
isomorphism class.

Notice that $(X,\sfd,\mm)$ is always isomorphic to
$(\supp\mm,\sfd,\mm)$, so that it will often be not restrictive to
assume the non-degeneracy condition $\supp\mm=X$.

In this section we recall the definition of the distance $\D$
between metric measure spaces, introduced by Sturm in
\cite{Sturm06I}, and its basic properties.

\begin{definition}[Coupling between metric measure spaces]
Given two metric measure spaces $(X,\sfd_X,\mm_X)$, $(Y,\sfd_Y,\mm_Y)$, we consider the product space
$(X\times Y,\sfd_{XY})$, where $\sfd_{XY}$ is the distance defined by
\[
\sfd_{XY}\big((x_1,y_1),(x_2,y_2)\big):=\sqrt{\sfd_X^2(x_1,x_2)+\sfd_Y^2(y_1,y_2)},
\]
We say that a pair $({\bf d},\ggamma)$ is an admissible coupling
between $(X,\sfd_X,\mm_X),\,(Y,\sfd_Y,\mm_Y)$, and we write $({\bf
d},\ggamma)\in\adm{(\sfd_X,\mm_X)}{(\sfd_Y,\mm_Y)}$, if:
\begin{itemize}
\item[(a)] ${\bf d}$ is a pseudo distance on $X\sqcup Y$ (i.e. points at 0
${\bf d}$-distance are not necessarily equal) which coincides with
$\sfd_X$ (resp. $\sfd_Y$) when restricted to
$\supp\mm_X\times\supp\mm_X$ (resp. $\supp\mm_Y\times\supp\mm_Y$).
\item[(b)] $\ggamma$ is a Borel measure
on $X\times Y$ such that $\pi^X_\sharp\ggamma=\mm_X$ and
$\pi^Y_\sharp\ggamma=\mm_Y$.
\end{itemize}
\end{definition}
It is not hard to see that the set of admissible couplings is always
non empty. Notice that the restriction of $\bf d$ to $X\times Y$ is
Lipschitz continuous and therefore Borel (with respect to the
product topology), as a simple application of the triangle
inequality.

The \emph{cost} $C({\bf d},\ggamma)$ of a coupling is given by
\[
C({\bf d},\ggamma):=\int_{X\times Y} {\bf d}^2(x,y)\,\d\ggamma(x,y).
\]
In analogy with the definition of $W_2$, the distance
$\D\big((X,\sfd_X,\mm_X),(Y,\sfd_Y,\mm_Y)\big)$ is then defined as
\begin{equation}
\label{eq:defD}
\D^2\bigl((X,\sfd_X,\mm_X),(Y,\sfd_Y,\mm_Y)\bigr):=\inf C({\bf
d},\ggamma),
\end{equation}
the infimum being taken among all couplings $({\bf d},\ggamma)$ of
$(X,\sfd_X,\mm_X)$ and $(Y,\sfd_Y,\mm_Y)$.
Since one can use the isometries in \eqref{eq:isosturm} to transfer
couplings between two spaces to couplings between isomorphic spaces,
a trivial consequence of the definition is that $\D$ actually
depends only on the isomorphism class. In the next proposition we
collect the main properties of $\D$, see
\cite[Section~3.1]{Sturm06I}.

\begin{proposition}[Properties of $\D$]
The infimum in \eqref{eq:defD} is attained and a minimizing coupling
will be called optimal. Also, $\D$ is a distance on $\X$, and in
particular $\D$ vanishes only on pairs of isomorphic metric measure
spaces.\\
Finally, $(\X,\D)$ is a complete, separable and length metric space.
\end{proposition}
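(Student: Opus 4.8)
The statement collects several independent assertions about $\D$, and the plan is to treat them in turn, reducing each to a property of the Wasserstein distance $W_2$ on the ``disjoint union'' space.

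\emph{Existence of optimal couplings.} Fix two spaces $(X,\sfd_X,\mm_X)$ and $(Y,\sfd_Y,\mm_Y)$ and take a minimizing sequence $(\bd_n,\ggamma_n)$. First I would bound the cost: since the admissible couplings form a nonempty set, $\D$ is finite, and along a minimizing sequence the quantities $\int\bd_n^2\,\d\ggamma_n$ are bounded, which forces a uniform tightness estimate on $(\ggamma_n)$ (via Markov's inequality and the fact that the marginals $\mm_X,\mm_Y$ are themselves tight with finite second moment). After passing to a subsequence, $\ggamma_n\weakto\ggamma$ narrowly, with the correct marginals. For the pseudo-distances, the key observation is that each $\bd_n$ restricted to $X\times Y$ is $1$-Lipschitz (by the triangle inequality, using that $\bd_n$ agrees with the fixed $\sfd_X,\sfd_Y$ on the two factors), so the $\bd_n$ are equi-Lipschitz and equibounded on $\supp\ggamma_n$; an Ascoli–Arzel\`a argument on an exhausting sequence of compacts (given by tightness) extracts a limit pseudo-distance $\bd$, which one checks is still an admissible pseudo-distance. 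Lower semicontinuity of $(\bd,\ggamma)\mapsto\int\bd^2\,\d\ggamma$ under joint narrow/local-uniform convergence then gives $C(\bd,\ggamma)\le\liminf C(\bd_n,\ggamma_n)=\D^2$, so $(\bd,\ggamma)$ is optimal. I expect the main obstacle to be this compactness step for the pseudo-distances: one must be careful that the off-diagonal behavior of $\bd_n$ does not escape to infinity, and this is exactly where the finite-second-moment hypothesis on $\mm_X,\mm_Y$ is used.

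\emph{$\D$ is a distance.} Symmetry and $\D\big((X,\sfd_X,\mm_X),(X,\sfd_X,\mm_X)\big)=0$ are immediate from the definition (use the diagonal coupling and the distance $\sfd_X$ on $X\sqcup X$). The triangle inequality is the standard ``gluing'' argument: given optimal couplings between $(X_1)$ and $(X_2)$ and between $(X_2)$ and $(X_3)$, one glues the measures $\ggamma_{12}$ and $\ggamma_{23}$ over their common marginal $\mm_{X_2}$ to get a measure on $X_1\times X_2\times X_3$, and one combines the pseudo-distances on $X_1\sqcup X_2$ and $X_2\sqcup X_3$ into a pseudo-distance on $X_1\sqcup X_2\sqcup X_3$ (taking infima of sums of consecutive distances through $X_2$); projecting to $X_1\times X_3$ and applying Minkowski's inequality in $L^2(\ggamma)$ yields $\D_{13}\le\D_{12}+\D_{23}$. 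The only genuinely non-trivial point is that $\D$ vanishes \emph{only} on isomorphic pairs: if $\D=0$, an optimal coupling $(\bd,\ggamma)$ has $\int\bd^2\,\d\ggamma=0$, so $\ggamma$ is concentrated on $\{\bd=0\}$; one then shows this forces $\ggamma$ to be concentrated on the graph of a bijective isometry $f:\supp\mm_X\to\supp\mm_Y$ with $f_\sharp\mm_X=\mm_Y$. Here I would argue that for $\mm_X$-a.e.\ $x$ the conditional measure $\ggamma_x$ is a Dirac mass (otherwise the $\bd$-diameter of $\supp\ggamma_x$, measured by $\sfd_Y$, would be positive, contradicting $\bd=0$ $\ggamma$-a.e.\ together with compatibility of $\bd$ with $\sfd_Y$), define $f$ by $\ggamma=(\Id,f)_\sharp\mm_X$, and use the $\bd=0$ condition to show $\sfd_X(x,x')=\sfd_Y(f(x),f(x'))$ on a full-measure set, then extend by continuity to $\supp\mm_X$.

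\emph{$(\X,\D)$ is complete, separable and length.} For separability, I would produce a countable dense set by approximating any $(X,\sfd,\mm)$ first by a finitely-supported measure (possible since $\mm$ has finite second moment, so truncation plus discretization is $W_2$-small, hence $\D$-small via the diagonal coupling), and then by one with rational atom weights and rational mutual distances; this gives a countable dense family. For completeness, given a $\D$-Cauchy sequence $(X_n,\sfd_n,\mm_n)$, pass to a fast-converging subsequence, use the optimal couplings between consecutive terms and an iterated gluing to realize all the $\mm_n$ as measures on a single complete separable space $(Z,\sfd_Z)$ (the completion of the union of all the $X_n$ under the glued pseudo-distance, quotiented by the zero-distance relation) with $W_2(\mm_n,\mm_{n+1})$ summable; then $\mm_n\to\mm$ in $W_2(Z)$ for some limit $\mm$, and $(\supp\mm,\sfd_Z,\mm)$ is the $\D$-limit. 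Finally, the length property: given two spaces $\X_0,\X_1$ with an optimal coupling $(\bd,\ggamma)$, I would interpolate by ``pushing'' along a suitable parametrization of $\bd$ — concretely, build intermediate spaces whose underlying set sits inside the $\bd$-completion of $X_0\sqcup X_1$ and whose measure is obtained by an optimal-transport geodesic between $\mm_0$ and $\mm_1$ inside that ambient metric space — and check that $\D(\X_s,\X_t)\le|t-s|\,\D(\X_0,\X_1)$ using the $W_2$-geodesic property combined with the triangle inequality for $\D$. The reference \cite[Section~3.1]{Sturm06I} supplies the details; the plan above is simply to retrace that argument, with the only subtle ingredient again being the compactness/gluing bookkeeping for pseudo-distances.
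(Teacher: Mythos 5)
First, a remark on the comparison: the paper does not actually prove this proposition --- it is quoted from Sturm with a pointer to \cite[Section~3.1]{Sturm06I} --- so your proposal can only be measured against that reference. Most of your outline correctly retraces Sturm's arguments: the compactness step for minimizing sequences (equi-Lipschitz continuity of ${\bf d}_n$ on $X\times Y$ from the triangle inequality, equiboundedness at one point from the cost bound together with tightness of the marginals, then Ascoli--Arzel\`a and lower semicontinuity of the cost), the gluing proof of the triangle inequality, the argument that $\D=0$ forces the disintegration $\ggamma_x$ to consist of Dirac masses and hence produces a measure-preserving bijective isometry of the supports, and the completeness and separability arguments (your gluing of a fast subsequence into a single ambient space is exactly the device the paper itself uses in the proposition that follows this one).

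The genuine gap is in the length property. You propose to produce the intermediate spaces by running a $W_2$-geodesic between $\mm_0$ and $\mm_1$ inside the ${\bf d}$-completion of $X_0\sqcup X_1$ for an optimal $({\bf d},\ggamma)$. But that ambient space is in general not a length space (completion adds no points between the two pieces), hence $\probt{X_0\sqcup X_1}$ with $W_2$ is not a length space either, and even \emph{approximate} midpoints of $\mm_0,\mm_1$ can fail to exist there. Concretely, let $X_0=\{p\}$ and $X_1=\{q_1,q_2\}$ with $\sfd_{X_1}(q_1,q_2)=2$ and masses $1/2$ each: the optimal ${\bf d}$ has ${\bf d}(p,q_1)={\bf d}(p,q_2)=1$ and $\D=1$, yet any $\mu=a\delta_p+b_1\delta_{q_1}+b_2\delta_{q_2}$ with $W_2^2(\delta_p,\mu)\le \tfrac14+\eps$ must have $a\ge\tfrac34-O(\eps)$, which forces $W_2^2(\mu,\mm_1)\ge a\ge\tfrac34-O(\eps)$; so no approximate $W_2$-midpoint exists, whereas $\X$ does contain a $\D$-midpoint, namely the two-point space at distance $1$ with masses $1/2$. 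The correct construction interpolates the \emph{metric}, not the measure: given an almost optimal $({\bf d},\ggamma)$, take as underlying set $X_0\times X_1$ with reference measure $\ggamma$ and metrics $\sfd_t\big((x_0,x_1),(y_0,y_1)\big):=(1-t)\sfd_{X_0}(x_0,y_0)+t\,\sfd_{X_1}(x_1,y_1)$; coupling the space at parameter $s$ to the one at parameter $t$ by the diagonal, with connecting pseudo-distance governed by $|s-t|\,{\bf d}$, gives $\D\le|s-t|\bigl(\D\big((X_0,\sfd_{X_0},\mm_0),(X_1,\sfd_{X_1},\mm_1)\big)+\eps\bigr)$ between the intermediate spaces, and hence the length property. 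With that step replaced, the rest of your proof stands.
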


Also, it can be shown \cite[Lemma~3.7]{Sturm06I} that $\D$ metrizes
the measured Gromov-Hausdorff convergence, when restricted to
compact metric spaces with controlled diameter. We also remark that,
in line with what happens with the Gromov-Hausdorff distance, a
$\D$-convergent sequence of metric measure spaces can be embedded
into a common metric space: in this case the possibility to work in
spaces where $\supp\mm$ is not equal to the whole space $X$ turns
out to be useful.
\begin{proposition}
Let $(X_n,\sfd_n,\mm_n)\in\X$, $n\in\N$, and $(X,\sfd,\mm)\in\X$.
Then the following two properties are equivalent.
\begin{itemize}
\item[(i)] $(X_n,\sfd_n,\mm_n)\stackrel{\D}\to(X,\sfd,\mm)$ as $n\to\infty$.
\item[(ii)] There exist a complete and separable metric
space $(Y,\sfd_Y)$ and isometries $f_n:\supp\mm_n\to Y$, $n\in\N$,
$f:\supp\mm\to Y$, such that $W_2((f_n)_\sharp\mm_n,f_\sharp\mm)\to
0$ as $n\to\infty$.
\end{itemize}
\end{proposition}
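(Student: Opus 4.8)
\smallskip
\noindent\emph{Proof plan.} The plan is to establish the two implications separately; the direction (i)$\Rightarrow$(ii) is the substantial one. Throughout, since $\D$ depends only on the isomorphism class, I would first replace $(X_n,\sfd_n,\mm_n)$ by $(\supp\mm_n,\sfd_n,\mm_n)$ and $(X,\sfd,\mm)$ by $(\supp\mm,\sfd,\mm)$, so that from now on $X_n=\supp\mm_n$ and $X=\supp\mm$.

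\textbf{(ii) $\Rightarrow$ (i).} Here $f_n$ and $f$ are bijective isometries onto their images in $Y$. Given an optimal plan $\ggamma_n\in\opt{(f_n)_\sharp\mm_n}{f_\sharp\mm}$ on $Y\times Y$ (automatically concentrated on $f_n(X_n)\times f(X)$), set $\bd_n(a,b):=\sfd_Y(f_n(a),f(b))$ for $a\in X_n$, $b\in X$, and let $\bd_n$ coincide with $\sfd_n$ on $X_n\times X_n$ and with $\sfd$ on $X\times X$; the triangle inequality in $Y$ makes $\bd_n$ a pseudodistance on $X_n\sqcup X$. Then $\widehat\ggamma_n:=(f_n^{-1}\circ\pi^1,f^{-1}\circ\pi^2)_\sharp\ggamma_n$ has marginals $\mm_n$ and $\mm$, so $(\bd_n,\widehat\ggamma_n)\in\adm{(\sfd_n,\mm_n)}{(\sfd,\mm)}$, and its cost equals $\int\sfd_Y^2\,\d\ggamma_n=W_2^2((f_n)_\sharp\mm_n,f_\sharp\mm)$. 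Hence $\D^2\bigl((X_n,\sfd_n,\mm_n),(X,\sfd,\mm)\bigr)\le W_2^2((f_n)_\sharp\mm_n,f_\sharp\mm)\to0$.

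\textbf{(i) $\Rightarrow$ (ii).} Pick optimal couplings $(\bd_n,\ggamma_n)$ with $C(\bd_n,\ggamma_n)\to0$. The idea is to glue all the spaces $X_n$ to the single space $X$, using the pseudodistances $\bd_n$ on $X_n\sqcup X$ as the gluing data. On $Z:=X\sqcup\bigsqcup_{n}X_n$ I would define $\bd$ by $\bd:=\sfd$ on $X\times X$, $\bd:=\sfd_n$ on $X_n\times X_n$, $\bd:=\bd_n$ on $(X_n\times X)\cup(X\times X_n)$, and, for $a\in X_n$ and $b\in X_m$ with $n\ne m$,
\[
\bd(a,b):=\inf_{c\in X}\bigl(\bd_n(a,c)+\bd_m(c,b)\bigr).
\]
A short case analysis — reducing each mixed triple, via the triangle inequality in the relevant $X_k\sqcup X$ and in $X$, to a pair already handled — shows that $\bd$ is a pseudodistance and that this piecewise definition is self-consistent (no ``shortcut'' through another block lowers $\bd$ between $X_n$ and $X$, so $\bd$ genuinely restricts to each $\bd_n$). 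Let $(Y,\sfd_Y)$ be the completion of the quotient metric space of $(Z,\bd)$; since $Z$ is a countable union of Polish spaces it is separable, so $Y$ is complete and separable. Because $\bd$ restricts to the honest distances $\sfd_n$ and $\sfd$ on the individual blocks, the canonical maps give isometric embeddings $f_n:X_n\to Y$ and $f:X\to Y$. Finiteness of the second moments of $\mm_n$ and $\mm$ transfers to $(f_n)_\sharp\mm_n,f_\sharp\mm\in\probt Y$, and $(f_n,f)_\sharp\ggamma_n$ couples them with $\int\sfd_Y^2\,\d\bigl((f_n,f)_\sharp\ggamma_n\bigr)=\int\bd_n^2\,\d\ggamma_n=C(\bd_n,\ggamma_n)$. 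Therefore $W_2((f_n)_\sharp\mm_n,f_\sharp\mm)\le\sqrt{C(\bd_n,\ggamma_n)}\to0$.

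The only genuinely delicate point is the gluing construction in (i)$\Rightarrow$(ii): one must fit \emph{infinitely many} pseudodistances $\bd_n$, each defined between a block $X_n$ and the common hub $X$, into a single pseudodistance on $Z$, and check both that the resulting $\bd$ still restricts to each $\bd_n$ and that it satisfies the triangle inequality. Everything else — the reduction to $\supp\mm_n$, finite variance, separability and completeness of $Y$, and the passage from Sturm-type couplings to costs of Wasserstein couplings — is routine.
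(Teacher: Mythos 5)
Your proof is correct and follows essentially the same route as the paper: for (i)$\Rightarrow$(ii) the paper performs exactly the same gluing, defining the pseudodistance on $\bigl(\bigsqcup_n\supp\mm_n\bigr)\sqcup\supp\mm$ by $\bd_n$ on each block $\supp\mm_n\sqcup\supp\mm$ and by $\inf_{c\in X}\bigl(\bd_n(a,c)+\bd_m(c,b)\bigr)$ between distinct blocks, then passing to the quotient and its completion. The paper dismisses (ii)$\Rightarrow$(i) as ``straightforward''; your explicit coupling argument for that direction is a correct way to fill it in.
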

\begin{proof} $(i)\Rightarrow (ii)$. Let $({\bf d}_n,\ggamma_n)$ be optimal
couplings for $(X,\sfd,\mm)$, $(X_n,\sfd_n,\mm_n)$, $n\in\N$. Define
$Y:=\Big(\bigsqcup_i\supp\mm_i\Big)\sqcup \supp\mm$ and the
pseudo distance $\sfd_Y$ on $Y$ by
\[
\sfd_Y(x,x'):=\left\{\begin{array}{ll}
{\bf d}_n(x,x') ,&\qquad\textrm{ if }x,x'\in \supp\mm\sqcup \supp\mm_n,\\
\inf\limits_{x''\in X} {\bf d}_n(x,x'')+{\bf d}_m(x'',x')&
\qquad\textrm{ if }x\in \supp\mm_n,\ x'\in \supp\mm_m.
\end{array}
\right.
\]
By construction the quotient metric space $(Y,\sfd_Y)$ induced by
the equivalence relation $x\sim y\Leftrightarrow \sfd_Y(x,y)=0$ is
separable. Possibly replacing it by its abstract completion we can
also assume that it is complete. Denoting by $f_n,\,f$ the isometric
embeddings of $X_n,\,X$ into $Y$,
\[
\D\big((X,\sfd,\mm),(X_n,\sfd_n,\mm_n)\big)= \sqrt{\int_{X\sqcup
X_n}{\bf d}_n^2(x,y)\,\d\ggamma_n(x,y)}\geq
W_2((f_n)_\sharp\mm_n,f_\sharp\mm),
\]
so the conclusion follows.\\* $(ii)\Rightarrow (i)$.
Straightforward.
\end{proof}

\subsection{Calculus and heat flow in metric measure
spaces}\label{sub4}

\subsubsection{Upper gradients}

Recall that the slope of a Lipschitz function $\psi$ is an upper
gradient, namely $|\psi(\gamma_1)-\psi(\gamma_0)|$ can be bounded
from above by $\int_\gamma|\nabla\psi|$ for any absolutely
continuous curve $\gamma:[0,1]\to X$.
\begin{lemma}
  \label{le:sug} Let $(X,\sfd,\mm)\in\X$ and $\psi:X\to\R$ Lipschitz. For all
  $\mu_t\in \AC2{[0,1]}{\probt{X}}$ it holds
  \begin{equation}
      \label{eq:3}
        \left|\int_X\psi\,\d\mu_1-\int_X\psi\,\d\mu_0\right|\le
        \int_0^1\Big(\int_X
        |\nabla\psi|^2\,\d\mu_t\Big)^{1/2}|\dot\mu_t|\,\dt.
    \end{equation}
  \end{lemma}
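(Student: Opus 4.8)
The plan is to reduce the statement to a $1$-dimensional estimate by first treating the case of curves with uniformly bounded velocity, then passing to the general case by reparametrization. First I would observe that the inequality is invariant under reparametrization of the curve $(\mu_t)$ (both sides scale the same way under a change of variable $t = t(s)$), so it suffices to prove it, say, for curves parametrized by a constant multiple of arc length, or more conveniently to prove the local statement: for a.e.\ $t$,
\[
\limsup_{h\to 0}\frac{1}{|h|}\left|\int_X\psi\,\d\mu_{t+h}-\int_X\psi\,\d\mu_t\right|\le \Big(\int_X|\nabla\psi|^2\,\d\mu_t\Big)^{1/2}|\dot\mu_t|,
\]
and then integrate in $t$ over $[0,1]$.

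The key step is to estimate $\int_X\psi\,\d\mu_{t+h}-\int_X\psi\,\d\mu_t$ using an optimal plan. Fix $t$ and $h>0$ and let $\ggamma_h\in\opt{\mu_t}{\mu_{t+h}}$ be an optimal plan. Since $\psi$ is Lipschitz and $|\psi(x)-\psi(y)|\le \int_0^1 |\nabla\psi|(\sigma_r)\,|\dot\sigma_r|\,\d r$ along any geodesic (or nearly-geodesic) $\sigma$ from $x$ to $y$ — here I would use that the slope of a Lipschitz function is an upper gradient, as recalled just before the statement — one might hope to bound $|\int\psi\,\d\mu_{t+h}-\int\psi\,\d\mu_t|$ directly; but this requires controlling $|\nabla\psi|$ along connecting curves, which is delicate. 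The cleaner route is: bound crudely $|\psi(x)-\psi(y)|\le \Lip(\psi)\,\sfd(x,y)$ to get absolute continuity of $t\mapsto\int\psi\,\d\mu_t$ (with $\int\psi\,\d\mu_t$ having an a.e.\ derivative, since $t\mapsto\mu_t$ is $W_2$-absolutely continuous), and then for the sharp bound at a point of differentiability $t$ work with a curve $\ppi\in\gopt(\mu_t,\mu_{t+h})$ and write
\[
\int_X\psi\,\d\mu_{t+h}-\int_X\psi\,\d\mu_t=\int\bigl(\psi(\gamma_1)-\psi(\gamma_0)\bigr)\,\d\ppi(\gamma)\le\int\int_0^1|\nabla\psi|(\gamma_r)\,|\dot\gamma_r|\,\d r\,\d\ppi(\gamma),
\]
using the upper gradient property along each geodesic $\gamma$. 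Then apply Cauchy–Schwarz in $(r,\gamma)$ against the measure $\d r\otimes\d\ppi$:
\[
\int\int_0^1|\nabla\psi|(\gamma_r)\,|\dot\gamma_r|\,\d r\,\d\ppi\le\Big(\int\int_0^1|\nabla\psi|^2(\gamma_r)\,\d r\,\d\ppi\Big)^{1/2}\Big(\int\int_0^1|\dot\gamma_r|^2\,\d r\,\d\ppi\Big)^{1/2}.
\]
The second factor equals $W_2(\mu_t,\mu_{t+h})$ by \eqref{eq:w2din}. For the first factor, note $(\e_r)_\sharp\ppi$ is a $W_2$-continuous curve from $\mu_t$ to $\mu_{t+h}$, so as $h\to 0$ it concentrates near $\mu_t$; using continuity of $\psi$ (hence of $x\mapsto|\nabla\psi|$ in an integrated sense — more precisely using that $|\nabla\psi|$ is bounded and that $(\e_r)_\sharp\ppi\to\mu_t$ suitably) one shows $\limsup_{h\to0}\frac1{|h|^?}\cdots$; after dividing by $h$ and letting $h\downarrow 0$, the first factor tends to $(\int_X|\nabla\psi|^2\,\d\mu_t)^{1/2}$ and $W_2(\mu_t,\mu_{t+h})/h\to|\dot\mu_t|$, giving the pointwise bound, which integrates to \eqref{eq:3}.

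The main obstacle I anticipate is the convergence of the first factor $\int\int_0^1|\nabla\psi|^2(\gamma_r)\,\d r\,\d\ppi$ to $\int_X|\nabla\psi|^2\,\d\mu_t$ as $h\to 0$: the function $|\nabla\psi|^2$ is only upper semicontinuous (and bounded), not continuous, so one must be careful with weak convergence of the measures $(\e_r)_\sharp\ppi$ to $\mu_t$, uniformly in $r\in[0,1]$. This can be handled by a uniform equi-continuity estimate $W_2((\e_r)_\sharp\ppi,\mu_t)\le r\,W_2(\mu_t,\mu_{t+h})\le r\,C|h|^{1/2}$ (say, from $|\dot\mu|\in L^2$) together with upper semicontinuity of $\mu\mapsto\int|\nabla\psi|^2\,\d\mu$ under $W_2$-convergence, which gives a one-sided bound that is exactly what is needed (we only need the $\limsup$ of the first factor to be $\le(\int|\nabla\psi|^2\,\d\mu_t)^{1/2}$ after the limit). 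Alternatively, one avoids this point entirely by integrating the pointwise-in-$t$ inequality against a slightly enlarged integrand and using dominated convergence in $t$; I would first try the upper-semicontinuity argument as it is the most transparent.
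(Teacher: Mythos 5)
Your strategy is genuinely different from the paper's, and as written it has a gap that I don't see how to close. The paper does not differentiate in $t$ at all: it invokes Lisini's superposition theorem to produce a single plan $\ppi\in\prob{\CC{[0,1]}X}$ concentrated on $\AC2{[0,1]}X$ with $(\e_t)_\sharp\ppi=\mu_t$ for \emph{every} $t$ and $\int|\dot\gamma_t|^2\,\d\ppi=|\dot\mu_t|^2$ for a.e.\ $t$; then the upper gradient property along each $\gamma$, Fubini, and Cauchy--Schwarz in $\gamma$ at each fixed $t$ give \eqref{eq:3} directly, with no limit $h\downarrow 0$ and no convergence of measures to justify. Because the superposition reproduces $\mu_t$ exactly at every time, the first Cauchy--Schwarz factor is \emph{identically} $\int|\nabla\psi|^2\,\d\mu_t$, which is precisely the point where your argument runs into trouble.

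The gap in your route is the claimed upper semicontinuity of $|\nabla\psi|$. The slope $|\nabla\psi|(y)=\limsup_{z\to y}|\psi(y)-\psi(z)|/\sfd(y,z)$ of a Lipschitz function is \emph{not} upper semicontinuous in general: already on $\R$, for $\psi(x)=x^2\sin(1/x)$ (extended by $\psi(0)=0$, Lipschitz near $0$) one has $|\nabla\psi|(0)=0$ while $|\nabla\psi|(1/(n\pi))=1$, so $|\nabla\psi|$ jumps down at $0$. Consequently the one-sided bound $\limsup_h\int\int_0^1|\nabla\psi|^2(\gamma_r)\,\d r\,\d\ppi_h\le\int|\nabla\psi|^2\,\d\mu_t$ does not follow from $W_2$-convergence of the interpolants $(\e_r)_\sharp\ppi_h$ to $\mu_t$, and these interpolants are not the measures $\mu_s$ of the given curve, so nothing else controls them. (Replacing $|\nabla\psi|$ by the asymptotic Lipschitz constant would restore upper semicontinuity but would prove a weaker inequality than \eqref{eq:3}.) A secondary issue is that you take $\ppi\in\gopt(\mu_t,\mu_{t+h})$: the lemma is stated for arbitrary $(X,\sfd,\mm)\in\X$, where $(\probt X,W_2)$ need not be geodesic, so such plans need not exist; the connecting curves must come from the given absolutely continuous curve itself, which is exactly what Lisini's theorem provides. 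I would redirect your proof to that superposition argument.
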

  \begin{proof}
    %Taking Lemma~\ref{le:approxGiuseppe} into account we can assume
    %with no loss of generality that $\mu_t\leq C\mm$ for all $t\in
    %[0,1]$, for some constant $C$.
    Applying \cite{Lisini07}, we can find a probability
    measure $\ppi$ in $\CC{[0,1]}X$ concentrated on
    $\AC2{[0,1]}X =0$ and satisfying
    \begin{equation}
      \label{eq:4}
      \mu_t=(\e_t)_\sharp\ppi\quad\text{for all $t\in [0,1]$},\qquad
      |\dot \mu_t|^2=\int |\dot\gamma_t|^2\,\d\ppi(\gamma)\quad
      \text{for a.e.\ }t\in (0,1).
    \end{equation}
    %Since $\mu_t\leq C\mm$ we obtain that $\ppi$ is a test plan,
    %hence $\psi$ is Sobolev along $\ppi$-a.e.~curve.
    By the upper gradient property of $|\nabla\psi|$ we get
    \begin{eqnarray*}
    \biggl|\int_X\psi\,\d\mu_1-\int_X\psi\,\d\mu_0\biggr|
    &=&
    \biggl|\int(\psi\circ\e_1-\psi\circ\e_0))\,\d\ppi\biggr|
    \le\int
    \Big(\int_0^1|\nabla\psi|(\gamma_t)|\dot\gamma_t|\,\d t\Big)\,\d\ppi(\gamma)
    \\&=&
    \int_0^1\Big(\int|\nabla\psi|(\gamma_t)|\dot\gamma_t|\,\d\ppi(\gamma)\Big) \,\d t
    \\&\le&
    \int_0^1\Big(\int|\nabla\psi|^2(\gamma_t)\,\d\ppi(\gamma)\Big)^{1/2}
    \Big(\int|\dot\gamma_t|^2\,\d\ppi(\gamma)\Big)^{1/2}\,\d t\\
    &=&\int_0^1 \Big(\int|\nabla\psi|^2\,\d\mu_t\Big)^{1/2}
    |\dot\mu_t|\,\d t.
  \end{eqnarray*}
  \end{proof}

\subsubsection{Weak upper gradients and gradient flow of Cheeger's energy}

Here we recall the definition and basic properties of weak upper
gradients of real functions in the metric measure space
$(X,\sfd,\mm)$. All the concepts and statements that we consider
here have been introduced and proven in
\cite{Ambrosio-Gigli-Savare11}, see \S~5. In particular, here  we
shall consider measures concentrated in $\AC2{[0,1]}X$ (see
\S~\ref{sub1}).

\begin{definition}[Test plans and negligible collection of
  curves]
\label{def:testplan}
We say that\\
 $\ppi\in \prob{\AC2{[0,1]}X}$ is a test plan
 with bounded compression if there exists $C=C(\ppi)>0$ such
that
\[
(\e_t)_\sharp\ppi\leq C\,\mm\qquad\text{for every } t\in[0,1].
\]
We will denote by $\calT$ the collection of all the test plans with
bounded compression. We say that a Borel set $A\subset \AC2{[0,1]}X
$ is $\calT$-\emph{negligible} if $\ppi(A)=0$ for any test plan
$\ppi\in\calT$.
\end{definition}
Since we will always deal with test plans in $\calT$, we will often
omit to mention exsplicitly $\calT$ and the words ``bounded
compression'', and we will refer to them simply as \emph{test
plans.}

A property which holds for every curve of $\AC2{[0,1]}X $, except
possibly for a subset of a negligible set, is said to hold for
almost every curve.

\begin{definition}[Functions which are Sobolev along almost all curves]
We say that $f:X\to\overline{\R}$ is Sobolev along almost all curves
if, for a.e.~curve $\gamma$, $f\circ\gamma$ coincides a.e.~in
$[0,1]$ and in $\{0,1\}$ with an absolutely continuous map
$f_\gamma:[0,1]\to\R$.
\end{definition}

Notice that the choice of the trivial test plan $\ppi:=\iota_\sharp
\mm$, where $\iota:X\to \AC 2{[0,1]}X$ maps any point $x\in X$ to
the constant curve $\gamma\equiv x$, yields that any Sobolev
function along almost all curves is finite $\mm$-a.e.~in $X$. In
this class of functions we can define the notion of weak upper
gradient and of minimal weak upper gradient.

\begin{definition}[Weak upper gradients]\label{def:weak_upper_gradient}
Given $f:X\to\overline{\R}$ Sobolev along a.e.~curve, a
$\mm$-measurable function $G:X\to[0,\infty]$ is a weak upper
gradient of $f$ if
\begin{equation}
\label{eq:inweak} \biggl|\int_{\partial\gamma}f\biggr|\leq
\int_\gamma G\qquad\text{for a.e.~curve }\gamma.
\end{equation}
Here and in the following, we write $\int_{\partial\gamma}f$ for
$f(\gamma_1)-f(\gamma_0)$ and $\int_\gamma G$ for $\int_0^1
G(\gamma_t)|\dot\gamma_t|\d t$.
\end{definition}

It turns out (see \cite[Proposition~5.7,
Definition~5.9]{Ambrosio-Gigli-Savare11}) that if and $G_1,\,G_2$
are weak upper gradients of $f$, then so is $\min\{G_1,G_2\}$. It
follows that there exists a $\mm$-measurable function $\weakgrad
f:X\to[0,\infty]$ weak upper gradient having the property that
\[
\weakgrad f\leq G\qquad\text{$\mm$-a.e.~in $X$}
\]
for any other weak upper gradient $G$. Because of this $\mm$-a.e.
minimality property, the function $\weakgrad f$ will be called the
\emph{minimal weak upper gradient} of $f$. Also, the property of
being Sobolev along a.e.~curve and the minimal weak upper gradient
are invariant under modifications of $f$ in $\mm$-negligible sets
(\cite[Proposition~5.8]{Ambrosio-Gigli-Savare11}). In addition, the
minimal weak gradient is local in the following sense: if both
$f,\,g$ are Sobolev along a.e.~curve then it holds
\begin{equation}
\label{eq:weaklocal} \weakgrad f=\weakgrad g\qquad\text{$\mm$-a.e.
on the set $\{f=g\}$.}
\end{equation}

Other useful and natural properties are: the restriction inequality
\cite[Remark~5.6]{Ambrosio-Gigli-Savare11}
\begin{equation}
\label{eq:restrizione}
|f(\gamma_s)-f(\gamma_s)|\leq\int_t^s\weakgrad
f(\gamma_r)|\dot\gamma_r|\,dr\quad\text{for a.e.\ $\gamma$, for all
$[s,t]\subset [0,1]$}
\end{equation}
the chain rule \cite[Proposition~5.14(b)]{Ambrosio-Gigli-Savare11})
\begin{equation}
\label{eq:chainrule}
\begin{aligned}
\weakgrad{(\phi\circ f)}&=\phi'\circ f\weakgrad
f&\quad&\text{$\mm$-a.e.~in $X$, if $\phi$ is Lipschitz and nondecreasing,}\\
\weakgrad{(\phi\circ f)}&\leq |\phi'\circ f|\weakgrad
f&\quad&\text{$\mm$-a.e.~in $X$, if $\phi$ is Lipschitz,}
\end{aligned}
\end{equation}
and the weak Leibnitz rule
\begin{equation} \label{eq:weakleib}
\weakgrad{(fg)}\leq|f|\weakgrad g+|g|\weakgrad
f\qquad\text{$\mm$-a.e.~in $X$.}
\end{equation}

The \emph{Cheeger energy} is the functional defined in the class of
Borel functions $f:X\to\overline{\R}$ by
\[
\C(f):=\left\{
\begin{array}{ll}
\displaystyle{\frac12\int\weakgrad f^2\,\d\mm}&\qquad\text{if }f\text{ is Sobolev along a.e.~curve},\\
&\\
+\infty&\qquad\text{otherwise}.
\end{array}
\right.
\]
Using the stability properties of weak upper gradients under weak
convergence (\cite[Theorem~5.12]{Ambrosio-Gigli-Savare11}) it can be
proved that $\C$ is convex and lower semicontinuous w.r.t.
convergence in $\mm$-measure (in particular w.r.t. $\mm$-a.e.
convergence). For the domain of $\C$ in $L^2(X,\mm)$ we shall also
use the traditional notation $W^{1,2}(X,\sfd,\mm)$, see \cite[Remark
4.7]{Ambrosio-Gigli-Savare11}: it is a Banach space when endowed
with the norm $\|f\|_{W^{1,2}}^2:=\|f\|_2^2+2\C(f)$. A nontrivial
approximation theorem (see
\cite[Theorem~6.2]{Ambrosio-Gigli-Savare11}) shows that
\begin{equation}\label{eq:nontrivialatall}
\C(f)=\frac 12\inf\left\{\liminf_{h\to\infty}\int |\nabla
f_h|^2\,\d\mm:\ f_h\in {\rm Lip}(X),\,\,\|f_h-f\|_2\to
0\right\}\qquad\forall f\in L^2(X,\mm),
\end{equation}
where $|\nabla f|$ is the local Lipschitz constant of $f$ defined in
\eqref{eq:loclip}.

Given $f\in W^{1,2}(X,\sfd,\mm)$, we write $\partial^-\C(f)\subset
L^2(X,\mm)$ for the subdifferential at $f$ of the restriction to
$L^2(X,\mm)$ of Cheeger's energy, namely $\xi\in\partial^-\C(f)$ iff
$$
\C(g)\geq\C(f)+\int_X\xi(g-f)\,\d\mm\qquad\forall g\in L^2(X,\mm).
$$
We say that $f\in L^2(X,\mm)$ is in the domain of the
$(\sfd,\mm)$-Laplacian, and write $f\in D(\Deltam)$ (in
\cite{Ambrosio-Gigli-Savare11} we used the notation
$\Delta_{\sfd,\mm}$ to emphasize the dependence on the metric
measure structure), if $\partial^-\C(f)\neq\emptyset$. In this case
we define $\Deltam f\in L^2(X,\mm)$ by $\Deltam f:=-v$, where $v$ is
the element of minimal $L^2(X,\mm)$ norm in $\partial^-\C(f)$.

We remark that in this generality $\C$ is not necessarily a
quadratic form, which is the same as to say that its restriction to
$L^2(X,\mm)$ is not a Dirichlet form. This means that the Laplacian,
though 1-homogeneous \cite[Remark~4.14]{Ambrosio-Gigli-Savare11}, is
not necessarily linear.

For the Laplacian we just defined the following rough integration by
parts formula holds:
\begin{equation}
\label{eq:partilapl} \left|\int g\Deltam
f\,\d\mm\right|\leq\int\weakgrad g\weakgrad f\,\d\mm,
\end{equation}
for all $f,\,g\in L^2(X,\mm)$ with  $f\in D(\Deltam)$ and $g\in
D(\C)$, see \cite[Proposition~4.15]{Ambrosio-Gigli-Savare11}.

The following result is a consequence of the by now classical theory
of gradient flows of convex lower semicontinuous functionals on
Hilbert spaces.

\begin{theorem}[Gradient flow of $\C$ in $L^2(X,\mm)$]\label{thm:gfc}
For all $f\in L^2(X,\mm)$ there exists a unique locally absolutely
continuous curve $(0,\infty)\ni t\mapsto f_t\in L^2(X,\mm)$ such
that $f_t\to f$ in $L^2(X,\mm)$ as $t\downarrow 0$ and
\[
\frac{\d}{\dt}f_t\in -\partial^-\C(f_t)\qquad\text{for a.e.~$t>0$,}
\]
the derivative being understood in $L^2(X,\mm)$. This curve is also
locally Lipschitz, it satisfies $f_t\in D(\Deltam)$ for any $t>0$
and
\[
\frac{\d^+}{\dt}f_t=\Deltam f_t\qquad\forall t>0.
\]
Finally, $t\mapsto \C(f_t)$ is locally Lipschitz in $(0,\infty)$,
infinitesimal at $\infty$ and, if $f\in D(\C)$, continuous in $0$.
Its right derivative is given by $-\|\Deltam f_t\|^2_2$ for every
$t>0$.
\end{theorem}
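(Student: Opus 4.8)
The plan is to recognize this statement as a direct instance of the classical Br\'ezis theory of gradient flows for convex and lower semicontinuous functionals on a Hilbert space \cite{Brezis73}, applied to $\C$ regarded as a functional on the Hilbert space $H:=L^2(X,\mm)$, and then to read off each of the assertions from that general theory once its hypotheses have been checked. The only real work is the verification of the hypotheses and the translation of the abstract conclusions into the language of $\Deltam$ and $\C$.

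First I would verify that the restriction of $\C$ to $H$ is proper, convex, lower semicontinuous, and has dense domain. Convexity and lower semicontinuity of $\C$ with respect to convergence in $\mm$-measure have already been recorded above; since $\mm$ is finite, convergence in $L^2(X,\mm)$ implies convergence in $\mm$-measure (Chebyshev's inequality), so $\C$ is lower semicontinuous on $H$. For properness and density of $D(\C)$, note that every bounded Lipschitz $\psi:X\to\R$ satisfies $\C(\psi)\le\tfrac12\Lip(\psi)^2\,\mm(X)<\infty$, because $|\nabla\psi|$ is a bounded weak upper gradient of $\psi$, while bounded Lipschitz functions are dense in $L^2(X,\mm)$; hence $\overline{D(\C)}=H$.

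Next I would invoke the general theory. Since $\partial^-\C$ is a maximal monotone operator on $H$ with $\overline{D(\partial^-\C)}=\overline{D(\C)}=H$, for every $f\in H$ there is a unique locally absolutely continuous curve $(0,\infty)\ni t\mapsto f_t\in H$ with $f_t\to f$ in $H$ as $t\downarrow0$ and $\frac{\d}{\dt}f_t\in-\partial^-\C(f_t)$ for a.e.\ $t>0$; moreover this curve is locally Lipschitz on $(0,\infty)$ and, if $f\in D(\C)$, Lipschitz on $[0,\infty)$; for every $t>0$ one has $f_t\in D(\partial^-\C)$, and the right derivative $\frac{\d^+}{\dt}f_t$ exists and equals $-v_t$, where $v_t$ is the element of minimal norm in $\partial^-\C(f_t)$, which by the very definition of $\Deltam$ is $\Deltam f_t$. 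Thus $f_t\in D(\Deltam)$ and $\frac{\d^+}{\dt}f_t=\Deltam f_t$ for every $t>0$. For the behaviour of the energy I would combine: the dissipation identity $\frac{\d}{\dt}\C(f_t)=-\|\frac{\d}{\dt}f_t\|_2^2=-\|\Deltam f_t\|_2^2$ for a.e.\ $t>0$; local Lipschitz continuity of $t\mapsto\C(f_t)$ on $(0,\infty)$ (Lipschitz on $[0,\infty)$ if $f\in D(\C)$); and the classical monotonicity and right-continuity of $t\mapsto\|\Deltam f_t\|_2$, which upgrade the a.e.\ identity to the statement that the right derivative of $\C(f_t)$ equals $-\|\Deltam f_t\|_2^2$ at \emph{every} $t>0$. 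Infinitesimality of $\C(f_t)$ at $\infty$ follows from the a priori bound $\C(f_t)\le\C(g)+\frac{\|f-g\|_2^2}{2t}$, valid for every $g\in H$, together with $\inf_H\C=0$ (attained on constant functions); right-continuity at $0$ when $f\in D(\C)$ follows from $t\mapsto\C(f_t)$ being non-increasing (whence $\limsup_{t\downarrow0}\C(f_t)\le\C(f)$) combined with lower semicontinuity along $f_t\to f$ in $H$ (whence $\liminf_{t\downarrow0}\C(f_t)\ge\C(f)$).

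There is no deep obstacle here, the content being entirely classical; the points requiring genuine care are purely the preliminary bookkeeping---checking that $D(\C)$ is dense in $L^2(X,\mm)$, so that the flow is defined for \emph{every} $f\in L^2$ and not merely on $\overline{D(\C)}$; that lower semicontinuity transfers from convergence in $\mm$-measure to convergence in $L^2$; and that the minimal-norm selection provided by the abstract theory coincides with $\Deltam$ as defined above. Once these are in place, existence, uniqueness, the regularizing effect, and all the statements about $t\mapsto\C(f_t)$ are quotations from \cite{Brezis73}.
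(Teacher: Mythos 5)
Your proposal is correct and follows exactly the route the paper takes: the paper gives no proof of this theorem, introducing it only with the remark that it "is a consequence of the by now classical theory of gradient flows of convex lower semicontinuous functionals on Hilbert spaces" (i.e.\ \cite{Brezis73}), which is precisely the theory you invoke. Your additional bookkeeping (density of $D(\C)$ via bounded Lipschitz functions, transfer of lower semicontinuity from convergence in measure to $L^2$, identification of the minimal selection with $\Deltam$) is sound and simply makes explicit what the paper leaves to the reader.
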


Finally, we recall a property of the minimal weak gradient of
Kantorovich potentials \cite[Lemma~10.1]{Ambrosio-Gigli-Savare11}:
\begin{proposition}\label{prop:potkant}
Let $(X,\sfd,\mm)\in\X$, $\mu,\,\nu\in\probt X$ with $\mu\geq c\mm$
for some $c>0$ and let $\varphi$ be a Kantorovich potential relative
to $(\mu,\nu)$. Then $\varphi$ is finite and absolutely continuous
(in particular, Sobolev) along a.e.~curve and
\[
\weakgrad\varphi\leq|\nabla^+\varphi|\qquad\text{$\mm$-a.e.~in $X$.}
\]
\end{proposition}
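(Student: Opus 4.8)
The plan is to prove the two assertions separately, reducing the second one to the single statement that $|\nabla^+\varphi|$ is a weak upper gradient of $\varphi$; granting this, $\varphi$ is automatically Sobolev along a.e.\ curve and, by the $\mm$-a.e.\ minimality of $\weakgrad\varphi$ among weak upper gradients, $\weakgrad\varphi\le|\nabla^+\varphi|$ $\mm$-a.e. Finiteness is immediate: as a Kantorovich potential $\varphi$ lies in $L^1(X,\mu)$, and $\mu\ge c\mm$ forces $\varphi\in L^1(X,\mm)$, so $\varphi$ is finite $\mm$-a.e.; for a test plan $\ppi\in\calT$ with $(\e_t)_\sharp\ppi\le C\mm$ the bounds $\int\!\int_0^1|\varphi(\gamma_t)|\,\d t\,\d\ppi\le C\|\varphi\|_{L^1(\mm)}$ and $\int|\varphi(\gamma_0)|\,\d\ppi,\ \int|\varphi(\gamma_1)|\,\d\ppi\le C\|\varphi\|_{L^1(\mm)}$ make $\varphi$ finite along $\ppi$-a.e.\ curve.

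To prove the weak upper gradient property I would fix a test plan $\ppi\in\calT$, write $\varphi=\psi^c$ with $\psi:=\varphi^c$, and fix $\ggamma\in\opt\mu\nu$ with $\ggamma(\Gamma)=1$ for a closed $c$-cyclically monotone $\Gamma$ on which $\varphi(x)+\psi(y)=\tfrac12\sfd^2(x,y)$, recalling $\int\sfd^2\,\d\ggamma=W_2^2(\mu,\nu)<\infty$. Here the hypothesis $\mu\ge c\mm$ is indispensable: since $(\e_r)_\sharp\ppi\le C\mm\le(C/c)\mu$, re-weighting $\ggamma$ by $\tfrac{\d(\e_r)_\sharp\ppi}{\d\mu}\circ\pi^1$ and disintegrating $\ggamma=\int\ggamma_x\,\d\mu(x)$ along its first marginal yields $\int\!\big(\int_0^1\!\int\sfd^2(\gamma_r,y)\,\d\ggamma_{\gamma_r}(y)\,\d r\big)\,\d\ppi\le(C/c)\,W_2^2(\mu,\nu)<\infty$. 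Consequently, for $\ppi$-a.e.\ $\gamma$ one can select (Borel in $r$, by a measurable-selection theorem) a $\Gamma$-partner $y_r$ of $\gamma_r$ for a.e.\ $r\in[0,1]$ with $r\mapsto\sfd(\gamma_r,y_r)\in L^2(0,1)$; together with $|\dot\gamma|\in L^2$ and \eqref{eq:propkant} this gives $|\nabla^+\varphi|(\gamma_r)|\dot\gamma_r|\le\sfd(\gamma_r,y_r)|\dot\gamma_r|\in L^1(0,1)$. For such a $\gamma$ the Lipschitz function $\rho\mapsto\tfrac12\sfd^2(\gamma_\rho,y_r)-\psi(y_r)$ dominates $u:=\varphi\circ\gamma$ and touches it at $r$, whence, for a.e.\ $r$ and all $\rho$, $u(\rho)-u(r)\le\int_{[r,\rho]}\sfd(\gamma_\sigma,y_r)\,|\dot\gamma_\sigma|\,\d\sigma$.

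From this ``one-sided touching at a.e.\ time'' I would deduce absolute continuity of $u$: using $\sfd(\gamma_\sigma,y_r)\le\sfd(\gamma_\sigma,\gamma_r)+\sfd(\gamma_r,y_r)$ and $\sfd(\gamma_\sigma,\gamma_r)\le\int_{[r,\sigma]}|\dot\gamma|$, summing the touching inequality over partitions whose nodes are good times and Lebesgue points of $r\mapsto\sfd(\gamma_r,y_r)|\dot\gamma_r|$, and running the same estimate on the time-reversed curve (legitimate because $\gamma\mapsto\gamma_{1-\cdot}$ sends $\ppi$ to another test plan), one gets $|u(\rho_2)-u(\rho_1)|\le\int_{[\rho_1,\rho_2]}h(\sigma)\,\d\sigma$ for a.e.\ $\rho_1,\rho_2$ and a fixed $h\in L^1(0,1)$; so $u$ coincides a.e.\ — and, by a routine argument using finiteness at the endpoints, also at $0$ and $1$ — with an absolutely continuous function, and $\varphi$ is Sobolev along a.e.\ curve. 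To conclude, $u$ being absolutely continuous it is differentiable a.e., and at points where $\gamma$ is also metrically differentiable the elementary bound $\limsup_{\rho\to r}\tfrac{(u(\rho)-u(r))^+}{|\rho-r|}\le\limsup_{\rho\to r}\tfrac{(\varphi(\gamma_\rho)-\varphi(\gamma_r))^+}{\sfd(\gamma_\rho,\gamma_r)}\cdot\tfrac{\sfd(\gamma_\rho,\gamma_r)}{|\rho-r|}\le|\nabla^+\varphi|(\gamma_r)\,|\dot\gamma_r|$, after distinguishing the sign of $u'(r)$ and approaching $r$ from the appropriate side, yields $|u'(r)|\le|\nabla^+\varphi|(\gamma_r)|\dot\gamma_r|$ a.e.; integrating, $\big|\int_{\partial\gamma}\varphi\big|\le\int_\gamma|\nabla^+\varphi|$, and likewise on every subcurve, so $|\nabla^+\varphi|$ is a weak upper gradient of $\varphi$.

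The hard part is the third paragraph — converting the a.e.-pointwise one-sided touching into genuine absolute continuity of $\varphi\circ\gamma$ — together with the measurable selection of $\Gamma$-partners $y_r$ enjoying the $L^2(0,1)$ bound that is hidden in the second; this is exactly where $\mu\ge c\mm$ is used, since it is what lets $(\e_r)_\sharp\ppi$ be dominated by $\mu$ and hence the finiteness of $W_2^2(\mu,\nu)$ be transferred to the level of curves. By contrast, the final sharpening from $\sfd(\gamma_r,y_r)$ to $|\nabla^+\varphi|(\gamma_r)$ is a soft one-line argument once absolute continuity is available.
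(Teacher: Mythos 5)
The paper itself offers no proof of this proposition: it is quoted directly from \cite[Lemma~10.1]{Ambrosio-Gigli-Savare11}, so there is no internal argument to compare against. Your proof is, in substance, the intended one: the one-sided touching inequality supplied by $c$-concavity, the selection of $\Gamma$-partners with $L^2(0,1)$-integrable distance obtained by re-weighting $\ggamma$ through $(\e_r)_\sharp\ppi\le (C/c)\mu$ (which is indeed exactly where the hypothesis $\mu\ge c\mm$ is used), the upgrade of the a.e.-pointwise touching to absolute continuity of $\varphi\circ\gamma$, and the remark that the two-sided derivative of $\varphi\circ\gamma$ is controlled by the \emph{ascending} slope because one may approach a differentiability point from whichever side makes the increment positive. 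The one step I would press you on is the partition argument: choosing the nodes to be Lebesgue points of $r\mapsto\sfd(\gamma_r,y_r)|\dot\gamma_r|$ does not by itself force the Riemann-type sums $\sum_i\sfd(\gamma_{t_i},y_{t_i})\int_{t_i}^{t_{i+1}}|\dot\gamma|$ to converge to $\int_0^1\sfd(\gamma_r,y_r)|\dot\gamma_r|\,\d r$, because the rate of convergence at a Lebesgue point is not uniform over the nodes; you need an extra device, e.g.\ averaging over translated equi-spaced partitions (exploiting continuity of translations in $L^2$) and extracting a good subsequence, or replacing $\sfd(\gamma_r,y_r)$ by the fixed Borel majorant $x\mapsto\bigl(2\int\sfd^2(x,y)\,\d\ggamma_x(y)\bigr)^{1/2}$ evaluated along the curve, for which the same averaging can be organised once and for all. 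This gap is of a purely technical nature and does not compromise the scheme; likewise the measurability of $|\nabla^+\varphi|$ (needed to invoke minimality of $\weakgrad\varphi$) deserves a word, but follows from the upper semicontinuity of the $c$-concave function $\varphi$.
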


As a consequence of the previous proposition, since
\eqref{eq:propkant} yields $|\nabla^+\varphi|\in L^2(X,\mu)$, the
lower bound on $\mu$ yields $\weakgrad\varphi\in L^2(X,\mm)$.

\subsubsection{Convex functionals: gradient flows, entropy, and the
$CD(K,\infty)$ condition}

Let $(Y,\sfd_Y)$ be a complete and separable metric space,
$E:Y\to\R\cup\{+\infty\}$, and $K\in\R$. We say that $E$ is
$K$-geodesically convex if for any $y_0,\,y_1\in D(E)$ there exists
$\gamma\in\geo(Y)$ satisfying $\gamma_0=y_0$, $\gamma_1=y_1$ and
\[
E(\gamma_t)\leq (1-t)E(y_0)+tE(y_1)-\frac K2t(1-t)
\sfd_Y^2(y_0,y_1)\qquad\text{for every } t\in[0,1].
\]
Notice that if $E$ is $K$-geodesically convex, then $D(E)$ is
geodesic in $Y$ and therefore $\overline{D(E)}$ is a length space.

A consequence of $K$-geodesic convexity is that the descending slope
$|\nabla^- E|$ can be calculated at all $y\in D(E)$ as
\begin{equation}
\label{eq:slopesup} |\nabla^-E|(y)=\sup_{z\in
D(E)\setminus\{y\}}\left(\frac{E(y)-E(z)}{\sfd_Y (y,z)}+\frac K2
\sfd_Y(y,z)\right)^+.
\end{equation}
We recall (see \cite[Corollary~2.4.10]{Ambrosio-Gigli-Savare08})
that for $K$-geodesically convex and l.s.c.~functionals
the descending slope is an upper gradient, in particular
the property we shall need is
\begin{equation}
\label{eq:boundtuttecurve} E(y_s)\le E(y_t)+\int_s^t |\dot
y_r|\,|\nabla^-E|(y_r)\,\d r \qquad\text{for every }s,\,t\in
[0,\infty),\ s<t,
\end{equation}
for \emph{all} locally absolutely continuous curves $y:[0,\infty)\to
D(E)$. A metric gradient flow for the $K$-geodesically convex
functional $E$ is a locally absolutely continuous curve
$y:[0,\infty)\to D(E)$ along which \eqref{eq:boundtuttecurve} holds
as an equality and moreover $|\dot y_t|=|\nabla^- E|(y_t)$ for a.e.
$t>0$, so that the energy dissipation rate $\tfrac{\d}{\dt}E(y_t)$
is equal to $-|\dot y_t|^2=-|\nabla^- E|^2(y_t)$ for a.e.~$t>0$.

An application of Young inequality shows that metric gradient flows
for $K$-geodesically convex and l.s.c. functionals can equivalently
be defined as follows.

\begin{definition}[Metric  formulation of gradient flow]\label{def:dissKconv}
Let $E:Y\to\R\cup\{+\infty\}$ be a $K$-geodesically convex and
l.s.c. functional. We say that a locally absolutely continuous curve
$[0,\infty)\ni t\mapsto y_t\in D(E)$ is a gradient flow of $E$
starting from $y_0\in D(E)$ if
\begin{equation}\label{eq:ede}
E(y_0)= E(y_t)+\int_0^t\frac12 |\dot y_r|^2+\frac12|\nabla^-
E|^2(y_r)\,\d r\qquad\forall t\geq 0.
\end{equation}
\end{definition}
We now recall the definition of metric measure space with Ricci
curvature bounded from below by $K\in\R$, following \cite[\S
4.2]{Sturm06I} and \cite[\S 5]{Lott-Villani09}. More precisely, we
consider here the weaker definition of \cite{Sturm06I} and we will
discuss a stronger version in Section~\ref{se:strongcd}: see the
bibliographical references of \cite[Chapter 17]{Villani09} for a
comparison between the two approaches.
\begin{definition}[$CD(K,\infty)$ spaces]\label{def:cdk}
We say that $(X,\sfd,\mm)\in\X$ has Ricci curvature bounded from
below by $K\in\R$ (in short: it is a $CD(K,\infty)$ space) if the
relative entropy functional $\entv$ is $K$-geodesically convex on
$(\probt X,W_2)$, i.e. for any pair of measures $\mu,\,\nu\in
D(\entv)\cap\probt X$ there exists a constant speed geodesic
$(\mu_t)\subset\probt X$ such that $\mu_0=\mu$, $\mu_1=\nu$ and
\[
\entr{\mu_t}\mm\leq(1-t)\entr{\mu_0}\mm+t\entr{\mu_1}\mm-\frac
K2t(1-t)W_2^2(\mu_0,\mu_1)\qquad\forevery t\in[0,1].
\]
\end{definition}
Notice that, in comparison with the definition given in
\cite{Sturm06I} and \cite{Lott-Villani09} we are restricting the
analysis to the case of a probability reference measure  $\mm$ with
finite second moment (but we do not assume local compactness). This
is actually unneeded from the ``Ricci bound'' point of view (see
also \cite[Definition~9.1]{Ambrosio-Gigli-Savare11}), however in
this paper we want to focus more on the geometrical aspect, rather
than on the - non trivial - analytic tools needed to work in higher
generality: the assumption $\mm\in\probt X$ serves to this scope.

Let us also remark that a $CD(K,\infty)$ space $(X,\sfd,\mm)$
satisfies the length property, i.e.~$\supp\mm$ is a length space if
it is endowed with the distance $\sfd$ \cite[Remark
4.6(iii)]{Sturm06I} (the proof therein, based on an approximate
midpoint construction, does not use the local compactness).

Now let $(X,\sfd,\mm)$ be a $CD(K,\infty)$ space. Then, by
assumption, the relative entropy functional $\entv$ is
$K$-geodesically convex on $(\probt X,W_2)$, so that we could ask
about the existence and the uniqueness of its gradient flow. The
following theorem, proved in \cite{Gigli10} for the locally compact
case and generalized in
\cite[Theorem~9.3(ii)]{Ambrosio-Gigli-Savare11} holds:

\begin{theorem}[Gradient flow of the relative entropy]
\label{thm:gfe} Let $(X,\sfd,\mm)$ be a $CD(K,\infty)$ space. Then
for any $\mu\in D(\entv)\cap\probt X$ there exists a unique gradient
flow of $\entv$ starting from $\mu$.
\end{theorem}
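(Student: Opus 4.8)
\emph{Proof plan.} Granting the identification of the $W_2$-gradient flow of $\entv$ with the $L^2(X,\mm)$-gradient flow of $\C$ recalled in \S\ref{sub4}, this statement is little more than a repackaging of Theorem~\ref{thm:gfc}, but it is worth spelling out the argument that produces it. For \textbf{existence} I would run De Giorgi's minimizing movements scheme for $\entv$ on $(\probt X,W_2)$: fix $\mu\in D(\entv)\cap\probt X$, a step $\tau>0$, put $\mu^\tau_0:=\mu$ and recursively choose $\mu^\tau_{n+1}$ minimizing $\nu\mapsto\tfrac1{2\tau}W_2^2(\nu,\mu^\tau_n)+\ent\nu$. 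The minimum is attained and finite: since $\mm$ is a probability measure, $\entv\geq 0$ on $\probt X$, while $\entv$ and $W_2^2(\cdot,\mu^\tau_n)$ are narrowly lower semicontinuous and the sublevels $\{\entv\le c\}$ are narrowly relatively compact — the superlinearity of $z\mapsto z\log z$ and the de la Vall\'ee-Poussin criterion make them uniformly $\mm$-integrable, hence tight, because $\mm\in\probt X$; the penalization controls second moments along the iteration, upgrading narrow convergence of the interpolants to $W_2$-convergence. The standard discrete estimates then pass to the limit $\tau\downarrow0$ as the dissipation inequality $\ent{\mu_0}\ge\ent{\mu_t}+\int_0^t\bigl(\tfrac12|\dot\mu_r|^2+\tfrac12|\nabla^-\entv|^2(\mu_r)\bigr)\,\d r$ along a locally absolutely continuous limit curve $(\mu_t)$ with $\mu_0=\mu$; the reverse inequality follows from the fact that, $\entv$ being $K$-geodesically convex and l.s.c., $|\nabla^-\entv|$ is computed by \eqref{eq:slopesup} and is an upper gradient in the sense of \eqref{eq:boundtuttecurve}. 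Hence $(\mu_t)$ is a gradient flow of $\entv$ in the sense of Definition~\ref{def:dissKconv}.

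For \textbf{uniqueness}, which is the real point, I would show that \emph{every} curve $(\mu_t)$ satisfying \eqref{eq:ede} with $\mu_0=\mu$ is of the form $\mu_t=\rho_t\mm$ with $(\rho_t)$ the $L^2(X,\mm)$-gradient flow of Cheeger's energy from Theorem~\ref{thm:gfc}, which is unique by the Hilbertian theory. This rests on two facts from the calculus of \cite{Ambrosio-Gigli-Savare11} recalled in \S\ref{sub4}: first, the continuity-equation/superposition description of absolutely continuous curves in $(\probt X,W_2)$ (\cite{Lisini07}) yields the representation $|\dot\mu_t|^2=4\int\weakgrad{\sqrt{\rho_t}}^2\,\d\mm$ for a.e.\ $t$; second — and this is the delicate one — the descending slope of the entropy coincides with the square root of the Fisher information, $|\nabla^-\entv|^2(\rho_t\mm)=4\int\weakgrad{\sqrt{\rho_t}}^2\,\d\mm$. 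Substituting both into \eqref{eq:ede} and reading De Giorgi's energy-dissipation identity in reverse (via the chain rule for $\C$ along $(\rho_t)$) forces $t\mapsto\rho_t$ to be locally absolutely continuous in $L^2(X,\mm)$ with $\tfrac{\d}{\dt}\rho_t\in-\partial^-\C(\rho_t)$, i.e.\ $(\rho_t)$ is the curve of Theorem~\ref{thm:gfc}; its uniqueness then transfers to $(\mu_t)$.

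The main obstacle is entirely packed into the identification $|\nabla^-\entv|^2(\rho\mm)=4\int\weakgrad{\sqrt\rho}^2\,\d\mm$, valid here \emph{without any doubling or Poincar\'e assumption}: this is the heart of \cite{Ambrosio-Gigli-Savare11}, and it in turn rests on the coincidence of minimal weak upper gradients with minimal relaxed gradients, which is exactly what reconciles the ``horizontal'' variations governing the Wasserstein slope with the ``vertical'' ones governing $\partial^-\C$. A secondary, more technical difficulty is the compactness used to close the minimizing movements scheme in the absence of local compactness, which is why the hypothesis $\mm\in\probt X$ is a convenient simplification. Altogether this recovers \cite{Gigli10} in the locally compact case and \cite[Theorem~9.3(ii)]{Ambrosio-Gigli-Savare11} in general.
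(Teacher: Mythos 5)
The paper does not actually prove Theorem~\ref{thm:gfe}: it imports it from \cite{Gigli10} and \cite[Theorem~9.3(ii)]{Ambrosio-Gigli-Savare11}, so your proposal has to be measured against the argument there. Your existence half --- De Giorgi's minimizing movements, with tightness of the entropy sublevels coming from the superlinearity of $z\mapsto z\log z$ and the tightness of the reference probability $\mm$, and lower semicontinuity of the descending slope along sequences with converging energies supplied by the representation \eqref{eq:slopesup} --- is essentially the route of \cite{Gigli10} and is sound, modulo standard details.

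The uniqueness half, which you rightly call the real point, has a genuine gap. First, the identity $|\dot\mu_t|^2=4\int\weakgrad{\sqrt{\rho_t}}^2\,\d\mm$ is \emph{not} a consequence of Lisini's superposition principle for an arbitrary absolutely continuous curve of densities: translating a fixed smooth density in $\R^n$ at unit speed gives metric speed $1$ and a completely unrelated Fisher information. For an EDE solution the identity can be recovered, but only by combining \eqref{eq:ede} with the upper-gradient inequality \eqref{eq:boundtuttecurve} and the slope identity \eqref{eq:slopeFisher}, which forces $|\dot\mu_t|=|\nabla^-\entv|(\mu_t)$ a.e.; and at that point you have merely restated that the curve is a metric gradient flow of the entropy. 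The scalar energy identity does not determine the curve, and there is no ``chain rule for $\C$ along $(\rho_t)$'' that converts it into the differential inclusion $\frac{\d}{\dt}\rho_t\in-\partial^-\C(\rho_t)$: the entropy and Cheeger's energy are different functionals, $\rho_t$ need not even lie in $L^2(X,\mm)$, and Theorem~\ref{thm:heatgf}, which you are implicitly reverse-engineering, is stated and proved \emph{after}, and in terms of, Theorem~\ref{thm:gfe}, so leaning on it would be circular. Extracting the evolution equation from the metric EDE is exactly what requires the Kantorovich-potential and horizontal/vertical-derivative machinery of Section~\ref{se:formule}, and even there only under the strong $CD(K,\infty)$ hypothesis. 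The cited references avoid all of this with a convexity argument: given two EDE flows $\mu^1_t,\,\mu^2_t$ from the same $\mu$, set $\sigma_t:=\tfrac12(\mu^1_t+\mu^2_t)$; the joint convexity of $W_2^2$ (see \eqref{eq:convw2}), the convexity of $\entv$ and of the Fisher information $\rho\mm\mapsto 4\int\weakgrad{\sqrt\rho}^2\,\d\mm=|\nabla^-\entv|^2(\rho\mm)$ under \emph{linear} interpolation, inserted into \eqref{eq:boundtuttecurve} for $\sigma_t$ and compared with the sum of the two energy identities, force $\ent{\sigma_t}=\tfrac12\ent{\mu^1_t}+\tfrac12\ent{\mu^2_t}$ for all $t$, and the strict convexity of $z\mapsto z\log z$ then yields $\mu^1_t=\mu^2_t$. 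That is the idea missing from your argument.
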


Notice that the theorem says nothing about contractivity of the
Wasserstein distance along the flow, a property which we address in
Section~\ref{sec:EVI}. Actually, Ohta and Sturm proved in
\cite{Sturm-Ohta10} that contractivity \emph{fails} if
$(X,\sfd,\mm)$ is $\R^d$ endowed with the Lebesgue measure and with
a distance coming from a norm not induced by a scalar product.

\subsubsection{The heat flow as gradient flow in $L^2(X,\mm)$ and in
$\probt X$}

One of the main result of \cite{Ambrosio-Gigli-Savare11} has been
the following identification theorem, see formula (8.5) and
Theorem~9.3(iii) therein.

\begin{theorem}[The heat flow as gradient flow]\label{thm:heatgf}
Let $(X,\sfd,\mm)$ be a $CD(K,\infty)$ space and let $f\in
L^2(X,\mm)$ be such that $\mu=f\mm\in\probt{X}$. Let $(f_t)$ be the
gradient flow of $\C$ in $L^2(X,\mm)$ starting from $f$ as in
Theorem~\ref{thm:gfc}, and let $(\mu_t)$ be the gradient flow of
$\entv$ in $\probt X$ starting from $\mu$, as in
Theorem~\ref{thm:gfe}.\\
Then $\mu_t=f_t\mm$ for all $t\geq 0$, $t\mapsto\entv(\mu_t)$ is
locally absolutely continuous in $[0,\infty)$, and
\begin{equation}\label{eq:edissrateflow}
-\frac{\d}{\d
t}\entv(\mu_t)=|\dot\mu_t|^2=\int_{\{f_t>0\}}\frac{\weakgrad
{f_t}^2}{f_t}\,\d\mm\qquad\text{for a.e.~$t>0$.}
\end{equation}
\end{theorem}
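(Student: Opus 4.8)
The plan is to derive the identification theorem from two ingredients already available: the Hilbertian gradient-flow theory for $\C$ (Theorem~\ref{thm:gfc}), giving a curve $(f_t)$ with $\frac{\rmd^+}{\dt}f_t=\Deltam f_t$, and the metric gradient-flow theory for $\entv$ on $(\probt X,W_2)$ (Theorem~\ref{thm:gfe}). First I would show that the curve $\mu_t:=f_t\mm$ stays in $\probt X$ and is locally absolutely continuous in $W_2$; this needs that $f_t\ge 0$ (preservation of nonnegativity of the $L^2$-heat flow, which follows since $z\mapsto z^+$ decreases $\C$, hence the flow is order-preserving) and that $\int f_t\,\d\mm=1$ is conserved (formally $\frac{\rmd}{\dt}\int f_t=\int\Deltam f_t=0$; rigorously one tests with $g\equiv 1$ in the integration-by-parts bound \eqref{eq:partilapl}, or approximates $1$ by Lipschitz functions). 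Then I would compute, for $f_t$ with $f_t>0$ bounded away from $0$, the Wasserstein speed of $\mu_t$: using Lemma~\ref{le:sug} with test functions $\psi$ and a duality/Kantorovich argument one bounds $|\dot\mu_t|$, while in the other direction the chain rule \eqref{eq:chainrule} (namely $\weakgrad{\log f_t}=\weakgrad{f_t}/f_t$) together with \eqref{eq:partilapl} and the formula $4\,\C(\sqrt{f_t})=\int\weakgrad{f_t}^2/f_t\,\d\mm$ identifies the candidate dissipation. The key identity to establish is
\[
-\frac{\rmd}{\dt}\entv(\mu_t)=\int_{\{f_t>0\}}\frac{\weakgrad{f_t}^2}{f_t}\,\d\mm,
\]
obtained by differentiating $\entv(f_t\mm)=\int f_t\log f_t\,\d\mm$ and using $\frac{\rmd^+}{\dt}f_t=\Deltam f_t$ plus the integration-by-parts bound with $g=\log f_t$ (legitimate once one knows $\log f_t\in D(\C)$, which again uses the chain rule and the $L^2$-regularization of the heat flow for $t>0$).

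The heart of the matter is then the \emph{sharp} energy-dissipation (EDE) argument, exactly as in Definition~\ref{def:dissKconv}: one side is automatic from the upper-gradient inequality \eqref{eq:boundtuttecurve} applied to $\entv$ along $(\mu_t)$, and the reverse inequality is what forces $(\mu_t)$ to coincide with \emph{the} gradient flow of $\entv$. Concretely, I would show
\[
\entv(\mu_0)\ge\entv(\mu_t)+\int_0^t\Big(\tfrac12|\dot\mu_r|^2+\tfrac12|\nabla^-\entv|^2(\mu_r)\Big)\,\d r,
\]
by combining: (a) $-\frac{\rmd}{\dt}\entv(\mu_t)=\int\weakgrad{f_t}^2/f_t\,\d\mm$ from above; (b) the bound $|\dot\mu_t|^2\le\int\weakgrad{f_t}^2/f_t\,\d\mm$ coming from Lemma~\ref{le:sug} plus a Kantorovich-potential computation; and (c) the lower bound $|\nabla^-\entv|^2(\mu_t)\le\int\weakgrad{f_t}^2/f_t\,\d\mm$, the hardest of the three, which in the locally compact case is classical but in this generality relies on the $CD(K,\infty)$ convexity (via \eqref{eq:slopesup}) and on the metric Brenier/Kantorovich tools — here one tests the slope against geodesics $\mu_s$ emanating from $\mu_t$, uses the convexity inequality \eqref{eq:Alica1}-type estimate for the entropy, and passes to the limit. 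Chaining (a)-(c) with Young's inequality yields the EDE equality \eqref{eq:ede}, whence $(\mu_t)$ is the gradient flow of $\entv$; uniqueness in Theorem~\ref{thm:gfe} then gives $\mu_t=f_t\mm$ and all the inequalities collapse to the equalities in \eqref{eq:edissrateflow}.

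I expect the main obstacle to be step (c), the lower bound on the descending slope $|\nabla^-\entv|(\mu_t)$ by $\big(\int\weakgrad{f_t}^2/f_t\,\d\mm\big)^{1/2}$, since without local compactness one cannot rely on compactness of geodesics or on a pointwise Otto-calculus identification of the gradient; this is precisely where the interplay between the weak-upper-gradient calculus of \cite{Ambrosio-Gigli-Savare11} and the $CD(K,\infty)$ assumption is essential, and one must be careful to first reduce to densities $f_t$ bounded and bounded away from $0$ (by truncation and a diagonal argument, monotone in $t$) before removing the extra regularity by approximation. A secondary technical point is the justification of the chain/Leibniz manipulations and of $\log f_t\in D(\C)$ for $t>0$, which uses the instantaneous $L^2$-regularization $f_t\in D(\Deltam)$ from Theorem~\ref{thm:gfc} together with \eqref{eq:chainrule} and \eqref{eq:weakleib}.
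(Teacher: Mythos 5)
First, a point of orientation: the paper does not actually prove Theorem~\ref{thm:heatgf} — it is recalled verbatim from the companion paper \cite{Ambrosio-Gigli-Savare11} (formula (8.5) and Theorem~9.3(iii) there), so there is no in-text proof to compare against. That said, your plan reproduces the strategy of the cited proof essentially correctly: conservation of mass and positivity of the $L^2$-flow, the identity $-\frac{\d}{\dt}\entv(f_t\mm)=\int_{\{f_t>0\}}\weakgrad{f_t}^2/f_t\,\d\mm$, the two upper bounds of the Wasserstein speed and of the descending slope by the Fisher information, and then the EDE chain \eqref{eq:ede} closed by Young's inequality and the uniqueness statement of Theorem~\ref{thm:gfe}. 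Your reduction to densities bounded away from $0$ and $\infty$ and the care about $\log f_t\in D(\C)$ are also the right technical moves.

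The one step where your plan as written would not go through is (b). Lemma~\ref{le:sug} bounds $\bigl|\int\psi\,\d\mu_1-\int\psi\,\d\mu_0\bigr|$ \emph{by} the metric speed $|\dot\mu_t|$, so it can only ever produce lower bounds on $|\dot\mu_t|$; it cannot yield the upper bound $|\dot\mu_t|^2\le\int\weakgrad{f_t}^2/f_t\,\d\mm$ that the EDE argument needs. The missing ingredient is Kuwada's lemma: one writes $\tfrac12W_2^2(\mu_s,\mu_t)$ via Kantorovich duality as a supremum of $\int Q_1\varphi\,\d\mu_s-\int\varphi\,\d\mu_t$ over the Hopf--Lax regularizations $Q_r\varphi$, and estimates $\frac{\d}{\d r}\int Q_r\varphi\,\rho_{t+r(s-t)}\,\d\mm$ using the Hamilton--Jacobi subsolution inequality $\frac{\d^+}{\d r}Q_r\varphi+\tfrac12|\nabla Q_r\varphi|^2\le 0$ together with the integration by parts \eqref{eq:partilapl}. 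This is exactly the mechanism the present paper invokes again in Lemma~\ref{le:keyimproved} (``Kuwada's lemma in product spaces''), whose proof is said to follow \emph{verbatim} the corresponding lemma of \cite{Ambrosio-Gigli-Savare11} thanks to the Hamilton--Jacobi inequality \eqref{eq:Prato1}. A smaller remark: the upper bound (c) on the slope by the Fisher information is obtained in \cite{Ambrosio-Gigli-Savare11} from the convexity of $z\mapsto z\log z$ and the push-forward-via-plans machinery of Proposition~\ref{prop:basegammapf}; the $CD(K,\infty)$ hypothesis is what makes the slope a genuine upper gradient and guarantees existence/uniqueness in Theorem~\ref{thm:gfe} (and the \emph{reverse} inequality in \eqref{eq:slopeFisher}), rather than being the engine of the bound (c) itself. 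With Kuwada's lemma inserted in place of Lemma~\ref{le:sug}, your outline is the correct proof.
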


In other words, we can unambiguously define the heat flow on a
$CD(K,\infty)$ space either as the gradient flow of Cheeger's energy
in $L^2(X,\mm)$ or as the gradient flow of the relative entropy in
$(\probt X,W_2)$. A byproduct of this proof is also (see
\cite[Theorem~9.3(i)]{Ambrosio-Gigli-Savare11}) the equality between
slope and the so-called Fisher information functional:
\begin{equation}\label{eq:slopeFisher}
|\nabla^-\entv|^2(\rho\mm)=4\int\weakgrad{\sqrt{\rho}}^2\,\d\mm
\end{equation}
for all probability densities $\rho$ such that $\sqrt{\rho}\in
D(\C)$. Choosing $f=\sqrt{\rho}$ this identity, in conjunction with
the $HWI$ inequality relating entropy, Wasserstein distance and
Fisher information (see \cite{Lott-Villani-Poincare} or
\cite[Proposition~7.18]{Ambrosio-Gigli11}) gives the log-Sobolev
inequality
\begin{equation}\label{eq:logSobolev}
\int f^2\log f^2\,\d\mm\leq\frac{2}{K}\int\weakgrad{f}^2\,\d\mm
\qquad\text{whenever $f\in D(\C)$ and $\int f^2\,\d\mm=1$.}
\end{equation}

We will denote by $\heatl_t:L^2(X,\mm)\to L^2(X,\mm)$ the heat
semigroup in $L^2(X,\mm)$ and by $\heatw_t:\probt X\to\probt X$ the
gradient flow of the entropy on $\probt X$. A distinct notation is
useful not only for conceptual reasons, but also because the domains
of the two gradient flows don't match, even if we identify
absolutely continuous measures with their densities.

Some basic properties of the heat flow that we will need later on
are collected in the following proposition, see
\cite[Theorem~4.16]{Ambrosio-Gigli-Savare11} also for further
details.

\begin{proposition}[Some properties of the heat flow]\label{prop:proprheat}
Let $(X,\sfd,\mm)\in\X$ and $f\in L^2(X,\mm)$. Then the following
statements hold:
\begin{itemize}
\item[(i)] (Maximum principle) If $f\leq C$ (resp. $f\geq C$) $\mm$-a.e.~in $X$ for some
$C\in\R$, then $\heatl_t(f)\leq C$ (resp. $\heatl_t(f)\geq C$)
$\mm$-a.e.~in $X$ for any $t\geq 0$.
\item[(ii)] ($1$-homogeneity)
$\heatl_t(\lambda f)=\lambda\heatl_t(f)$ for any $\lambda\in\R$,
$t\geq 0$.
\end{itemize}
\end{proposition}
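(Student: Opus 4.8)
The plan is to obtain both assertions from the identification of $\heatl_t$ with the $L^2(X,\mm)$-gradient flow of the Cheeger energy $\C$ in Theorem~\ref{thm:gfc}, the only additional ingredients being the chain rule \eqref{eq:chainrule} for minimal weak upper gradients and the classical Hilbertian theory of gradient flows of convex lower semicontinuous functionals \cite{Brezis73} (in particular the exponential formula $\heatl_t g=\lim_{n\to\infty}(I+\tfrac tn\partial^-\C)^{-n}g$).

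For the maximum principle, part (i), I would fix $C\in\R$, set $K:=\{g\in L^2(X,\mm):\ g\le C\ \text{$\mm$-a.e.}\}$, a closed convex subset of $L^2(X,\mm)$ whose metric projection is the pointwise truncation $g\mapsto\min\{g,C\}$, and first observe that $\C$ is nonincreasing under this projection: since $r\mapsto\min\{r,C\}$ is $1$-Lipschitz, \eqref{eq:chainrule} gives $\weakgrad{(\min\{g,C\})}\le\weakgrad g$ $\mm$-a.e.\ whenever $g$ is Sobolev along a.e.\ curve, hence $\C(\min\{g,C\})\le\C(g)$ for every $g\in L^2(X,\mm)$. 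The key step is then to check that each implicit Euler step $J_\tau:=(I+\tau\partial^-\C)^{-1}$ maps $K$ into itself: if $g\in K$ and $h=J_\tau g$ is the unique minimiser of $h'\mapsto\C(h')+\tfrac1{2\tau}\|h'-g\|_2^2$, then $\tilde h:=\min\{h,C\}\in K$ satisfies $\C(\tilde h)\le\C(h)$, while $g\le C$ forces the pointwise bound $|\min\{h,C\}-g|\le|h-g|$ and hence $\|\tilde h-g\|_2\le\|h-g\|_2$; uniqueness of the minimiser then yields $\tilde h=h$, i.e.\ $h\in K$. Passing to the limit in the exponential formula and using that $K$ is closed gives $\heatl_t(K)\subset K$, which is the bound from above; the bound from below follows verbatim with $\min\{\cdot,C\}$ replaced by the $1$-Lipschitz map $\max\{\cdot,C\}$ and $K$ by $\{g\ge C\}$.

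For the $1$-homogeneity, part (ii), the case $\lambda=0$ is immediate because $\C(0)=0$ gives $0\in\partial^-\C(0)$, so the constant curve is the gradient flow from $0$ and $\heatl_t(0)=0$. For $\lambda\ne0$, applying \eqref{eq:chainrule} to $r\mapsto\lambda r$ yields $\weakgrad{(\lambda f)}=|\lambda|\,\weakgrad f$ $\mm$-a.e.\ and therefore $\C(\lambda f)=\lambda^2\C(f)$ for all $f\in L^2(X,\mm)$; a short manipulation of the subdifferential inequality (replace the test function $g$ by $g/\lambda$ and multiply through by $\lambda^2$) then upgrades this to $\partial^-\C(\lambda f)=\lambda\,\partial^-\C(f)$ for every $\lambda\ne0$. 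Consequently, if $(f_t)$ is the gradient flow of $\C$ from $f$ provided by Theorem~\ref{thm:gfc}, the curve $t\mapsto\lambda f_t$ is locally absolutely continuous, converges to $\lambda f$ in $L^2(X,\mm)$ as $t\downarrow0$, and satisfies $\tfrac{\d}{\dt}(\lambda f_t)=\lambda\tfrac{\d}{\dt}f_t\in-\lambda\,\partial^-\C(f_t)=-\partial^-\C(\lambda f_t)$ for a.e.\ $t>0$; by the uniqueness in Theorem~\ref{thm:gfc} it coincides with $\heatl_t(\lambda f)$, which is the claim.

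The only point I expect to be non-routine is, in part (i), the passage from the invariance of $K$ under the resolvents of $\partial^-\C$ to its invariance under the semigroup $\heatl_t$, which rests on the Crandall--Liggett generation theorem for gradient flows of convex functionals already used for Theorem~\ref{thm:gfc}; the remaining steps are direct applications of the chain rule for weak upper gradients and of the uniqueness of the gradient flow.
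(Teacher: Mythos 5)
Your proof is correct. The paper itself gives no argument for this proposition and simply refers to \cite[Theorem~4.16]{Ambrosio-Gigli-Savare11}, where the maximum principle is obtained by exactly your mechanism (truncation decreases $\C$ by the chain rule, hence each Moreau--Yosida/implicit Euler step preserves $\{g\le C\}$ by uniqueness of the minimizer, and the exponential formula passes this to $\heatl_t$), and the $1$-homogeneity from $\C(\lambda f)=\lambda^2\C(f)$ together with uniqueness of the gradient flow; so your blind reconstruction matches the intended proof.
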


\subsection{EVI formulation of gradient flows}\label{sec:EVI}

Here we recall a stronger formulation of gradient flows in a
complete and separable metric space $(Y,\sfd_Y)$, introduced and
extensively studied in \cite{Ambrosio-Gigli-Savare08},
\cite{Daneri-Savare08}, \cite{Savare10}, which will play a key role
in our analysis.
\begin{definition}[Gradient flows in the $\EVI$ sense]\label{def:EVIK}
Let $E:Y\to\R\cup\{+\infty\}$ be a lower semicontinuous functional,
$K\in\R$ and $(0,\infty)\ni t\mapsto y_t\in D(E)$ be a locally
absolutely continuous curve. We say that $(y_t)$ is a $K$-gradient
flow for $E$ in the Evolution Variational Inequalities sense (or,
simply, it is an $\EVI_K$ gradient flow) if for any $z\in Y$ it
holds
\begin{equation}
\label{eq:defevi} \frac \d{\d t}\frac{\sfd_Y^2(y_t,z)}2+\frac
K2\sfd_Y^2(y_t,z)+E(y_t)\leq E(z)\qquad\text{for a.e.~$t\in
(0,\infty)$.}
\end{equation}
If $\lim\limits_{t\downarrow 0}y_t=y_0 \in \overline {D(E)}$, we say
that the gradient flow starts from $y_0$.
\end{definition}
Notice that the derivative in \eqref{eq:defevi} exists for
a.e.~$t>0$, since $t\mapsto\sfd_Y(y_t,z)$ is locally absolutely
continuous in $(0,\infty)$.

In the next proposition we will consider equivalent formulations of
\eqref{eq:defevi} involving subsets $D\subset D(E)$ \emph{dense in
energy}: it means that for any $y\in D(E)$ there exists a sequence
$(y_n)\subset D$ such that $\sfd_Y(y_n,y)\to 0$ and $E(y_n)\to E(y)$
as $n\to\infty$.

\begin{proposition}[Equivalent formulations of $\EVI$]\label{prop:eviequiv}
Let $E$, $K$ be as in Definition~\ref{def:EVIK}, $D\subset D(E)$
dense in energy, and $y:(0,\infty)\to D(E)$ be a locally absolutely
continuous curve with $\lim_{t\downarrow0}y_t=y_0\in
\overline{D(E)}$. Then, $(y_t)$ is an $\EVI_K$ gradient flow if and
only if one of the following properties is satisfied:
\begin{itemize}
\item[(i)] (Dense version)
The differential inequality \eqref{eq:defevi} holds for all $z\in
D$.
\item[(ii)] (Integral version)
For all $z\in D$ it holds
\begin{equation}
\label{eq:eviint} \frac{\rme^{K(t-s)}}2
\sfd^2_Y(y_t,z)-\frac{\sfd^2_Y(y_s,z)}{2}\le \mathrm
I_K(t-s)\Big(E(z)-E(y_t)\Big)\qquad \forevery 0\le s\leq t,
\end{equation}
where $\mathrm I_K(t):=\int_0^t \rme^{K r}\,\d r$.
\item[(iii)] (Pointwise version) For all $z\in D$ it holds
\begin{equation}
\label{eq:evipoint} \limsup_{h\downarrow
0}\frac{\sfd^2_Y(y_{t+h},z)-\sfd^2_Y(y_t,z)}2+\frac
K2\sfd^2_Y(y_t,z)+E(y_t)\leq E(z)\qquad\forevery t>0.
\end{equation}
\end{itemize}
\end{proposition}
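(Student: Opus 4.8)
\textbf{Proof plan for Proposition~\ref{prop:eviequiv}.}
The plan is to establish a cycle of implications among the $\EVI_K$ condition \eqref{eq:defevi} and the three properties (i), (ii), (iii), exploiting the local absolute continuity of $t\mapsto\sfd_Y(y_t,z)$ and the density-in-energy hypothesis to recover the test points $z\in D(E)\setminus D$. First I would record the trivial implication $\EVI_K\Rightarrow$(i), since (i) is just the restriction of \eqref{eq:defevi} to the smaller set $D$. The substantive steps are then (i)$\Rightarrow$(ii), (ii)$\Rightarrow$(iii), and (iii)$\Rightarrow\EVI_K$.

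For (i)$\Rightarrow$(ii) I would fix $z\in D$ and integrate the differential inequality \eqref{eq:defevi} after multiplying by the integrating factor $\rme^{Kr}$. Writing $\phi(r):=\tfrac12\sfd_Y^2(y_r,z)$, which is locally absolutely continuous, \eqref{eq:defevi} reads $\phi'(r)+K\phi(r)\le E(z)-E(y_r)$ for a.e.\ $r$; multiplying by $\rme^{Kr}$ gives $\tfrac{\d}{\d r}\bigl(\rme^{Kr}\phi(r)\bigr)\le \rme^{Kr}\bigl(E(z)-E(y_r)\bigr)$. Integrating on $[s,t]$ and using that, along a gradient flow, $r\mapsto E(y_r)$ is nonincreasing (so $E(y_r)\ge E(y_t)$ for $r\le t$) yields exactly \eqref{eq:eviint} with $\mathrm I_K(t-s)=\int_0^{t-s}\rme^{Kr}\,\d r$. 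The monotonicity of $r\mapsto E(y_r)$ is itself a standard consequence of \eqref{eq:defevi}: choosing $z$ near $y_r$ with $E(z)\to E(y_r)$ along a recovery sequence forces $\tfrac{\d}{\d r}E(y_r)\le 0$ in the appropriate sense; alternatively one invokes the energy dissipation identity \eqref{eq:ede} which holds for metric gradient flows and is implied by $\EVI_K$ (cf.\ the Young-inequality remark preceding Definition~\ref{def:dissKconv}).

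For (ii)$\Rightarrow$(iii) I would fix $z\in D$ and $t>0$, use \eqref{eq:eviint} on the interval $[t,t+h]$ with roles chosen so that $y_t$ plays the role of the ``later'' endpoint's energy, divide by $h$, and let $h\downarrow0$; the term $\mathrm I_K(h)/h\to1$ and $\bigl(\rme^{Kh}-1\bigr)/h\to K$, producing \eqref{eq:evipoint}. Here the one-sided $\limsup$ is exactly what survives because \eqref{eq:eviint} is only an inequality; continuity of $h\mapsto\sfd_Y^2(y_{t+h},z)$ at $h=0$ handles the boundary. Finally, for (iii)$\Rightarrow\EVI_K$ I would note that \eqref{eq:evipoint} bounds the upper right Dini derivative of the locally absolutely continuous function $t\mapsto\tfrac12\sfd_Y^2(y_t,z)$; since such a function is differentiable a.e.\ and its a.e.\ derivative agrees a.e.\ with its Dini derivative, \eqref{eq:evipoint} upgrades to \eqref{eq:defevi} for a.e.\ $t$, first for $z\in D$ and then for general $z\in Y$ (in particular $z\in D(E)$) by passing to the limit along a recovery sequence $z_n\to z$, $E(z_n)\to E(z)$, using continuity of $\sfd_Y$ and lower semicontinuity of $E$.

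The main obstacle I anticipate is the passage from a one-sided or Dini-derivative inequality to the a.e.\ differential inequality, and the justification that the dense-in-energy set $D$ suffices: one must be careful that the recovery sequences interact well with the exceptional null sets in $t$ (which may depend on $z$), so the cleanest route is to prove the integral form (ii) for all $z\in D$ first, then deduce (ii) for all $z\in Y$ by density (the right-hand side and left-hand side of \eqref{eq:eviint} are continuous, resp.\ lower semicontinuous, in $z$), and only afterwards differentiate to recover \eqref{eq:defevi}; this avoids any $z$-dependent null-set bookkeeping. I would also double-check the sign conventions in $\mathrm I_K$ when $K<0$, where $\mathrm I_K(t)=\tfrac{\rme^{Kt}-1}{K}$ is still positive, so no case distinction is actually needed.
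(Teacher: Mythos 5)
Your proposal is correct and follows essentially the same route as the paper's (very terse) proof: multiply \eqref{eq:defevi} by the integrating factor $\rme^{Kt}$, integrate using the monotonicity of $t\mapsto E(y_t)$ to obtain \eqref{eq:eviint}, observe that the integral version passes from $D$ to all of $D(E)$ by density in energy, and recover \eqref{eq:defevi} by differentiating the locally absolutely continuous map $t\mapsto\sfd_Y^2(y_t,z)$. Your explicit remark that one should extend to general $z$ at the level of the integral inequality (ii) rather than the a.e.\ differential inequality, so as to avoid $z$-dependent null sets, is exactly the point the paper leaves implicit.
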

\begin{proof}
To get \eqref{eq:eviint} for all $z\in D(E)$ from \eqref{eq:defevi},
just multiply by $\rme^{Kt}$ and integrate in time, using the fact
that $t\mapsto E(y_t)$ is nonincreasing (see e.g.\
\cite{Clement-Desch10} and the next Proposition); a differentiation
provides the equivalence, since $y$ is absolutely continuous. The
fact that \eqref{eq:eviint} holds for any $z$ if and only if it
holds in a set dense in energy is trivial, so that the equivalence
of $(ii)$ and Definition~\ref{def:EVIK} is proved. The equivalences
with $(i)$ and $(iii)$ follow by similar arguments.
\end{proof}

We recall some basic and useful properties of gradient flows in the
$\EVI$ sense; we give here the essential sketch of the proofs,
referring to \cite[Chap.\ 4]{Ambrosio-Gigli-Savare08} and
\cite{Savare10} for more details and results. In particular, we
emphasize that the maps $\sfS_t:y_0\mapsto y_t$ that at every $y_0$
associate the value at time $t\ge0$ of the unique $K$-gradient flow
starting from $y_0$ give raise to a continuous semigroup of
$K$-contractions according to \eqref{eq:21} in a closed (possibly
empty) subset of $Y$.
\begin{proposition}[Properties of gradient flows in the $\EVI$ sense]\label{prop:evipropr}
Let $Y$, $E$, $K$, $y_t$ be as in Definition~\ref{def:EVIK}
 and suppose that $(y_t)$ is an $\EVI_K$ gradient flow of $E$ starting from
$y_0$. Then:
\begin{itemize}
\item[(i)] If $y_0\in D(E)$, then $y_t$ is also a metric gradient flow,
i.e.~\eqref{eq:ede} holds.
\item[(ii)] If $(\tilde y_t)$ is another $\EVI_K$ gradient flow for $E$ starting
from $\tilde{y}_0$, it holds
\begin{equation}
\sfd_Y(y_t,\tilde y_t)\leq e^{-Kt}\sfd_Y(y_0,\tilde y_0).\label{eq:21}
\end{equation}
In particular, $\EVI_K$ gradient flows uniquely depend on the
initial condition.
\item[(iii)]
Existence of $\EVI_K$ gradient flows starting from any point in
$D\subset Y$ implies existence starting from any point in $\overline
D$.
\item[(iv)] $(y_t)$ is locally Lipschitz in $(0,\infty)$,
  $y_t\in D(|\nabla^- E|)$ for every $t>0$, the map $t\mapsto \rme^{K
    t}\,|\nabla^-E|(y_t)$ is nonincreasing, and
  we have the regularization estimate
  \begin{equation}\label{eq:ultraEVI}
    \mathrm I_K(t)\,E(y_t)+\frac{\big(\mathrm I_K(t)\big)^2}2|\nabla^- E|^2(y_t)\leq
    \mathrm I_K(t)\,E(z)+\frac12
    \sfd_Y^2(z,y_0)\quad
    \forall t>0,\ z\in D(E).
  \end{equation}
\end{itemize}
\end{proposition}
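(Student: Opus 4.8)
The statement to prove is Proposition~\ref{prop:evipropr}, collecting the basic properties (i)--(iv) of $\EVI_K$ gradient flows. Since the excerpt explicitly says to give only "the essential sketch of the proofs," referring to \cite[Chap.~4]{Ambrosio-Gigli-Savare08} and \cite{Savare10}, my plan is to indicate the main mechanism behind each item rather than reproduce the full theory.

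\textbf{Plan for (ii), the contraction estimate.} This is the cornerstone and I would prove it first. Given two $\EVI_K$ flows $(y_t)$ and $(\tilde y_t)$, the idea is to apply the $\EVI_K$ inequality \eqref{eq:defevi} for $(y_t)$ with test point $z=\tilde y_s$ and, symmetrically, for $(\tilde y_t)$ with test point $z=y_s$, then add. One has to be careful because \eqref{eq:defevi} gives information about $\frac{\d}{\d t}\tfrac12\sfd_Y^2(y_t,z)$ for \emph{fixed} $z$, whereas here both arguments move. The standard device is to use the integral/pointwise formulations in Proposition~\ref{prop:eviequiv} together with the elementary fact that for a locally absolutely continuous curve $t\mapsto\sfd_Y^2(y_t,\tilde y_t)$, the derivative can be estimated by freezing one variable at a time: $\frac{\d}{\d t}\tfrac12\sfd_Y^2(y_t,\tilde y_t)\le \partial_t\big[\tfrac12\sfd_Y^2(y_t,\tilde y_s)\big]_{s=t}+\partial_s\big[\tfrac12\sfd_Y^2(y_t,\tilde y_s)\big]_{t=s}$ for a.e.\ $t$. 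Adding the two $\EVI_K$ inequalities, the entropy terms $E(y_t),E(\tilde y_t)$ cancel against $E(z)$, and one is left with $\frac{\d}{\d t}\tfrac12\sfd_Y^2(y_t,\tilde y_t)\le -K\sfd_Y^2(y_t,\tilde y_t)$, whence Gronwall gives \eqref{eq:21}. Uniqueness for fixed initial datum is the special case $\tilde y_0=y_0$.

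\textbf{Plan for (i), (iii), (iv).} For (i): if $y_0\in D(E)$, integrate the pointwise version \eqref{eq:evipoint} with $z=y_0$ over $[0,t]$ to control $\sfd_Y^2(y_t,y_0)$, giving in particular an a priori bound near $t=0$ and hence that $(y_t)$ is a genuine curve emanating from $y_0$; then the general theory (Young's inequality argument mentioned before Definition~\ref{def:dissKconv}, combined with the fact, provable from \eqref{eq:defevi} by a Lebesgue-point argument, that $|\dot y_t|\le|\nabla^-E|(y_t)$ and $t\mapsto E(y_t)$ is nonincreasing) upgrades the $\EVI_K$ inequality to the energy-dissipation equality \eqref{eq:ede}. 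For (iii): given $y_0\in\overline D$, pick $y_0^n\in D$ with $\sfd_Y(y_0^n,y_0)\to0$, let $(y_t^n)$ be the corresponding flows; by (ii) the curves $(y^n_\cdot)$ form a Cauchy sequence in $\CC{[0,T]}Y$ uniformly on compact time intervals, so they converge to some $(y_t)$, which one checks is again an $\EVI_K$ flow by passing to the limit in the integral formulation \eqref{eq:eviint} (using lower semicontinuity of $E$ for the $-E(y_t)$ term), and it starts from $y_0$ by uniform convergence. For (iv): local Lipschitz continuity in $(0,\infty)$ and the monotonicity of $t\mapsto e^{Kt}|\nabla^-E|(y_t)$ follow from the semigroup property (itself a consequence of (ii)) combined with the contraction $\sfd_Y(y_{t+h},y_t)=\sfd_Y(\sfS_h y_t, \sfS_h y_{t-\delta}\cdot\ldots)$-type estimates; and the regularization estimate \eqref{eq:ultraEVI} is obtained by optimizing the integral inequality \eqref{eq:eviint} in $s$ — more precisely, from \eqref{eq:eviint} with $s=0$ one gets $\mathrm I_K(t)E(y_t)\le\mathrm I_K(t)E(z)+\tfrac12\sfd_Y^2(z,y_0)$, and then feeding back the energy-dissipation identity \eqref{eq:ede} (valid once $y_0\in D(E)$, and extended to $z$ by density) together with the monotonicity of the slope produces the extra $\tfrac12(\mathrm I_K(t))^2|\nabla^-E|^2(y_t)$ term.

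\textbf{Main obstacle.} The delicate point is the differentiation of $t\mapsto\sfd_Y^2(y_t,\tilde y_t)$ along two simultaneously moving curves in a bare metric space: there is no chain rule, so one must justify carefully, via the pointwise formulation \eqref{eq:evipoint} and an argument about a.e.\ differentiability of absolutely continuous real functions, that the one-sided derivative splits into the two "frozen" contributions. Everything else is either Gronwall bookkeeping or routine limiting arguments; I would cite \cite[Chap.~4]{Ambrosio-Gigli-Savare08} and \cite{Daneri-Savare08,Savare10} for the technical lemmas underlying this step rather than reproving them.
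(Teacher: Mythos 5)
Your proposal is correct and follows essentially the same route as the paper, which itself only sketches these facts: items (i) and (ii) are delegated to \cite{Savare10} and \cite[Chap.~4]{Ambrosio-Gigli-Savare08} (your doubling-of-variables outline for \eqref{eq:21} is precisely the argument of the cited reference), (iii) is the same contraction-plus-integral-formulation limit, and (iv) rests on the same two ingredients you name, namely the monotonicity of $t\mapsto \rme^{Kt}|\nabla^-E|(y_t)$ and the dissipation identity $\frac{\d}{\dt}E(y_t)=-|\nabla^-E|^2(y_t)$. The only place where the paper is more explicit than you are is the derivation of \eqref{eq:ultraEVI}, which it obtains by the concrete chain $\tfrac12(\mathrm I_K(t))^2|\nabla^-E|^2(y_t)\le\int_0^t \rme^{Ks}\bigl(E(y_s)-E(y_t)\bigr)\,\d s\le\tfrac12\sfd_Y^2(y_0,z)+\mathrm I_K(t)\bigl(E(z)-E(y_t)\bigr)$, the first inequality coming from the slope monotonicity, the dissipation identity and an integration by parts, and the second from the integrated $\EVI_K$; your qualitative description matches this but would need that computation to be complete.
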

\begin{proof}
The fact that $\EVI_K$ gradient flows satisfy \eqref{eq:ede} has
been proved by the third author in \cite{Savare10} (see also
\cite[Proposition~3.9]{Ambrosio-Gigli11}). The contractivity
property $(ii)$ has been proved in
\cite[Chap.~4]{Ambrosio-Gigli-Savare08}. Statement $(iii)$ follows
trivially from contractivity and integral formulation
\eqref{eq:eviint} of the $\EVI$. The fact that $t\mapsto \rme^{K
t}|\nabla^- E|(y_t)$ is nonincreasing follows from the energy
identity, which shows that $|\nabla^- E|(y_t)=|\dot y_t|$, and the
$K$-contraction estimate \eqref{eq:21}, which in particular yields
that $t\mapsto \rme^{Kt}\sfd_Y(y_t,y_{t+h})$ is nonincreasing as
well as $t\mapsto \rme^{Kt}|\dot y_t|$.

An easier regularization formula for $t\mapsto E(y_t)$ follows
immediately by \eqref{eq:eviint} by choosing $s=0$ and neglecting
the term proportional to $\sfd_Y^2(y_t,z)$. Inequality
\eqref{eq:ultraEVI} is a consequence of the $\EVI_K$, the identity
$\frac\d{\dt}E(y_t)=-|\nabla^- E|^2(y_t)$, the previous monotonicity
property and the following calculations:
  \begin{align*}
    \frac 12&\big(\mathrm I_K(t)\big)^2|\nabla^-E|^2(y_t)
    =\frac12\big(\mathrm I_{-K}(t)\big)^2\rme^{2 K t}|\nabla^- E|^2(y_t)
    \le
    \int_0^t \mathrm I_{-K}(s) \rme^{-K s}
    \rme^{2K s}|\nabla^- E|^2(y_s)\,\d s
    \\&
    =-\int_0^t  \mathrm I_{-K}( s) \rme^{K s}
    \big(E(y_s)-E(y_t)\big)'\,\d s
    =
    \int_0^t \rme^{K s}\big(E(y_s)-E(y_t)\big)\,\d s
    \\&\le
    \int_0^t -\frac 12\big(\rme^{K s}\sfd^2_Y(y_s,z)\big)'
    +\rme^{K s}\big(E(z)-E(y_t)\big)\,\d s
    %\\&
    \le
    \frac 12\sfd^2_Y(y_0,z)+\mathrm I_K(t)\big(E(z)-E(y_t)\big).
  \end{align*}
\end{proof}

We point out that in general existence of $\EVI_K$ gradient flows is
a consequence of the $K$-geodesic convexity of $E$ and of strong
geometric assumptions on the metric space $(Y,\sfd_Y)$: it is well
known when $Y$ is a convex set of an Hilbert space, but existence
holds even when $(Y,\sfd_Y)$ satisfies suitable lower sectional
curvature bounds in the sense of Alexandrov
\cite{Gigli-Ohta10,Ohta09,Savare07,Savare10}, or when suitable
compatibility conditions between $E$ and $\sfd$ hold
\cite[Chapter~4]{Ambrosio-Gigli-Savare08} which include also spaces
with nonpositive Alexandrov curvature. In the present paper we will
study an important situation where $\EVI_K$ gradient flows arise
without any assumption on sectional curvature.

In any case, $\EVI_K$ gradient flows have the following interesting
geometric consequence on the functional $E$
\cite[Theorem~3.2]{Daneri-Savare08}: if $\EVI_K$ gradient flows
exist for any initial data, then the functional is $K$-convex along
\emph{any} geodesic contained in $\overline {D(E)}$. Recall that the
standard definition of geodesic convexity, e.g.~the one involved in
Definition~\ref{def:cdk} of $CD(K,\infty)$ metric measure spaces,
requires convexity along \emph{some} geodesic; this choice is
usually motivated by stability properties
w.r.t.~$\Gamma$-convergence
\cite[Thm.~9.1.4]{Ambrosio-Gigli-Savare08} and
Sturm-Gromov-Hausdorff convergence in the case of metric measure
spaces (see also the next section). We state this property in a
quantitative way, which will turn out to be useful in the following.
\begin{proposition}\label{prop:danerisavare}
Let $E$, $K$, $y_t$ be as in Definition~\ref{def:EVIK} and assume
that for every $y_0\in\overline{D(E)}$ there exists the $\EVI_K$
gradient flow $y_t:=\sfS_t(y_0)$ for $E$ starting from $y_0$. If
$\eps\ge0$ and $\gamma:[0,1]\to\overline{D(E)}$ is a Lipschitz curve
satisfying
\begin{equation}
  \label{eq:5}
  \sfd_Y(\gamma_{s_1},\gamma_{s_2})\le L\,|s_1-s_2|,\quad
  L^2\le \sfd^2_Y(\gamma_0,\gamma_1)+\eps^2\qquad\forevery s_1,\,s_2\in [0,1],
\end{equation}
then for every $t>0$ and $s\in [0,1]$
\begin{equation}
  \label{eq:6}
  E(\sfS_t(\gamma_s))\le (1-s)E(y_0)+sE(y_1)-\frac
  K2s(1-s)\sfd^2_Y(y_0,y_1)+\frac{\eps^2}{2\mathrm I_K(t)} s(1-s).
\end{equation}
In particular $E$ is $K$-convex along all geodesics contained in
$\overline{D(E)}$.
\end{proposition}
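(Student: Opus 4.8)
The plan is to exploit the $\EVI_K$ evolution variational inequality \eqref{eq:eviint} applied to the curve $s\mapsto\sfS_t(\gamma_s)$ for fixed $t>0$, testing it against the endpoints $y_0=\sfS_t(\gamma_0)$ and $y_1=\sfS_t(\gamma_1)$, and then running the classical ``action along a curve'' argument that produces geodesic convexity from $\EVI$, keeping track of the $\eps^2$ error coming from the fact that $\gamma$ is only an approximate geodesic.

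First I would fix $t>0$ and $s\in[0,1]$, and apply the integral $\EVI_K$ inequality \eqref{eq:eviint} twice: once with the curve $h\mapsto\sfS_h(\gamma_{s})$ run up to time $t$ and test point $z=y_0=\sfS_t(\gamma_0)$, and once with test point $z=y_1=\sfS_t(\gamma_1)$. More precisely, the key estimate is that for the $\EVI_K$ flow one has, for any $a,b\in\overline{D(E)}$ and any $t>0$,
\begin{equation}
  \label{eq:planbasic}
  E(\sfS_t(a))\le E(\sfS_t(b)) + \frac{1}{2\mathrm I_K(t)}\Big(\rme^{-Kt}\sfd_Y^2(a,b) - \sfd_Y^2(\sfS_t(a),\sfS_t(b))\Big),
\end{equation}
which follows by choosing $s=0$ in \eqref{eq:eviint} (with $y_\cdot = \sfS_\cdot(a)$, $z=\sfS_t(b)$) together with the $K$-contraction \eqref{eq:21} to bound $\sfd_Y^2(\sfS_t(a),\sfS_t(b))$ from below in the wrong-signed place — actually one wants to compare $\sfd_Y(y_0,y_0)=0$ type terms, so the cleanest route is to apply \eqref{eq:eviint} to the flow started at $\gamma_s$ with test points $\sfS_t(\gamma_0)$ and $\sfS_t(\gamma_1)$ directly. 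Writing $d_0(s):=\sfd_Y(\gamma_s,\sfS_t(\gamma_0))$ and $d_1(s):=\sfd_Y(\gamma_s,\sfS_t(\gamma_1))$ is not quite right either since $\gamma_s$ is the time-$0$ point; rather one uses that $\sfS_0(\gamma_s)=\gamma_s$, gets from \eqref{eq:eviint}
\[
\frac{\rme^{Kt}}{2}\sfd_Y^2(\sfS_t(\gamma_s),y_i) - \frac12\sfd_Y^2(\gamma_s,y_i)\le \mathrm I_K(t)\big(E(y_i)-E(\sfS_t(\gamma_s))\big),\qquad i=0,1,
\]
and then forms the convex combination $(1-s)[\,\cdot\,]_{i=0} + s[\,\cdot\,]_{i=1}$.

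The next step is to bound the resulting combination of squared distances. On the gained side, $(1-s)\sfd_Y^2(\gamma_s,y_0)+s\,\sfd_Y^2(\gamma_s,y_1)$: here I would use that $y_i=\sfS_t(\gamma_i)$, the $K$-contraction \eqref{eq:21} giving $\sfd_Y(\gamma_i,y_i)=\sfd_Y(\gamma_i,\sfS_t(\gamma_i))$ is small-ish, and more importantly the Lipschitz bound \eqref{eq:5} to control $\sfd_Y(\gamma_s,\gamma_i)\le L\min\{s,1-s\}$-type quantities, ultimately producing the standard midpoint-type identity $(1-s)\sfd_Y^2(p,\gamma_0)+s\,\sfd_Y^2(p,\gamma_1)\le \sfd_Y^2(\gamma_0,\gamma_1)$-shaped bound with the crucial appearance of $L^2s(1-s)\le(\sfd_Y^2(\gamma_0,\gamma_1)+\eps^2)s(1-s)$. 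On the lower-order side, $\sfd_Y^2(\sfS_t(\gamma_s),y_i)\ge 0$ can simply be dropped (both terms are nonnegative and on the correct side after multiplying through). Collecting terms, dividing by $\mathrm I_K(t)$, and isolating $E(\sfS_t(\gamma_s))$ yields \eqref{eq:6}; the $\eps^2/(2\mathrm I_K(t))\cdot s(1-s)$ term is exactly the residue of the $L^2\le\sfd_Y^2(\gamma_0,\gamma_1)+\eps^2$ slack.

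Finally, for the last assertion: given a genuine geodesic $\gamma$ in $\overline{D(E)}$ (so \eqref{eq:5} holds with $\eps=0$), \eqref{eq:6} gives $E(\sfS_t(\gamma_s))\le(1-s)E(\sfS_t(\gamma_0))+sE(\sfS_t(\gamma_1))-\tfrac K2 s(1-s)\sfd_Y^2(\sfS_t(\gamma_0),\sfS_t(\gamma_1))$ with $y_i=\sfS_t(\gamma_i)$; one then lets $t\downarrow0$, using that $\sfS_t(\gamma_s)\to\gamma_s$, that $\sfd_Y(\sfS_t(\gamma_0),\sfS_t(\gamma_1))\to\sfd_Y(\gamma_0,\gamma_1)$ by continuity of the semigroup, and the lower semicontinuity of $E$ on the left together with the convergence $E(\sfS_t(\gamma_i))\to E(\gamma_i)$ for $i=0,1$ (or an $\limsup$ bound via \eqref{eq:ultraEVI} / the fact that $t\mapsto E(\sfS_t(y))$ is nonincreasing, handling the case $E(\gamma_i)=+\infty$ trivially). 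I expect the main obstacle to be the bookkeeping in controlling the combination of squared distances in step two — getting the coefficient of $s(1-s)$ to come out as exactly $L^2$ (hence $\le\sfd_Y^2(\gamma_0,\gamma_1)+\eps^2$) rather than something weaker requires careful use of the uniform Lipschitz bound along $\gamma$ and of contractivity to pass from distances involving $\gamma_i$ to distances involving $\sfS_t(\gamma_i)$; everything else is a routine application of the $\EVI_K$ machinery already recorded in Proposition~\ref{prop:eviequiv} and Proposition~\ref{prop:evipropr}.
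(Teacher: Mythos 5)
The paper does not actually reprove \eqref{eq:6} --- it is quoted from Daneri--Savar\'e --- so I compare your argument with the standard one. Your skeleton is exactly right: apply the integral form \eqref{eq:eviint} of the $\EVI_K$ to the flow started at $\gamma_s$, test against the two endpoints, take the convex combination with weights $(1-s)$ and $s$, control the distance terms via \eqref{eq:5}, and get the final claim by letting $t\downarrow0$ with $\eps=0$ and using lower semicontinuity. But two of your concrete choices break the computation. First, the test points must be $z=\gamma_0$ and $z=\gamma_1$, not $\sfS_t(\gamma_0)$ and $\sfS_t(\gamma_1)$: with your choice the quantity you need to bound from above is $(1-s)\sfd_Y^2(\gamma_s,\sfS_t(\gamma_0))+s\,\sfd_Y^2(\gamma_s,\sfS_t(\gamma_1))$, which contains the displacements $\sfd_Y(\gamma_i,\sfS_t(\gamma_i))$; these are controlled neither by $L$ nor by $\eps$, and the contraction \eqref{eq:21} compares $\sfd_Y(\sfS_ta,\sfS_tb)$ with $\sfd_Y(a,b)$, so it cannot remove them. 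With $z=\gamma_i$ one gets cleanly $(1-s)\sfd_Y^2(\gamma_s,\gamma_0)+s\,\sfd_Y^2(\gamma_s,\gamma_1)\le (1-s)(Ls)^2+s\bigl(L(1-s)\bigr)^2=L^2s(1-s)\le\bigl(\sfd_Y^2(\gamma_0,\gamma_1)+\eps^2\bigr)s(1-s)$, and the values $E(\gamma_i)$ appear directly on the right-hand side, as \eqref{eq:6} requires (in \eqref{eq:6} the symbols $y_0,y_1$ stand for $\gamma_0,\gamma_1$, as the application in Lemma~\ref{le:EVISCD} confirms).

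Second, and more seriously, you must not drop the nonnegative term $\frac{\rme^{Kt}}2\sfd_Y^2(\sfS_t(\gamma_s),z)$: it is precisely what generates the modulus $-\frac K2 s(1-s)\sfd_Y^2(\gamma_0,\gamma_1)$. Discarding it leaves only $E(\sfS_t(\gamma_s))\le(1-s)E(\gamma_0)+sE(\gamma_1)+\frac{L^2s(1-s)}{2\mathrm I_K(t)}$, whose error term blows up as $t\downarrow0$ and never yields $K$-convexity. The correct step is the elementary identity $(1-s)a^2+sb^2-s(1-s)(a+b)^2=\bigl((1-s)a-sb\bigr)^2\ge0$ applied with $a=\sfd_Y(\sfS_t(\gamma_s),\gamma_0)$, $b=\sfd_Y(\sfS_t(\gamma_s),\gamma_1)$ and $a+b\ge\sfd_Y(\gamma_0,\gamma_1)$, which gives the lower bound $s(1-s)\sfd_Y^2(\gamma_0,\gamma_1)$ for the convex combination of these squared distances. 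Keeping that term and combining its lower bound with the upper bound $L^2s(1-s)$ above, the coefficient of $s(1-s)\sfd_Y^2(\gamma_0,\gamma_1)$ after dividing by $\mathrm I_K(t)$ is $\frac{\rme^{Kt}-1}{2\mathrm I_K(t)}=\frac K2$, and the residue of $L^2\le\sfd_Y^2(\gamma_0,\gamma_1)+\eps^2$ is exactly $\frac{\eps^2}{2\mathrm I_K(t)}s(1-s)$. Your treatment of the final assertion ($\eps=0$, $t\downarrow0$, lower semicontinuity of $E$) is fine once these two points are repaired, and is then immediate since the right-hand side of \eqref{eq:6} no longer depends on $t$.
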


The last statement is an immediate consequence of \eqref{eq:6} by
choosing $\eps=0$ and letting $t\down0$.

\section{Strong $CD(K,\infty)$ spaces}\label{se:strongcd}
\begin{definition}[Strong $CD(K,\infty)$ spaces]\label{def:strongcd}
We say that $(X,\sfd,\mm)$ is a strong $CD(K,\infty)$ space if for
every $\mu_0,\,\mu_1\in D(\entv)\cap\probt{X}$ there exists an
optimal geodesic plan $\ppi$ from $\mu_0$ to $\mu_1$ such that
$K$-convexity of the entropy holds along all weighted plans
$\ppi_F:=F\ppi$, where $F:\geo(X)\to\R$ is any Borel, bounded, non
negative function such that $\int F\,\d\ppi=1$. More precisely, for
any such $F$, the interpolated measures $\muF
t:=(\e_t)_\sharp\ppi_F$ satisfy:
\[
\entr{\muF t }\mm\leq(1-t)\entr{\muF0}\mm +t\entr{\muF1}\mm-\frac K2
t(1-t)W_2^2(\muF0,\muF1)\qquad\forall t\in [0,1].
\]
\end{definition}

It is unclear to us whether this notion is stable w.r.t
$\D$-convergence or not. As such, it should be handled with care. We
introduced this definition for two reasons. The first one is that
applying Proposition~\ref{prop:danerisavare} we will show in
Lemma~\ref{le:EVISCD} that if a length metric measure space
$(X,\sfd,\mm)$ admits existence of $\EVI_K$ gradient flows of
$\entv$ for any initial measure $\mu\in D(\entv)\cap\probt X$, then
it is a strong $CD(K,\infty)$ space. Given that spaces admitting
$\EVI_K$ gradient flows for $\entv$ are the main subject of
investigation of this paper, it is interesting to study a priori the
properties of strong $CD(K,\infty)$ spaces. The other reason is due
to the fact that the proof that linearity of the heat flow implies
the existence of $\EVI_K$ gradient flows of the entropy requires
additional $L^\infty$-estimates for displacement interpolations
which looks unavailable in general $CD(K,\infty)$ spaces.

\begin{remark}[The nonbranching case]\label{re:nonbr}{\rm If a space $(X,\sfd,\mm)$
is $CD(K,\infty)$ and nonbranching, then it is also strong
$CD(K,\infty)$ according to the previous definition. \\
Indeed, pick $\mu_0,\,\mu_1\in D(\entv)$, let
$\ppi\in\gopt(\mu_0,\mu_1)$ be such that the relative entropy is
$K$-convex along $((\e_t)_\sharp\ppi)$, so that
$\entr{(\e_t)_\sharp\ppi}{\mm}$ is bounded in $[0,1]$. Now, pick $F$
as in Definition~\ref{def:strongcd}, let $\muF
t:=(\e_t)_\sharp\ppi_F$ and notice that the real function $s\mapsto
\phi(s):=\entr{\muF s}{\mm}$ is bounded (thus in particular $\muF
s\in D(\entv)$) in $[0,1]$, since $\muF t\leq\sup|F|
(\e_t)_\sharp\ppi$.\\
The nonbranching assumption ensures that for any $t\in(0,1)$ there
is a unique geodesic connecting $\muF t$ to $\muF 0$ (and similarly
to $\muF 1$). Hence, since $(X,\sfd,\mm)$ is a $CD(K,\infty)$ space,
the restriction of $\phi$ to all the intervals of the form $[0,t]$
and $[t,1]$ for $t\in(0,1)$ is $K$-convex and finite. It follows
that $\phi$ is $K$-convex in $[0,1]$. \fr }\end{remark}

In order to better understand the next basic interpolation estimate,
let us consider the simpler case of an otimal geodesic plan $\ppi\in
\gopt(\mu_0,\mu_1)$ in a nonbranching $CD(K,\infty)$ space
$(X,\sfd,\mm)$. Assuming that $\mu_i=\rho_i\mm\in D(\entv)$ and
setting $\mu_t:=\rho_t\mm$, along $\ppi$-a.e.\ geodesic $\gamma$ the
real map $t\mapsto\log\rho_t(\gamma_t)$ is $K$-convex and therefore
$\rho_t(\gamma_t)$ can be pointwise estimated by \cite[Thm.~30.32,
(30.51)]{Villani09}
\begin{equation}
  \label{eq:7}
  \rho_t(\gamma_t)\le \rme^{-\frac
    K2\,t(1-t)\,\sfd^2(\gamma_0,\gamma_1)}\rho_0(\gamma_0)^{1-t}\,\rho_1(\gamma_1)^t\quad
  \forevery t\in [0,1],\ \text{for $\ppi$-a.e.\ $\gamma$.}
\end{equation}
Inequality \eqref{eq:7} for smooth Riemannian manifolds goes back to
\cite{Cordero-McCann-Schmuckenschlager01}. If $\mu_i$ have bounded
supports, one immediately gets the uniform $L^\infty$-bound:
\begin{equation}
  \label{eq:8}
  \|\rho_t\|_\infty
  \le \rme^{\frac {K^-}2    t(1-t)S^2}
  \|\rho_0\|_\infty
  ^{1-t}\,\|\rho_1\|_\infty^t,\quad\text{with }
  S:=\sup\big\{\sfd(x_0,x_1):\ x_i\in\supp(\mu_i)\big\}.
\end{equation}
When $K\ge0$ \eqref{eq:8} is also a consequence of the definition (stronger than
\eqref{def:cdk}) of spaces with
non-negative Ricci curvature given by \cite{Lott-Villani09}, which in
particular yields the geodesic convexity of all the
functionals $U_{p}(\mu):=\int \rho\,^p\,\d\mm$ whenever $\mu=\rho\mm$
and $p>1$.

If we know that only $\rho_1$ is supported in a bounded set, we can
still get a weighted $L^\infty$-bound on $\rho_t$. Let us assume
that
\begin{equation}
  \label{eq:9}
  \supp\rho_1\subset \sfC,\quad\text{with}\quad
  \Di:=\diam(\sfC)<\infty,\quad
  \sfD(x):=\mathrm{dist}(x,\sfC)\quad\text{for }x\in X,
\end{equation}
and let us observe that for $\ppi$-a.e.~$\gamma$ we have
$\gamma_1\in\supp\mu_1$, so that for every $t\in [0,1)$ it holds
\begin{align}
\label{eq:10}
\sfd(\gamma_0,\gamma_1)&=\frac{\sfd(\gamma_t,\gamma_1)}{1-t}\leq\frac{\sfD(\gamma_t)+{\sf
Di}}{1-t},
\\
\label{eq:11}
\sfD(\gamma_0)&\geq \sfd(\gamma_t,\gamma_1)-{\sf
Di}-\sfd(\gamma_0,\gamma_t)= (1-2t)\sfd(\gamma_0,\gamma_1)-{\sf
Di}\geq\frac{1-2t}{1-t}\sfD(\gamma_t)-{\sf Di}.
\end{align}
Substituting the above bounds in \eqref{eq:7} we get
\begin{equation}
\label{eq:boundinterpolate} \rho_t(x)\leq
\rme^{\frac{K^-}2\frac{t}{1-t}(\sfD(x)+{\sf Di})^2}
\|\rho_0\|^{1-t}_{L^\infty(R(\sfD(x),t);\mm)}\, \|\rho_1\|^t_\infty
\qquad\text{$\mm$-a.e.~in $X$},
\end{equation}
where
\begin{equation}\label{eq:defM}
R(D,t):= \Big\{y\in X:\sfD(y)\ge \frac{1-2t}{1-t}D-\Di\Big\},\quad
D\ge 0,\ t\in [0,1).
\end{equation}

The next lemma shows that the strong $CD(K,\infty)$ condition is
sufficient to obtain the same estimates.

\begin{proposition}[Interpolation properties]\label{prop:intpropr}
Let $(X,\sfd,\mm)$ be a strong $CD(K,\infty)$ space and let
$\rho_0,\,\rho_1$ be probability densities such that
$\mu_i=\rho_i\mm\in D(\entv)\cap\probt X$. Assume that $\rho_1$ is
bounded and with support contained in a bounded set ${\sf C}$ as in
\eqref{eq:9} and let $\ppi\in\gopt(\mu_0,\mu_1)$ as in
Definition~\ref{def:strongcd}. Then for all $t\in [0,1)$ the density
$\rho_t$ of $\mu_t=(\e_t)_\sharp\ppi$ satisfies
\eqref{eq:boundinterpolate}. Furthermore, if also $\rho_0$ is
bounded with bounded support, then \eqref{eq:8} holds and
$\sup_t\|\rho_t\|_\infty<\infty$.
\end{proposition}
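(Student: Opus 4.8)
The plan is to prove the pointwise bound \eqref{eq:boundinterpolate} first and then read off \eqref{eq:8} and the uniform estimate as special cases. The engine is a localization of the strong $CD(K,\infty)$ inequality: for the plan $\ppi$ fixed in the statement and any Borel set $A\subset X$ with $\mu_t(A)>0$, apply the convexity along the weighted plan $\ppi_F$ of Definition~\ref{def:strongcd} with the weight $F:=(\chi_A\circ\e_t)/\mu_t(A)$ (a bounded nonnegative Borel function with $\int F\,\d\ppi=1$). Then $\muF t=(\mu_t\res A)/\mu_t(A)$, while $\muF0\le\mu_0/\mu_t(A)$ and $\muF1\le\mu_1/\mu_t(A)$ because $F\le1/\mu_t(A)$. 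If $A\subset\{\rho_t>\lambda\}$ then the $\mm$-density of $\muF t$ is $>\lambda/\mu_t(A)$ on $A$, so $\entr{\muF t}{\mm}\ge\log(\lambda/\mu_t(A))$; and if the $\mm$-densities of $\muF0,\muF1$ are bounded by $M_0'/\mu_t(A)$, $M_1'/\mu_t(A)$, then $\entr{\muF i}{\mm}\le\log(M_i'/\mu_t(A))$ for $i\in\{0,1\}$ (using $z\log z\le z\log c$ for $0\le z\le c$ and $\mm(X)=1$). Feeding these three bounds into $\entr{\muF t}{\mm}\le(1-t)\entr{\muF0}{\mm}+t\entr{\muF1}{\mm}-\tfrac K2 t(1-t)W_2^2(\muF0,\muF1)$, the $\log\mu_t(A)$-terms cancel and one is left with
\begin{equation}\label{eq:starplan}
\lambda\le(M_0')^{1-t}(M_1')^t\,\exp\!\Big(-\tfrac K2\,t(1-t)\,W_2^2(\muF0,\muF1)\Big).
\end{equation}

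For \eqref{eq:boundinterpolate} with $t\in(0,\tfrac12)$, I would take $A:=\{D_0\le\sfD<D_0+\delta\}\cap\{\rho_t>\lambda\}$ for parameters $D_0\ge0$, $\delta>0$, $\lambda>0$ (nothing to prove when $\mm(A)=0$, and otherwise $\mu_t(A)\ge\lambda\,\mm(A)>0$). Since $\ppi$-a.e.\ $\gamma$ satisfies $\gamma_1\in\supp\mu_1\subset\sfC$ and $\ppi_F$-a.e.\ $\gamma$ moreover has $\gamma_t\in A$, \eqref{eq:10} gives $\sfd(\gamma_0,\gamma_1)\le(D_0+\delta+\Di)/(1-t)$, hence $W_2^2(\muF0,\muF1)\le(D_0+\delta+\Di)^2/(1-t)^2$; and \eqref{eq:11} together with $\sfD(\gamma_t)\ge D_0$ and $\tfrac{1-2t}{1-t}\ge0$ yields $\gamma_0\in R(D_0,t)$. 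Thus $\muF1$ has $\mm$-density $\le\|\rho_1\|_\infty/\mu_t(A)$, and $\muF0$, being $\le\mu_0/\mu_t(A)$ and concentrated on $R(D_0,t)$, has $\mm$-density $\le\|\rho_0\|_{L^\infty(R(D_0,t);\mm)}/\mu_t(A)$. Plugging these $M_i'$ and the Wasserstein bound into \eqref{eq:starplan} and using $-\tfrac K2\le\tfrac{K^-}2$, one gets
\[
\esssup_{\{D_0\le\sfD<D_0+\delta\}}\rho_t\le\exp\!\Big(\tfrac{K^-}2\,\tfrac t{1-t}\,(D_0+\delta+\Di)^2\Big)\,\|\rho_0\|_{L^\infty(R(D_0,t);\mm)}^{1-t}\,\|\rho_1\|_\infty^t.
\]
Sending $\delta\downarrow0$ gives the inequality on each level set $\{\sfD=D_0\}$, and a covering of $X$ by countably many such shells, with $\delta$ running along a sequence tending to $0$ and using the monotonicity and one-sided semicontinuity of $D\mapsto\|\rho_0\|_{L^\infty(R(D,t);\mm)}$, upgrades this to \eqref{eq:boundinterpolate} for $\mm$-a.e.\ $x$. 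For $t\in[\tfrac12,1)$ one has $R(D,t)=X$ by \eqref{eq:defM}, so the same argument with $A=\{\sfD<D_0+\delta\}\cap\{\rho_t>\lambda\}$ gives the inequality with $\|\rho_0\|_\infty$ in place of the localized norm (and nothing to prove when $\rho_0\notin L^\infty$).

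For the last assertion, assume also $\rho_0\in L^\infty$ with bounded support and set $S:=\sup\{\sfd(x_0,x_1):x_i\in\supp\mu_i\}<\infty$. Running the scheme with $A=\{\rho_t>\lambda\}$ itself, $\muF0,\muF1$ have $\mm$-densities $\le\|\rho_0\|_\infty/\mu_t(A)$, $\le\|\rho_1\|_\infty/\mu_t(A)$ and are supported in $\supp\mu_0,\supp\mu_1$, so $W_2^2(\muF0,\muF1)\le S^2$ and \eqref{eq:starplan} becomes $\lambda\le\rme^{\frac{K^-}2 t(1-t)S^2}\|\rho_0\|_\infty^{1-t}\|\rho_1\|_\infty^t$, which is \eqref{eq:8}; since $t(1-t)\le\tfrac14$ and $\|\rho_0\|_\infty^{1-t}\|\rho_1\|_\infty^t\le\max\{\|\rho_0\|_\infty,\|\rho_1\|_\infty\}$, it follows that $\sup_t\|\rho_t\|_\infty\le\rme^{K^-S^2/8}\max\{\|\rho_0\|_\infty,\|\rho_1\|_\infty\}<\infty$. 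The derivation of \eqref{eq:starplan} is routine once the localizing weight is identified; the step I expect to be the main obstacle is the geometric bookkeeping---choosing a single $A$ for which \eqref{eq:10}--\eqref{eq:11} simultaneously confine $\muF0$ to $R(D_0,t)$ and keep $W_2(\muF0,\muF1)$ finite---together with the (more routine) passage from the shell estimates to the genuinely $\mm$-a.e.\ bound \eqref{eq:boundinterpolate}.
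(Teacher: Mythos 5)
Your proof is correct and follows essentially the same route as the paper's: you localize the optimal geodesic plan with the weight $F=\chi_A\circ\e_t/\mu_t(A)$ (which is exactly the paper's normalized restriction $c\,\ppi\res{\e_t^{-1}(A)}$), apply the strong $CD(K,\infty)$ convexity along $\ppi_F$ together with \eqref{eq:10}--\eqref{eq:11}, and let the normalizing constants cancel in the entropies; the only difference is that you run the argument directly on shells $\{D_0\le\sfD<D_0+\delta\}\cap\{\rho_t>\lambda\}$ rather than by contradiction on a set where the bound fails. The concluding measure-theoretic upgrade (countably many rational shells plus the at most countably many discontinuities of the monotone map $D\mapsto\|\rho_0\|_{L^\infty(R(D,t);\mm)}$) is sketched at the same level of detail as the paper's own slicing of $B$, and it does close.
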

\begin{proof} Let $\ppi$ be given by the strong $CD(K,\infty)$ condition. Fix
$t\in(0,1)$ and assume that \eqref{eq:boundinterpolate} does not
hold on a Borel set $B$ of positive $\mm$-measure. Then we can find
a Borel set $A\subset B$ with $\mm(A)>0$ such that
\[
\rho_t(x)>\rme^{\frac{K^-}2\frac{t}{1-t}\left(\sfD_1+{\sf
Di}\right)^2} \sfM^{1-t} \|\rho_1\|_\infty^t\qquad\forall x\in A,
\]
where
\begin{equation}
  \label{eq:12}
  \sfM:=\|\rho_0\|_{L^\infty(R(\sfD_2,t);\mm)},\quad
  \sfD_1:=\sup_{x\in A}\sfD(x),\quad
  \sfD_2:=\inf_{x\in A} \sfD(x).
\end{equation}
To build $A$, it suffices to slice $B$ in countably many pieces
where the oscillation of $D$ is sufficiently small. We have
$\ppi((\e_t)^{-1}(A))=\mu_t(A)>0$, thus the plan
$\tilde\ppi:=c\,\ppi\res{\e_t^{-1}( A)}$, where
$c:=\bigl[\mu_t(A)\bigr]^{-1}$ is the normalizing constant, is well
defined. Let $\tilde\rho_s$ be the density of
$\tilde\mu_s=(\e_s)_\sharp\tilde\ppi$. By definition it holds
$\tilde\rho_t=c\rho_t$ on $A$ and $\tilde\rho_t=0$ on $X\setminus
A$, thus we have:
\begin{equation}
\label{eq:entrrhot}
%\begin{split}&
\entr{\tilde\mu_t%(\e_t)_\sharp\tilde\ppi
}\mm=\int\tilde\rho_t\log\tilde\rho_t\,\d\mm
%\\&
> \log c+\frac{K^-}2\frac t{1-t}\bigl(\sfD_1+{\sf Di}\bigr)^2+(1-t)
\log\sfM+t\log\|\rho_1\|_\infty.
\end{equation}
On the other hand, we have $\tilde\rho_0\leq c\rho_0$ and
$\tilde\rho_1\leq c\rho_1$ hence
\begin{equation}
\label{eq:entrrho01}
\entr{\tilde\mu_0}\mm
=\int\log(\tilde\rho_0\circ\e_0)\,\d\tilde\ppi
\leq\log c+
\log\Big(\|\rho_0\circ\e_0\|_{L^\infty(\geo(X),\tilde\sppi)}\Big),\qquad
\end{equation}
\begin{equation}\label{eq:entrrho010}
\entr{\tilde\mu_1}\mm=\int\log(\tilde\rho_1\circ\e_1)\,
\d\tilde\ppi\leq\log c+\log\|\rho_1\|_\infty.
\end{equation}
Now observe that $\tilde\ppi$-a.e.~geodesic $\gamma$ satisfies
$\gamma_t\in A$ and $\gamma_1\in \supp\rho_1\subset {\sf C}$, so
that \eqref{eq:10} and \eqref{eq:11} yield
\[
\sfd(\gamma_0,\gamma_1) \leq\frac{\sfD_1+{\sf Di}}{1-t},\qquad
\sfD(\gamma_0) \geq\frac{1-2t}{1-t}\sfD_2-{\sf Di},\quad
\text{i.e.}\quad \gamma_0\in R(\sfD_2,t).
\]
Integrating the squared first inequality w.r.t.~$\tilde\ppi$ and
combining the second one, \eqref{eq:entrrho01}, and \eqref{eq:12} we
get
\begin{equation}
\label{eq:entrrho0} W_2^2(\tilde\mu_0,\tilde\mu_1)\leq
\left(\frac{\sfD_1+{\sf Di}}{1-t}\right)^2, \qquad
\entr{\tilde\mu_0}\mm\leq \log c+(1-t)\log \sfM.
\end{equation}
Inequalities \eqref{eq:entrrhot}, \eqref{eq:entrrho010}, and
\eqref{eq:entrrho0} contradict the $K$-convexity of the entropy
along $((\e_s)_\sharp\tilde\ppi)$, so the proof of the first claim
is concluded.

The proof of \eqref{eq:8} when also $\rho_0$ has bounded support
follows the same lines just used. Let $t\in(0,1)$ and assume that
\eqref{eq:8} does not hold. Thus there exists a Borel set $A$ of
positive $\mm$-measure such that $\rho_t>
e^{K^-t(1-t)S^2/2}\|\rho_0\|^{1-t}_\infty\,\|\rho_1\|^t_\infty$ in
$A$. As before, we define $\tilde\ppi:=c\ppi\res{\e_t^{-1}(A)}$,
where $c$ is the normalizing constant, and $\tilde\rho_s$ as the
density of $(\e_s)_\sharp\tilde\ppi$: the inequalities
\[
\begin{split}
&\entr{(\e_t)_\sharp\tilde\ppi}\mm>\log c+
\frac{K^-}{2}t(1-t)S^2+(1-t)\log\|\rho_0\|_\infty+t\log\|\rho_1\|_\infty,\\
&\entr{(\e_0)_\sharp\tilde\ppi}\mm\leq \log
c+\log\|\rho_0\|_\infty,\,\,\,\,
\entr{(\e_1)_\sharp\tilde\ppi}\mm\leq\log c+\log\|\rho_1\|_\infty,\\
&W_2^2\big((\e_0)_\sharp\tilde\ppi,(\e_1)_\sharp\tilde\ppi\big)\leq
S^2,
\end{split}
\]
contradict the $K$-convexity of the entropy along  $((\e_s)_\sharp\tilde\ppi)$.
\end{proof}

In the sequel we will occasionally use the stretching/restriction
operator ${\rm restr}_0^s$ in $\CC{[0,1]}X$, defined for all $s\in
[0,1]$ by
$$
{\rm restr}_0^s(\gamma)_t:=\gamma_{ts}\qquad t\in [0,1].
$$
\begin{proposition}[Existence of test plans]\label{prop:testplan}
Let $(X,\sfd,\mm)$ be a strong $CD(K,\infty)$ space and let
$\rho_0,\,\rho_1$ be probability densities. Assume that $\rho_1$ is
bounded with bounded support as in \eqref{eq:9}, that $\rho_0$ is
bounded and satisfies
\begin{equation}\label{eq:boundrho0}
\rho_0(x)\leq c\,\rme^{-9K^-(\sfD(x)-C)^2}\qquad\text{whenever
$\sfD(x):=\mathrm{dist}(x,\supp\rho_1)>R$},
\end{equation}
for some nonnegative constants $c,\,C,\,R$. Then,  for
$\ppi\in\gopt(\rho_0\mm,\rho_1\mm)$ as in
Definition~\ref{def:strongcd}, $({\rm restr}_0^{1/3})_\sharp\ppi$ is
a test plan (recall Definition~\ref{def:testplan}).
\end{proposition}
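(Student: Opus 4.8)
The plan is to verify the two defining properties of a test plan (Definition~\ref{def:testplan}) for $\ppi':=({\rm restr}_0^{1/3})_\sharp\ppi$. Concentration on $\AC2{[0,1]}X$ is the easy part: since ${\rm restr}_0^{1/3}\colon\CC{[0,1]}X\to\CC{[0,1]}X$, $\gamma\mapsto(t\mapsto\gamma_{t/3})$, is $1$-Lipschitz for the supremum distance (hence Borel), $\ppi'$ is a well-defined Borel probability measure; moreover ${\rm restr}_0^{1/3}$ maps $\geo(X)$ into itself, because $t\mapsto\gamma_{t/3}$ is again a constant speed geodesic, so $\ppi'$ inherits from $\ppi$ concentration on $\geo(X)\subset\AC2{[0,1]}X$. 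For bounded compression, the key point is that $(\e_t)_\sharp\ppi'=(\e_{t/3})_\sharp\ppi=\mu_{t/3}$ for every $t\in[0,1]$, where $\mu_s:=(\e_s)_\sharp\ppi$; hence the whole matter reduces to the uniform bound $\sup_{s\in[0,1/3]}\|\rho_s\|_{L^\infty(X,\mm)}<\infty$ on the densities $\rho_s$ of $\mu_s$.

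To obtain this bound I would invoke Proposition~\ref{prop:intpropr}. Its hypotheses hold: $\rho_1$ is bounded with bounded support by assumption, and since $\mm\in\probt X$ (recall \eqref{eq:1}) and $\rho_0,\rho_1$ are bounded, both $\rho_0\mm$ and $\rho_1\mm$ lie in $D(\entv)\cap\probt X$ — finiteness of the entropy follows from $z\log z\ge-\rme^{-1}$ and $\rho_i\le\|\rho_i\|_\infty$ together with $\mm(X)=1$, and finiteness of the second moment from $\rho_i\mm\le\|\rho_i\|_\infty\mm$. Thus \eqref{eq:boundinterpolate} applies to $\rho_s$ for all $s\in[0,1)$. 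Restricting to $s\in(0,1/3]$ and writing $D:=\sfD(x)$, the elementary inequalities $\tfrac s{1-s}\le\tfrac12$, $\tfrac{1-2s}{1-s}\ge\tfrac12$ and $1-s\ge\tfrac23$ (which hold precisely because we truncate at $1/3$) simplify \eqref{eq:boundinterpolate} to
\[
\rho_s(x)\le \rme^{\frac{K^-}{4}(D+\Di)^2}\,\|\rho_0\|^{1-s}_{L^\infty(R(\sfD(x),s);\mm)}\,\|\rho_1\|^s_\infty\qquad\text{for $\mm$-a.e.\ $x$,}
\]
and, through \eqref{eq:defM}, put $R(\sfD(x),s)$ inside the ``far'' region $\{y:\sfD(y)\ge\tfrac12 D-\Di\}$.

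Now I would split according to the size of $D$. Fix $S_*$, depending only on $R,C,\Di$, so large that for $D>S_*$ every $y$ with $\sfD(y)\ge\tfrac12 D-\Di$ satisfies both $\sfD(y)>R$ and $\tfrac12 D-\Di-C\ge0$; then the decay hypothesis \eqref{eq:boundrho0} gives $\rho_0\le c\,\rme^{-9K^-(\frac12 D-\Di-C)^2}$ $\mm$-a.e.\ on $R(\sfD(x),s)$, and, using $1-s\ge\tfrac23$ once more,
\[
\rho_s(x)\le \max\{1,c\}\,\max\{1,\|\rho_1\|_\infty\}\,\exp\!\big(K^-\big[\tfrac14(D+\Di)^2-6(\tfrac12 D-\Di-C)^2\big]\big).
\]
The bracket is a downward parabola in $D$ with leading term $-\tfrac54 D^2$, so the exponent is bounded above by a constant depending only on $K,C,\Di$; hence $\rho_s(x)$ is bounded, uniformly in $s\in(0,1/3]$, on $\{\sfD(x)>S_*\}$. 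On the complementary set $\{\sfD(x)\le S_*\}$ the three factors on the right-hand side of \eqref{eq:boundinterpolate} are separately bounded (the first by $\rme^{\frac{K^-}{4}(S_*+\Di)^2}$ since $\tfrac s{1-s}\le\tfrac12$, the others by $\max\{1,\|\rho_0\|_\infty\}$ and $\max\{1,\|\rho_1\|_\infty\}$), while the case $s=0$ is trivial since $\rho_0\le\|\rho_0\|_\infty$. Collecting the estimates yields a finite constant $C_\ppi$ with $\rho_s\le C_\ppi$ $\mm$-a.e.\ for every $s\in[0,1/3]$, that is $(\e_t)_\sharp\ppi'=\mu_{t/3}\le C_\ppi\,\mm$ for all $t\in[0,1]$, as required.

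The main obstacle is this last estimate, and more precisely the balance between two competing quadratic effects: the growth factor $\rme^{\frac{K^-}{2}\frac s{1-s}(\sfD(x)+\Di)^2}$ coming from the negative part of the curvature, against the Gaussian-type decay of $\rho_0$ at infinity once it is transported backwards along the geodesics — which, by \eqref{eq:defM}, is felt on the region $\{\sfD\gtrsim\tfrac12\sfD(x)\}$. Truncating the plan at $s=1/3$ is exactly what keeps $\tfrac s{1-s}$ and $\tfrac{1-s}{1-2s}$ under control so that this picture survives, and the numerical constant $9$ in the hypothesis \eqref{eq:boundrho0} is what upgrades ``growth merely controlled'' to ``net exponent a downward parabola''; any constant strictly larger than $\tfrac32$ would do, $9$ being a safe and convenient choice.
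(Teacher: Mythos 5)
Your proof is correct, and its engine is the same as the paper's: the interpolation estimate \eqref{eq:boundinterpolate} from Proposition~\ref{prop:intpropr}, the elementary bounds $\tfrac{t}{1-t}\le\tfrac12$, $\tfrac{1-2t}{1-t}\ge\tfrac12$, $1-t\ge\tfrac23$ valid for $t\le 1/3$, and the observation that the net exponent $K^-\bigl[\tfrac14(\sfD+\Di)^2-6(\tfrac12\sfD-\Di-C)^2\bigr]$ is a downward parabola with leading coefficient $-\tfrac54K^-$. The organization differs slightly: the paper splits the \emph{plan} into $\ppi\res{\e_0^{-1}(A)}$ and $\ppi\res{\e_0^{-1}(X\setminus A)}$ according to whether $\sfD(\gamma_0)\le L$, handles the first piece via the second part of Proposition~\ref{prop:intpropr} (both marginals bounded with bounded support), and for the second piece must propagate the information forward along geodesics ($\sfD(\gamma_0)$ large implies $\sfD(\gamma_t)\ge\tfrac23\sfD(\gamma_0)-\Di$ large) before applying \eqref{eq:boundinterpolate} to the restricted plan. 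You instead apply \eqref{eq:boundinterpolate} once, to the whole plan, and split \emph{pointwise} according to the size of $\sfD(x)$ at the interpolation point; since the estimate is already expressed in terms of $\sfD(x)$ and the region $R(\sfD(x),s)$, no restriction of the plan and no renormalization constants are needed, and the bounded region is handled trivially by $\|\rho_0\|_{L^\infty(R(\sfD(x),s))}\le\|\rho_0\|_\infty$. This is a genuine, if modest, simplification; your closing remark that any decay constant strictly larger than $\tfrac32$ would suffice is also consistent with the arithmetic.
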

\begin{proof}
In order to avoid cumbersome formulas, in this proof we switch to
the exp notation. We need to prove that $\sup_X\rho_t$ is uniformly
bounded in $[0,1/3]$. Let ${\sf Di}={\rm diam}(\supp\rho_1)$, $M$
the function defined in \eqref{eq:defM}, $L$ a constant to be
specified later, $A:=\{y:\ \sfD(y)\leq L\}$ and set
$\ppi^1:=\ppi\res{\e_0^{-1}(A)}$,
$\ppi^2:=\ppi\res{\e_0^{-1}(X\setminus A)}$. Choosing $L$ large
enough we have $\alpha:=\ppi(\e_0^{-1}(A))>0$ and we can also assume
that $\alpha<1$ (otherwise, $\rho_0$ has bounded support and the
second part of Proposition~\ref{prop:intpropr} applies). Also,
possibly increasing $R$ and taking \eqref{eq:boundrho0} into
account, we can assume that $\rho_0(x)\leq 1$ wherever $\sfD(x)\geq R$.

Denoting by $\tilde{\ppi}^1,\,\tilde{\ppi}^2$ the corresponding
renormalized plans, it suffices to show that both have bounded
densities in the time interval $[0,1/3]$, because $\ppi$ is a convex
combination of them. Concerning $\tilde{\ppi}^1$, notice that both
$(\e_0)_\sharp\tilde{\ppi}^1$ and $(\e_1)_\sharp\tilde{\ppi}^1$ have
bounded support and bounded density, so that the conclusion follows
from the second part of Proposition~\ref{prop:intpropr}.

For $\tilde{\ppi}^2$ we argue as follows. Pick
$\gamma\in\supp\tilde{\ppi}^2$ and notice that
$\gamma_1\in\supp\rho_1$ and $t\leq \frac13$ give the inequality
\[
\sfD(\gamma_t) \geq \sfd(\gamma_t,\gamma_1)-{\sf
  Di}=(1-t)\sfd(\gamma_0,\gamma_1)-{\sf Di}
\geq (1-t)\sfD(\gamma_0) -{\sf Di}\geq \frac23D(\gamma_0)- {\sf Di}.
\]
So, choosing $L$ sufficiently large (depending only on ${\sf Di}$
and $R$), we have
\[
\gamma_0\in X\setminus A\qquad\Rightarrow\qquad
\frac{\sfD(\gamma_t)}{2}-{\sf Di}>R.
\]
Recalling the definition \eqref{eq:defM} of $R(D,t)$ and using the
fact that $t\in [0,1/3]$, we get that
\begin{displaymath}
  y\in R(\sfD(\gamma_t),t)\quad\Rightarrow\quad
  \sfD(y)\ge \frac{1-2t}{1-t}\sfD(\gamma_t)-\Di\ge
  \frac{\sfD(\gamma_t)}{2}-{\sf Di}>R
  \quad
  \text{for all $\gamma\in\supp\tilde\ppi^2$,}
\end{displaymath}
and therefore \eqref{eq:boundrho0} gives
\begin{displaymath}
  \sup_{R(\sfD(\gamma_t),t)}\rho_0\le
  c\exp\Bigl(-9K^-\big(\sfD(\gamma_t)-\Di-C\big)^2\Bigr)
  \qquad
  \text{for all $\gamma\in\supp\tilde\ppi^2$.}
\end{displaymath}
Now, by applying \eqref{eq:boundinterpolate} to $\tilde{\ppi}^2$, we
get that the density $\eta_t$ of $(\e_t)_\sharp{\tilde\ppi}^2$
satisfies
\begin{align*}
  \eta_t(\gamma_t)&
  \leq \frac{1}{1-\alpha}\exp\Bigl(\frac{K^-}4(\sfD(\gamma_t)+{\sf
    Di})^2\Bigr)
  \|\rho_0\|_{L^\infty(R(\sfD(\gamma_t),t),\mm)}^{1-t}\|\rho_1\|_\infty^t
  \qquad
  \text{for $\tilde\ppi^2$-a.e.~$\gamma$.}
\end{align*}
Using the fact that $t$ varies in $[0,1/3]$ and $\rho_0\le 1$ in
$R(\sfD(\gamma_t),t)$, we eventually get
$$
\eta_t(\gamma_t)\leq c \frac{\|\rho_1\|^{1/3}_\infty}{1-\alpha}\exp
\biggl(\frac{K^-}4(\sfD(\gamma_t)+{\sf
  Di})^2-6K^-\Big(\frac{\sfD(\gamma_t)}{2}-{\sf
Di}-C\Big)^2\biggr)
  \qquad
  \text{for $\tilde\ppi^2$-a.e.~$\gamma$.}
$$
Since $-\tfrac 54 K^-\sfD^2(\gamma_t)$ is the leading term in the
exponential, the right-hand side is bounded and we deduce that
$\|\eta_t\|_{\infty}=\|\eta_t\circ\rme_t\|_{L^\infty(\geo(X),\tilde\sppi^2)}$
is uniformly bounded.
\end{proof}

\begin{proposition}[Metric Brenier theorem for strong $CD(K,\infty)$ spaces]\label{prop:brenierbip}
Let $(X,\sfd,\mm)\in\X$ be a strong $CD(K,\infty)$ space, $x_0\in
X$, $\mu_0=\rho_0\mm\in\probt X$ with
\begin{equation}
  \label{eq:13}
  0<c_R\le \rho_0\le c_R^{-1}\qquad\text{$\mm$-a.e.\ in
  }B_R(x_0)\quad\forevery R>0,
\end{equation}
and $\mu_1\in\probt X$ with bounded support and bounded density.
Then, for $\ppi\in\gopt{(\mu_0,\mu_1)}$ as in
Definition~\ref{def:strongcd}, there exists $L\in L^2(X,\mu_0)$ such
that
\[
L(\gamma_0)=\sfd(\gamma_0,\gamma_1)\qquad\text{for $\ppi$-a.e.
$\gamma\in\geo(X)$.}
\]
Furthermore,
$$L(x)=\weakgrad\varphi(x)=|\nabla^+\varphi|(x)\qquad\text{for $\mu_0$-a.e.~$x\in X$,}$$
where $\varphi$ is any Kantorovich potential relative to
$(\mu_0,\mu_1)$.
\end{proposition}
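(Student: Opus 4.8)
The strategy is to squeeze $\weakgrad\varphi$ between $\sfd(\gamma_0,\gamma_1)$ from above and, in an integrated sense, from below. The upper bound is essentially for free: by \eqref{eq:propkant} one has $|\nabla^+\varphi|(\gamma_0)\le\sfd(\gamma_0,\gamma_1)$ for $\ppi$-a.e.\ $\gamma$, and Proposition~\ref{prop:potkant} (whose hypothesis is guaranteed by the two-sided bound \eqref{eq:13}, its conclusion being $\mm$-a.e.\ and local) gives $\weakgrad\varphi\le|\nabla^+\varphi|$; hence $\weakgrad\varphi(\gamma_0)\le\sfd(\gamma_0,\gamma_1)$ for $\ppi$-a.e.\ $\gamma$, and moreover $\weakgrad\varphi\in L^2(X,\mu_0)$. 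All the work is therefore in proving the matching lower bound, which I will obtain only in the integrated form $\int\sfd(\gamma_0,\gamma_1)\,\d\ppi\le\int\weakgrad\varphi\,\d\mu_0$; combined with the pointwise upper bound this forces $\weakgrad\varphi(\gamma_0)=\sfd(\gamma_0,\gamma_1)$ for $\ppi$-a.e.\ $\gamma$, which is exactly the assertion.

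First a reduction. Replacing $\ppi$ by the weighted plan $\bigl(\mathbf 1_{\e_0^{-1}(B_R(x_0))}/\ppi(\e_0^{-1}(B_R(x_0)))\bigr)\,\ppi$, which is again a plan as in Definition~\ref{def:strongcd} for its renormalized marginals and keeps the same Kantorovich potential $\varphi$ (the restricted support is still $c$-cyclically monotone and the marginals have bounded support), and noting that the claims only concern $\ppi$-a.e.\ $\gamma$ while $\bigcup_R B_R(x_0)=X$, I may assume that $\rho_0$ is bounded with bounded support. Then \eqref{eq:boundrho0} holds trivially, so Proposition~\ref{prop:testplan} applies and $({\rm restr}_0^{1/3})_\sharp\ppi$ — and hence also $({\rm restr}_0^{s})_\sharp\ppi$ for every $s\in(0,1/3]$, being a time-restriction of it — is a test plan; in particular the densities $\rho_u$ of $\mu_u:=(\e_u)_\sharp\ppi$ are uniformly bounded for $u\in[0,1/3]$. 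Since $\mu_1$ has bounded support, $\varphi$ may be taken locally Lipschitz, so it is Sobolev along almost every curve and $\weakgrad\varphi\in L^1(\mm)$.

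Now the key chain. Writing $\psi:=\varphi^c$, from $\varphi+\psi\le c$ everywhere, $\varphi(\gamma_0)+\psi(\gamma_1)=\tfrac12\sfd^2(\gamma_0,\gamma_1)$ on $\supp\big((\e_0,\e_1)_\sharp\ppi\big)$, and $\sfd(\gamma_s,\gamma_1)=(1-s)\sfd(\gamma_0,\gamma_1)$, one gets for $\ppi$-a.e.\ $\gamma$ and all $s\in[0,1]$ the elementary estimate
\[
\varphi(\gamma_0)-\varphi(\gamma_s)\ \ge\ \tfrac12\,s(2-s)\,\sfd^2(\gamma_0,\gamma_1).
\]
On the other hand, applying the weak upper gradient inequality \eqref{eq:restrizione} to $\varphi$ along the (reparametrized geodesic) curves of the test plan $({\rm restr}_0^{s})_\sharp\ppi$, and using $|\dot\gamma_r|\equiv\sfd(\gamma_0,\gamma_1)$, we obtain for $\ppi$-a.e.\ $\gamma$
\[
\varphi(\gamma_0)-\varphi(\gamma_s)\ \le\ \sfd(\gamma_0,\gamma_1)\int_0^{s}\weakgrad\varphi(\gamma_u)\,\d u .
\]
Combining the two, dividing by $s\,\sfd(\gamma_0,\gamma_1)$ (the set $\{\gamma_0=\gamma_1\}$ being trivial), and integrating against $\ppi$,
\[
\frac{2-s}{2}\int\sfd(\gamma_0,\gamma_1)\,\d\ppi\ \le\ \frac1s\int_0^{s}\Big(\int\weakgrad\varphi\,\d\mu_u\Big)\,\d u .
\]
Because $\rho_u$ is uniformly bounded on $[0,1/3]$ and $\mu_u\to\mu_0$ narrowly as $u\downarrow0$, one has $\rho_u\weakto\rho_0$ weakly$^*$ in $L^\infty(\mm)$ (bounded continuous functions being dense in $L^1(\mm)$), whence $\int\weakgrad\varphi\,\d\mu_u\to\int\weakgrad\varphi\,\d\mu_0$; letting $s\downarrow0$ gives $\int\sfd(\gamma_0,\gamma_1)\,\d\ppi\le\int\weakgrad\varphi\,\d\mu_0$. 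Together with $\weakgrad\varphi(\gamma_0)\le\sfd(\gamma_0,\gamma_1)$ $\ppi$-a.e.\ this yields $\int\bigl(\sfd(\gamma_0,\gamma_1)-\weakgrad\varphi(\gamma_0)\bigr)\,\d\ppi=0$ with nonnegative integrand, so $\weakgrad\varphi(\gamma_0)=\sfd(\gamma_0,\gamma_1)$ for $\ppi$-a.e.\ $\gamma$. In particular the right-hand side depends only on $\gamma_0$, so $L:=\weakgrad\varphi$ works, with $\int L^2\,\d\mu_0=\int\sfd^2(\gamma_0,\gamma_1)\,\d\ppi=W_2^2(\mu_0,\mu_1)<\infty$; and since $\weakgrad\varphi\le|\nabla^+\varphi|$ $\mm$-a.e.\ while $|\nabla^+\varphi|(\gamma_0)\le\sfd(\gamma_0,\gamma_1)=\weakgrad\varphi(\gamma_0)$ $\ppi$-a.e., we conclude $L=\weakgrad\varphi=|\nabla^+\varphi|$ $\mu_0$-a.e.

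The step I expect to be most delicate is the localization: one must check that after the weighting above the hypotheses of Propositions~\ref{prop:intpropr}, \ref{prop:testplan} and \ref{prop:potkant} genuinely hold for the renormalized marginals (in particular that the local two-sided bound \eqref{eq:13} on $B_R(x_0)$, rather than a global one, suffices for the $\mm$-a.e., local conclusion of Proposition~\ref{prop:potkant} and for $|\nabla^+\varphi|\in L^2(\mu_0)$), and that the resulting $L$ is globally well defined — this relies on $\weakgrad\varphi$ being intrinsic to $(X,\sfd,\mm)$ and $\varphi$, and on the locality \eqref{eq:weaklocal} of weak upper gradients. A minor but essential secondary point is the continuity at $u=0$ of $u\mapsto\int\weakgrad\varphi\,\d\mu_u$, which is precisely why Proposition~\ref{prop:testplan} (uniform compression, not mere absolute continuity of $\mu_u$) is invoked.
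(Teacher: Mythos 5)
Your proposal takes a genuinely different route from the paper's. The paper's entire proof consists of verifying the local compression bound \eqref{eq:boundlinfty1} on the interpolating measures $(\e_t)_\sharp\ppi$ (by restricting $\ppi$ to the curves that hit $B_R(x_0)$ at some time in $[0,1/2]$ and applying the second part of Proposition~\ref{prop:intpropr}) and then invoking the \emph{weighted} metric Brenier theorem \cite[Theorem~10.3]{Ambrosio-Gigli-Savare11} with $V(x)=\sfd(x,x_0)$; you instead re-derive that external theorem from scratch. Your squeeze --- the $c$-concavity estimate $\varphi(\gamma_0)-\varphi(\gamma_s)\ge\tfrac12 s(2-s)\sfd^2(\gamma_0,\gamma_1)$, the weak upper gradient inequality \eqref{eq:restrizione} along the test plan $({\rm restr}_0^{s})_\sharp\ppi$ furnished by Proposition~\ref{prop:testplan}, and the passage $s\downarrow 0$ using the uniform compression --- is indeed the mechanism inside the cited theorem, and your localization is legitimate as far as it goes: weighted plans of a weighted plan are weighted plans of $\ppi$, so the restricted plan still qualifies for Definition~\ref{def:strongcd}, and restriction preserves $c$-cyclical monotonicity and the potential.

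There is, however, a genuine gap at the step you yourself flag as delicate. The pointwise upper bound $\weakgrad\varphi(\gamma_0)\le\sfd(\gamma_0,\gamma_1)$ rests on Proposition~\ref{prop:potkant}, whose hypothesis is the \emph{global} lower bound $\mu\ge c\mm$, whereas \eqref{eq:13} only gives $\rho_0\ge c_R$ on each ball $B_R(x_0)$. Your reduction does not repair this: after restricting $\ppi$ to $\e_0^{-1}(B_R(x_0))$ and renormalizing, the new first marginal vanishes identically outside $B_R(x_0)$, so it is still not bounded below by a positive multiple of $\mm$ on $X$, and Proposition~\ref{prop:potkant} still does not apply. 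Asserting that its conclusion ``is $\mm$-a.e.\ and local'' is not a proof: the global lower bound is used precisely to pass from ``along a.e.\ curve of a test plan whose marginals are controlled by $\mu$'' to an $\mm$-a.e.\ statement, and under merely local bounds this is exactly the content of the weighted statements of \cite[Section~10]{Ambrosio-Gigli-Savare11} that the paper imports wholesale. Since both the pointwise half of your squeeze and the final identity $L=\weakgrad\varphi=|\nabla^+\varphi|$ $\mu_0$-a.e.\ hinge on the inequality $\weakgrad\varphi\le|\nabla^+\varphi|$, the gap propagates to the conclusion. To close it you would either have to prove a localized version of Proposition~\ref{prop:potkant} under \eqref{eq:13}, or do as the paper does and reduce the whole statement to checking the compression hypothesis of \cite[Theorem~10.3]{Ambrosio-Gigli-Savare11}. (A secondary imprecision: local Lipschitzness of $\varphi$ alone does not give $\weakgrad\varphi\in L^1(X,\mm)$ on all of $X$; you need to note that all measures $\mu_u$, $u\in[0,1/3]$, are concentrated on a fixed bounded set after your reduction.)
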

\begin{proof}
We apply the metric Brenier Theorem~10.3 of
\cite{Ambrosio-Gigli-Savare11} with $V(x)=\sfd(x,x_0)$. To this aim,
we need only to show that
\begin{equation}\label{eq:boundlinfty1}
(\e_t)_\sharp\ppi(B\cap B_R(x_0))\leq C(R)\mm(B)\qquad\forevery t\in
[0, 1/2],\,\,B\in\BorelSets{X},\ R>0.
\end{equation}
Denoting by $R_1$ the radius of a ball containing the
support of $\mu_1$, notice that if a curve $\gamma$ in the support of
$\ppi$ hits $B_R(x_0)$ at some time $s\in [0,1/2]$, then
$$
\sfd(\gamma_0,\gamma_1)\leq 2\sfd(\gamma_s,\gamma_1)\leq 2(R+R_1)
$$
because $\gamma_1\in B_{R_1}(x_0)$. Possibly restricting $\ppi$ to
the set of $\gamma$'s hitting $B_R(x_0)$ at some $s\in[0,1/2]$, an
operation which does not affect $((\e_t)_\sharp\ppi)\res B_R(x_0)$
for $t\in [0,1/2]$, we get that $(\e_0)_\sharp\ppi$,
$(\e_1)_\sharp\ppi$ have bounded support and bounded densities, thus
the conclusion follows from the second part of
Proposition~\ref{prop:intpropr}.
\end{proof}

\section{Key formulas}\label{se:formule}

\subsection{Derivative of the squared Wasserstein
distance}\label{sub5}

In this short section we compute the derivative of the squared
Wasserstein distance along a heat flow.

\begin{theorem}[Derivative of squared Wasserstein distance]
\label{thm:derw2} Let $(X,\sfd,\mm)$ be a $CD(K,\infty)$ space,
$\mu=\rho\mm\in\probt X$ such that $0<c\leq \rho\leq C<\infty$ and
define $\mu_t:=\heatw_t(\mu)=\rho_t\mm$. Let $\sigma\in\probt X$ and
for any $t>0$ let $\varphi_t$ be a Kantorovich potential relative to
$(\mu_t,\sigma)$. Then for a.e.~$t>0$ it holds
\begin{equation}
\label{eq:derw2} \frac{\d}{\d
t}\frac12W_2^2(\mu_t,\sigma)\leq\frac{\C(\rho_t-\eps\varphi_t)-\C(\rho_t)}\eps
\qquad\forall\eps>0.
\end{equation}
\end{theorem}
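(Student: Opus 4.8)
The plan is to combine the classical duality formula for the time derivative of $W_2^2$ along the heat flow with the identification of the heat flow as the $L^2$-gradient flow of $\C$, so that the ``vertical'' perturbation $\rho_t - \eps\varphi_t$ enters naturally. First I would recall that, since $0 < c \le \rho \le C$, the maximum principle (Proposition~\ref{prop:proprheat}(i)) gives $0 < c \le \rho_t \le C$ for all $t>0$, so each $\mu_t = \rho_t\mm$ has density bounded away from $0$ and $\infty$; in particular $\varphi_t$ can be chosen (by the standard construction, since $\sigma$ need not have bounded support we may need a mild truncation/approximation argument, or invoke that $\mu_t \ge c\mm$) so that $\weakgrad{\varphi_t} \in L^2(X,\mm)$ via Proposition~\ref{prop:potkant} and $|\nabla^+\varphi_t| \in L^2(X,\mu_t)$ by \eqref{eq:propkant}. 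This integrability is what makes $\C(\rho_t - \eps\varphi_t)$ finite and the right-hand side of \eqref{eq:derw2} meaningful.

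Next I would establish the duality upper bound for the derivative of $t\mapsto \tfrac12 W_2^2(\mu_t,\sigma)$. Fix $t_0>0$ where $t\mapsto \rho_t$ is differentiable in $L^2(X,\mm)$ with $\tfrac{\d}{\d t}\rho_t = \Deltam\rho_t$ (Theorem~\ref{thm:gfc}); almost every $t_0$ works. Using the Kantorovich duality
\[
\tfrac12 W_2^2(\mu_{t},\sigma) = \sup_{\psi}\Big\{\int \psi\,\d\mu_{t} + \int \psi^c\,\d\sigma\Big\},
\]
the supremum being attained at $\psi = \varphi_{t_0}$ for $t = t_0$, one gets for $t$ near $t_0$
\[
\tfrac12 W_2^2(\mu_{t},\sigma) \ge \int \varphi_{t_0}\,\d\mu_{t} + \int \varphi_{t_0}^c\,\d\sigma
= \tfrac12 W_2^2(\mu_{t_0},\sigma) + \int \varphi_{t_0}\,(\rho_t - \rho_{t_0})\,\d\mm.
\]
Dividing by $t-t_0$ and letting $t\to t_0$ from both sides yields
\[
\frac{\d}{\d t}\Big|_{t=t_0}\tfrac12 W_2^2(\mu_{t},\sigma) = \int \varphi_{t_0}\,\partial_t\rho_{t_0}\,\d\mm = \int \varphi_{t_0}\,\Deltam\rho_{t_0}\,\d\mm,
\]
where I should be slightly careful: the one-sided difference quotients of $W_2^2$ straddle this value, and since $W_2^2(\mu_t,\sigma)$ is locally Lipschitz in $t$ (as $t\mapsto\mu_t$ is locally Lipschitz in $W_2$, the heat flow being an $\AC^2$ curve in $\probt X$ by Theorem~\ref{thm:heatgf}), the derivative exists a.e.\ and equals this expression a.e.

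Finally, I would convert $\int \varphi_{t_0}\,\Deltam\rho_{t_0}\,\d\mm$ into the convexity-type bound on the right of \eqref{eq:derw2}. Since $\Deltam\rho_{t_0} = -v$ with $v\in\partial^-\C(\rho_{t_0})$ the element of minimal norm, the subdifferential inequality
\[
\C(\rho_{t_0} - \eps\varphi_{t_0}) \ge \C(\rho_{t_0}) + \int v\,(-\eps\varphi_{t_0})\,\d\mm = \C(\rho_{t_0}) + \eps\int \varphi_{t_0}\,\Deltam\rho_{t_0}\,\d\mm
\]
gives exactly
\[
\int \varphi_{t_0}\,\Deltam\rho_{t_0}\,\d\mm \le \frac{\C(\rho_{t_0} - \eps\varphi_{t_0}) - \C(\rho_{t_0})}{\eps} \qquad\forall\,\eps>0,
\]
which is \eqref{eq:derw2}. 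The main obstacle I anticipate is the rigorous justification of passing the derivative inside the integral, i.e.\ that $\frac{\d}{\d t}\int \varphi_{t_0}\rho_t\,\d\mm = \int\varphi_{t_0}\partial_t\rho_t\,\d\mm$ at $t=t_0$: this needs $\varphi_{t_0}\in L^2(X,\mm)$, which follows from the lower bound $\mu_{t_0}\ge c\mm$ together with $|\nabla^+\varphi_{t_0}|\in L^2(X,\mu_{t_0})$ and a Poincaré/log-Sobolev-type control of $\|\varphi_{t_0}\|_{L^2}$ (or a direct estimate using that $\varphi_{t_0}$ is $c$-concave, hence its oscillation is controlled by $W_2(\mu_{t_0},\sigma)$ up to additive constants, and additive constants are harmless in $\int\varphi_{t_0}(\rho_t-\rho_{t_0})\,\d\mm$ since $\rho_t,\rho_{t_0}$ are probability densities). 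A secondary technical point is handling the choice of Kantorovich potential when $\sigma$ has unbounded support, which I would deal with either by the general existence of Kantorovich potentials in $L^1$ recalled in Section~\ref{sub2} or by a truncation argument on $\sigma$ followed by a limiting procedure.
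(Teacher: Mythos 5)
Your overall architecture is exactly the paper's: the one-sided Kantorovich duality estimate $\tfrac12 W_2^2(\mu_{t_0-h},\sigma)\ge\int\varphi_{t_0}\,\d\mu_{t_0-h}+\int\varphi_{t_0}^c\,\d\sigma$, combined with the subdifferential inequality for $-\Deltam\rho_{t_0}\in\partial^-\C(\rho_{t_0})$ applied to the competitor $\rho_{t_0}-\eps\varphi_{t_0}$. (Only the upper bound on the derivative is needed, so the backward difference quotient suffices; your claim of equality is harmless but unnecessary.)

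The genuine gap is in the step you yourself flag as the main obstacle: justifying $\liminf_{h\downarrow0}\int\varphi_{t_0}\,h^{-1}(\rho_{t_0}-\rho_{t_0-h})\,\d\mm\le\int\varphi_{t_0}\Deltam\rho_{t_0}\,\d\mm$ when $\varphi_{t_0}\notin L^2(X,\mm)$. Neither of your proposed fixes works. The oscillation of a Kantorovich potential is \emph{not} controlled by $W_2(\mu_{t_0},\sigma)$: already in $\R^d$ the potential for a translation is affine and unbounded, and in general $\varphi$ can grow quadratically, so one cannot reduce to the bounded case by subtracting a constant. A Poincar\'e or log-Sobolev control of $\|\varphi_{t_0}\|_2$ by $\|\weakgrad{\varphi_{t_0}}\|_2$ is only available for $K>0$, whereas the theorem is stated for arbitrary $K\in\R$ and the paper assumes no doubling or Poincar\'e inequality. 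What one actually has is only $\varphi_{t_0}\in L^1(X,\mm)$ (from $\varphi_{t_0}\in L^1(X,\mu_{t_0})$ and $c\le\rho_{t_0}\le C$) together with $\weakgrad{\varphi_{t_0}}\in L^2(X,\mm)$ from Proposition~\ref{prop:potkant}. The paper closes the gap by truncating, $\varphi^N:=\max\{\min\{\varphi_{t_0},N\},-N\}\in L^2(X,\mm)$, noting that $\C(\rho_{t_0}-\eps\varphi^N)\to\C(\rho_{t_0}-\eps\varphi_{t_0})$ by the chain rule, locality and dominated convergence, and then controlling the error term uniformly in $h$ via the a priori estimate of Lemma~\ref{le:luigi}: the quantity $\bigl|\int(\varphi_{t_0}-\varphi^N)\,h^{-1}(\rho_{t_0}-\rho_{t_0-h})\,\d\mm\bigr|^2$ is bounded by the time average over $s\in(t_0-h,t_0)$ of $\int_{\{|\varphi_{t_0}|>N\}}\weakgrad{\varphi_{t_0}}^2\rho_s\,\d\mm$ times the Fisher information, and this vanishes as $N\to\infty$. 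Some device of this kind --- an integration-by-parts bound that sees only the \emph{gradient} of the test function, not the function itself --- is needed to make your argument rigorous; as written, the proposal does not supply one.
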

\begin{proof} Since $t\mapsto\rho_t\mm$ is a locally absolutely continuous curve in
$\probt X$, the derivative at the left hand side of \eqref{eq:derw2}
exists for a.e.~$t>0$. Also, the derivative of
$t\mapsto\rho_t=\heatl_t(\rho)\in L^2(X,\mm)$ exists for a.e.~$t>0$.
Fix $t_0>0$ where both derivatives exist and notice that since
$\varphi_{t_0}$ is a Kantorovich potential for $(\mu_{t_0},\sigma)$
it holds
\[
\begin{split}
\frac 12 W_2^2(\mu_{t_0},\sigma)&=\int_X\varphi_{t_0}\,\d\mu_{t_0}+\int\varphi_{t_0}^c\,\d\sigma\\
\frac 12
W_2^2(\mu_{t_0-h},\sigma)&\geq\int_X\varphi_{t_0}\,\d\mu_{t_0-h}+
\int\varphi_{t_0}^c\,\d\sigma\qquad\text{for all $h$ such that
$t_0-h> 0$.}
\end{split}
\]
Taking the difference between the first identity and the second
inequality, dividing by $h>0$, and letting $h\to 0$ we obtain
\[
\frac{\d}{\d t}\frac 12 W_2^2(\mu_{t},\sigma)\restr{t=t_0}
\leq\liminf_{h\downarrow 0}\int_X\varphi_{t_0}\frac{
\rho_{t_0}-\rho_{t_0-h}}{h}\,\d\mm.
\]
Now recall that $\varphi_{t_0}\in L^1(X,\mu_{t_0})$, so that by our
assumption on $\rho$ and the maximum principle
(Proposition~\ref{prop:proprheat}) we deduce that $\varphi_{t_0}\in
L^1(X,\mm)$. By Proposition~\ref{prop:potkant} we have $\weakgrad
{\varphi_{t_0}}\in L^2(X,\mm)$.

Now, if $\varphi_{t_0}\in L^2(X,\mm)$ the estimate of the $\liminf$
with the difference quotient of $\C$ is just a consequence of the
following three facts: the first one is that, for all $t>0$ we have
$h^{-1}(\rho_{t+h}-\rho_t)\to\Deltam\rho_t$ as $h\downarrow 0$ in
$L^2(X,\mm)$; the second one is that we have chosen $t_0$ such that
the full limit exists; the third one is the inequality
\[
\C(\rho_{t_0})+\eps\int\varphi_{t_0}\Deltam\rho_{t_0}\,\d\mm\leq\C(\rho_{t_0}-\eps\varphi_{t_0})
\qquad\forall\eps>0
\]
provided by the inclusion $-\Deltam
\rho_{t_0}\in\partial^-\C(\rho_{t_0})$.

For the general case, fix $t_0>0$ as before, $\eps>0$ and let
$\varphi^N:=\max\{\min\{\varphi_{t_0},N\},-N\}\in L^2(X,\mm)$ be the
truncated functions. Since the chain rule \eqref{eq:chainrule} gives
$\weakgrad{\varphi^N}\leq\weakgrad{\varphi_{t_0}}$, the locality of
the minimal weak gradient \eqref{eq:weaklocal} and the dominated
convergence theorem ensures that $\C(\rho_t-\eps \varphi^N)\to
\C(\rho_t-\eps \varphi_{t_0})$ as $N\to\infty$. Applying
Lemma~\ref{le:luigi} below with $f:=\varphi_{t_0}-\varphi^N$ we get
\[
\begin{split}
\sup_{h\in(0,t_0/2)}&\left|\int
(\varphi_{t_0}-\varphi^N)\frac{\rho_{t_0}-\rho_{t_0-h}}{h}\,\d\mm\right|^2\\
&\leq \sup_{h\in(0,t_0/2)}\frac1h\int\limits_{t_0-h}^{t_0}
\biggl(\int\limits_{\{|\varphi_{t_0}|>N\}}\weakgrad
{\varphi_{t_0}}^2\,\rho_s\,\d\mm\,\int\frac{\weakgrad{\rho_s}^2}{\rho_s}\,\d\mm\biggr)\,\d
s,
\end{split}
\]
and hence
\[
\limsup_{N\to\infty}\sup_{h\in(0,t_0/2)}\left|\int
(\varphi_{t_0}-\varphi^N)\frac{
\rho_{t_0}-\rho_{t_0-h}}{h}\,\d\mm\right|=0,
\]
which is sufficient to conclude, applying the liminf estimate to all
functions $\varphi^N$ and then passing to the limit.
\end{proof}

\begin{lemma}\label{le:luigi} With the same notation and assumptions
of the previous theorem, for every $f\in L^1(X,\mm)$ and
$[s, t]\subset (0,\infty)$ it holds
\begin{equation}
\label{eq:trucco} \left|\int
f\frac{\rho_{t}-\rho_s}{t-s}\,\d\mm\right|^2\leq
\frac1{t-s}\int_s^{t}\biggl(\int\weakgrad
f^2\,\rho_r\,\d\mm\,\int\frac{\weakgrad{\rho_s}^2}{\rho_s}\,\d\mm\biggr)\,\d r.
\end{equation}
\end{lemma}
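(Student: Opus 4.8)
The plan is to bound the increment $\int f\,\d\mu_t-\int f\,\d\mu_s$, where $\mu_r:=\rho_r\mm=\heatw_r(\mu)$, by lifting the heat flow to a suitable \emph{test plan} and combining the weak upper gradient inequality for $f$ with the energy dissipation identity \eqref{eq:edissrateflow}. One may assume that $f$ is Sobolev along a.e.\ curve with $\weakgrad f\in L^2(X,\mm)$, since otherwise the right‑hand side of \eqref{eq:trucco} is $+\infty$; fix a Borel representative of $f$. By the maximum principle (Proposition~\ref{prop:proprheat}(i)) and the hypothesis $0<c\le\rho\le C$ of the previous theorem, one has $0<c\le\rho_r\le C$ $\mm$-a.e.\ for every $r\ge0$, so in particular $\mu_r\le C\mm$ and $f\in L^1(X,\mu_r)$ for all $r$.

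Next I would fix $[s,t]\subset(0,\infty)$. Since $(\mu_r)$ is the gradient flow of the entropy, \eqref{eq:edissrateflow} gives $\int_s^t|\dot\mu_r|^2\,\d r=\entr{\mu_s}{\mm}-\entr{\mu_t}{\mm}\le\entr{\mu_s}{\mm}<\infty$, so $(\mu_r)_{r\in[s,t]}\in\AC2{[s,t]}{\probt X}$ and $|\dot\mu_r|^2=\int\weakgrad{\rho_r}^2/\rho_r\,\d\mm$ for a.e.\ $r$ (using $\rho_r>0$). Applying Lisini's theorem \cite{Lisini07} exactly as in the proof of Lemma~\ref{le:sug}, one obtains $\ppi\in\prob{\CC{[s,t]}X}$ concentrated on $\AC2{[s,t]}X$ with $(\e_r)_\sharp\ppi=\mu_r$ for all $r\in[s,t]$ and $\int|\dot\gamma_r|^2\,\d\ppi(\gamma)=|\dot\mu_r|^2$ for a.e.\ $r$. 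The key observation is that $(\e_r)_\sharp\ppi=\rho_r\mm\le C\mm$, so $\ppi$ is a test plan (Definition~\ref{def:testplan}); hence, by the defining property of $\weakgrad f$ (cf.\ \eqref{eq:inweak}), applied on the whole interval $[s,t]$, the inequality
\[
|f(\gamma_t)-f(\gamma_s)|\le\int_s^t\weakgrad f(\gamma_r)\,|\dot\gamma_r|\,\d r
\]
holds for $\ppi$-a.e.\ $\gamma$.

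Finally I would integrate this against $\ppi$, use $\mu_r=(\e_r)_\sharp\ppi$, Fubini and the Cauchy--Schwarz inequality in $\gamma$ (with respect to $\ppi$) inside the time integral, obtaining
\[
\Bigl|\int f\,\d\mu_t-\int f\,\d\mu_s\Bigr|\le\int_s^t\Bigl(\int\weakgrad f^2\,\rho_r\,\d\mm\Bigr)^{1/2}\Bigl(\int\frac{\weakgrad{\rho_r}^2}{\rho_r}\,\d\mm\Bigr)^{1/2}\d r,
\]
where the last factor equals $|\dot\mu_r|$ by \eqref{eq:edissrateflow}. A further Cauchy--Schwarz inequality in $r$ over $[s,t]$, then dividing by $t-s$ and squaring, yields \eqref{eq:trucco} (with $\rho_r$ in the last factor). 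The only step that deserves care is the passage from ``a.e.\ curve'' to ``$\ppi$-a.e.\ curve'' in the weak upper gradient inequality; this is precisely where the uniform bound $\rho_r\le C$ is used, through the fact that it makes the Lisini superposition plan a test plan, and it is exactly what allows the estimate to be phrased in terms of the minimal weak upper gradient $\weakgrad f$ of an arbitrary $f\in L^1(X,\mm)$ rather than in terms of the slope of a Lipschitz function.
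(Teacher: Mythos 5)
Your proof is correct, but it takes a genuinely different route from the paper's. The paper argues \emph{vertically}, entirely on the $L^2$ side of the flow: for $f\in L^2(X,\mm)$ it writes $\rho_t-\rho_s=\int_s^t\Deltam\rho_r\,\d r$, applies the rough integration by parts \eqref{eq:partilapl} in the form $|\int f\Deltam\rho_r\,\d\mm|^2\le\int\weakgrad f^2\rho_r\,\d\mm\,\int\weakgrad{\rho_r}^2\rho_r^{-1}\,\d\mm$, and averages in $r$; the general $f\in L^1(X,\mm)$ is then recovered by truncation, using $\rho_t-\rho_s\in L^\infty(X,\mm)$ and $\weakgrad{f^N}\le\weakgrad{f}$. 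You argue \emph{horizontally}: you lift the curve $(\rho_r\mm)$ to a Lisini superposition, observe that the uniform bound $\rho_r\le C$ makes it a test plan, feed the defining inequality of the minimal weak upper gradient into it (in effect a weak-upper-gradient version of Lemma~\ref{le:sug}), and identify $|\dot\mu_r|$ with the square root of the Fisher information via \eqref{eq:edissrateflow}. Both arguments are sound and yield the same bound, with $\rho_r$ rather than $\rho_s$ in the last factor, confirming your reading that the printed $\rho_s$ is a typo. What the paper's route buys is economy of hypotheses: it uses only the Hilbertian theory of the $L^2$-gradient flow of $\C$ and needs neither the $CD(K,\infty)$ assumption nor the identification Theorem~\ref{thm:heatgf}, the passage to $L^1$ being a two-line truncation. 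What your route buys is that it avoids the truncation entirely and makes explicit where the upper bound $\rho\le C$ enters, namely through the bounded compression of the lifted plan; its cost is the reliance on the Lagrangian description of the flow (Lisini's theorem and \eqref{eq:edissrateflow}), which is legitimate here only because the lemma inherits the $CD(K,\infty)$ assumption of Theorem~\ref{thm:derw2}.
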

\begin{proof}
Assume first that $f\in L^2(X,\mm)$. Then from \eqref{eq:partilapl}
we get
\[
\left|\int f\Deltam\rho_r\,\d\mm\right|^2\leq\left(\int\weakgrad
f\,\weakgrad{\rho_r}\,\d\mm\right)^2\leq\int\weakgrad
f^2\,\rho_r\,\d\mm\,\int\frac{\weakgrad{\rho_r}^2}{\rho_r}\,\d\mm,
\]
for all $r>0$, and the thesis follows by integration in $(s,t)$.

For the general case, let $f^N:=\max\{\min\{f,N\},-N\}\in
L^2(X,\mm)$ be the truncated functions. By
Proposition~\ref{prop:proprheat}(i) we know that
$\rho_{t}-\rho_s\in L^\infty(X,\mm)$, so that
\[
\lim_{N\to\infty}\int f^N\,\frac{\rho_t-\rho_s}{t-s}\,\d\mm=\int
f\,\frac{\rho_t-\rho_s}{t-s}\,\d\mm,
\]
by dominated convergence. Also, by the chain rule
\eqref{eq:chainrule} we have $\weakgrad{f^N}\leq\weakgrad{f}$
$\mm$-a.e.~in $X$. The conclusion follows.
\end{proof}

\subsection{Derivative of the entropy along a geodesic}\label{sub6}

We now look for a formula to bound from below the derivative of the
entropy along a geodesic, which is going to be a much harder task
compared to Theorem~\ref{thm:derw2}, due to the lack of a change of
variable formula. From the technical point of view, we will need to
assume that $(X,\sfd,\mm)$ is a strong $CD(K,\infty)$ space, it
order to apply the metric Brenier theorem \ref{prop:brenierbip}.
From the geometric point of view, the key property that we will use
is given by Lemma~\ref{le:horver} where we relate ``horizontal'' to
``vertical'' derivatives. In order to better understand the point,
we propose the following simple example.

\begin{example}{\rm
Let $\|\cdot\|$ be a smooth, strictly convex norm on $\R^d$ and let
$\|\cdot\|_*$ be the dual norm. Let $\mathcal L$ be the duality map
from $(\R^d,\|\cdot\|)$ to $(\R^d,\|\cdot\|_*)$ and let $\mathcal
L^*$ be its inverse (respectively, the differentials of the maps
$\frac12\|\cdot\|^2$ and $\frac 12\|\cdot\|_*^2$). For a smooth map
$f:\R^d\to\R$ its differential $Df(x)$ at any point $x$ is
intrinsically defined as cotangent vector. To define the gradient
$\nabla g(x)$ of a function $g:\R^d\to\R$ (which is a tangent
vector), the norm comes into play via the formula $\nabla
g(x):=\mathcal L^*(Dg(x))$. Notice that the gradient can be
characterized without invoking the duality map: first of all one
evaluates the slope
\begin{equation}
\label{eq:15}
  |\nabla g|(x):=\limsup_{y\to x}\frac{|g(x)-g(y)|}{\|x-y\|}=\|Dg(x)\|_*;
\end{equation}
then one looks for smooth curves $\gamma:(-\delta,\delta)\to\R^d$ such that
\begin{equation}\label{eq:14}
  \begin{gathered}
  \gamma_0=x,\quad
  \frac \d{\d t}g(\gamma_t)\restr{t=0}
  =\|\dot \gamma_0\|^2=|\nabla g|^2(x).\\
  \text{In this case}\quad
  \nabla g(x)=\dot\gamma_0\quad\text{and}\quad
  |\nabla g|(x)=\|\nabla g(x)\|.
\end{gathered}
\end{equation}
Now, given two smooth functions $f,\,g$, the real number $Df(\nabla
g)(x)$ is well defined as the application of the cotangent vector
$Df(x)$ to the tangent vector $\nabla g(x)$.

What we want to point out, is that there are in principle and in
practice two very different ways of obtaining $Df(\nabla g)(x)$ from
a derivation. The first one, maybe more conventional, is the ``horizontal
derivative'':
\begin{displaymath}
Df(\nabla g)(x)= Df(\dot \gamma_0)=\lim_{t\to
  0}\frac{f(\gamma(t))-f(\gamma_0)}t,\qquad
\text{where $\gamma$ is any curve as in \eqref{eq:14}.}
\end{displaymath}
The second one is the ``vertical derivative'', where we consider
perturbations of the slope
\begin{displaymath}
Df(\nabla g)(x)=\lim_{\eps\to0}\frac{\frac12\,\|\nabla(g+\eps
  f)\|^2(x)-\frac12\|\nabla g\|^2(x)}{\eps}.
\end{displaymath}
It coincides with the previous quantity thanks to the ``dual''
representation \eqref{eq:15}.\fr}\end{example}

We emphasize that this relation between horizontal and vertical
derivation holds in a purely metric setting: compare the statement
of the example with that of Lemma~\ref{le:horver} below (the plan
$\ppi$ playing the role of a curve $\gamma$ as in \eqref{eq:14},
moving points in the direction of $-\nabla g$).

For $\gamma\in \AC2{[0,1]}X $ we set
\begin{equation}\label{eq:moregenerald}
E_t(\gamma):=\sqrt{t\int_0^t|\dot\gamma_s|^2\,\d s}.
\end{equation}
Notice that $E_t(\gamma)$ reduces to $\sfd(\gamma_0,\gamma_t)$ if
$\gamma\in\geo(X)$. In the sequel it is tacitly understood that the
undetermined ratios of the form
$$
\frac{f(\gamma_t)-f(\gamma_0)}{E_t(\gamma)}
$$
are set equal to 0 whenever $E_t(\gamma)=0$, i.e.~$\gamma$ is
constant in $[0,t]$.

Recall that he notion of negligible collections of curves in
$\AC2{[0,1]}X$ has been introduced in Definition~\ref{def:testplan}.

\begin{lemma}\label{le:perhorver}
Let $f:X\to\overline{\R}$ be a Borel function, Sobolev on almost
every curve, such that $\weakgrad f\in L^2(X,\mm)$, and let $\ppi$
be a test plan. Then
\begin{equation}\label{eq:aprioriEt}
\limsup_{t\downarrow
0}\int\left|\frac{f(\gamma_t)-f(\gamma_0)}{E_t(\gamma)}\right|^2\,\d\ppi(\gamma)\leq
\int\weakgrad f^2(\gamma_0)\,\d\ppi(\gamma).
\end{equation}
In particular, assume that  $\ppi\in\gopt(\mu,\nu)$ with
$\mu,\,\nu\ll\mm$ with bounded densities, $\nu$ with bounded
support, $\mu\geq c\mm$ for some $c>0$ and let $\varphi$ be a
Kantorovich potential relative to it. Then it holds
\begin{equation}
\label{eq:limitepot}
\lim_{t\downarrow0}\frac{\varphi(\gamma_0)-\varphi(\gamma_t)}
{E_t(\gamma)}=\weakgrad\varphi(\gamma_0) \qquad\text{in
$L^2(\geo(X),\ppi)$.}
\end{equation}
\end{lemma}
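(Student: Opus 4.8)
The plan is to establish the a priori bound \eqref{eq:aprioriEt} first, and then to deduce \eqref{eq:limitepot} from it by pairing it with a matching \emph{pointwise} lower bound. For \eqref{eq:aprioriEt} I would start from the restriction inequality \eqref{eq:restrizione}: for $\ppi$-a.e.\ $\gamma$ and every $t\in(0,1]$, $|f(\gamma_t)-f(\gamma_0)|\le\int_0^t\weakgrad f(\gamma_r)\,|\dot\gamma_r|\,\d r$. Squaring, using Cauchy--Schwarz to separate off $\int_0^t|\dot\gamma_r|^2\,\d r$, and dividing by $E_t(\gamma)^2=t\int_0^t|\dot\gamma_s|^2\,\d s$ (the quotient being $0$ by convention, and the inequality trivial, when $E_t(\gamma)=0$) one obtains
\[
\Bigl|\frac{f(\gamma_t)-f(\gamma_0)}{E_t(\gamma)}\Bigr|^2\le\frac1t\int_0^t\weakgrad f^2(\gamma_r)\,\d r\qquad\text{for $\ppi$-a.e.\ }\gamma.
\]
Integrating this in $\ppi$ and using Fubini turns the right-hand side into $\frac1t\int_0^t g(r)\,\d r$, where $g(r):=\int_X\weakgrad f^2\,\d(\e_r)_\sharp\ppi$. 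Since $\ppi\in\calT$, the bound $(\e_r)_\sharp\ppi\le C\mm$ is uniform in $r$, so $g$ is bounded; approximating $\weakgrad f^2$ in $L^1(X,\mm)$ by functions of $\Cb(X)$ and combining this uniform bound with $(\e_r)_\sharp\ppi\weakto(\e_0)_\sharp\ppi$ as $r\downarrow0$ (continuity of $r\mapsto\gamma_r$) shows that $g$ is continuous at $0$. Hence $\frac1t\int_0^t g\to g(0)=\int\weakgrad f^2(\gamma_0)\,\d\ppi(\gamma)$ as $t\downarrow0$, and \eqref{eq:aprioriEt} follows.

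For \eqref{eq:limitepot}, first note that $\ppi\in\gopt(\mu,\nu)$ is concentrated on $\geo(X)$, so $E_t(\gamma)=\sfd(\gamma_0,\gamma_t)=t\,\sfd(\gamma_0,\gamma_1)$ for $\ppi$-a.e.\ $\gamma$; abbreviate $h_t(\gamma):=(\varphi(\gamma_0)-\varphi(\gamma_t))/E_t(\gamma)$. The potential $\varphi$ is $c$-concave (hence Borel) and, by Proposition~\ref{prop:potkant}, Sobolev along a.e.\ curve with $\weakgrad\varphi\le|\nabla^+\varphi|$; together with \eqref{eq:propkant}, $\mu\ge c\mm$ and the boundedness of $\d\mu/\d\mm$ this gives $\weakgrad\varphi\in L^2(X,\mm)$ and $\weakgrad\varphi\circ\e_0\in L^2(\geo(X),\ppi)$. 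In the strong $CD(K,\infty)$ setting in which this statement is applied, Proposition~\ref{prop:testplan} shows that $({\rm restr}_0^{1/3})_\sharp\ppi$ is a test plan; applying \eqref{eq:aprioriEt} to it with $f=\varphi$ and reparametrising back then yields the \emph{upper} bound $\limsup_{t\downarrow0}\int|h_t|^2\,\d\ppi\le\int\weakgrad\varphi^2(\gamma_0)\,\d\ppi(\gamma)$. For the matching \emph{lower} bound I would use $c$-concavity directly: on the $c$-cyclically monotone set carrying $(\e_0,\e_1)_\sharp\ppi$ one has $\varphi(\gamma_0)+\varphi^c(\gamma_1)=\tfrac12\sfd^2(\gamma_0,\gamma_1)$, while always $\varphi(\gamma_t)\le\tfrac12\sfd^2(\gamma_t,\gamma_1)-\varphi^c(\gamma_1)$; subtracting and using $\sfd(\gamma_t,\gamma_1)=(1-t)\sfd(\gamma_0,\gamma_1)$ gives $\varphi(\gamma_0)-\varphi(\gamma_t)\ge t\bigl(1-\tfrac t2\bigr)\sfd^2(\gamma_0,\gamma_1)$, i.e.\ $h_t(\gamma)\ge\bigl(1-\tfrac t2\bigr)\sfd(\gamma_0,\gamma_1)$ for $\ppi$-a.e.\ $\gamma$. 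Finally, the metric Brenier Theorem (Proposition~\ref{prop:brenierbip}) identifies $\weakgrad\varphi(\gamma_0)=\sfd(\gamma_0,\gamma_1)$ for $\ppi$-a.e.\ $\gamma$.

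It then remains to combine these three facts by a Hilbert-space squeeze. Writing $h_t=\bigl(1-\tfrac t2\bigr)\weakgrad\varphi\circ\e_0+u_t$, the lower bound gives $u_t\ge0$ $\ppi$-a.e., and since also $\weakgrad\varphi\circ\e_0\ge0$ the mixed term in $\int|h_t|^2\,\d\ppi$ is nonnegative, so $\int|h_t|^2\,\d\ppi\ge\bigl(1-\tfrac t2\bigr)^2\int\weakgrad\varphi^2(\gamma_0)\,\d\ppi+\int|u_t|^2\,\d\ppi$; by the upper bound this forces $\int|u_t|^2\,\d\ppi\to0$, i.e.\ $u_t\to0$ in $L^2(\geo(X),\ppi)$. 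As $\bigl(1-\tfrac t2\bigr)\weakgrad\varphi\circ\e_0\to\weakgrad\varphi\circ\e_0$ in the same space, this gives $h_t\to\weakgrad\varphi\circ\e_0$ in $L^2(\geo(X),\ppi)$, which is \eqref{eq:limitepot}. The step I expect to be the main obstacle is making the upper bound usable, i.e.\ verifying that $({\rm restr}_0^{1/3})_\sharp\ppi$ really is a test plan: this is precisely where the strong $CD(K,\infty)$ hypothesis and the $L^\infty$-interpolation estimates (Propositions~\ref{prop:intpropr} and \ref{prop:testplan}) are indispensable. Once the upper bound and the Brenier identification are in hand, the pointwise lower bound and the final squeeze are routine.
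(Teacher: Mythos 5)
Your proof is correct and follows the paper's own route: the first part is proved exactly as in the paper (restriction inequality, Cauchy--Schwarz, division by $E_t^2$, and weak$^*$ convergence of the densities $\rho_r$ against $\weakgrad f^2\in L^1(X,\mm)$), while for the second part you have simply unpacked the argument the paper delegates to the citation of the metric Brenier theorem --- namely the $c$-concavity lower bound $h_t\ge(1-\tfrac t2)\,\sfd(\gamma_0,\gamma_1)$, the identification $\weakgrad\varphi(\gamma_0)=\sfd(\gamma_0,\gamma_1)$ from Proposition~\ref{prop:brenierbip}, and the $L^2$ squeeze. The only superfluous step is the appeal to Proposition~\ref{prop:testplan}: the ``in particular'' keeps the standing hypothesis that $\ppi$ is a test plan, so the upper bound follows from \eqref{eq:aprioriEt} applied directly to $\ppi$ with $f=\varphi$ (admissible by Proposition~\ref{prop:potkant} and \eqref{eq:propkant}), without invoking strong $CD(K,\infty)$.
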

\begin{proof}
For any $t\in(0,1)$ and $\ppi$-a.e.~$\gamma$ it holds
\begin{equation}\label{eq:BtDt1}
\begin{split}
\left|\frac{f(\gamma_{t})-f(\gamma_0)}{E_t(\gamma)}\right|^2\leq\frac{\left(\int_0^t\weakgrad
f(\gamma_s)|\dot\gamma_s|\,\d s\right)^2}{E_t^2(\gamma)}\leq \frac
1t\int_0^t \weakgrad f^2(\gamma_s)\,\d s.
\end{split}
\end{equation}
Hence
\[
\int\left|\frac{f(\gamma_{t})-f(\gamma_0)}{E_t(\gamma)}\right|^2\,\d\ppi(\gamma)\leq
\frac1t\iint_0^t\weakgrad f^2(\gamma_s)\,\d s\,\d\ppi(\gamma)=\int
\left(\frac1t\int_0^t\rho_s\,\d s\right)\weakgrad f^2\,\d\mm,
\]
where $\rho_s$ is the density of $(\e_s)_\sharp\ppi$. Now notice
that $\rho_t\mm\to\rho_0\mm$ as $t\downarrow 0$ in duality with
continuous and bounded functions and that
$\sup_t\|\rho_t\|_\infty<\infty$. Hence $\rho_t\to\rho_0$ weakly$^*$
in $L^\infty(X,\mm)$ and the conclusion follows from the fact that
$\weakgrad f^2\in L^1(X,\mm)$.

The second part of the statement follows by \cite[Theorem
10.3]{Ambrosio-Gigli-Savare11}, Proposition~\ref{prop:brenierbip},
and the identity $E_t(\gamma)=\sfd(\gamma_0,\gamma_t)$ since $\ppi$
in this case is concentrated on $\geo(X)$.
\end{proof}

\begin{lemma}[Horizontal and vertical derivatives]\label{le:horver}
Let $f,\,g:X\to\overline{\R}$ be Borel functions, Sobolev on almost
every curve, such that both $\weakgrad f$ and $\weakgrad g$ belong
to $L^2(X,\mm)$, and let $\ppi$ be a test plan. Assume that
\begin{equation}
\label{eq:asshorvert}
\lim_{t\downarrow0}\frac{g(\gamma_0)-g(\gamma_t)}{E_t(\gamma)}=\lim_{t\downarrow0}\frac{E_t(\gamma)}t=\weakgrad
g(\gamma_0)\qquad\text{in $L^2\bigl(\AC2{[0,1]}X ,\ppi\bigr)$.}
\end{equation}
Then
\begin{equation}\label{eq:derhorvert}
\liminf_{t\downarrow0}\int\frac{f(\gamma_{t})-f(\gamma_0)}{t}
\,\d\ppi(\gamma)\geq\limsup_{\eps\downarrow 0}\int\frac{\weakgrad
g^2(\gamma_0)-\weakgrad{(g+\eps
f)}^2(\gamma_0)}{2\eps}\,\d\ppi(\gamma).
\end{equation}
\end{lemma}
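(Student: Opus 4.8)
The plan is to deduce \eqref{eq:derhorvert} from the a priori estimate \eqref{eq:aprioriEt} of Lemma~\ref{le:perhorver}, applied not to $f$ but to $g+\eps f$, combined with the elementary identity
\[
\frac{f(\gamma_t)-f(\gamma_0)}{t}=\frac1\eps\,\frac{(g+\eps f)(\gamma_t)-(g+\eps f)(\gamma_0)}{t}-\frac1\eps\,\frac{g(\gamma_t)-g(\gamma_0)}{t},\qquad\eps>0.
\]
Fix $\eps>0$: the function $g+\eps f$ is Borel, Sobolev along almost every curve, and $\weakgrad{(g+\eps f)}\le\weakgrad g+\eps\weakgrad f\in L^2(X,\mm)$, since $\weakgrad g+\eps\weakgrad f$ is easily seen to be a weak upper gradient of $g+\eps f$ (triangle inequality for $\int_{\partial\gamma}$ and chain rule \eqref{eq:chainrule}); moreover $(\e_0)_\sharp\ppi\le C\mm$ because $\ppi$ is a test plan, so the numbers
\[
a:=\int\weakgrad g^2(\gamma_0)\,\d\ppi,\qquad b_\eps:=\int\weakgrad{(g+\eps f)}^2(\gamma_0)\,\d\ppi
\]
are finite. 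I then estimate the two terms on the right-hand side of the identity separately as $t\downarrow0$.

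For the $g$-term I would write $\frac{g(\gamma_t)-g(\gamma_0)}{t}=-\frac{g(\gamma_0)-g(\gamma_t)}{E_t(\gamma)}\cdot\frac{E_t(\gamma)}{t}$, which is still correct (both sides vanishing) when $E_t(\gamma)=0$, since then $g(\gamma_t)=g(\gamma_0)$ by \eqref{eq:restrizione}; the hypothesis \eqref{eq:asshorvert} gives that both factors converge to $\weakgrad g(\gamma_0)$ in $L^2(\ppi)$, hence their product converges in $L^1(\ppi)$ and $\int\frac{g(\gamma_t)-g(\gamma_0)}{t}\,\d\ppi\to-a$. For the $(g+\eps f)$-term I would use \eqref{eq:restrizione} and Cauchy--Schwarz in $L^2(0,t)$ to bound, for $\ppi$-a.e.\ $\gamma$,
\[
\frac{(g+\eps f)(\gamma_0)-(g+\eps f)(\gamma_t)}{t}\le\Bigl|\frac{(g+\eps f)(\gamma_0)-(g+\eps f)(\gamma_t)}{E_t(\gamma)}\Bigr|\,\frac{E_t(\gamma)}{t},
\]
then integrate in $\ppi$, apply Cauchy--Schwarz in $L^2(\ppi)$, and let $t\downarrow0$: by \eqref{eq:asshorvert} the factor $\int(E_t(\gamma)/t)^2\,\d\ppi$ tends to $a$, while by \eqref{eq:aprioriEt} applied to $g+\eps f$ the $\limsup$ of $\int|\tfrac{(g+\eps f)(\gamma_0)-(g+\eps f)(\gamma_t)}{E_t(\gamma)}|^2\,\d\ppi$ is at most $b_\eps$. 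Hence
\[
\liminf_{t\downarrow0}\int\frac{(g+\eps f)(\gamma_t)-(g+\eps f)(\gamma_0)}{t}\,\d\ppi\ \ge\ -\sqrt{a\,b_\eps}.
\]

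Combining the two estimates through the identity yields, for every $\eps>0$,
\[
\liminf_{t\downarrow0}\int\frac{f(\gamma_t)-f(\gamma_0)}{t}\,\d\ppi\ \ge\ \frac{a-\sqrt{a\,b_\eps}}{\eps}\ \ge\ \frac{a-b_\eps}{2\eps}=\int\frac{\weakgrad g^2(\gamma_0)-\weakgrad{(g+\eps f)}^2(\gamma_0)}{2\eps}\,\d\ppi,
\]
the middle step being the elementary inequality $\frac{a-\sqrt{ab_\eps}}{\eps}-\frac{a-b_\eps}{2\eps}=\frac{(\sqrt a-\sqrt{b_\eps})^2}{2\eps}\ge0$. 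Taking $\limsup_{\eps\downarrow0}$ of the right-hand side gives \eqref{eq:derhorvert}. The step I expect to require most care is the passage to the limit $t\downarrow0$ in the two factors of the Cauchy--Schwarz estimate of the $(g+\eps f)$-term: this is exactly where the hypothesis \eqref{eq:asshorvert} and the a priori bound \eqref{eq:aprioriEt} (itself resting on the weak$^\ast$ convergence in $L^\infty(X,\mm)$ of the densities of $(\e_r)_\sharp\ppi$ towards that of $(\e_0)_\sharp\ppi$, available because $\ppi$ is a test plan) are indispensable. It is also the reason only an inequality is obtained: Cauchy--Schwarz is an equality only when the difference quotient of $g+\eps f$ along $\gamma$ is $\ppi$-a.e.\ proportional to $\weakgrad g(\gamma_0)$, i.e.\ heuristically when $\gamma$ moves in the direction of $-\nabla g$, which is the content of \eqref{eq:asshorvert}.
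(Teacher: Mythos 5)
Your proof is correct. It rests on the same two pillars as the paper's argument -- the a priori bound \eqref{eq:aprioriEt} applied to the perturbed function $g+\eps f$, and the sharpness of that bound for $g$ itself guaranteed by \eqref{eq:asshorvert} -- but it organizes the algebra differently. The paper expands the square
\[
\Bigl|\tfrac{(g+\eps f)(\gamma_0)-(g+\eps f)(\gamma_t)}{E_t(\gamma)}\Bigr|^2\ \ge\ G_t^2+2\eps\,G_tF_t,
\]
keeps the cross term exactly, and identifies $\liminf_t\bigl(-\int G_tF_t\,\d\ppi\bigr)$ with $\liminf_t\int\frac{f(\gamma_t)-f(\gamma_0)}{t}\,\d\ppi$ by replacing $G_t$ with $E_t/t$ (using $\|G_t-E_t/t\|_2\to0$ and $\sup_t\|F_t\|_2<\infty$). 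You never form the cross term: you split $\eps f=(g+\eps f)-g$, handle the $g$-quotient exactly (product of two $L^2(\ppi)$-convergent factors, hence $L^1$ convergence to $-a$), bound the $(g+\eps f)$-quotient by Cauchy--Schwarz in $L^2(\ppi)$ to get $-\sqrt{a\,b_\eps}$, and then recover the linear-in-$\eps$ estimate from the quadratic one via $(\sqrt a-\sqrt{b_\eps})^2\ge0$. The two routes are essentially dual: your final elementary inequality is exactly the ``loss'' incurred by Cauchy--Schwarz, which the paper avoids by retaining the cross term. What your version buys is that you never need to justify substituting $E_t/t$ for $G_t$ inside a product with $F_t$; what it costs is the extra (harmless) concavity step at the end. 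All the side points you flag -- Sobolev property and $L^2$ bound for $\weakgrad{(g+\eps f)}$, finiteness of $a$ and $b_\eps$ from the bounded-compression property of $\ppi$, and the treatment of curves with $E_t(\gamma)=0$ via \eqref{eq:restrizione} -- are handled correctly.
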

\begin{proof}
Define functions $F_t,\,G_t:\AC2{[0,1]}X \to\R$ by
\[
F_t(\gamma):=\frac{f(\gamma_0)-f(\gamma_t)}{E_t(\gamma)},\qquad
G_t(\gamma):=\frac{g(\gamma_0)-g(\gamma_t)}{E_t(\gamma)}.
\]
By \eqref{eq:asshorvert} we get
\begin{equation}
\label{eq:uguale} \lim_{t\downarrow 0}\int
G_t^2\,\d\ppi=\int\weakgrad g^2(\gamma_0)\,\d\ppi(\gamma).
\end{equation}
Applying Lemma~\ref{le:perhorver} to the function $g+\eps f$ we
obtain
\begin{equation}
\label{eq:loscambio}
\begin{split}
\int\weakgrad{(g+\eps f)}^2(\gamma_0)\,\d\ppi(\gamma)&\geq
\limsup_{t\downarrow 0}\int\left|\frac{(g+\eps f)(\gamma_0)-(g+\eps f)(\gamma_{t})}
{E_t(\gamma)}\right|^2\,\d\ppi(\gamma)\\
&\geq \limsup_{t\downarrow 0}\int \bigl(G_t^2(\gamma)+2\eps
G_tF_t\bigr) \,\d\ppi(\gamma).
\end{split}
\end{equation}
Subtracting this inequality from \eqref{eq:uguale} we get
\[
\frac12\int\frac{\weakgrad g^2(\gamma_0)-\weakgrad{(g+\eps
f)}^2(\gamma_0)}\eps  \,\d\ppi(\gamma)\leq\liminf_{t\downarrow
0}-\int G_t(\gamma)F_t(\gamma)\,\d\ppi(\gamma).
\]
By assumption, we know that $\|G_t-E_t/t\|_2\to 0$ as
$t\downarrow0$. Also, by Lemma~\ref{le:perhorver}, we have
$\sup_t\|F_t\|_2<\infty$. Thus it holds
\[
\liminf_{t\downarrow 0}-\int
G_t(\gamma)F_t(\gamma)\,\d\ppi(\gamma)=\liminf_{t\downarrow
0}-\int\frac{E_t(\gamma)}tF_t(\gamma)\,\d\ppi(\gamma)=\liminf_{t\downarrow
0}\int\frac{f(\gamma_t)-f(\gamma_0)}{t}\,\d\ppi(\gamma).
\]
\end{proof}

Before turning to the proof of the estimate of the derivative of the
entropy along a geodesic, we need two more lemmas.

\begin{lemma}\label{le:apprentr}
Let $(X,\sfd,\mm)$ be a metric measure space, $\ppi$ a test plan and
$\chi_n:X\to[0,1]$ monotonically convergent to $1$. Define the plans
 $\ppi^n:=c_n\,(\chi_n\circ\rme_0)\,\ppi$, where $c_n$ is the
normalizing constant. Then
\[
\lim_{n\to\infty}\entr{(\e_t)_\sharp\ppi^n}\mm=\entr{(\e_t)_\sharp\ppi}\mm
\qquad\forall t\in [0,1].
\]
\end{lemma}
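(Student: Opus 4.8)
The plan is to reduce the statement, for each fixed $t\in[0,1]$, to an application of the dominated convergence theorem, using crucially that $\ppi$ has bounded compression. First I would fix a constant $C=C(\ppi)$ with $(\e_s)_\sharp\ppi\le C\mm$ for every $s\in[0,1]$ and write $(\e_t)_\sharp\ppi=\rho_t\mm$ with $0\le\rho_t\le C$ $\mm$-a.e.; since $\mm$ is a probability measure and $z\mapsto z\log z$ is bounded on $[0,C]$, this already shows that all the entropies appearing in the statement are finite, so there is no $+\infty$ case to treat. I would also record that $c_n=\bigl(\int\chi_n\circ\e_0\,\d\ppi\bigr)^{-1}=\bigl(\int_X\chi_n\,\d(\e_0)_\sharp\ppi\bigr)^{-1}$ and that $\int_X\chi_n\,\d(\e_0)_\sharp\ppi\uparrow1$ by monotone convergence, so that $c_n\downarrow1$ and $\log c_n\to0$.

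The main step is to control the densities of the pushed-forward plans. I would introduce the auxiliary measures $\nu_n:=(\chi_n\circ\e_0)\,\ppi$, which increase to $\ppi$ because $\chi_n\uparrow1$. Pushing forward and testing against indicator functions (monotone convergence once more), $(\e_t)_\sharp\nu_n$ increases setwise to $(\e_t)_\sharp\ppi=\rho_t\mm$; since $(\e_t)_\sharp\nu_n\le(\e_t)_\sharp\ppi\le C\mm$, it has an $\mm$-density $\eta^n_t$, and the elementary fact that setwise-monotone densities converge monotonically $\mm$-a.e.\ gives $0\le\eta^n_t\uparrow\rho_t$ $\mm$-a.e.\ with $\eta^n_t\le C$. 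Moreover $\int_X\eta^n_t\,\d\mm$ is the total mass of $\nu_n$, i.e.\ $c_n^{-1}$. Hence $(\e_t)_\sharp\ppi^n$ has density $c_n\eta^n_t$, and expanding $z\log z$ I would split
\[
\entr{(\e_t)_\sharp\ppi^n}\mm=\int_X c_n\eta^n_t\log\bigl(c_n\eta^n_t\bigr)\,\d\mm=\log c_n\cdot c_n\!\int_X\eta^n_t\,\d\mm+c_n\!\int_X\eta^n_t\log\eta^n_t\,\d\mm=\log c_n+c_n\!\int_X\eta^n_t\log\eta^n_t\,\d\mm.
\]

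Finally I would let $n\to\infty$: $\log c_n\to0$, $c_n\to1$, and since $0\le\eta^n_t\le C$ with $\eta^n_t\to\rho_t$ $\mm$-a.e., while $z\mapsto z\log z$ is bounded on $[0,C]$ and $\mm(X)=1$, dominated convergence gives $\int_X\eta^n_t\log\eta^n_t\,\d\mm\to\int_X\rho_t\log\rho_t\,\d\mm=\entr{(\e_t)_\sharp\ppi}\mm$, which is the assertion for that $t$. I do not expect a genuine obstacle here once the bounded compression of $\ppi$ is exploited; the only points needing (routine) care are the identification $\eta^n_t\uparrow\rho_t$ $\mm$-a.e.\ and the bookkeeping of the normalizing constants $c_n$.
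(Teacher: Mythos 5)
Your proof is correct and follows essentially the same route as the paper's: identify the densities $\eta^n_t$ of the truncated pushed-forward plans, show they increase $\mm$-a.e.\ to $\rho_t$, pass to the limit in $\int\eta^n_t\log\eta^n_t\,\d\mm$, and absorb the normalizing constants $c_n\downarrow1$. The only difference is that the paper disposes of the limit of the entropy integrals with a one-line appeal to monotone convergence (which, since $z\mapsto z\log z$ is not monotone, really requires exactly the kind of care you supply), whereas your dominated convergence argument based on the bounded-compression bound $0\le\eta^n_t\le C$ is the cleaner and fully rigorous justification.
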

\begin{proof} If $\rho_{n,t}$ are the densities w.r.t.\ $\mm$ of
$(\e_t)_\sharp(\chi_n\circ\rme_0\,\ppi_n)$, by monotone convergence
we have
$\int\rho_{n,t}\log\rho_{n,t}\,\d\mm\to\entr{(\e_t)_\sharp\ppi}\mm$.
Since $c_n\downarrow 1$ the thesis follows.
\end{proof}

\begin{lemma}\label{le:chains}
Let $f,\,g:X\to\R$ be Sobolev functions along a.e.~curve, let $J$ be
an interval containing $g(X)$ and let $\phi:J\to\R$ be
nondecreasing, Lipschitz and $C^1$. Then
\[
\lim_{\eps\downarrow 0}\frac{\weakgrad{(f+\eps\phi(g))}^2-\weakgrad
f^2}{\eps} =\phi'(g)\lim_{\eps\downarrow 0}\frac{\weakgrad{(f+\eps
g)}^2-\weakgrad f^2}{\eps}\qquad\text{$\mm$-a.e.~in $X$.}
\]
Similarly, under the same assumptions on $\phi$, if $\C$ is a
quadratic form and $\weakgrad{f}\in L^2(X,\mm)$, it holds
\[
\lim_{\eps\downarrow 0}\int\frac{\weakgrad{(\phi(g)+\eps
f)}^2-\weakgrad{\phi(g)}^2}{\eps}\,\d\mm=\int
\phi'(g)\lim_{\eps\downarrow 0}\frac{\weakgrad{(g+\eps
f)}^2-\weakgrad g^2}{\eps}\,\d\mm.
\]
\end{lemma}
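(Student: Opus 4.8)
The plan is to prove both identities by reducing to the case of a \emph{piecewise affine} $\phi$, where the locality of the minimal weak upper gradient does the work, and then to pass to the limit; the second identity is then obtained from the first by exploiting the quadratic structure. A remark used repeatedly: if $h,k$ are Sobolev along almost every curve, then $\eps\mapsto\weakgrad{(h+\eps k)}^2$ is convex $\mm$-a.e., since $\weakgrad{(h_1+h_2)}\le\weakgrad{h_1}+\weakgrad{h_2}$ (a sum of weak upper gradients is a weak upper gradient of the sum) and $t\mapsto t^2$ is convex and nondecreasing on $[0,\infty)$. Hence all the right difference quotients in the statement are nonincreasing in $\eps$, so their limits as $\eps\downarrow0$ exist; and $|\weakgrad{(h+\eps k)}-\weakgrad h|\le\eps\weakgrad k$ gives $|\weakgrad{(h+\eps k)}^2-\weakgrad h^2|\le\eps\weakgrad k\,(2\weakgrad h+\eps\weakgrad k)$, so these limits are finite $\mm$-a.e.

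\emph{First identity.} When $\phi(s)=cs+d$ is affine with $c\ge0$ the identity is immediate: adding the constant $\eps d$ leaves the weak upper gradient unchanged and $\weakgrad{(c\,g)}=c\weakgrad g$ by \eqref{eq:chainrule}, so the substitution $\eps\mapsto c\eps$ (for $c>0$; $c=0$ is trivial) turns the left-hand limit into $c$ times the right-hand one. For piecewise affine $\phi$, write $J$ as a countable union of intervals $J_i$ on which $\phi$ agrees with a (necessarily nondecreasing) affine $\ell_i$, set $E_i:=\{g\in J_i\}$, and apply locality \eqref{eq:weaklocal} to the pair $f+\eps\phi(g),\ f+\eps\ell_i(g)$, which coincide on the Borel set $E_i$; together with $\phi'(g)=\ell_i'(g)$ on $E_i$ and the affine case, summation over $i$ gives the identity. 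For general $\phi\in C^1$, approximate it by nondecreasing piecewise affine $\psi_n$ with $\mathrm{Lip}(\psi_n)\le L:=\mathrm{Lip}(\phi)$, $\psi_n\to\phi$ and $\psi_n'\to\phi'$ pointwise (choosing the nodes outside the at most countably many atoms of $g_\sharp\mm$, so that $\psi_n'(g)\to\phi'(g)$ $\mm$-a.e.); using the chain rule for Lipschitz maps, $\weakgrad{((\phi-\psi_n)(g))}\le|\phi'(g)-\psi_n'(g)|\weakgrad g$, the inequality $|a^2-b^2|\le|a-b|(a+b)$ and $\weakgrad{(f+\eps\phi(g))},\,\weakgrad{(f+\eps\psi_n(g))}\le\weakgrad f+\eps L\weakgrad g$, one obtains
\[
\Big|\frac{\weakgrad{(f+\eps\phi(g))}^2-\weakgrad f^2}{\eps}-\frac{\weakgrad{(f+\eps\psi_n(g))}^2-\weakgrad f^2}{\eps}\Big|\le|\phi'(g)-\psi_n'(g)|\,\weakgrad g\,\big(2\weakgrad f+2\eps L\weakgrad g\big)\qquad\mm\text{-a.e.}
\]
Letting first $\eps\downarrow0$ (using the piecewise affine case for the $\psi_n$-term) and then $n\to\infty$ yields the first identity. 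This interchange of the two limits is the one delicate step, and the displayed bound is exactly what makes it work.

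\emph{Second identity.} Use now that $\C$ is quadratic on $L^2(X,\mm)$: the associated symmetric bilinear form $\mathcal E(u,v):=\tfrac12\big(\C(u+v)-\C(u)-\C(v)\big)$ is continuous on $W^{1,2}(X,\sfd,\mm)$, and for $u,v\in W^{1,2}(X,\sfd,\mm)$ one has $\int\weakgrad{(u+\eps v)}^2\,\d\mm=\int\weakgrad u^2\,\d\mm+4\eps\,\mathcal E(u,v)+\eps^2\int\weakgrad v^2\,\d\mm$, hence $\lim_{\eps\downarrow0}\int\tfrac{\weakgrad{(u+\eps v)}^2-\weakgrad u^2}{\eps}\,\d\mm=4\,\mathcal E(u,v)$. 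With $(u,v)=(\phi(g),f)$ this identifies the left-hand side of the second identity with $4\,\mathcal E(\phi(g),f)$, and by dominated convergence also with $2\int\mathcal D(\phi(g),f)\,\d\mm$, where $\mathcal D(u,v):=\lim_{\eps\downarrow0}\tfrac{\weakgrad{(u+\eps v)}^2-\weakgrad u^2}{2\eps}$; with $(u,v)=(g,f)$ and dominated convergence (the difference quotient being bounded for small $\eps$ by $\weakgrad f(2\weakgrad g+\eps\weakgrad f)\in L^1$) the right-hand side becomes $\int\phi'(g)\,2\mathcal D(g,f)\,\d\mm$. So it suffices to prove the integral chain rule $\int\mathcal D(\phi(g),f)\,\d\mm=\int\phi'(g)\,\mathcal D(g,f)\,\d\mm$. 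For piecewise affine $\phi$ this follows by integrating the pointwise identity $\mathcal D(\phi(g),f)=\phi'(g)\,\mathcal D(g,f)$, proved exactly as in the first part (rescaling in the affine case, then locality \eqref{eq:weaklocal} on the $E_i$). For general $\phi\in C^1$, approximate by nondecreasing piecewise affine $\psi_n$ as above: the right-hand side converges by dominated convergence ($L|\mathcal D(g,f)|\le L\weakgrad f\weakgrad g\in L^1$), while $\int\mathcal D(\psi_n(g),f)\,\d\mm=2\,\mathcal E(\psi_n(g),f)\to2\,\mathcal E(\phi(g),f)=\int\mathcal D(\phi(g),f)\,\d\mm$ since $\psi_n(g)\to\phi(g)$ in $W^{1,2}(X,\sfd,\mm)$ — indeed $\psi_n(g)\to\phi(g)$ in $L^2$ and $\weakgrad{(\psi_n(g)-\phi(g))}\le|\psi_n'(g)-\phi'(g)|\weakgrad g\to0$ in $L^2$ — and $\mathcal E$ is $W^{1,2}$-continuous. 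Throughout, $g$ is tacitly assumed to have enough integrability that $g,\phi(g)\in W^{1,2}(X,\sfd,\mm)$ and the relevant difference quotients are dominated in $L^1$, as is the case in all applications of the lemma; the only genuine obstacle, in both parts, is the passage from piecewise affine to general $\phi$, handled in the first identity by the explicit pointwise estimate and in the second by the $W^{1,2}$-continuity of $\mathcal E$.
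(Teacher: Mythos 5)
Your argument is correct. For the first identity it is essentially the paper's own proof: the affine case by invariance under additive constants and positive rescaling, the countably piecewise affine case by locality \eqref{eq:weaklocal} on the sets $\{g\in J_i\}$, and then approximation of a nondecreasing $C^1$ function $\phi$ by piecewise affine maps, with the error controlled through $\weakgrad{((\phi-\psi_n)(g))}\le|\phi'(g)-\psi_n'(g)|\,\weakgrad g$; the only cosmetic differences are that the paper works with a uniform bound $\|(\phi-\phi_\delta)'\|_\infty<\delta$ where you use pointwise convergence of $\psi_n'(g)$, and that it disposes of the nodes by noting that $\weakgrad g=0$ $\mm$-a.e.\ on level sets of $g$ where you instead move the nodes off the atoms of $g_\sharp\mm$. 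For the second identity your route is genuinely different and, in fact, more self-contained. The paper integrates the first identity, pulls the limit out by dominated convergence, and then invokes the quadratic-form identity \eqref{eq:shift} with $u=\phi(g)$, $v=f$; read literally this produces $\int\phi'(g)\lim_{\eps\downarrow0}\eps^{-1}\bigl(\weakgrad{(f+\eps g)}^2-\weakgrad f^2\bigr)\,\d\mm$ on the right, i.e.\ the product with base $f$ and perturbation $g$, and converting this into the stated $\int\phi'(g)\lim_{\eps\downarrow0}\eps^{-1}\bigl(\weakgrad{(g+\eps f)}^2-\weakgrad g^2\bigr)\,\d\mm$ would require a $\phi'(g)$-weighted symmetry of the product $\nabla u\cdot\nabla v$ that is only established later, in Theorem~\ref{thm:tangente}. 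You sidestep this by proving the pointwise chain rule directly in the orientation that the statement (and its use in \eqref{eq:chainfacile}) actually requires --- base $\phi(g)$, resp.\ $g$, perturbation $f$ --- for piecewise affine $\phi$ via the same rescaling-plus-locality argument, and by then passing to general $\phi$ through dominated convergence, the bilinear expansion of the quadratic form, and the $W^{1,2}$-continuity of the associated bilinear form applied to $\psi_n(g)\to\phi(g)$ in $W^{1,2}(X,\sfd,\mm)$. The price is redoing the piecewise-affine step in the second orientation; the gain is that no symmetry of the not-yet-bilinear product is needed. Both arguments rest on the same tacit integrability assumptions ($\weakgrad g\in L^2(X,\mm)$ and $g,\phi(g)\in W^{1,2}(X,\sfd,\mm)$ in the second part), which you rightly flag and which hold in every application of the lemma.
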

\begin{proof} Let us consider the first equality. Notice that it is
invariant under addition of constants to $\phi$ and multiplication
of $\phi$ by positive constants, hence if $\phi$ is affine the
thesis is obvious. In addition, since $\weakgrad{h}=0$ $\mm$-a.e.~in
all level sets $h^{-1}(c)$, the formula holds $\mm$-a.e.~on any
level set of $g$. Then, by locality, the formula holds if $\phi$ is
countably piecewise affine, i.e. if there is a partition of $J$ in
countably many intervals where $\phi$ is affine. In the general
case, thanks to the $C^1$ regularity of $\phi$, for any $\delta>0$
we can find a countably piecewise affine $\phi_\delta$ such that
$\|(\phi-\phi_\delta)'\|_\infty<\delta$ and use the estimate
\[
\big|\weakgrad{(f+\eps\phi(g))}-\weakgrad{(f+\eps\phi_\delta(g))}\big|
\leq\eps \weakgrad{(\phi-\phi_\delta)(g)}\leq\eps\delta\weakgrad{g}
\]
to conclude the proof of the first equality.

The second integral equality immediately follows by the integrating
the first one, pulling the limit out of the integral in the left
hand side using the dominated convergence theorem and finally using
the identity
\begin{equation}\label{eq:shift}
Q(u+\eps v)-Q(u)-\eps^2 Q(v)=Q(v+\eps u)-Q(v)-\eps^2Q(u)
\end{equation}
(with $u=\phi(g)$, $v=f$) satisfied by all quadratic forms $Q$.
\end{proof}
We are finally ready to prove the main result of this section.

\begin{theorem}[Derivative of the entropy along a geodesic]\label{thm:derentr}
Let $(X,\sfd,\mm)\in\X$ be a strong $CD(K,\infty)$ space and let
$\sigma_0,\,\sigma_1$ be probability densities. Assume that
$\sigma_1$ is bounded with bounded support as in \eqref{eq:9}, and
let $\varphi$ be a Kantorovich potential from $\sigma_0\mm$ to
$\sigma_1\mm$. Then:
\begin{itemize}
\item[(a)] if $\varphi\in L^2(X,\mm)$, $\log\sigma_0\in D(\C)$ and
\begin{equation}\label{eq:boundrhoo0}
\rho_0(x)\leq c\,\rme^{-9K^-(\sfD(x)-C)^2}\qquad\text{whenever
$\sfD(x):=\mathrm{dist}(x,\supp\rho_1)>R$ for some $c,\,C,\,R$,}
\end{equation}
then for any $\ppi\in\gopt(\sigma_0\mm,\sigma_1\mm)$ as in
Definition~\ref{def:strongcd} and $\mu_t=(\e_t)_\sharp\ppi$ it holds
\begin{equation}
\label{eq:derentropia1} \liminf_{t\downarrow
0}\frac{\ent{\mu_t}-\ent{\mu_0}}{t}\geq
\frac{\C(\varphi)-\C(\varphi+\eps\sigma_0)}{\eps}\qquad\forall\eps>0;
\end{equation}
\item[(b)] if $0<c\leq\sigma_0\leq C<\infty$ for some constants
$c,\,C$, then
\begin{equation}
\label{eq:derentropia}
\ent{\sigma_1\mm}-\ent{\sigma_0\mm}-\frac{K}{2}W_2^2(\sigma_0\mm,\sigma_1\mm)\geq
\frac{\C(\varphi)-\C(\varphi+\eps\sigma_0)}{\eps}\qquad\forall\eps>0.
\end{equation}
\end{itemize}
\end{theorem}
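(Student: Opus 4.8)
The plan is to derive both items from the first-order convexity inequality \eqref{eq:Alica1}, the horizontal--vertical Lemma~\ref{le:horver}, and the chain rule Lemma~\ref{le:chains}. For (a), write $\mu_t=(\e_t)_\sharp\ppi=\rho_t\mm$ and note that, by Proposition~\ref{prop:intpropr}, the densities $\rho_t$ satisfy the weighted $L^\infty$ bound \eqref{eq:boundinterpolate}; in particular the entropies $\ent{\mu_t}$ are finite and $\log\sigma_0$ is $\mu_t$-integrable, so \eqref{eq:Alica1} applies and gives $\ent{\mu_t}-\ent{\mu_0}\ge\int\bigl(\log\sigma_0(\gamma_t)-\log\sigma_0(\gamma_0)\bigr)\,\d\ppi(\gamma)$ for all $t\in[0,1]$. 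Dividing by $t>0$ and letting $t\downarrow0$, it suffices to bound from below the horizontal derivative $\liminf_{t\downarrow0}\int t^{-1}\bigl(\log\sigma_0(\gamma_t)-\log\sigma_0(\gamma_0)\bigr)\,\d\ppi$ of $f:=\log\sigma_0$ along $\ppi$. I would do this by Lemma~\ref{le:horver} with this $f$ and $g:=\varphi$: its two hypotheses are that $({\rm restr}_0^{1/3})_\sharp\ppi$ is a test plan --- which is Proposition~\ref{prop:testplan}, and is exactly what the decay assumption \eqref{eq:boundrhoo0} together with the boundedness of $\sigma_1$ guarantee --- and the compatibility condition \eqref{eq:asshorvert} for $g=\varphi$, which holds because $\ppi$ is concentrated on geodesics, so $E_t(\gamma)/t=\sfd(\gamma_0,\gamma_1)$ is constant in $t$, and equals $\weakgrad\varphi(\gamma_0)$ for $\ppi$-a.e.\ $\gamma$ by the metric Brenier Proposition~\ref{prop:brenierbip}, the other half of \eqref{eq:asshorvert} being \eqref{eq:limitepot}.

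Lemma~\ref{le:horver} then yields, for every $\eps>0$, the lower bound $\tfrac1{2\eps}\int\bigl(\weakgrad\varphi^2(\gamma_0)-\weakgrad{(\varphi+\eps\log\sigma_0)}^2(\gamma_0)\bigr)\,\d\ppi(\gamma)$, which equals $\tfrac1{2\eps}\int\sigma_0\bigl(\weakgrad\varphi^2-\weakgrad{(\varphi+\eps\log\sigma_0)}^2\bigr)\,\d\mm$ since $(\e_0)_\sharp\ppi=\sigma_0\mm$. To match this with $\eps^{-1}\bigl(\C(\varphi)-\C(\varphi+\eps\sigma_0)\bigr)$ I would invoke the pointwise chain rule Lemma~\ref{le:chains} with $\phi=\log$, $g=\sigma_0$, $f=\varphi$, which (after multiplying by $\sigma_0$) gives $\lim_{\eps\downarrow0}\eps^{-1}\sigma_0\bigl(\weakgrad{(\varphi+\eps\log\sigma_0)}^2-\weakgrad\varphi^2\bigr)=\lim_{\eps\downarrow0}\eps^{-1}\bigl(\weakgrad{(\varphi+\eps\sigma_0)}^2-\weakgrad\varphi^2\bigr)$ $\mm$-a.e.; integrating and exchanging the limit with the integral by dominated convergence (using $\bigl|\weakgrad{(u+v)}-\weakgrad u\bigr|\le\weakgrad v$ and the square-integrability of $\weakgrad\varphi$, $\weakgrad{\sigma_0}$, $\weakgrad{\log\sigma_0}$) identifies $\lim_{\eps\downarrow0}\tfrac1{2\eps}\int\sigma_0\bigl(\weakgrad\varphi^2-\weakgrad{(\varphi+\eps\log\sigma_0)}^2\bigr)\,\d\mm$ with $\lim_{\eps\downarrow0}\eps^{-1}\bigl(\C(\varphi)-\C(\varphi+\eps\sigma_0)\bigr)$. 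Finally, convexity of $\C$ makes $\eps\mapsto\eps^{-1}\bigl(\C(\varphi)-\C(\varphi+\eps\sigma_0)\bigr)$ non-increasing, hence bounded above by this limit for every $\eps>0$, and \eqref{eq:derentropia1} follows.

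For (b), the idea is to combine (a) with the $K$-convexity of the entropy along the geodesic $(\mu_t)$, which is the case $F\equiv1$ of the strong $CD(K,\infty)$ condition: from $\ent{\mu_t}\le(1-t)\ent{\mu_0}+t\ent{\mu_1}-\tfrac K2t(1-t)W_2^2(\mu_0,\mu_1)$ one gets $t^{-1}\bigl(\ent{\mu_t}-\ent{\mu_0}\bigr)\le\ent{\sigma_1\mm}-\ent{\sigma_0\mm}-\tfrac K2(1-t)W_2^2(\sigma_0\mm,\sigma_1\mm)$, and letting $t\downarrow0$ while bounding the left-hand side from below by \eqref{eq:derentropia1} gives \eqref{eq:derentropia}. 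Under $0<c\le\sigma_0\le C$ one has $\sigma_0\mm\ge c\mm$, so $\weakgrad\varphi\in L^2(X,\mm)$ by Proposition~\ref{prop:potkant}; the remaining hypotheses of (a) are met after, if necessary, regularizing $\sigma_0$ (e.g.\ replacing it by $\heatl_\tau\sigma_0$, still pinched between $c$ and $C$ by the maximum principle, with $\log\heatl_\tau\sigma_0\in D(\C)$) and passing to the limit using lower semicontinuity of $\C$ and of $\entv$ --- noting also that \eqref{eq:boundrhoo0} is automatic when $K\ge0$.

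The hard part will be the verification of the compatibility condition \eqref{eq:asshorvert}: it relies on the metric Brenier theorem --- hence on the strong $CD(K,\infty)$ hypothesis and the $L^\infty$ interpolation bounds of Proposition~\ref{prop:intpropr} --- and on the identity $E_t(\gamma)/t=\sfd(\gamma_0,\gamma_1)=\weakgrad\varphi(\gamma_0)$, together with the fact that $({\rm restr}_0^{1/3})_\sharp\ppi$ is a genuine test plan, which is precisely why the a priori decay hypothesis \eqref{eq:boundrhoo0} on $\sigma_0$ is imposed (and why, in (b) with $K<0$ on an unbounded space, reconciling it with the lower bound on $\sigma_0$ forces the approximation step). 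The remainder is bookkeeping: the integrability of $\log\sigma_0$ against $\mu_t$ needed for \eqref{eq:Alica1}, the square-integrability of the weak gradients needed for Lemmas~\ref{le:horver} and \ref{le:chains}, and the dominated convergence justifying the exchange of $\lim_{\eps\downarrow0}$ with the integral.
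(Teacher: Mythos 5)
Your part (a) reproduces the paper's proof essentially step by step: the first-order convexity inequality, Lemma~\ref{le:horver} applied to $f=\log\sigma_0$, $g=\varphi$ with its hypotheses checked through Proposition~\ref{prop:testplan}, Proposition~\ref{prop:brenierbip} and Lemma~\ref{le:perhorver}, then Lemma~\ref{le:chains} with dominated convergence and the monotonicity of the difference quotients of the convex functional $\C$. That part is fine.

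Part (b) has a genuine gap. To invoke (a) you must exhibit an initial density with the Gaussian decay \eqref{eq:boundrhoo0}, and for $K<0$ on an unbounded space this is incompatible with $\sigma_0\ge c>0$. The decay is not bookkeeping: it is exactly what makes $({\rm restr}_0^{1/3})_\sharp\ppi$ a test plan via Proposition~\ref{prop:testplan}, because the interpolation bound \eqref{eq:boundinterpolate} carries the factor $\rme^{\frac{K^-}{2}\frac{t}{1-t}(\sfD(x)+\Di)^2}$ and is useless unless $\sigma_0$ decays at infinity. Your proposed regularization $\sigma_0\mapsto\heatl_\tau\sigma_0$ does nothing for this: by the maximum principle $\heatl_\tau\sigma_0\ge c$ everywhere, so \eqref{eq:boundrhoo0} still fails. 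Worse, changing $\sigma_0$ changes the optimal plan, the potential $\varphi_\tau$ and $W_2$, and to recover \eqref{eq:derentropia} as $\tau\downarrow0$ you would need $\limsup_\tau \C(\varphi_\tau+\eps\,\heatl_\tau\sigma_0)\le\C(\varphi+\eps\sigma_0)$; lower semicontinuity of $\C$, which is all you have, points the wrong way for this term, and you have no control on $\varphi_\tau$ in any case.

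The paper's device avoids both problems: it keeps $\ppi$, $\varphi$ and $\sigma_1$ fixed and reweights the plan, setting $\ppi^n:=c_n(\nchi_n\circ\e_0)\ppi$ with a cutoff $\nchi_n=h_n(\sfD)$ engineered so that $\sigma_0^n=c_n\nchi_n\sigma_0$ satisfies \eqref{eq:boundrhoo0} exactly and $\log\sigma_0^n\in D(\C)$. This is precisely where the strong $CD(K,\infty)$ condition with general weights $F$ from Definition~\ref{def:strongcd} is needed (your argument only uses $F\equiv1$), and the passages to the limit, $\entr{\mu_t^n}{\mm}\to\entr{\mu_t}{\mm}$ by Lemma~\ref{le:apprentr} and $\C(\varphi+\eps\sigma_0^n)\to\C(\varphi+\eps\sigma_0)$ by locality and domination rather than semicontinuity, are carried out with the \emph{same} potential $\varphi$. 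Note also that the case $\C(\sigma_0)=+\infty$ is dismissed at the outset because then $\C(\varphi+\eps\sigma_0)=+\infty$ and there is nothing to prove, so no regularization of $\sigma_0$ itself is ever required.
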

\begin{proof} (a) Since $\log\sigma_0\in D(\C)$ and $\sigma_0$ is
bounded we have $\C(\sigma_0)<\infty$. By
Proposition~\ref{prop:testplan} we get that $({\rm
restr}_0^{1/3})_\sharp\ppi$ is a test plan.

Now observe that the convexity of $z\mapsto z\log z$ gives
\begin{equation}
\label{eq:fine1}
\begin{split}
\frac{\ent{\mu_t}-\ent{\mu_0}}t&\geq \int\log\sigma_0
\frac{\sigma_t-\sigma_0}t\,\d\mm=\int\frac{\log(\sigma_0\circ
\e_t)-\log(\sigma_0\circ \e_0)}t \,\d\ppi.
\end{split}
\end{equation}
Here we make the fundamental use of Lemma~\ref{le:horver}: take
$f:=\log\sigma_0$, $g:=\varphi$ and notice that thanks to
Proposition~\ref{prop:brenierbip} and the second part of
Lemma~\ref{le:perhorver} applied with $\ppi$, the assumptions of
Lemma~\ref{le:horver} are satisfied by $\ppi$. Thus we have
\begin{eqnarray}
\label{eq:cena} \liminf_{t\downarrow0}\int\frac{\log(\sigma_0\circ
e_t)-\log(\sigma_0\circ e_0)}t \,\d\ppi
&\geq&\int\frac{\weakgrad{\varphi}^2(\gamma_0)-\weakgrad{(\varphi+\eps\log\sigma_0)}^2(\gamma_0)}{2\eps}\,
\d\ppi(\gamma)\nonumber\\
&=&\int\frac{\weakgrad{\varphi}^2-\weakgrad{(\varphi+\eps\log\sigma_0)}^2}{2\eps}\sigma_0\,\d\mm.
\end{eqnarray}
Since $\log\sigma_0\in D(\C)$, the integrand in the r.h.s. of
\eqref{eq:cena} is dominated, so that Lemma~\ref{le:chains} yields
\begin{eqnarray*}
\lim_{\eps\downarrow0}\int\frac{\weakgrad{\varphi}^2-\weakgrad{(\varphi+\eps\log\sigma_0)}^2}
{\eps}\sigma_0\,\d\mm
&=&\int\lim_{\eps\downarrow0}\frac{\weakgrad{\varphi}^2-\weakgrad{(\varphi+\eps\log\sigma_0))}^2}{\eps}
\sigma_0\,\d\mm\\
&=&\int\lim_{\eps\downarrow0}\frac{\weakgrad{\varphi}^2-\weakgrad{(\varphi+\eps\sigma_0)}^2}{\eps}\,\d\mm\\
&=&\lim_{\eps\downarrow0}\int\frac{\weakgrad{\varphi}^2-\weakgrad{(\varphi+\eps\sigma_0)}^2}{\eps}\,\d\mm\\
&=&2\lim_{\eps\downarrow0}\frac{\C(\varphi)-\C(\varphi+\eps\sigma_0)}{\eps}.
\end{eqnarray*}
Now the convexity of $\C$ together with \eqref{eq:fine1} and
\eqref{eq:cena} proves \eqref{eq:derentropia1}.

\noindent (b) Observe that by Proposition~\ref{prop:brenierbip} we
know that $\int\weakgrad\varphi^2\,\d\mu_0<\infty$, so that the
lower bound $\mu_0\geq c\mm$ gives $\weakgrad\varphi\in L^2(X,\mm)$
and the statement makes sense. We can also assume
$\C(\sigma_0)<\infty$, indeed if not the inequality
$\weakgrad{(\varphi+\eps\sigma_0)}\geq\eps\weakgrad{\sigma_0}-\weakgrad\varphi$
implies $\C(\varphi+\eps\sigma_0)=\infty$ and there is nothing to
prove.

Let $\sfD(x):=\sfd(x,\supp\sigma_1)$ and
$h_n:[0,\infty)\to[0,\infty)$ be given by
\[
h_n(r):=\rme^{-9K^-((r-n)^+)^2}.
\]
Define the cut-off functions $\nchi_n(x):=h_n(\sfD(x))$, and notice
that since the $h_n$'s are equi-Lipschitz, so are the $\nchi_n$'s.

Notice that $\nchi_n\uparrow 1$ in $X$ as $n\to\infty$. Define
$\ppi^n:=c_n\,(\nchi_n\circ\rme_0)\,\ppi$,
$\ppi\in\gopt(\sigma_0\mm,\sigma_1\mm)$ as in
Definition~\ref{def:strongcd} and $c_n\downarrow 1$ being the
normalizing constant, and
$\mu^n_t:=(\e_t)_\sharp\ppi^n=\sigma^n_t\mm$.

We claim that $\C(\varphi+\eps\sigma_0^n)$ converges to
$\C(\varphi+\eps\sigma_0)$ as $n\to\infty$. To prove the claim, let
$L$ be a uniform bound on the Lipschitz constants of the $\nchi^n$
and notice that the inequality \eqref{eq:weakleib} yields
$\weakgrad{(\nchi_n\sigma_0)}\leq\nchi_n\weakgrad{\sigma_0}
+\sigma_0\weakgrad{\nchi_n}\leq\weakgrad{\sigma_0}+L\|\sigma\|_\infty$,
so that the sequence $(\weakgrad{(\nchi_n\sigma_0)})$ is dominated
in $L^2(X,\mm)$. Now just observe that by the locality principle
\eqref{eq:weaklocal} we have
$\weakgrad{\sigma_0}=\weakgrad{(\nchi_n\sigma_0)}$ $\mm$-a.e.~in
$\{\nchi_n=1\}$.

Taking the previous claim into account, by Lemma~\ref{le:apprentr}
we have that $\entr{\mu^n_t}\mm\to\entr{\mu_t}\mm$ for any
$t\in[0,1]$, so that
\begin{eqnarray*}
&&\ent{\sigma_1\mm}-\ent{\sigma_0\mm}-\frac{K}{2}W_2^2(\sigma_0\mm,\sigma_1\mm)\\&=&
\lim_{n\to\infty}
\ent{\sigma_1^n\mm}-\ent{\sigma^n_0\mm}-\frac{K}{2}W_2^2(\sigma^n_0\mm,\sigma^n_1\mm)\\
&\geq&\liminf_{n\to\infty}\liminf_{t\downarrow
0}\frac{\ent{\sigma_t^n\mm}-\ent{\sigma^n_0\mm}}{t}\geq
\liminf_{n\to\infty}\frac{\C(\varphi)-\C(\varphi+\eps\sigma^n_0)}{\eps}\\
&=&\frac{\C(\varphi)-\C(\varphi+\eps\sigma_0)}{\eps}
\end{eqnarray*}
for all $\eps>0$, provided that statement (a) is applicable to
$(\sigma^n_0\mm,\sigma^n_1\mm)$.

To conclude, we prove that $\sigma^n_0$ satisfies the assumptions
made in (a). Indeed, \eqref{eq:boundrhoo0} is satisfied by
construction (with $c=2\|\sigma_0\|_\infty$, $C=R=n$), thanks to
\[
\sigma^n_0(x)=c_n\,\nchi_n(x)\,\sigma_0(x)\leq 2\|\sigma_0\|_\infty
\,\rme^{-9K^-(\sfD(x)-n)^2}\qquad\text{whenever $D(x)\geq n$},
\]
for $n$ large enough to ensure $c_n\leq 2$. In addition, the
inequality $|\nabla\log\nchi_n|(x)\leq 18 K^-\sfD(x)$ gives
\[
\weakgrad{\log\sigma^n_0}(x)\leq\weakgrad{\log\nchi_n}(x)
+\weakgrad{\log\sigma_0}(x)\leq 18K^- \sfD(x)+\frac{1}{\tilde
c}\weakgrad{\sigma_0}(x),
\]
so that $\weakgrad{\log\sigma^n_0}\in D(\C)$.
\end{proof}

\subsection{Quadratic Cheeger's energies}
\label{sec:quadratic}

Fix a metric measure space $(X,\sfd\,\mm)\in \X$; without assuming
any curvature bound, in this section we apply the tools obtained in
Lemma~\ref{le:horver} to derive useful locality and structural
properties on the Cheeger energy in the distinguished case when $\C$
is a quadratic form on $L^2(X,\mm)$. Since $\C$ is $2$-homogeneous
and convex, this property is easily see to be equivalent to the
parallelogram identity (see for instance
\cite[Proposition~11.9]{DalMaso93})
\begin{equation}\label{eq:18}
      \C(f+g)+\C(f-g)=2\C(f)+2\C(g)\quad\forevery\ f,\,g\in
      L^2(X,\mm).
\end{equation}
If this is the case we will denote by $\mathcal E$ the associated
\emph{Dirichlet} form with domain $D(\mathcal
E):=W^{1,2}(X,\sfd,\mm)$, i.e.~$\mathcal E:[D(\mathcal E)]^2\to \R$
is the unique bilinear symmetric form satisfying (see e.g.\
\cite[Prop.~11.9]{DalMaso93})
\[
\mathcal E(f,f)=2\C(f)\qquad\forall f\in W^{1,2}(X,\sfd,\mm).
\]
Recall that $W^{1,2}(X,\sfd,\mm)=D(\C)\cap L^2(X,\mm)$.

We will occasionally use this density criterion in the theory of
linear semigroups.

\begin{lemma}[Density of invariant sets]\label{lem:invariant}
Let ${\mathcal E}$ be the bilinear form associated to a nonnegative
and lower semicontinuous quadratic form $Q$ in a Hilbert space $H$,
and let ${\mathsf S}_t$ be the associated evolution semigroup. If a
subspace $V\subset D(Q)$ is dense for the norm of $H$ and ${\mathsf
S}_t$-invariant, then $V$ is also dense in $D(Q)$ for the Hilbert
norm $\sqrt{{\mathcal E}(u,u)+(u,u)^2}$.
\end{lemma}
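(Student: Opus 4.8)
The plan is to use the smoothing (ultracontractive-in-energy) effect of the semigroup to convert $H$-density into density in the form norm. Throughout, write $\|u\|_Q^2:=\mathcal E(u,u)+\|u\|_H^2$ for the Hilbert norm on $D(Q)$ appearing in the statement. Since $Q$ is quadratic and nonnegative, $\mathsf S_t$ is linear and coincides with the $\EVI_0$ gradient flow of $Q$ in $H$ (which exists for every initial datum in $H=\overline{D(Q)}$, by the classical Hilbertian theory \cite{Brezis73} together with Proposition~\ref{prop:evipropr}(iii)); thus the results of Section~\ref{sec:EVI} apply with $E=Q$ and $K=0$. As $V$ is a subspace we may assume $0\in V\subset D(Q)$, with $Q(0)=0$.

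First I would isolate two standard facts. \emph{(a) Energy regularization:} for every $v\in H$ and every $t>0$ one has $\mathsf S_t v\in D(Q)$ and $Q(\mathsf S_t v)\le\frac1{2t}\|v\|_H^2$; this is \eqref{eq:ultraEVI} applied with $K=0$ (so that $\mathrm I_0(t)=t$) and with the admissible competitor $z=0\in D(Q)$, after discarding the nonnegative slope term — this is essentially the only place where the quadratic structure really intervenes. \emph{(b) Strong continuity at $0$ on $D(Q)$:} for every $u\in D(Q)$ one has $\mathsf S_t u\to u$ in $\|\cdot\|_Q$ as $t\downarrow0$. To see this, note that $\mathsf S_t u\to u$ in $H$ (the flow starts at $u$) and that, by the energy identity \eqref{eq:ede} (valid since $u\in D(Q)$, by Proposition~\ref{prop:evipropr}(i)), $Q(\mathsf S_t u)\to Q(u)$ with $Q(\mathsf S_t u)\le Q(u)$ for every $t$. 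Hence $(\mathsf S_t u)_{0<t\le1}$ is bounded in the Hilbert space $(D(Q),\|\cdot\|_Q)$ and has $u$ as its only weak cluster point as $t\downarrow0$ (any weak limit in $D(Q)$ is also a weak limit in $H$, hence equals $u$); therefore $\mathsf S_t u\rightharpoonup u$ weakly in $D(Q)$, and since $\|\mathsf S_t u\|_Q\to\|u\|_Q$ the convergence is strong.

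The density assertion then follows by a two-step approximation. Fix $u\in D(Q)$ and $\varepsilon>0$. By (b) choose $t>0$ with $\|\mathsf S_t u-u\|_Q<\varepsilon/2$, and set $w:=\mathsf S_{t/2}u\in D(Q)\subset H$, so that $\mathsf S_{t/2}w=\mathsf S_t u$ by the semigroup property. By $H$-density of $V$ pick $w_n\in V$ with $w_n\to w$ in $H$; then $\mathsf S_{t/2}w_n\in V$ by $\mathsf S_{t/2}$-invariance. Using $H$-contractivity of $\mathsf S_{t/2}$ and linearity, $\|\mathsf S_{t/2}w_n-\mathsf S_t u\|_H=\|\mathsf S_{t/2}(w_n-w)\|_H\le\|w_n-w\|_H\to0$, while (a) applied to $w_n-w$ at time $t/2$ gives $Q(\mathsf S_{t/2}(w_n-w))\le\frac1t\|w_n-w\|_H^2\to0$; hence $\|\mathsf S_{t/2}w_n-\mathsf S_t u\|_Q\to0$. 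Choosing $n$ with $\|\mathsf S_{t/2}w_n-\mathsf S_t u\|_Q<\varepsilon/2$, the triangle inequality yields $\|\mathsf S_{t/2}w_n-u\|_Q<\varepsilon$ with $\mathsf S_{t/2}w_n\in V$, which is the required density.

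This is mostly a careful assembly of standard material; the step I would be most attentive to is (b), namely that $t\mapsto\mathsf S_t u$ is continuous at $t=0$ in the \emph{energy} norm — not merely in $H$ — for $u\in D(Q)$, where the lower semicontinuity of $Q$ and the energy identity \eqref{eq:ede} are used. Given (a), (b), and the factorization $\mathsf S_t=\mathsf S_{t/2}\mathsf S_{t/2}$ together with invariance, the rest is routine.
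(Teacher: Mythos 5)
Your proof is correct, but it follows a genuinely different route from the paper's. The paper argues by duality/orthogonality: it takes $u\in D(\mathcal E)$ with $\mathcal E(u,w)+(u,w)=0$ for all $w\in V$, tests with $w=\mathsf S_tv$, uses self-adjointness of $\mathsf S_t$ and the fact that $\mathsf S_tu$ lies in the domain of the generator (so that $v\mapsto\mathcal E(\mathsf S_tu,v)$ is $H$-continuous) to extend the identity to all $v\in D(\mathcal E)$, and then chooses $v=\mathsf S_tu$ and lets $t\downarrow0$ to conclude $u=0$. You instead give a direct, constructive approximation: the regularization estimate $Q(\mathsf S_tv)\le\frac1{2t}\|v\|_H^2$ (your step (a), i.e.\ \eqref{eq:ultraEVI} with $K=0$, $z=0$) shows that $\mathsf S_{t/2}$ is continuous from $H$ into $(D(Q),\|\cdot\|_Q)$, invariance keeps $\mathsf S_{t/2}w_n$ inside $V$, and your step (b) — energy-norm continuity of $t\mapsto\mathsf S_tu$ at $t=0$ on $D(Q)$, obtained correctly from the energy identity \eqref{eq:ede}, lower semicontinuity of $Q$, and the weak-plus-norm-convergence argument in the Hilbert space $(D(Q),\|\cdot\|_Q)$ — closes the gap. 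The paper's argument is shorter and leans on self-adjointness and the generator's domain; yours trades that for the smoothing estimate and produces explicit approximants $\mathsf S_{t/2}w_n\in V$, which is slightly more information. All the ingredients you invoke (contractivity, \eqref{eq:ultraEVI}, \eqref{eq:ede}, linearity of $\mathsf S_t$ from the quadratic structure, completeness of $(D(Q),\|\cdot\|_Q)$ from lower semicontinuity) are legitimately available in the setting of the lemma, so the proof stands as written.
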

\begin{proof} If $u\in D({\mathcal E})$ satisfies ${\mathcal E}(u,w)+(u,w)=0$ for all $w\in V$,
we can choose $w={\mathsf S}_tv$, $v\in V,$ and use the fact that
${\mathsf S}_t$ is self-adjoint to get
$$
{\mathcal E}({\mathsf S}_tu,v)+({\mathsf S}_tu,v)={\mathcal
E}(u,{\mathsf S}_tv)+(u,{\mathsf S}_tv)=0\qquad\forall v\in
V,\,\,t>0.
$$
Since ${\mathsf S}_tu$ belongs to the domain of the infinitesimal
generator of ${\mathsf S}_tu$, $v\mapsto {\mathcal E}({\mathsf
S}_tu,v)$ is continuous in $D({\mathcal E})$ for the $H$ norm, hence
${\mathcal E}({\mathsf S}_tu,v)+({\mathsf S}_tu,v)=0$ for all $v\in
D({\mathcal E})$ and $t>0$. Choosing $v={\mathsf S}_tu$ and letting
$t\downarrow 0$ gives $u=0$.
%
%To show the last statement, let us first notice that
%$V:=\bigcup_{t>0}\sfS_t(H)$ is dense in $H$ and invariant, thus
%dense in $D(Q)$. If $H_0$ is a countable dense subset of $H$, it is
%immediate to check using \eqref{eq:ultraEVI} that
%$V_0:=\bigcup_{t\in \Q\cap(0,\infty)}\sfS_t(H_0)$ is countable and
%dense in $V$ with respect to the norm of $D(Q)$.
\end{proof}

\begin{proposition}[Properties of $W^{1,2}(X,\sfd,\mm)$]\label{prop:lipdense}
If $\C$ is quadratic in $L^2(X,\mm)$ then $W^{1,2}(X,\sfd,\mm)$
endowed with the norm $\sqrt{\|f\|_2^2+\mathcal E(f,f)}$ is a
separable Hilbert space and Lipschitz functions are dense.
\end{proposition}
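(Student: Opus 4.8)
The plan is to establish the three assertions in turn: the Hilbert structure, separability, and density of Lipschitz functions, the last being the only genuinely delicate one.

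\emph{Hilbert structure.} Being $2$-homogeneous and convex, $\C$ is quadratic exactly when it satisfies the parallelogram identity \eqref{eq:18}, and then $\mathcal E(f,f)=2\C(f)\ge 0$. Hence $(f,g)\mapsto\int_X fg\,\d\mm+\mathcal E(f,g)$ is a symmetric bilinear form on $W^{1,2}(X,\sfd,\mm)$ which is positive definite (vanishing of the associated norm forces $\|f\|_2=0$, hence $f=0$) and induces precisely the norm $\sqrt{\|f\|_2^2+\mathcal E(f,f)}=\|f\|_{W^{1,2}}$. Since $W^{1,2}(X,\sfd,\mm)$ is complete for this norm (already recorded), it is a Hilbert space.

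\emph{Separability.} The inclusion $\iota\colon W^{1,2}(X,\sfd,\mm)\to L^2(X,\mm)$ is a continuous linear injection, because $\|f\|_2\le\|f\|_{W^{1,2}}$, and $L^2(X,\mm)$ is separable, $(X,\sfd)$ being complete and separable and $\mm$ a Borel measure. I would then use the adjoint $\iota^*\colon L^2(X,\mm)\to W^{1,2}(X,\sfd,\mm)$, which exists since both spaces are Hilbert: fixing a countable dense set $\{g_n\}\subset L^2(X,\mm)$, the closed linear span $V$ of $\{\iota^*g_n\}_n$ in $W^{1,2}(X,\sfd,\mm)$ is separable, and any $f\in W^{1,2}(X,\sfd,\mm)$ orthogonal to $V$ satisfies $\langle\iota f,g_n\rangle_{L^2}=\langle f,\iota^*g_n\rangle_{W^{1,2}}=0$ for all $n$, hence $\iota f=0$, hence $f=0$. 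Therefore $V=W^{1,2}(X,\sfd,\mm)$ is separable.

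\emph{Density of Lipschitz functions.} Fix $f\in W^{1,2}(X,\sfd,\mm)$. By \eqref{eq:nontrivialatall}, a diagonal argument over the admissible approximating sequences yields $f_h\in\Lip(X)$ with $\|f_h-f\|_2\to 0$ and $\limsup_h\int_X|\nabla f_h|^2\,\d\mm\le 2\C(f)$. The slope $|\nabla f_h|$ is an upper gradient of $f_h$, hence a weak upper gradient, so $\weakgrad{f_h}\le|\nabla f_h|$ $\mm$-a.e.\ and $\limsup_h 2\C(f_h)=\limsup_h\int_X\weakgrad{f_h}^2\,\d\mm\le 2\C(f)$; on the other hand, lower semicontinuity of $\C$ with respect to convergence in $\mm$-measure (in particular in $L^2(X,\mm)$) gives $\liminf_h\C(f_h)\ge\C(f)$. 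Thus $\C(f_h)\to\C(f)$, so $\|f_h\|_{W^{1,2}}\to\|f\|_{W^{1,2}}$ and $(f_h)$ is bounded in the Hilbert space $W^{1,2}(X,\sfd,\mm)$; passing to a subsequence, $f_h\rightharpoonup g$ weakly in $W^{1,2}(X,\sfd,\mm)$, which implies $f_h\rightharpoonup g$ weakly in $L^2(X,\mm)$, and since $f_h\to f$ strongly there we get $g=f$. Hence, along this subsequence, $f_h\rightharpoonup f$ weakly in $W^{1,2}(X,\sfd,\mm)$ while $\|f_h\|_{W^{1,2}}\to\|f\|_{W^{1,2}}$, and in a Hilbert space this upgrades to strong convergence $f_h\to f$ in $W^{1,2}(X,\sfd,\mm)$, proving density.

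The main obstacle is precisely this last implication: the approximation theorem \eqref{eq:nontrivialatall} only delivers $L^2$-approximants with a $\liminf$-type control on the Cheeger energies, and promoting this to convergence in the full $W^{1,2}$-norm genuinely uses the quadraticity of $\C$ — it is only because $W^{1,2}(X,\sfd,\mm)$ is Hilbert that convergence of the norms together with weak convergence forces strong convergence. The Hilbert structure and separability are, by contrast, essentially formal.
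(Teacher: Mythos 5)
Your proof is correct, and it reaches the same two key conclusions as the paper --- that $\|f_n-f\|_2\to 0$ together with $\C(f_n)\to\C(f)$ forces strong $W^{1,2}$-convergence, and that $W^{1,2}(X,\sfd,\mm)$ is separable --- but by genuinely different routes. For the first point the paper avoids any compactness argument: it writes, using the parallelogram identity and the $L^2$-lower semicontinuity of $\C$,
$$\limsup_{n\to\infty}\C(f-f_n)=4\C(f)-\liminf_{n\to\infty}\C(f+f_n)\le 4\C(f)-\C(2f)=0,$$
which gives strong convergence of the full sequence in two lines, whereas you extract a weakly convergent subsequence, identify its limit through the $L^2$-embedding, and invoke the Radon--Riesz property; both arguments use quadraticity in an essentially equivalent way, though the paper's is shorter and subsequence-free. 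The more substantial divergence is in the separability step: the paper works with the $\heatl_t$-invariant subspace $V:=\bigcup_{t>0}\heatl_tL^2(X,\mm)$, proving its density in $W^{1,2}(X,\sfd,\mm)$ via Lemma~\ref{lem:invariant} and its separability via the regularization estimate \eqref{eq:ultraEVI}, while you use the purely functional-analytic fact that a Hilbert space admitting a continuous linear injection into a separable Hilbert space is itself separable (through the adjoint of the inclusion). Your argument is more elementary and self-contained --- it needs nothing beyond the Hilbert structure just established and the separability of $L^2(X,\mm)$ --- whereas the paper's route has the side benefit of exhibiting a concrete dense invariant subspace that is reused elsewhere. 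Your diagonal argument extracting from \eqref{eq:nontrivialatall} a single Lipschitz sequence with $\limsup_h\int|\nabla f_h|^2\,\d\mm\le 2\C(f)$, and the inequality $\weakgrad{f_h}\le|\nabla f_h|$, are both legitimate and consistent with how the paper uses these objects.
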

\begin{proof} We already know that $W^{1,2}(X,\sfd,\mm)$ is complete \cite[Remark
4.7]{Ambrosio-Gigli-Savare11} and therefore it is a Hilbert space
since $\C$ is quadratic. In particular if $f_n,f\in
W^{1,2}(X,\sfd,\mm)$ satisfy $\|f_n-f\|_2\to 0$ and
$\C(f_n)\to\C(f)$ then $f_n\to f$ strongly in $W^{1,2}(X,\sfd,\mm)$.
In fact, by the parallelogram identity and the $L^2(X,\mm)$-lower
semicontinuity of $\C$
\begin{align*}
  \limsup_{n\to\infty}\C(f-f_n)&=
  \limsup_{n\to\infty}\Big(2\C(f)+2\C(f_n)-\C(f+f_n)\Big)
  \\&=4\C(f)-\liminf_{n\to\infty} \C(f+f_n)\le 4\C(f)-\C(2f)=0.
\end{align*}
The density of Lipschitz function thus follows by
\eqref{eq:nontrivialatall}. The separability of
$W^{1,2}(X,\sfd,\mm)$ follows considering the invariant set
$V:=\bigcup_{t>0}\heatl_tL^2(X,\mm)$, which is a subspace thanks to
the semigroup property, dense in $W^{1,2}(X,\sfd,\mm)$ thanks to
Lemma~\ref{lem:invariant}. Using \eqref{eq:ultraEVI} and the
separability of $L^2(X,\mm)$ it is easy to check that $V$ is
separable with respect to the $W^{1,2}(X,\sfd,\mm)$ norm, whence the
separability of $W^{1,2}(X,\sfd,\mm)$ follows.
\end{proof}

The terminology Dirichlet form, borrowed from \cite{Fukushima80}, is
justified by the fact that $\mathcal E$ is closed (because $\C$ is
$L^2(X,\mm)$-lower semicontinuous) and Markovian (by the chain rule
\eqref{eq:chainrule}). Good references on the theory of Dirichlet
forms are\cite{Fukushima80,Fukushima-Oshima-Takeda11} for locally
compact spaces and \cite{Rockner92}. The second reference (but see
also \cite[A.4]{Fukushima-Oshima-Takeda11}), where the theory is
extended to infinite-dimensional spaces and even to some classes of
non-symmetric forms is more appropriate for us, since we are not
assuming local compactness of our spaces.

In this section we analyze the basic properties of this form and
relate the energy measure $[f]$ appearing in Fukushima's theory, a
kind of localization of ${\mathcal E}$, to $\weakgrad{f}$. Recall
that for any $f\in D(\mathcal E)\cap L^\infty(X,\mm)$ the energy
measure $[f]$ (notice the factor $1/2$ with respect to
  the definition of \cite[(3.2.14)]{Fukushima-Oshima-Takeda11})
is defined by
\begin{equation}\label{eq:fuku1}
\mathcal[f](\varphi):={\color{red}-}
{\mathcal E}(f,f\varphi)- {\mathcal E}(\frac
{f^2} 2,\varphi)\quad \text{for any }\varphi\in D(\mathcal E)\cap
L^\infty(X,\mm).
\end{equation}
We shall prove in Theorem~\ref{thm:tangente} that
$[f]=\weakgrad{f}^2\mm$.
The first step concerns locality, which is not difficult to prove in
our setting:
\begin{proposition}\label{prop:stronglocal}
$\mathcal E$ is strongly local:
\begin{equation}
\label{eq:strongloc} \text{$f,\,g\in D(\mathcal E)$, $g$ constant on
$\{f\neq 0\}$}\qquad\Rightarrow\qquad\mathcal E(f,g)=0.
\end{equation}
\end{proposition}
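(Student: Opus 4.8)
The plan is to prove strong locality directly from the Leibniz/chain-rule structure already available for weak upper gradients, exploiting the fact that $\mathcal E(f,g) = \frac{d^2}{d\eps^2}\big|_{\eps=0}\C(f+\eps g)$ and that $\C(f) = \frac12\int\weakgrad f^2\,\d\mm$. Since $\C$ is quadratic, polarization gives $\mathcal E(f,g) = \C(f+g)-\C(f)-\C(g)$, so it suffices to understand $\weakgrad{(f+g)}$ when $g$ is constant, say $g\equiv c$, on $\{f\neq 0\}$.

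First I would reduce to the case where $g$ takes only finitely many values, or more precisely handle the level sets of $g$ one at a time using the locality principle \eqref{eq:weaklocal}. On the set $\{f\neq 0\}$ we have $g = c$ (a constant), hence $f+g = f+c$ there, and by the chain rule \eqref{eq:chainrule} (applied with the affine map $z\mapsto z+c$) together with locality \eqref{eq:weaklocal}, $\weakgrad{(f+g)} = \weakgrad f$ $\mm$-a.e.\ on $\{f\neq 0\}$. On the complementary set $\{f = 0\}$ we have $\weakgrad f = 0$ $\mm$-a.e.\ (by locality, since $f$ agrees with the constant $0$ there), and also $f+g = g$ there, so $\weakgrad{(f+g)} = \weakgrad g$ $\mm$-a.e.\ on $\{f=0\}$. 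Combining, $\weakgrad{(f+g)}^2 = \weakgrad f^2 + \weakgrad g^2$ $\mm$-a.e.\ in $X$, because on $\{f\neq 0\}$ the left side equals $\weakgrad f^2$ while $\weakgrad g = 0$ $\mm$-a.e.\ there (as $g$ is locally constant on an open-in-measure sense — actually because $g$ is constant on $\{f\neq 0\}$, locality gives $\weakgrad g = 0$ there too). Integrating yields $\C(f+g) = \C(f)+\C(g)$, hence $\mathcal E(f,g) = 0$.

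The one subtlety to be careful about is that ``$g$ constant on $\{f\neq 0\}$'' only pins down $g$ up to an $\mm$-null set, and $\weakgrad g$ need not vanish on all of $\{f\neq 0\}$ a priori — but it does $\mm$-a.e., which is exactly what locality \eqref{eq:weaklocal} provides when we compare $g$ with the constant function equal to $c$: they agree $\mm$-a.e.\ on $\{f\neq 0\}$, so their minimal weak upper gradients agree $\mm$-a.e.\ there, and the constant has zero gradient. Another point: the chain rule \eqref{eq:chainrule} was stated for Lipschitz $\phi$, and $z\mapsto z+c$ is Lipschitz and nondecreasing, so the equality form applies and gives $\weakgrad{(f+c)} = \weakgrad f$; alternatively one can invoke directly that adding a constant does not change the minimal weak upper gradient. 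I expect no serious obstacle here — the result is essentially a bookkeeping consequence of \eqref{eq:weaklocal} and \eqref{eq:chainrule} — so the main (minor) care is organizing the set decomposition $X = \{f\neq 0\}\cup\{f=0\}$ cleanly and verifying the pointwise a.e.\ identity $\weakgrad{(f+g)}^2 = \weakgrad f^2 + \weakgrad g^2$ on each piece before integrating.
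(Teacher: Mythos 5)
Your argument is correct and is essentially the paper's own proof: both use the polarization identity $\mathcal E(f,g)=\C(f+g)-\C(f)-\C(g)$ and the decomposition $X=\{f\neq 0\}\cup\{f=0\}$, showing via locality \eqref{eq:weaklocal} and the chain rule \eqref{eq:chainrule} that $\weakgrad{(f+g)}=\weakgrad f$ and $\weakgrad g=0$ a.e.\ on the first set, while $\weakgrad{(f+g)}=\weakgrad g$ and $\weakgrad f=0$ a.e.\ on the second. The only blemish is your opening remark that $\mathcal E(f,g)$ is the \emph{second} $\eps$-derivative of $\C(f+\eps g)$ at $\eps=0$ (it is the first; the second is $2\C(g)$), but this is never used and does not affect the proof.
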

\begin{proof}
By definition we have
\begin{equation}
\label{eq:polar}
2\mathcal E(f,g)=\int\weakgrad{(f+g)}^2-\weakgrad f^2-\weakgrad g^2\,\d\mm.
\end{equation}
By the assumption on $g$, locality \eqref{eq:weaklocal} and chain
rule \eqref{eq:chainrule} we get that $\mm$-a.e.~on $\{f\neq 0\}$.
$\weakgrad{(f+g)}=\weakgrad{f}$ and $\weakgrad{g}$ vanishes. On the
other hand, $\mm$-a.e.~on $X\setminus \{f\neq 0\}$ we have
$\weakgrad{(f+g)}=\weakgrad{g}$ and $\weakgrad{f}$ vanishes.
\end{proof}
The identification of $[f]$ with $\weakgrad{f}^2\mm$ requires a
deeper understanding of the Leibnitz formula in our context. Our
goal is to prove the existence of a bilinear symmetric map from
$[D(\C)]^2$ to $L^1(X,\mm)$, that we will denote by $\nabla
f\cdot\nabla g$, which gives a pointwise representation of the
Dirichlet form, in the sense that
\[
\mathcal E(f,g)=\int\nabla f\cdot\nabla g\,\d\mm\qquad\forall
f,\,g\in W^{1,2}(X,\sfd,\mm).
\]
In the spirit of Cheeger's theory \cite{Cheeger00}, we may think to
differentials of Lipschitz functions as $L^\infty$ sections of the
cotangent bundle to $X$. Then, if we keep $f$ fixed, the map
$g\mapsto\nabla f(x)\cdot\nabla g(x)$ may be interpreted as the
action of $df$ on the section induced by $g$, and we may use this
map to define the gradient of $f$.

We remark that $\nabla f\cdot \nabla g$ could be considered as the
``carr\'e du champ'' operator $\Gamma(f,g)$ of $\Gamma$-calculus in
this context, widely used in the study of diffusion semigroups. We
adopted the appealing notation $\nabla f\cdot\nabla g$ since in our
approach this quantity is directly obtained by a pointwise
``vertical" differential of the squared weak upper gradient (like
when one tries to recover a scalar product from the squared norm).
As a byproduct, we will show that the Laplacian satisfies a suitable
formulation of the diffusion condition \cite[1.3]{Bakry06}, so that
useful estimates can be derived by the so called $\Gamma$-calculus.
An example of application will be given in Section \ref{sub8}.

Let $f,\,g\in D(\C)$ and notice that the inequality
\[
\weakgrad{((1-\lambda)f+\lambda g)}\leq(1-\lambda)\weakgrad
f+\lambda\weakgrad g\qquad\text{$\mm$-a.e.~in $X$,}
\]
valid for any $\lambda\in[0,1]$, immediately yields that
$\eps\mapsto \weakgrad{(f+\eps g)}^2$ satisfies the usual convexity
inequality $\mm$-a.e.\ and $\eps\mapsto\eps^{-1}[\weakgrad{(f+\eps
g)}^2-\weakgrad f^2]$ is nondecreasing $\mm$-a.e.~in  $\R\setminus
\{0\}$, in the sense that
\[
\frac{\weakgrad{(f+\eps' g)}^2-\weakgrad f^2}{\eps'}\leq
\frac{\weakgrad{(f+\eps g)}^2-\weakgrad
f^2}{\eps}\qquad\text{$\mm$-a.e.~in $X$ for $\eps',\eps\in
\R\setminus \{0\},\ \eps'\leq\eps$.}
\]
\begin{definition}[The function $\nabla f\cdot\nabla g$]
For $f,\,g\in D(\C)$ we define $\nabla f\cdot\nabla g$ as
\begin{equation}
\nabla f\cdot\nabla g:=\lim_{\eps\downarrow
0}\frac{\weakgrad{(f+\eps g)}^2-\weakgrad f^2}{2\eps} \label{eq:26}
\end{equation}
where the limit is understood in $L^1(X,\mm)$.
\end{definition}
Notice that by monotone convergence and the lower bound obtained by
taking a negative $\eps'$ in the previous monotonicity formula, the
limit in \eqref{eq:26} exists in $L^1(X,\mm)$ along any
monotonically decreasing sequence $(\epsilon_i)\subset (0,\infty)$.
This obviously implies existence of the full limit as
$\eps\downarrow 0$; we also have
\begin{equation}
\label{eq:doteweakgrad} \nabla f\cdot\nabla f=\weakgrad
f^2\qquad\text{$\mm$-a.e.~in $X$.}
\end{equation}
Notice also that we don't know, yet, whether $(f,g)\mapsto\nabla
f\cdot\nabla g$ is symmetric, or bilinear, the only trivial
consequence of the definition being the positive homogeneity w.r.t.
$g$. Now we examine the continuity properties of $\nabla
f\cdot\nabla g$ with respect to $g$.

\begin{proposition}\label{eq:daunaparte}
For $f,\,g,\,\tilde{g}\in D(\C)$ it holds
\[
\big|\nabla f\cdot\nabla g-\nabla f\cdot\nabla \tilde g\big|\leq
\weakgrad f\weakgrad{(g-\tilde g)}\qquad\text{$\mm$-a.e.~in $X$}
\]
and, in particular, $\nabla f\cdot\nabla g\in L^1(X,\mm)$ and
\begin{equation} \label{eq:boundpuntuale}
|\nabla f\cdot\nabla
g|\leq\weakgrad f\weakgrad g\qquad\text{$\mm$-a.e.~in $X$.}
\end{equation}
\end{proposition}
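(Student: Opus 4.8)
The plan is to exploit the pointwise monotonicity of $\eps\mapsto \eps^{-1}(\weakgrad{(f+\eps g)}^2-\weakgrad f^2)$ recorded just above the definition of $\nabla f\cdot\nabla g$, and to compare the difference quotients for $g$ and for $\tilde g$ by writing $g = \tilde g + (g-\tilde g)$ and using the triangle-type inequality $|\weakgrad{(f+\eps g)} - \weakgrad{(f+\eps \tilde g)}| \le \eps\,\weakgrad{(g-\tilde g)}$, which follows $\mm$-a.e.\ from the sublinearity $\weakgrad{(u+v)}\le\weakgrad u+\weakgrad v$ of the minimal weak upper gradient. First I would fix $f,g,\tilde g\in D(\C)$ and a small $\eps>0$, and write
\[
\weakgrad{(f+\eps g)}^2-\weakgrad{(f+\eps\tilde g)}^2
=\bigl(\weakgrad{(f+\eps g)}-\weakgrad{(f+\eps\tilde g)}\bigr)\bigl(\weakgrad{(f+\eps g)}+\weakgrad{(f+\eps\tilde g)}\bigr).
\]
The first factor is bounded in absolute value by $\eps\,\weakgrad{(g-\tilde g)}$ $\mm$-a.e. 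For the second factor I would bound $\weakgrad{(f+\eps g)}+\weakgrad{(f+\eps\tilde g)}\le 2\weakgrad f+\eps(\weakgrad g+\weakgrad{\tilde g})$, so that
\[
\frac{\weakgrad{(f+\eps g)}^2-\weakgrad f^2}{2\eps}-\frac{\weakgrad{(f+\eps\tilde g)}^2-\weakgrad f^2}{2\eps}
\le \weakgrad{(g-\tilde g)}\Bigl(\weakgrad f+\tfrac\eps2(\weakgrad g+\weakgrad{\tilde g})\Bigr)\quad\mm\text{-a.e.}
\]
Passing to the limit $\eps\downarrow 0$ in $L^1(X,\mm)$, using that the limits defining $\nabla f\cdot\nabla g$ and $\nabla f\cdot\nabla\tilde g$ exist (as established right after Definition of $\nabla f\cdot\nabla g$), gives $\nabla f\cdot\nabla g-\nabla f\cdot\nabla\tilde g\le \weakgrad f\,\weakgrad{(g-\tilde g)}$ $\mm$-a.e.; interchanging the roles of $g$ and $\tilde g$ yields the two-sided bound.

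Then the particular case follows by taking $\tilde g=0$: since $\weakgrad 0=0$ $\mm$-a.e.\ and $\nabla f\cdot\nabla 0=0$ $\mm$-a.e.\ (clear from the definition, as $\weakgrad{(f+\eps\cdot 0)}=\weakgrad f$), we obtain $|\nabla f\cdot\nabla g|\le\weakgrad f\,\weakgrad g$ $\mm$-a.e., and since both $\weakgrad f,\weakgrad g\in L^2(X,\mm)$ by $f,g\in D(\C)$, Cauchy--Schwarz gives $\nabla f\cdot\nabla g\in L^1(X,\mm)$.

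The only point requiring a little care — the main (mild) obstacle — is the $L^1$ passage to the limit in the displayed inequality: one must make sure the difference quotients converge in $L^1(X,\mm)$, not merely pointwise. But this is exactly what the discussion preceding the definition of $\nabla f\cdot\nabla g$ guarantees (monotone convergence from above, together with an integrable lower bound coming from a negative value of $\eps$ in the monotonicity formula), so there is no genuine difficulty; one applies the convergence to the two difference quotients separately and the $\eps$-dependent remainder term $\tfrac\eps2\weakgrad{(g-\tilde g)}(\weakgrad g+\weakgrad{\tilde g})$ tends to $0$ in $L^1$ by dominated convergence.
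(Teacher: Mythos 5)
Your proof is correct and follows essentially the same route as the paper's: factor the difference of squares, bound the first factor by $\eps\weakgrad{(g-\tilde g)}$ via sublinearity of the minimal weak upper gradient, divide by $\eps$ and pass to the limit. The only cosmetic difference is that you control the second factor by the explicit pointwise bound $2\weakgrad f+\eps(\weakgrad g+\weakgrad{\tilde g})$, whereas the paper invokes the strong $L^2$ convergence of $\weakgrad{(f+\eps g)}$ and $\weakgrad{(f+\eps\tilde g)}$ to $\weakgrad f$.
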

\begin{proof} It follows from
\begin{eqnarray*}
&&\Big|\big(\weakgrad{(f+\eps g)}^2-\weakgrad
f^2\big)-\big(\weakgrad{(f+\eps \tilde g)}^2-\weakgrad
f^2\big)\Big|
=\Bigl|\weakgrad{(f+\eps g)}^2-\weakgrad{(f+\eps \tilde g)}^2\Bigr|\\
&=&\Bigl|\big(\weakgrad{(f+\eps g)}-\weakgrad{(f+\eps \tilde g)}\big)\,
\big(\weakgrad{(f+\eps g)}+\weakgrad{(f+\eps \tilde g)}\big)\Bigr|\\
&\leq&\eps\weakgrad {(g-\tilde g)}\,\big(\weakgrad{(f+\eps
g)}+\weakgrad{(f+\eps \tilde g)}\big),
\end{eqnarray*}
dividing by $\eps$, letting $\eps\downarrow 0$ and using the strong
convergence of $\weakgrad{(f+\eps g)}$ and $\weakgrad{(f+\eps\tilde
g)}$ to $\weakgrad{f}$.
\end{proof}

Observe that the second chain rule given in Lemma~\ref{le:chains}
grants that
\begin{equation}
\label{eq:chainfacile} \int\nabla (\phi\circ f)\cdot\nabla g\,\d\mm
=\int(\phi'\circ f)\,\nabla f\cdot\nabla g\,\d\mm
\end{equation}
for $\phi$ nondecreasing and $C^1$ on an interval containing the
image of $f$.

\begin{proposition}
For any $f,\,g\in D(\C)$ it holds
\begin{equation}
\label{eq:dirlocale}
\mathcal E(f,g)=\int \nabla f\cdot\nabla g\,\d\mm.
\end{equation}
Also, we have
\begin{equation}
\label{eq:giochettodisegno} \nabla f\cdot\nabla
g=-\nabla(-f)\cdot\nabla g=-\nabla f\cdot\nabla(- g)=\nabla
(-f)\cdot\nabla(- g)\qquad\text{$\mm$-a.e.~in $X$.}
\end{equation}
\end{proposition}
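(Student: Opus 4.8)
The plan is to establish \eqref{eq:dirlocale} first and then bootstrap \eqref{eq:giochettodisegno} out of it. For \eqref{eq:dirlocale}: by Proposition~\ref{eq:daunaparte} we have $\nabla f\cdot\nabla g\in L^1(X,\mm)$, and by the very definition \eqref{eq:26} the difference quotients $\frac1{2\eps}\bigl(\weakgrad{(f+\eps g)}^2-\weakgrad f^2\bigr)$ converge to $\nabla f\cdot\nabla g$ in $L^1(X,\mm)$ as $\eps\downarrow0$. Integrating this convergence and recalling that $\int\weakgrad h^2\,\d\mm=2\C(h)$ for every $h\in D(\C)$, one gets
\[
\int\nabla f\cdot\nabla g\,\d\mm=\lim_{\eps\downarrow0}\frac{\C(f+\eps g)-\C(f)}{\eps}.
\]
Since $\C$ is a quadratic form with associated bilinear form $\mathcal E$ and $g\in D(\C)$, polarization gives $\C(f+\eps g)=\C(f)+\eps\,\mathcal E(f,g)+\eps^2\C(g)$, so the right-hand side equals $\mathcal E(f,g)$. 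This proves \eqref{eq:dirlocale}.

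Next I would record what follows purely from the chain rule \eqref{eq:chainrule} applied with $\phi(r)=-r$, namely $\weakgrad{(-h)}=\weakgrad h$ $\mm$-a.e., which in particular gives $-h\in D(\C)$ whenever $h\in D(\C)$. Writing $q_{u,v}(\eps):=\eps^{-1}\bigl(\weakgrad{(u+\eps v)}^2-\weakgrad u^2\bigr)$ for $\eps\neq0$, a direct substitution yields, $\mm$-a.e.\ and for every $\eps\neq0$, the identities $q_{-f,-g}(\eps)=q_{f,g}(\eps)$ and $q_{-f,g}(\eps)=q_{f,-g}(\eps)=-q_{f,g}(-\eps)$. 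Letting $\eps\downarrow0$ (the limits exist $\mm$-a.e.\ by the monotonicity of $\eps\mapsto q_{f,g}(\eps)$ on $\R\setminus\{0\}$ recalled just before \eqref{eq:26}), these give $\nabla(-f)\cdot\nabla(-g)=\nabla f\cdot\nabla g$ and $\nabla(-f)\cdot\nabla g=\nabla f\cdot\nabla(-g)$ $\mm$-a.e. Hence all of \eqref{eq:giochettodisegno} reduces to the single relation $\nabla(-f)\cdot\nabla g=-\nabla f\cdot\nabla g$ $\mm$-a.e.

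To obtain this last relation, the monotonicity of $\eps\mapsto q_{f,g}(\eps)$ across $0$ forces, $\mm$-a.e.,
\[
\nabla f\cdot\nabla g=\tfrac12\inf_{\eps>0}q_{f,g}(\eps)\ \ge\ \tfrac12\sup_{\eps<0}q_{f,g}(\eps)=-\nabla(-f)\cdot\nabla g,
\]
so that $\nabla f\cdot\nabla g+\nabla(-f)\cdot\nabla g\ge0$ $\mm$-a.e. On the other hand, integrating this function and using \eqref{eq:dirlocale} (for the pairs $(f,g)$ and $(-f,g)$) together with the bilinearity of $\mathcal E$,
\[
\int\bigl(\nabla f\cdot\nabla g+\nabla(-f)\cdot\nabla g\bigr)\,\d\mm=\mathcal E(f,g)+\mathcal E(-f,g)=0.
\]
A nonnegative $\mm$-integrable function with zero integral vanishes $\mm$-a.e., whence $\nabla(-f)\cdot\nabla g=-\nabla f\cdot\nabla g$ $\mm$-a.e., which completes the proof.

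The only point that is not purely formal is this last step. Pointwise we have at our disposal only the monotonicity of the difference quotients, which yields just one of the two inequalities, $\inf_{\eps>0}q_{f,g}\ge\sup_{\eps<0}q_{f,g}$; the reverse inequality — the ``continuity at $0$'' of $\eps\mapsto q_{f,g}(\eps)$, i.e.\ equality of the two one-sided limits — is not available locally and is recovered only globally, by integrating against the quadratic (hence bilinear) structure of $\C$. I expect this to be the main, and essentially the only, subtlety of the argument.
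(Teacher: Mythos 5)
Your proof is correct and follows essentially the same route as the paper's: formula \eqref{eq:dirlocale} is obtained by integrating the $L^1$-convergent difference quotients against the quadratic structure of $\C$, and \eqref{eq:giochettodisegno} is reduced, via the elementary symmetries of the difference quotients, to the pointwise inequality $\nabla f\cdot\nabla g+\nabla f\cdot\nabla(-g)\ge 0$ coming from convexity, which is then shown to be an equality $\mm$-a.e.\ because its integral vanishes by \eqref{eq:dirlocale} and the linearity of $\mathcal E$. Your closing remark correctly isolates the one genuine subtlety (the two one-sided limits of the difference quotient need not agree pointwise a priori, and are matched only after integration), which is exactly the mechanism the paper uses.
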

\begin{proof}
The equality \eqref{eq:dirlocale} follows replacing $g$ by $\eps g$
into \eqref{eq:polar}, dividing by $\eps$ and letting
$\eps\downarrow 0$. To get \eqref{eq:giochettodisegno} notice that
the $\mm$-a.e.~convexity of $\eps\mapsto \weakgrad{(f+\eps g)}^2(x)$
in $\R$ yields
\begin{equation}\label{eq:opposite}
\nabla f\cdot\nabla g+\nabla f\cdot\nabla(- g)\geq
0\qquad\text{$\mm$-a.e.~in $X$.}
\end{equation}
Since $\mathcal E(f,g)=-\mathcal E(f,-g)$ we can use
\eqref{eq:dirlocale} to obtain that the sum in \eqref{eq:opposite}
is null $\mm$-a.e.~in $X$. We conclude using the identity $\nabla
f\cdot\nabla g=\nabla (-f)\cdot\nabla(-g)$, a trivial consequence of
the definition.
\end{proof}

\begin{lemma}\label{le:planperu}
Let $u\in D(\C)$ be a given bounded function and let $E_t(\gamma)$
be defined as in \eqref{eq:moregenerald}. Then there exists a test
plan $\ppi$ satisfying $(\e_0)_\sharp\ppi=\mm$ and
\begin{equation}
\label{eq:plandiu} \lim_{t\downarrow
0}\frac{E_t}{t}=\lim_{t\downarrow 0}\frac{u\circ\e_0-u\circ
\e_t}{E_t}=\weakgrad u\circ\e_0\qquad\text{in
$L^2\bigl(\AC2{[0,1]}X ,\ppi\bigr)$.}
\end{equation}
\end{lemma}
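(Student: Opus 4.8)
The plan is to manufacture the test plan $\ppi$ by running the heat flow started at a probability density adapted to $u$, lifting it to a plan of curves, and then correcting the initial marginal. First I would put $\rho:=Z^{-1}\rme^{u}$ with $Z:=\int\rme^{u}\,\d\mm$; since $u$ is bounded this gives $0<c\le\rho\le C$ for suitable constants, $\rho\mm\in\probt X$, and, applying the chain rule \eqref{eq:chainrule} to the Lipschitz nondecreasing map $z\mapsto Z^{-1}\rme^{z}$ (restricted to an interval containing the range of $u$), $\weakgrad\rho=\rho\,\weakgrad u\in L^2(X,\mm)$, so $\rho\in W^{1,2}(X,\sfd,\mm)$. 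Let $\rho_t:=\heatl_t\rho$ and $\mu_t:=\rho_t\mm$. By Theorem~\ref{thm:gfc}, the maximum principle (Proposition~\ref{prop:proprheat}(i)) and the quadraticity of $\C$ (as in the proof of Proposition~\ref{prop:lipdense}), one gets $0<c\le\rho_t\le C$, $\rho_t\in D(\Deltam)$ for $t>0$ with $\tfrac{\d}{\dt}\rho_t=\Deltam\rho_t$, and $\rho_t\to\rho$ strongly in $W^{1,2}(X,\sfd,\mm)$ as $t\downarrow0$, hence also $\weakgrad{\rho_t}\to\weakgrad\rho$ in $L^2(X,\mm)$.

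Next I would show that $t\mapsto\mu_t$ is locally absolutely continuous in $(\probt X,W_2)$ with $|\dot\mu_t|^2\le\int\weakgrad{\rho_t}^2/\rho_t\,\d\mm$ for a.e.\ $t$, so that, setting $A:=\int\rho\,\weakgrad u^2\,\d\mm=\int\weakgrad\rho^2/\rho\,\d\mm$, one has $\limsup_{t\downarrow0}\tfrac1t\int_0^t|\dot\mu_s|^2\,\d s\le A$. The speed bound follows, exactly as in Lemma~\ref{le:sug}, from Kantorovich duality together with the rough integration by parts \eqref{eq:partilapl} (for Lipschitz $\psi$ with $|\nabla\psi|\le1$, $\tfrac{\d}{\dt}\int\psi\rho_t\,\d\mm=\int\psi\,\Deltam\rho_t\,\d\mm$ is dominated by $\bigl(\int\weakgrad{\rho_t}^2/\rho_t\,\d\mm\bigr)^{1/2}$), and the passage to the limit uses $\weakgrad{\rho_t}\to\weakgrad\rho$ in $L^2$ and $\rho_t\ge c$. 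Applying Lisini's theorem \cite{Lisini07} I obtain $\tilde\ppi\in\prob{\AC2{[0,1]}X}$ with $(\e_t)_\sharp\tilde\ppi=\mu_t$ for all $t$ and $\int|\dot\gamma_t|^2\,\d\tilde\ppi=|\dot\mu_t|^2$ for a.e.\ $t$; since $(\e_t)_\sharp\tilde\ppi=\rho_t\mm\le C\mm$, $\tilde\ppi$ is a test plan. Finally I set $\ppi:=(\rho^{-1}\circ\e_0)\,\tilde\ppi$: it is a probability, $(\e_0)_\sharp\ppi=\mm$, and $(\e_t)_\sharp\ppi\le c^{-1}(\e_t)_\sharp\tilde\ppi\le(C/c)\mm$, so $\ppi$ is a test plan with the correct initial marginal. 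As $\rho^{-1}\circ\e_0$ is bounded above and below, it then suffices to prove \eqref{eq:plandiu} with $\tilde\ppi$ in place of $\ppi$.

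To prove \eqref{eq:plandiu} for $\tilde\ppi$, write $F_t:=E_t/t$, $G_t:=(u\circ\e_0-u\circ\e_t)/E_t$, $G:=\weakgrad u\circ\e_0$, all in $H:=L^2(\AC2{[0,1]}X,\tilde\ppi)$, so that $\|G\|_H^2=\int\weakgrad u^2\,\d\mu_0=A$. Three facts close the argument: (i) $\|F_t\|_H^2=\tfrac1t\int_0^t|\dot\mu_s|^2\,\d s$, whence $\limsup_t\|F_t\|_H^2\le A$ by the previous step; (ii) $\limsup_t\|G_t\|_H^2\le A$ by Lemma~\ref{le:perhorver} applied with $f=u$ and the test plan $\tilde\ppi$; (iii) since $\rho_t\in D(\Deltam)$ and $u\in D(\mathcal E)$, $\int G_tF_t\,\d\tilde\ppi=\int\tfrac{u\circ\e_0-u\circ\e_t}{t}\,\d\tilde\ppi=\tfrac1t\int u(\rho-\rho_t)\,\d\mm=\tfrac1t\int_0^t\mathcal E(u,\rho_s)\,\d s\to\mathcal E(u,\rho)$, and by symmetry of $\mathcal E$, \eqref{eq:dirlocale} and the chain rule \eqref{eq:chainfacile} with $\phi(z)=Z^{-1}\rme^{z}$, $\mathcal E(u,\rho)=\int\nabla\rho\cdot\nabla u\,\d\mm=\int\phi'(u)\,\weakgrad u^2\,\d\mm=\int\rho\,\weakgrad u^2\,\d\mm=A$. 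From $\int G_tF_t\,\d\tilde\ppi\le\|G_t\|_H\|F_t\|_H$ together with (i)--(iii) and $\int G_tF_t\,\d\tilde\ppi\to A$ one deduces $\|F_t\|_H\to\sqrt A$, $\|G_t\|_H\to\sqrt A$, and hence $\|F_t-G_t\|_H^2=\|F_t\|_H^2+\|G_t\|_H^2-2\int G_tF_t\,\d\tilde\ppi\to0$.

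The main obstacle — the only genuinely non-routine point — is to upgrade the last display to the statement \eqref{eq:plandiu}, i.e.\ to conclude that $F_t\to G$ and $G_t\to G$ \emph{strongly} in $H$, not merely that $\|F_t-G_t\|_H\to0$ with matching norms; this requires identifying the (unique) weak-$L^2$ limit of $(F_t)$ as $\weakgrad u\circ\e_0$. Here I would exploit the fact, available precisely because $\C$ is quadratic, that the speed bound of the second step is an equality, $|\dot\mu_s|^2=\int\weakgrad{\rho_s}^2/\rho_s\,\d\mm=|\nabla^-\entv|^2(\mu_s)$ (the second equality by \eqref{eq:slopeFisher}, together with $\tfrac{\d}{\d s}\entv(\mu_s)=\int\log\rho_s\,\Deltam\rho_s\,\d\mm=-\int\weakgrad{\rho_s}^2/\rho_s\,\d\mm$ obtained from \eqref{eq:chainfacile} and mass conservation, plus the fact that $|\nabla^-\entv|$ is a strong upper gradient for $\entv$): this pins down the velocity of the heat flow to have magnitude $\weakgrad{\log\rho_s}$, so that $\tilde\ppi$ may be chosen with $|\dot\gamma_s|^2=\weakgrad{\log\rho_s}^2(\gamma_s)$ for $\tilde\ppi$-a.e.\ $\gamma$ and a.e.\ $s$. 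Then $F_t^2(\gamma)=\tfrac1t\int_0^t\weakgrad{\log\rho_s}^2(\gamma_s)\,\d s\to\weakgrad{\log\rho_0}^2(\gamma_0)=\weakgrad u^2(\gamma_0)$ for $\tilde\ppi$-a.e.\ $\gamma$ (using $\rho_s\to\rho_0$ in $W^{1,2}$ and $(\e_s)_\sharp\tilde\ppi\to(\e_0)_\sharp\tilde\ppi$), and since $F_t^2\ge0$ with $\int F_t^2\,\d\tilde\ppi\to A=\int G^2\,\d\tilde\ppi$, dominated convergence gives $F_t\to G$ in $H$; finally $G_t\to G$ follows from $\|F_t-G_t\|_H\to0$.
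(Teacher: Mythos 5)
Your construction (heat flow started from the density $c\,\rme^{u}$, Lisini's lift, renormalization of the initial marginal) and the three facts (i)--(iii) match the substance of the paper's proof; in particular your computation of the cross term $\int F_tG_t\,\d\tilde\ppi=\tfrac1t\int_0^t\mathcal E(u,\rho_s)\,\d s\to\mathcal E(u,\rho)=\int\rho\,\weakgrad u^2\,\d\mm$ is a legitimate variant of the paper's route, which instead squeezes this quantity between the entropy dissipation rate (via convexity of $z\mapsto z\log z$) and the Cauchy--Schwarz bound $\|F_t\|_H\|G_t\|_H$. One secondary remark: the $W_2$-speed bound $|\dot\mu_t|^2\le\int\weakgrad{\rho_t}^2/\rho_t\,\d\mm$ does \emph{not} follow from duality against $1$-Lipschitz test functions, which only controls the $W_1$-metric derivative; it is Kuwada's lemma (\cite[Lemma~6.1]{Ambrosio-Gigli-Savare11}, invoked elsewhere in the paper) and should simply be cited.

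The genuine gap is in your last step, exactly where you flag the difficulty. First, the identity $|\dot\mu_s|^2=\int\weakgrad{\rho_s}^2/\rho_s\,\d\mm=|\nabla^-\entv|^2(\mu_s)$ rests on \eqref{eq:slopeFisher} and on the upper-gradient property \eqref{eq:boundtuttecurve} of the descending slope, both established only under $CD(K,\infty)$; Lemma~\ref{le:planperu} is used (through the Leibnitz rule and Theorem~\ref{thm:tangente}) for arbitrary spaces with quadratic Cheeger energy, so no curvature bound may be imported here. Second, and more seriously, even granting that identity, Lisini's theorem only gives the equality of \emph{integrals} $\int|\dot\gamma_s|^2\,\d\tilde\ppi=\int\weakgrad{\log\rho_s}^2(\gamma_s)\,\d\tilde\ppi$ for a.e.\ $s$; it does not permit choosing $\tilde\ppi$ with $|\dot\gamma_s|^2=\weakgrad{\log\rho_s}^2(\gamma_s)$ pointwise $\tilde\ppi$-a.e., which would amount to a pointwise velocity identification for the lifted heat flow that is not available at this stage. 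The ensuing claim of $\tilde\ppi$-a.e.\ convergence of $F_t^2$ and the appeal to dominated convergence (with no dominating function exhibited) therefore do not stand. The paper closes the argument with a device absent from your proposal: the majorant $D_t(\gamma):=\bigl(\tfrac1t\int_0^t\weakgrad u^2(\gamma_s)\,\d s\bigr)^{1/2}$, which converges to $\weakgrad u\circ\e_0$ in $L^2(\tilde\ppi)$ by Lemma~\ref{le:convl2} and dominates $|G_t|$ pointwise by \eqref{eq:BtDt1}. Since $\|G_t\|_H\to\|G\|_H=\lim_t\|D_t\|_H$ and $\int|G_t|\,D_t\,\d\tilde\ppi\ge\|G_t\|_H^2$, one gets $\|\,|G_t|-D_t\|_H\to0$, hence $|G_t|\to G$ in $H$; the sign is recovered from the asymptotic equality of $\int G_tF_t$ and $\int|G_t|F_t$, and finally $F_t\to G$ follows from your $\|F_t-G_t\|_H\to0$. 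Inserting this majorant argument repairs the proof; the velocity identification you propose cannot be carried out with the tools at hand.
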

\begin{proof}
Let $\rho_0:=c\,\rme^u$, where $c$ is the normalization constant, put
$\mu_0:=\rho_0\mm$ and $\rho_t:=\heatl_t(\rho_0)$. Notice that
$\rho_0$ is uniformly bounded away from 0 and $\infty$ and that
$\C(\rho_t)\to\C(\rho_0)$ as $t\downarrow 0$ implies, by the same
Hilbertian argument of Proposition~\ref{prop:lipdense}, strong convergence of
$\weakgrad{\rho_t}$ to $\weakgrad{\rho_0}$ in $L^2(X,\mm)$. Define
the functions $A_t,\,B_t,\,C_t,\,D_t:\AC2{[0,1]}X \to\R$ by (with
the usual convention if $E_t(\gamma)=0$)
\[
\begin{split}
A_t(\gamma)&:=\frac{\log\rho_0(\gamma_0)-\log\rho_0(\gamma_t)}{t}=\frac{u(\gamma_0)-u(\gamma_t)}t,\\
B_t(\gamma)&:=\frac{\log\rho_0(\gamma_0)-\log\rho_0(\gamma_t)}{E_t(\gamma)}=
\frac{u(\gamma_0)-u(\gamma_t)}{E_t(\gamma)},\\
C_t(\gamma)&:=\frac{E_t(\gamma)}{t},\\
D_t(\gamma)&:=\sqrt{\frac1t\int_0^t\frac{\weakgrad{\rho_0}^2(\gamma_s)}{\rho_0^2(\gamma_s)}\,\d
s}=\sqrt{\frac1t\int_0^t\weakgrad{u}^2(\gamma_s)\,\d s.}
\end{split}
\]
Now use \cite{Lisini07} to get the existence of a plan $\ppi\in
\prob{\AC2{[0,1]}X }$ such that $(\e_t)_\sharp\ppi=\mu_t:=\rho_t\mm$
for all $t\in [0,1]$ and
\[
\int|\dot\gamma_t|^2\,\d\ppi(\gamma)=|\dot\mu_t|^2\qquad\text{for
a.e.~$t\in [0,1]$.}
\]
The maximum principle ensures that $\ppi$ is a test plan.

By Lemma~\ref{le:convl2} below we get that $D_t\to\weakgrad
u\circ\e_0$ in $L^2\bigl(\AC2{[0,1]}X ,\ppi\bigr)$. From the second
equality in \eqref{eq:edissrateflow} we have
\begin{equation}
\label{eq:boundc}
\begin{split}
\lim_{t\downarrow 0}\left\|C_t\right\|^2_2&= \lim_{t\downarrow
0}\frac1t\int\int_0^t|\dot\gamma_s|^2\,\d s\,\d\ppi
=\lim_{t\downarrow 0}\frac1t\int_0^t|\dot\mu_s|^2\,\d s\\
&=\lim_{t\downarrow 0}\frac1t
\int_0^t\int\frac{\weakgrad{\rho_s}^2}{\rho_s}\,\d\mm\,\d s
=\int\frac{\weakgrad{\rho_0}^2}{\rho_0}\,\d\mm=\|\weakgrad
u\circ\e_0\|^2_2,
\end{split}
\end{equation}
and from Lemma~\ref{le:perhorver} and \eqref{eq:BtDt1} we know that
\begin{equation}\label{eq:BtDt}
|B_t|\leq D_t\qquad\text{and}\qquad\limsup_{t\downarrow
0}\|B_t\|_2^2\leq\|\weakgrad u\circ\e_0\|^2_2.
\end{equation}
Estimates \eqref{eq:boundc} and \eqref{eq:BtDt} imply
\begin{equation}
\label{eq:spaccata}
\begin{split}
\limsup_{t\downarrow 0}\int A_t\,\d\ppi&=\limsup_{t\downarrow 0}
\int B_tC_t\,\d\ppi\leq\limsup_{t\downarrow 0}\int |B_t|C_t\,\d\ppi\\
&\leq\limsup_{t\downarrow 0}\|B_t\|_2\|C_t\|_2\leq\|\weakgrad
u\circ\e_0\|^2_2.
\end{split}
\end{equation}
Notice that from the convexity of $z\mapsto z\log z$ we have
\[
\int A_t\,\d\ppi=\int\frac{\log\rho_0(\rho_0-\rho_t)}t\,\d\ppi\geq
\frac{\entr{\mu_0}{\mm}-\entr{\mu_t}\mm}t,
\]
and from the first equality in \eqref{eq:edissrateflow} we deduce
\[
\lim_{t\downarrow 0} \frac{\entr{\mu_0}{\mm}-\entr{\mu_t}\mm}t=
\lim_{t\downarrow 0}\frac1t
\int_0^t\int\frac{\weakgrad{\rho_s}^2}{\rho_s}\,\d s\d\mm=
\int\frac{\weakgrad{\rho_0}^2}{\rho_0}\,\d\mm=\|\weakgrad
u\circ\e_0\|^2_2.
\]
Thus from \eqref{eq:spaccata} we deduce that $\int A_t\,\d\ppi$
converges to $\|\weakgrad u\circ\e_0\|^2_2$ as $t\downarrow 0$.
Repeating now \eqref{eq:spaccata} with $\liminf$ we deduce that also
$\|B_t\|^2_2$ converges as $t\downarrow 0$ to $\|\weakgrad
u\circ\e_0\|^2_2$. Now, this convergence, the first inequality in
\eqref{eq:BtDt} and the $L^2$-convergence of $D_t$ to $\weakgrad
u\circ \e_0$ yield the $L^2$-convergence of $|B_t|$ to the same
limit.

Finally, from the fact that the first inequality in
\eqref{eq:spaccata} is an equality, we get that also $B_t$ converges
to $\weakgrad u\circ\e_0$ in $L^2(\ppi)$. Also, since the second
inequality in \eqref{eq:spaccata} is an equality and
\eqref{eq:boundc} holds one can conclude that $C_t$ converges to
$\weakgrad u\circ\e_0$ in $L^2(\ppi)$ as well.

Thus $\ppi$ has all the required properties, except the fact that
$(\e_0)_\sharp\ppi$ is not $\mm$. To conclude, just replace $\ppi$
with $\tilde c\rho_0^{-1}\circ\e_0\ppi$, $\tilde c$ being the
renormalization constant.
\end{proof}

\begin{lemma}\label{le:convl2}
Let $f\in L^1(X,\mm)$ be nonnegative and define
$F_t:\AC2{[0,1]}X \to [0,\infty]$, $t\in [0,1]$, by
\[
F_t(\gamma):=\sqrt{\frac1t\int_0^tf(\gamma_s)\,\d s}\quad t\in
(0,1],\qquad F_0:= \sqrt{f(\gamma_0)}.
\]
Then $F_t\to F_0$ in $L^2\bigl(\AC2{[0,1]}X ,\ppi\bigr)$ as
$t\downarrow0$ for any test plan $\ppi$ whose 2-action
$\int\int_0^1|\dot\gamma_t|^2\,\d t\,\d\ppi(\gamma)$ is finite.
\end{lemma}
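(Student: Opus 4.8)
The plan is to establish the convergence first for a continuous and bounded $f$, where it is elementary, and then to pass to general $f\in L^1(X,\mm)$ by a density argument resting on an estimate for $F_t$ that is \emph{uniform in $t$}. \textbf{Step 1 (a $t$-uniform estimate).} For nonnegative $f,g\in L^1(X,\mm)$ denote by $F_t^f,F_t^g$ the functions associated as in the statement to $f$ and $g$ (so $F_0^f=\sqrt{f\circ\e_0}$, etc.); these are $\ppi$-measurable since $(\gamma,s)\mapsto\gamma_s$ is jointly continuous and one may invoke Fubini. I would use the elementary inequality $|\sqrt a-\sqrt b|^2\le|a-b|$ for $a,b\ge0$, together with the triangle inequality in $L^1(0,t)$, to get for $t\in(0,1]$ and $\ppi$-a.e.\ $\gamma$
\[
|F_t^f-F_t^g|^2(\gamma)\le\Big|\frac1t\int_0^t(f-g)(\gamma_s)\,\d s\Big|\le\frac1t\int_0^t|f-g|(\gamma_s)\,\d s,
\]
and then, by Fubini--Tonelli and the bounded-compression bound $(\e_s)_\sharp\ppi\le C\,\mm$ with $C=C(\ppi)$,
\[
\int|F_t^f-F_t^g|^2\,\d\ppi\le\frac1t\int_0^t\!\int_X|f-g|\,\d(\e_s)_\sharp\ppi\,\d s\le C\,\|f-g\|_{L^1(X,\mm)}.
\]
The same estimate holds at $t=0$ because $(\e_0)_\sharp\ppi\le C\,\mm$; in particular every $F_t^f$, $t\in[0,1]$, lies in $L^2(\ppi)$.

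\textbf{Step 2 (continuous bounded $f$).} If $f\in\Cb(X)$ is nonnegative, then for each $\gamma\in\AC2{[0,1]}X\subset\CC{[0,1]}X$ the map $s\mapsto f(\gamma_s)$ is continuous, so $\frac1t\int_0^tf(\gamma_s)\,\d s\to f(\gamma_0)$, whence $F_t(\gamma)\to F_0(\gamma)$ as $t\downarrow0$. Since $0\le F_t\le\sqrt{\|f\|_\infty}$ uniformly in $t$ and $\ppi$ is a probability measure, dominated convergence gives $F_t\to F_0$ in $L^2(\AC2{[0,1]}X,\ppi)$.

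\textbf{Step 3 (general $f$ and conclusion).} Given a nonnegative $f\in L^1(X,\mm)$ and $\eps>0$, I would choose $g\in\Cb(X)$ with $\|f-g\|_{L^1(X,\mm)}<\eps$ (density of $\Cb(X)$ in $L^1$) and replace $g$ by $g^+$, which stays in $\Cb(X)$, is nonnegative, and satisfies $\|f-g^+\|_{L^1}\le\|f-g\|_{L^1}$ because $f\ge0$. Then, for every $t\in(0,1]$, Step 1 (applied at $t$ and at $0$) and the triangle inequality in $L^2(\ppi)$ yield
\[
\|F_t^f-F_0^f\|_{L^2(\ppi)}\le\|F_t^f-F_t^g\|_{L^2(\ppi)}+\|F_t^g-F_0^g\|_{L^2(\ppi)}+\|F_0^g-F_0^f\|_{L^2(\ppi)}\le 2\sqrt{C\eps}+\|F_t^g-F_0^g\|_{L^2(\ppi)},
\]
and letting $t\downarrow0$ and using Step 2 gives $\limsup_{t\downarrow0}\|F_t^f-F_0^f\|_{L^2(\ppi)}\le2\sqrt{C\eps}$; since $\eps>0$ is arbitrary, $F_t^f\to F_0^f$ in $L^2(\ppi)$, which is the assertion.

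The argument is essentially routine; the only point that needs care is recognizing that the square-root contraction $|\sqrt a-\sqrt b|\le\sqrt{|a-b|}$ is precisely what converts $L^1(\mm)$-closeness of $f$ and $g$ into a bound on $\|F_t^f-F_t^g\|_{L^2(\ppi)}$ independent of $t$, which is what makes the density step legitimate. (Note that the finiteness of the $2$-action of $\ppi$ is not used in this lemma.)
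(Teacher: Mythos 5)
Your proof is correct, and it follows the same overall architecture as the paper's: prove the convergence for a dense class of good functions, then transfer it to general nonnegative $f\in L^1(X,\mm)$ via a stability estimate that is uniform in $t$ and rests on the bounded compression $(\e_s)_\sharp\ppi\le C\mm$ together with the square-root contraction $|\sqrt a-\sqrt b|^2\le|a-b|$ (the paper packages this last point as the reduction to $F_t^2\to F_0^2$ in $L^1(\ppi)$, you keep it at the level of $\|F_t^f-F_t^g\|_{L^2(\ppi)}$; these are the same inequality). The genuine difference is in the base case. The paper takes Lipschitz functions as the dense class and estimates $|F_t^2(\gamma)-F_0^2(\gamma)|\le\Lip(f)\,\frac1t\int_0^t\sfd(\gamma_0,\gamma_s)\,\d s\le\Lip(f)\int_0^t|\dot\gamma_s|\,\d s$, which after integration against $\ppi$ requires precisely the finiteness of the $2$-action (via Cauchy--Schwarz, to make $\int\int_0^t|\dot\gamma_s|\,\d s\,\d\ppi\to0$). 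You instead take $\Cb(X)$ as the dense class and get the base case from the continuity of $s\mapsto f(\gamma_s)$ plus dominated convergence, using only that $\ppi$ is a probability measure concentrated on continuous curves. This buys a slightly cleaner and marginally more general statement: as you correctly observe, the finite-action hypothesis in the lemma is then not needed at all, whereas the paper's route does use it. Both arguments are complete; yours removes a hypothesis, the paper's stays closer to the Lipschitz-approximation machinery it uses elsewhere.
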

\begin{proof}
To prove the thesis it is sufficient to show that $F_t^2\to F^2_0$
in $L^1(\AC2{[0,1]}X ,\ppi)$.

Now assume first that $f$ is Lipschitz. In this case the conclusion
easily follows from the inequality
$|F_t^2(\gamma)-F_0^2(\gamma)|\leq \Lip(f) \tfrac
1t\int_0^t\sfd(\gamma_0,\gamma_s)\,\d s\leq
\Lip(f)\int_0^t|\dot\gamma_s|\,\d s$ and the fact that
$\int\int_0^1|\dot\gamma_t|^2\,\d t\,d\ppi(\gamma)<\infty$. To pass
to the general case, notice that Lipschitz functions are dense in
$L^1(X,\mm)$ and conclude by the continuity estimates
\[
\int\frac1t\left|\int_0^t h(\gamma_s)\,\d s\right|\,\d\ppi
\leq\frac1t\int_0^t\int|h(\gamma_s)|\,\d\ppi\,\d s\leq
C\|h\|_1,\qquad \int|h(\gamma_0)|\,\d\ppi\leq C\|h\|_1,
\]
where $C>0$ satisfies $(\e_t)_\sharp\ppi\leq C\mm$ for any
$t\in[0,1]$.
\end{proof}

\begin{proposition}[Leibnitz formula for nonnegative functions]
Let $f,\,g,\,h\in D(\C)\cap L^\infty(X,\mm)$ with $g,\,h$
nonnegative. Then
\begin{equation}
\label{eq:leibniz}
\mathcal E(f,gh)=\int\nabla
f\cdot\nabla(gh)\,\d\mm=\int h\nabla f\cdot\nabla g+g\nabla
f\cdot\nabla h\,\d\mm.
\end{equation}
\end{proposition}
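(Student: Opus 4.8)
Since $g,h\in D(\C)\cap L^\infty(X,\mm)$, the weak Leibnitz rule \eqref{eq:weakleib} gives $gh\in D(\C)$, so the first equality in \eqref{eq:leibniz} is just \eqref{eq:dirlocale} applied to the pair $(f,gh)$. The point is therefore to prove $\mathcal E(f,gh)=\int h\,\nabla f\cdot\nabla g+g\,\nabla f\cdot\nabla h\,\d\mm$, and the plan is to read both sides as horizontal derivatives along the test plan $\ppi$ produced by Lemma~\ref{le:planperu} for the bounded function $u:=f$, so that $(\e_0)_\sharp\ppi=\mm$ and the double limit \eqref{eq:plandiu} holds.

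\emph{Step 1: a weighted differentiation formula.} First I would show that for every $w\in D(\C)\cap L^\infty(X,\mm)$ and every bounded Borel $\phi\colon X\to[0,\infty)$ with $\int\phi\,\d\mm>0$,
\[
\lim_{t\downarrow0}\int\phi(\gamma_0)\,\frac{w(\gamma_t)-w(\gamma_0)}{t}\,\d\ppi(\gamma)=-\int\phi\,\nabla f\cdot\nabla w\,\d\mm .
\]
Indeed $\tilde\ppi:=c\,(\phi\circ\e_0)\,\ppi$, with $c:=(\int\phi\,\d\mm)^{-1}$, is a test plan with $(\e_0)_\sharp\tilde\ppi=c\,\phi\,\mm$, and since $\tilde\ppi\le c\|\phi\|_\infty\,\ppi$ the limits \eqref{eq:plandiu} persist along $\tilde\ppi$, so $\tilde\ppi$ and $f$ satisfy hypothesis \eqref{eq:asshorvert} of Lemma~\ref{le:horver}. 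Applying that lemma with its ``$g$'' equal to $f$ and its ``$f$'' equal to $w$, and using that $\eps\mapsto\eps^{-1}(\weakgrad{(f+\eps w)}^2-\weakgrad{f}^2)$ is $\mm$-a.e.\ monotone (the remark after \eqref{eq:26}) together with monotone convergence, the right‑hand side of \eqref{eq:derhorvert} equals $-c\int\phi\,\nabla f\cdot\nabla w\,\d\mm$; running the same argument with $w$ replaced by $-w$ and invoking \eqref{eq:giochettodisegno} gives the matching $\limsup$ bound, whence the limit exists and the claim follows after dividing by $c$. Specializing: $\phi\equiv1$, $w=gh$ together with \eqref{eq:dirlocale} gives $\lim_{t\downarrow0}\int t^{-1}\big((gh)(\gamma_t)-(gh)(\gamma_0)\big)\,\d\ppi=-\mathcal E(f,gh)$, while $(\phi,w)=(g,h)$ and $(\phi,w)=(h,g)$ give the limits $-\int g\,\nabla f\cdot\nabla h\,\d\mm$ and $-\int h\,\nabla f\cdot\nabla g\,\d\mm$ (the degenerate cases $g=0$ or $h=0$ $\mm$-a.e.\ being trivial).

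\emph{Step 2: the product rule and the second order term.} Next I would start from the algebraic identity
\[
(gh)(\gamma_t)-(gh)(\gamma_0)=g(\gamma_0)\bigl(h(\gamma_t)-h(\gamma_0)\bigr)+h(\gamma_0)\bigl(g(\gamma_t)-g(\gamma_0)\bigr)+\bigl(g(\gamma_t)-g(\gamma_0)\bigr)\bigl(h(\gamma_t)-h(\gamma_0)\bigr),
\]
divide by $t$ and integrate $\d\ppi$ (legitimate for fixed $t$, since each summand is bounded by $2\|g\|_\infty\|h\|_\infty/t$), then let $t\downarrow0$ and use Step 1. The conclusion will follow once the last, ``second order'', summand is shown to contribute $0$ in the limit. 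Writing it as $\int\bigl(g(\gamma_t)-g(\gamma_0)\bigr)\,C_t\,H_t\,\d\ppi$ with $C_t:=E_t/t$, $H_t:=(h(\gamma_t)-h(\gamma_0))/E_t$ (usual convention when $E_t=0$), bound it by $\|(g(\gamma_t)-g(\gamma_0))\,C_t\|_{L^2(\ppi)}\,\|H_t\|_{L^2(\ppi)}$: the second factor stays bounded by \eqref{eq:aprioriEt}, and for the first, splitting $C_t^2=(C_t^2-\weakgrad{f}^2\circ\e_0)+\weakgrad{f}^2\circ\e_0$, the first piece contributes $\le 4\|g\|_\infty^2\|C_t^2-\weakgrad{f}^2\circ\e_0\|_{L^1(\ppi)}\to0$ (from $C_t\to\weakgrad{f}\circ\e_0$ in $L^2(\ppi)$, \eqref{eq:plandiu}) and the second tends to $0$ by dominated convergence, since $(g(\gamma_t)-g(\gamma_0))^2\le 4\|g\|_\infty^2$, $g(\gamma_t)\to g(\gamma_0)$ for $\ppi$-a.e.\ $\gamma$, and $\weakgrad{f}^2\circ\e_0\in L^1(\ppi)$ — here the identity $(\e_0)_\sharp\ppi=\mm$ is used crucially.

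\emph{Conclusion.} Passing to the limit in the integrated product rule gives $-\mathcal E(f,gh)=-\int g\,\nabla f\cdot\nabla h\,\d\mm-\int h\,\nabla f\cdot\nabla g\,\d\mm$, which, combined with \eqref{eq:dirlocale}, is exactly \eqref{eq:leibniz}. I expect the only genuinely delicate point to be the vanishing of the second order term in Step 2 (which is why the precise plan of Lemma~\ref{le:planperu}, normalized so that $(\e_0)_\sharp\ppi=\mm$ and $C_t\to\weakgrad{f}\circ\e_0$ in $L^2$, is needed); everything else is a systematic use of Lemmas~\ref{le:planperu}, \ref{le:horver}, \ref{le:perhorver} and of the algebraic properties \eqref{eq:giochettodisegno} of $\nabla f\cdot\nabla g$.
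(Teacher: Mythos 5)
Your argument is correct, and it is built from the same ingredients as the paper's proof: the plan $\ppi$ of Lemma~\ref{le:planperu} with $u=f$, the horizontal--vertical Lemma~\ref{le:horver} applied to the weighted plans $(\phi\circ\e_0)\ppi$, and the product decomposition with a second--order remainder. There are, however, two genuine differences in execution. First, the paper begins by reducing to $g,h$ bounded nonnegative \emph{Lipschitz} (via Proposition~\ref{prop:lipdense} and the continuity of $w\mapsto\nabla f\cdot\nabla w$), precisely so that the remainder can be killed by the pointwise bound $\Lip(g)\Lip(h)\,\sfd^2(\gamma_0,\gamma_t)/t\to0$ in $L^1(\ppi)$; you skip this reduction and instead dispose of the remainder by Cauchy--Schwarz, using the $L^2(\ppi)$ convergence $E_t/t\to\weakgrad f\circ\e_0$ from \eqref{eq:plandiu}, the a priori bound \eqref{eq:aprioriEt}, and dominated convergence (the $\ppi$-a.e.\ convergence $g(\gamma_t)\to g(\gamma_0)$ that you use implicitly is justified by the restriction inequality \eqref{eq:restrizione} together with the finiteness of $\int_0^\cdot\weakgrad g(\gamma_r)|\dot\gamma_r|\,\d r$ for $\ppi$-a.e.\ $\gamma$ -- worth one line). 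Second, the paper only extracts one-sided $\liminf/\limsup$ bounds from \eqref{eq:derhorvert}, obtains the inequality $\geq$ in \eqref{eq:leibniz}, and then recovers the converse by replacing $f$ with $-f$ and invoking \eqref{eq:giochettodisegno}; your Step 1 performs the $\pm w$ symmetrization term by term, so each weighted horizontal derivative is identified as an honest limit and the identity drops out in one pass. Your route is slightly longer in Step 2 but more self-contained, since it never needs the density of Lipschitz functions in $W^{1,2}$; the paper's route trades that density argument for a one-line estimate of the remainder.
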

\begin{proof} Notice first that if $(g_n)$, $(h_n)$ are equibounded and converge
in $W^{1,2}(X,\sfd,\mm)$ to $g$, $h$ respectively then
\eqref{eq:weakleib} ensures that $g_nh_n$ converge to $gh$ strongly
in $W^{1,2}(X,\sfd,\mm)$. Hence, taking
Proposition~\ref{prop:lipdense} and Proposition~\ref{eq:daunaparte}
into account, we can assume with no loss of generality $g,\,h$ to be
bounded, nonnegative and Lipschitz.

Now we apply Lemma~\ref{le:planperu} with $u=f$. The definition of
$\nabla f\cdot\nabla(gh)$ and inequality \eqref{eq:derhorvert} gives
\begin{eqnarray*}
\int\nabla f\cdot\nabla(gh)\,\d\mm&=&
\liminf_{\eps\downarrow0}\int\frac{\weakgrad{(f+\eps
gh)}^2-\weakgrad f^2}{\eps}\,\d\mm\\
&\geq&\limsup_{t\downarrow0}\int\frac{g(\gamma_0)h(\gamma_0)-g(\gamma_t)h(\gamma_t)}t\,\d\ppi(\gamma).
\end{eqnarray*}
Now observe that the convergence
\[
\left|\frac{\big(g(\gamma_t)-g(\gamma_0)\big)\big(h(\gamma_t)-h(\gamma_0))}t\right|
\leq {\Lip(g)\Lip(h)}\frac{\sfd^2(\gamma_0,\gamma_t)}{t}\to0\qquad\textrm{in }L^1(\ppi),
\]
ensures that
\begin{eqnarray*}
&&\limsup_{t\downarrow0}\int\frac{g(\gamma_0)h(\gamma_0)-g(\gamma_t)h(\gamma_t)}t \,\d\ppi(\gamma)
\geq\liminf_{t\downarrow0}\int\frac{g(\gamma_0)h(\gamma_0)-g(\gamma_t)h(\gamma_t)}t\,\d\ppi(\gamma)\\
&&\geq\liminf_{t\downarrow0}\int
g(\gamma_0)\frac{h(\gamma_0)-h(\gamma_t)}t
\,\d\ppi(\gamma)+\liminf_{t\downarrow0}\int
h(\gamma_0)\frac{g(\gamma_0)-g(\gamma_t)}t \,\d\ppi(\gamma).
\end{eqnarray*}
Now applying inequality \eqref{eq:derhorvert} to the plans $(g\circ
\rme_0)\,\ppi$ and $(h\circ\rme_0)\,\ppi$ we get
\[
\begin{split}
\liminf_{t\downarrow0}\int g(\gamma_0)\frac{h(\gamma_0)-h(\gamma_t)}t \,\d\ppi(\gamma)&\geq
\limsup_{\eps\downarrow0}\int g\frac{\weakgrad f^2-\weakgrad{(f-\eps h)}^2}\eps \,\d\mm,\\
\liminf_{t\downarrow0}\int h(\gamma_0)\frac{g(\gamma_0)-g(\gamma_t)}t \,\d\ppi(\gamma)&\geq
\limsup_{\eps\downarrow0}\int h\frac{\weakgrad f^2-\weakgrad{(f-\eps g)}^2}\eps \,\d\mm.
\end{split}
\]
Recalling the convergence in $L^1(X,\mm)$ of the difference
quotients in \eqref{eq:26} we get
\[
\begin{split}
\limsup_{\eps\downarrow0}\int g  \frac{\weakgrad f^2-\weakgrad{(f-\eps h)}^2}\eps \,\d\mm&=
-\int g\nabla f\cdot\nabla(-h)\d\mm=\int g\nabla f\cdot\nabla h\,\d\mm,\\
\limsup_{\eps\downarrow0}\int h \frac{\weakgrad
f^2-\weakgrad{(f-\eps g)}^2}\eps \,\d\mm&= -\int h\nabla
f\cdot\nabla(-g)\d\mm=\int h\nabla f\cdot\nabla g\,\d\mm.
\end{split}
\]
Thus we proved that the inequality $\geq$ always holds in
\eqref{eq:leibniz}. Replacing $f$ with $-f$ and using
\eqref{eq:giochettodisegno} once more we get the opposite one and
the conclusion.
\end{proof}
\begin{theorem}[Leibnitz formula and identification of
  {$[f]$}]\label{thm:tangente}
Let $(X,\sfd,\mm)\in \X$ and let us assume that Cheeger's energy
$\C$ is quadratic in $L^2(X,\mm)$ as in \eqref{eq:18}. Then
\begin{enumerate}[(i)]
\item The map $(f,g)\mapsto\nabla f\cdot\nabla g$ from $[D(\C)]^2$ to
$L^1(X,\mm)$ is bilinear, symmetric and satisfies
\eqref{eq:boundpuntuale}. In particular it is continuous from
$[W^{1,2}(X,\sfd,\mm)]^2$ to $L^1(X,\mm)$.
\item For all $f,\,g\in D(\C)$ it holds
\begin{equation}
  \label{eq:cosimeglio}
  \weakgrad{(f+g)}^2+\weakgrad{(f-g)}^2=2\weakgrad{f}^2+2\weakgrad{g}^2\quad
  \text{$\mm$-a.e.\ in }X.
\end{equation}
In particular $\C$ is a quadratic form in $L^1(X,\mm)$.
\item  The Leibnitz formula \eqref{eq:leibniz} holds with
  equality and with no sign restriction on $g$, $h$.
\item The energy measure
 $[f]$ in \eqref{eq:fuku1} coincides with $\weakgrad{f}^2\mm$.
\end{enumerate}
\end{theorem}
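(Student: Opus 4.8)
The plan is to prove the four assertions in cascade, each feeding the next. Throughout I may assume $f,g,h,\varphi\in D(\C)\cap L^\infty(X,\mm)$: by locality \eqref{eq:weaklocal} and the chain rule \eqref{eq:chainrule} every pointwise statement localizes to $\{|f|\le N\}$, etc., and the general case follows letting $N\to\infty$ with \eqref{eq:boundpuntuale}. First I show that $g\mapsto\nabla f\cdot\nabla g$ is $\R$-linear. For subadditivity, writing $f+\eps(g+h)=\tfrac12\big[(f+2\eps g)+(f+2\eps h)\big]$, the convexity of $u\mapsto\weakgrad u$ and $ab\le\tfrac12(a^2+b^2)$ give $\weakgrad{(f+\eps(g+h))}^2\le\tfrac12\weakgrad{(f+2\eps g)}^2+\tfrac12\weakgrad{(f+2\eps h)}^2$ $\mm$-a.e.; subtracting $\weakgrad f^2$, dividing by $2\eps$ and letting $\eps\downarrow0$ — all three difference quotients converge in $L^1(X,\mm)$ by \eqref{eq:26} — yields $\nabla f\cdot\nabla(g+h)\le\nabla f\cdot\nabla g+\nabla f\cdot\nabla h$. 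Applying this to $-g,-h$ and using the sign rule \eqref{eq:giochettodisegno} gives the reverse inequality, hence additivity; combined with positive homogeneity, the oddness in \eqref{eq:giochettodisegno} and the continuity estimate of Proposition~\ref{eq:daunaparte} this gives $\R$-linearity. A byproduct I will reuse: $\eps\mapsto\weakgrad{(f+\eps g)}^2$ has a genuine two-sided derivative $2\,\nabla f\cdot\nabla g$ at $\eps=0$, which by rescaling forces, for affine and hence (via \eqref{eq:weaklocal}) piecewise-affine $\phi$, the pointwise identities $\nabla f\cdot\nabla(\phi\circ g)=(\phi'\circ g)\,\nabla f\cdot\nabla g$ and $\nabla(\phi\circ g)\cdot\nabla f=(\phi'\circ g)\,\nabla g\cdot\nabla f$; the first is precisely the first identity of Lemma~\ref{le:chains}.

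The heart of the proof is the symmetry $\nabla f\cdot\nabla g=\nabla g\cdot\nabla f$, and this is where the quadraticity of $\C$ is essential. For nondecreasing $C^1$ Lipschitz $\phi$ I compute $\mathcal E(f,\phi\circ g)$ in two ways: by \eqref{eq:dirlocale} and the pointwise second-slot chain rule just recorded it equals $\int(\phi'\circ g)\,\nabla f\cdot\nabla g\,\d\mm$; by symmetry of $\mathcal E$ it equals $\mathcal E(\phi\circ g,f)=\int\nabla(\phi\circ g)\cdot\nabla f\,\d\mm$, which by the \emph{second} identity of Lemma~\ref{le:chains} — whose proof uses the quadratic-form identity \eqref{eq:shift}, i.e.\ exactly our hypothesis — equals $\int(\phi'\circ g)\,\nabla g\cdot\nabla f\,\d\mm$. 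Hence $\int(\phi'\circ g)\,(\nabla f\cdot\nabla g-\nabla g\cdot\nabla f)\,\d\mm=0$; letting $\phi'$ run over nonnegative continuous functions, and then using the piecewise-affine chain rules of the first step to insert also cut-offs of $f$ and of the combinations $f+tg$ (for which the same computation applies, thanks to the rescaling identities), one upgrades this to $\int\zeta\,(\nabla f\cdot\nabla g-\nabla g\cdot\nabla f)\,\d\mm=0$ for enough weights $\zeta$ to conclude $\nabla f\cdot\nabla g=\nabla g\cdot\nabla f$ $\mm$-a.e. I expect this last step — promoting ``the antisymmetric part has vanishing conditional expectation'' to ``it vanishes $\mm$-a.e.'' — to be the main obstacle: a single pair $(f,g)$ only yields control modulo $\sigma(f,g)$, so one must feed in enough auxiliary functions. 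With symmetry in hand, $(i)$ is complete (bilinearity follows, and continuity from $[W^{1,2}(X,\sfd,\mm)]^2$ to $L^1(X,\mm)$ is then immediate from \eqref{eq:boundpuntuale}), and $(ii)$ follows by expanding $\weakgrad{(f\pm g)}^2=\nabla(f\pm g)\cdot\nabla(f\pm g)$ bilinearly, using \eqref{eq:doteweakgrad}, and adding — which is \eqref{eq:cosimeglio} and in particular says that $\C$ is quadratic in $L^1(X,\mm)$.

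Finally, $(iii)$ and $(iv)$ are quick. For $(iii)$, given $g,h\in D(\C)\cap L^\infty$ I write $g=g_1-\|g\|_\infty$, $h=h_1-\|h\|_\infty$ with $g_1,h_1\ge0$; since constants have vanishing minimal weak upper gradient, $\nabla f\cdot\nabla g_1=\nabla f\cdot\nabla g$, and expanding $gh$ accordingly, the linearity proved above together with the already known Leibnitz identity for the nonnegative $g_1,h_1$ (and $gh\in D(\C)\cap L^\infty$ by \eqref{eq:weakleib}) makes the constant-weighted cross terms cancel, giving \eqref{eq:leibniz} with equality for arbitrary $g,h$. For $(iv)$, apply \eqref{eq:leibniz} to get $\mathcal E(f,f\varphi)=\int\big(\varphi\,\weakgrad f^2+f\,\nabla f\cdot\nabla\varphi\big)\,\d\mm$, and apply the chain rule for $t\mapsto t^2/2$ (truncated outside $[-\|f\|_\infty,\|f\|_\infty]$) together with symmetry to get $\mathcal E(\tfrac{f^2}{2},\varphi)=\int f\,\nabla f\cdot\nabla\varphi\,\d\mm$; substituting both into the definition \eqref{eq:fuku1} of $[f]$, the terms $\int f\,\nabla f\cdot\nabla\varphi\,\d\mm$ cancel and one is left with $[f](\varphi)=\int\varphi\,\weakgrad f^2\,\d\mm$ for every $\varphi\in D(\mathcal E)\cap L^\infty(X,\mm)$, that is, $[f]=\weakgrad f^2\,\mm$.
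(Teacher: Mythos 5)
Your architecture is genuinely different from the paper's: you try to prove the \emph{symmetry} $\nabla f\cdot\nabla g=\nabla g\cdot\nabla f$ first and derive the pointwise parallelogram identity \eqref{eq:cosimeglio} from it, whereas the paper goes the other way. The peripheral pieces of your argument are sound: second--slot additivity via convexity plus the sign rule \eqref{eq:giochettodisegno} is a correct (and nice) observation, and your treatment of $(iii)$ and $(iv)$ would be fine once $(i)$--$(ii)$ are in place. But the step you yourself flag as ``the main obstacle'' is a genuine gap, and it is not a technicality that more bookkeeping will fix. The identities you can manufacture --- comparing $\mathcal E(f,\phi(g))$ with $\mathcal E(\phi(g),f)$, inserting piecewise--affine reparametrizations of $f$, of $g$, or of $f+tg$, or invoking the Leibnitz formula --- all produce statements of the form $\int \zeta\,\bigl(\nabla f\cdot\nabla g-\nabla g\cdot\nabla f\bigr)\,\d\mm=0$ where $\zeta$ is measurable with respect to the $\sigma$--algebra generated by $f$ and $g$ (note that $f+tg$ buys nothing new: expanding $\nabla(f+tg)\cdot\nabla g$ in the first slot already presupposes the bilinearity you are trying to prove). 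This only says that the conditional expectation of the antisymmetric part given $\sigma(f,g)$ vanishes; an $L^1$ function orthogonal to all bounded functions of $(f,g)$ need not vanish $\mm$-a.e. You have no mechanism for pairing the antisymmetric part with an \emph{arbitrary} bounded weight, so the promotion to a pointwise identity does not go through.

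The paper sidesteps this by never touching the off--diagonal directly: using the Leibnitz formula for nonnegative functions and the chain rule \eqref{eq:chainfacile} it writes, for an arbitrary nonnegative $h\in D(\C)\cap L^\infty(X,\mm)$,
\begin{equation*}
\int h\,\weakgrad f^2\,\d\mm \;=\; \mathcal E(f,fh)-\mathcal E\bigl(\tfrac{f^2}{2},h\bigr),
\end{equation*}
and the right-hand side is manifestly a quadratic function of $f$ because $\mathcal E$ is bilinear (this is where the hypothesis \eqref{eq:18} enters). Since the weight $h$ is \emph{arbitrary} in $L^\infty(X,\mm)$ by weak$^*$ density --- not constrained to be a function of $f$ and $g$ --- the parallelogram identity localizes to the pointwise statement \eqref{eq:cosimeglio}, and then $\eps\mapsto\weakgrad{(f+\eps g)}^2$ is forced to be a quadratic polynomial in $\eps$, so that $\nabla f\cdot\nabla g=\tfrac14\bigl(\weakgrad{(f+g)}^2-\weakgrad{(f-g)}^2\bigr)$ is symmetric and bilinear for free. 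If you want to keep your symmetry--first outline, you would have to reproduce essentially this localization of the \emph{diagonal} quantity; as written, your proof of $(i)$, and hence of everything downstream, is incomplete. (Minor additional points: the chain rule \eqref{eq:chainfacile} requires $\phi$ nondecreasing, so for $\phi(t)=t^2/2$ in step $(iv)$ you must split $f=f^+-f^-$ and use strong locality, as the paper does; and your cancellation in $(iv)$ implicitly uses the sign convention $[f](\varphi)=\mathcal E(f,f\varphi)-\mathcal E(f^2/2,\varphi)$, which is the one consistent with the paper's proof.)
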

\begin{proof}
The continuity bound follows at once from \eqref{eq:boundpuntuale}.
In order to show symmetry and bilinearity it is sufficient to prove
\eqref{eq:cosimeglio} when $f,\,g\in D(\C)$. In turn, this property
follows if we are able to prove that
\begin{equation}\label{eq:31}
f\mapsto\int h\weakgrad f^2\,\d\mm\quad\text{is quadratic in
$D(\C)$}
\end{equation}
for all $h\in L^\infty(X,\mm)$ nonnegative. Since $D(\C)\cap
L^\infty(X,\mm)$ is weakly$^*$ dense in $L^\infty(X,\mm)$, it is
sufficient to prove this property for nonnegative $h\in D(\C)\cap
L^\infty(X,\mm)$. Pick $f=g\in D(\C)\cap L^\infty(X,\mm)$
nonnegative in \eqref{eq:leibniz} to get
\[
\begin{split}
\int h\weakgrad f^2\,\d\mm& =
-\int f\nabla f\cdot\nabla h\,\d\mm+\int\nabla f\cdot\nabla(fh)\,\d\mm\\
&=-\int \nabla\big(\frac{f^2}2\big)\cdot\nabla h\,\d\mm+\int\nabla
f\cdot\nabla(fh)\,\d\mm\\&= -\mathcal E(\frac{f^2}2,h)+\mathcal
E(f,fh),
\end{split}
\]
having used equation \eqref{eq:chainfacile} in the second equality.
Now, splitting $f$ in positive and negative parts, we can extend the
formula to $D(\C)\cap L^\infty(X,\mm)$, since $\mathcal E$ is
bilinear and the strong locality ensures $\mathcal E(f^+,f^-h)=0$,
$\mathcal E(f^-,f^+h)=0$. Both maps $f\mapsto \mathcal E(f^2/2,h)$
and $f\mapsto \mathcal E(f,fh)$ are immediately seen to be
quadratic, so the same is true for $\int \weakgrad f^2h\,d\mm$. Thus
we proved that the map $\int h\weakgrad f^2\,\d\mm$ is quadratic on
$D(\C)\cap L^\infty(X,\mm)$. The statement for the full domain
$D(\C)$ follows from a simple truncation argument: if
$f^N:=\max\{\min\{f,N\},-N\},\,g^N:=\max\{\min\{g,N\},-N\}\in
L^2(X,\mm)$ are the truncated functions, the chain rule
\eqref{eq:chainrule} gives
$$
\int h\weakgrad{(f_N+g_N)}^2\,\d\mm+\int
h\weakgrad{(f_N-g_N)}^2\,\d\mm\leq 2\int
h\weakgrad{f}^2\,\d\mm+2\int h\weakgrad{g}^2.
$$
which yields in the limit one inequality. A similar argument applied
to $f+g$ and $f-g$ provides the converse inequality and proves
\eqref{eq:31}. Finally, we can use the fact that $h$ is arbitrary to
prove the pointwise formulation \eqref{eq:cosimeglio}.
\end{proof}

The property \eqref{eq:31} shows that if $(X,\sfd,\mm)$ gives raise
to a quadratic Cheeger's energy, also $(X,\sfd,h\mm)$ enjoys the
same property, provided a uniform bound $0<c\le h\le c^{-1}$ is
satisfied (indeed, we can use \cite[Lemma
4.11]{Ambrosio-Gigli-Savare11} to prove that $\weakgrad{f}$ is
independent of $h$). The next result consider the case when $h$ is
the characteristic function of a closed subset of $X$.

\begin{theorem}\label{thm:gradristretti}
Let $(X,\sfd,\mm)\in \X$ and let $Y\subset X$ be a closed set of
positive measure. For $f:Y\to\R$, denote by $\weakgrado f{Y}$ the
minimal weak upper gradient of $f$ calculated in the metric measure
space $(Y,\sfd,\mm(Y)^{-1}\mm\restr{Y})$. Then:
\begin{itemize}
\item[(i)] Let $f: X\to\R$ Borel and Sobolev along a.e.~curve of $X$, and define $g:Y\to\R$ by
$g:=f\restr{Y}$. Then $g$ is Sobolev along a.e.~curve of $Y$ and
$\weakgrad f=\weakgrado{g}{Y}\ $ $\mm$-a.e.~in $Y$.
\item[(ii)] Let $g:Y\to\R$ be Borel, Sobolev along a.e.~curve in $Y$ and such that
${\rm dist}(\supp g,\partial Y)>0$. Define $f:X\to\R$ by $f\restr
Y:=g$ and $f\restr{X\setminus Y}:=0$. Then $f$ is Sobolev along a.e.
curve of $X$ and $\weakgrad f=\nchi_Y\weakgrado {g}Y$ $\mm$-a.e.~in
$X$.
\item[(iii)] If moreover $\C$ is a quadratic form in $L^2(X,\mm)$ according to
\eqref{eq:18} and $\mm(\partial Y)=0$, then
\[
\C_Y(f):=\left\{\begin{array}{ll}
\displaystyle{\int_Y\weakgrado f{Y}^2\,\d\mm_Y}&\textrm{ if }f\textrm{ is Sobolev along a.e.~curve in }Y,\\
+\infty&\textrm{ otherwise},
\end{array}
\right.
\]
is a quadratic form in $L^2(Y,\mm_Y)$.
\end{itemize}
\end{theorem}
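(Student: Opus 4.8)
The plan is to treat the three items in the order stated: (i) is the structural heart, (ii) is deduced from it using the hypothesis $\sfd(\supp g,\partial Y)>0$, and (iii) is a reduction to the already established Theorem~\ref{thm:tangente}. The single tool that does the real work in (i)--(ii) is a correspondence between test plans of $(X,\sfd,\mm)$ and of $(Y,\sfd,\mm_Y)$, where $\mm_Y:=\mm(Y)^{-1}\mm\restr{Y}$. In one direction this is trivial: every $\ppi\in\calT$ of $Y$ is, read as a measure on $\AC2{[0,1]}X$, a test plan of $X$, because $\mm_Y\le\mm(Y)^{-1}\mm$. In the other direction, given $\ppi\in\calT(X)$, for $\ppi$-a.e.\ $\gamma$ the time set $\{t:\gamma_t\in Y\}$ is closed, and cutting $\gamma$ along $\partial Y$, reparametrizing its sojourns in $Y$ to $[0,1]$ and weighting by their length yields, after normalization, a test plan of $Y$ --- bounded compression being inherited from $(\e_t)_\sharp\ppi\le C\mm$ via the affine-reparametrization estimates, and the measurability of the cutting data coming from the closedness of $Y$.

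For (i) write $g:=f\restr Y$. The inequality $\weakgrado{g}{Y}\le\weakgrad f$ $\mm$-a.e.\ in $Y$ is immediate from the first half of the correspondence: applying the Sobolev-along-a.e.-curve property of $f$ in $X$ to test plans of $Y$ (along which $f\circ\gamma=g\circ\gamma$) shows that $g$ is Sobolev along a.e.\ curve of $Y$ and that $\weakgrad f\restr Y$ is a weak upper gradient of $g$ in $Y$. For the converse I would verify that $H:=\weakgrad f\,\nchi_{X\setminus Y}+\weakgrado g Y\,\nchi_Y$ is a weak upper gradient of $f$ in $X$; then $\mm$-a.e.\ minimality of $\weakgrad f$ forces $\weakgrad f\le\weakgrado g Y$ $\mm$-a.e.\ in $Y$. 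For this: along $\ppi$-a.e.\ $\gamma$ (with $\ppi\in\calT(X)$) the function $f\circ\gamma$ is absolutely continuous; at times with $\gamma_t\notin Y$ one has $H(\gamma_t)=\weakgrad f(\gamma_t)$ and the restriction inequality \eqref{eq:restrizione} already gives the needed local bound, while on the sojourns of $\gamma$ in $Y$ the second half of the correspondence together with the weak-upper-gradient inequality for $g$ in $Y$ bounds the local variation of $f\circ\gamma=g\circ\gamma$ by $\weakgrado g Y(\gamma_t)|\dot\gamma_t|$; assembling the pieces gives $\bigl|\int_{\partial\gamma}f\bigr|\le\int_\gamma H$.

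For (ii), set $\delta:=\sfd(\supp g,\partial Y)>0$. Any absolutely continuous curve running from $\supp g$ across $\partial Y$ has length at least $\delta$ on the transit, so absolute continuity of a test-plan curve $\gamma$ forces each time $t_0$ with $\gamma_{t_0}\in\supp g$ to have a neighbourhood on which $\gamma$ stays in $Y$; off $\supp g$ one simply has $f\circ\gamma=0$ (since $f$ vanishes outside $\supp g$). Thus along a.e.\ $\gamma$ the composition $f\circ\gamma$ breaks into pieces that are either identically zero or coincide with $g$ along a curve contained in $Y$, and the correspondence controls each nonzero piece by $\weakgrado g Y$ along it; summing shows $f$ is Sobolev along a.e.\ curve of $X$ and that $\nchi_Y\weakgrado g Y$ is a weak upper gradient of $f$, whence $\weakgrad f\le\nchi_Y\weakgrado g Y$ $\mm$-a.e. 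The reverse inequality needs no work: $\weakgrad f=0=\nchi_Y\weakgrado g Y$ $\mm$-a.e.\ on $X\setminus Y$ by locality \eqref{eq:weaklocal} (as $f\equiv0$ there), and on $Y$ part (i) applied to $f$ gives $\weakgrad f=\weakgrado{(f\restr Y)}{Y}=\weakgrado g Y$ $\mm$-a.e.

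For (iii), the idea is to transplant the parallelogram identity of Theorem~\ref{thm:tangente}(ii) from $X$ to $Y$. Given $g,h\in D(\C_Y)\cap L^2(Y,\mm_Y)$, pick Lipschitz cut-offs $\chi_n:=\phi_n(\sfd(\cdot,\partial Y))$ with $0\le\chi_n\le1$, $\chi_n\equiv1$ on $\{\sfd(\cdot,\partial Y)\ge1/n\}$ and $\chi_n\equiv0$ on a neighbourhood of $\partial Y$; by the weak Leibnitz rule \eqref{eq:weakleib} the functions $\chi_n g,\chi_n h,\chi_n(g\pm h)$ lie in $D(\C_Y)\cap L^2(Y,\mm_Y)$ with supports at positive distance from $\partial Y$. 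Extending them by $0$ to $G_n,H_n\in D(\C)$ via part (ii) --- so that $\weakgrad{G_n}=\nchi_Y\weakgrado{(\chi_n g)}{Y}$, $\weakgrad{H_n}=\nchi_Y\weakgrado{(\chi_n h)}{Y}$, and, since extension by $0$ is linear, $\weakgrad{(G_n\pm H_n)}=\nchi_Y\weakgrado{(\chi_n(g\pm h))}{Y}$ --- Theorem~\ref{thm:tangente}(ii) on the quadratic space $X$ gives $\weakgrad{(G_n+H_n)}^2+\weakgrad{(G_n-H_n)}^2=2\weakgrad{G_n}^2+2\weakgrad{H_n}^2$ $\mm$-a.e.; restricting to $Y$ and then to $\{\chi_n=1\}$, where locality \eqref{eq:weaklocal} turns each $\weakgrado{(\chi_n u)}{Y}$ into $\weakgrado u Y$, yields $\weakgrado{(g+h)}{Y}^2+\weakgrado{(g-h)}{Y}^2=2\weakgrado g Y^2+2\weakgrado h Y^2$ $\mm_Y$-a.e.\ on $\{\sfd(\cdot,\partial Y)\ge1/n\}$. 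Letting $n\to\infty$ and using $\mm(\partial Y)=0$, this pointwise identity holds $\mm_Y$-a.e.\ in $Y$; integrating gives the parallelogram law for $\C_Y$ on $D(\C_Y)$, and when $\C_Y(g)$ or $\C_Y(h)$ is infinite both sides are infinite by convexity of $\C_Y$ and $g=\tfrac12\bigl((g+h)+(g-h)\bigr)$, so $\C_Y$ is a quadratic form on $L^2(Y,\mm_Y)$. The step I expect to be the real obstacle is the second half of the correspondence: proving that chopping the curves of a test plan of $X$ along the closed set $Y$ and renormalizing produces a genuine test plan of $(Y,\sfd,\mm_Y)$, i.e.\ that bounded compression survives the curve-dependent cutting and reparametrization; everything else is locality bookkeeping and, for (iii), an application of Theorem~\ref{thm:tangente}.
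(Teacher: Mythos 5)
There is a genuine gap, and it sits exactly where you predicted: the ``second half of the correspondence'' in part (i). Your proof of the key inequality $\weakgrad f\le \weakgrado gY$ $\mm$-a.e.\ in $Y$ requires showing that $H:=\weakgrad f\,\nchi_{X\setminus Y}+\weakgrado gY\,\nchi_Y$ is a weak upper gradient of $f$ in $X$, and for that you must control the variation of $f\circ\gamma$ on the time set $K_\gamma:=\{t:\gamma_t\in Y\}$ by $\int_{K_\gamma}\weakgrado gY(\gamma_t)|\dot\gamma_t|\,\d t$. Two obstructions make your chop--reparametrize construction fail. First, $K_\gamma$ is merely closed: it can be a fat Cantor set, in which case it has no nondegenerate interval components at all, so there are no ``sojourns'' to reparametrize, yet the curve spends positive arclength in $Y$ and the needed bound on that portion of the variation is not produced by any collection of genuine curves of $Y$ (the weak upper gradient property in $Y$ only speaks about continuous curves $[0,1]\to Y$, not restrictions of curves to closed time sets). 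Second, even for interval sojourns, pushing $\ppi$ forward under a curve-dependent affine time change $t\mapsto a(\gamma)+t\,(b(\gamma)-a(\gamma))$ does not preserve the bounded compression condition of Definition~\ref{def:testplan}: the marginal at a fixed new time $t$ is a superposition of evaluations of the original curves at \emph{different} times, and this can concentrate even though each original marginal satisfies $(\e_s)_\sharp\ppi\le C\mm$. Your parenthetical claim that bounded compression is ``inherited via the affine-reparametrization estimates'' is asserted, not proved, and is false in general.

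The paper sidesteps all of this with a short trick you should compare against: it extends $\weakgrado gY$ to $X$ by setting it equal to $+\infty$ on $X\setminus Y$ and checks that this extension $G$ is a weak upper gradient of $f$ in $X$. The point is that if $\int_\gamma G<\infty$ then $|\dot\gamma_t|=0$ for a.e.\ $t$ with $\gamma_t\notin Y$; since $Y$ is closed, $\{t:\gamma_t\notin Y\}$ is open and $\gamma$ is constant on each of its components, which forces $\gamma$ to lie entirely in $Y$ by continuity. Such curves are honest curves of $Y$, and the restriction of a test plan of $X$ to the (Borel) set of curves contained in $Y$ is, after normalization, a test plan of $(Y,\sfd,\mm_Y)$ -- the bounded compression constant only changes by the factor $\mm(Y)$, with no reparametrization needed. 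Minimality of $\weakgrad f$ then gives $\weakgrad f\le G=\weakgrado gY$ $\mm$-a.e.\ on $Y$ directly, which is the whole content of (i). Your treatment of (ii) (coarea-type splitting near $\supp g$, then locality off $Y$ and part (i) on $Y$) and of (iii) (cut-offs supported away from $\partial Y$, extension by zero via (ii), the pointwise parallelogram identity \eqref{eq:cosimeglio} of Theorem~\ref{thm:tangente}, and $\mm(\partial Y)=0$) matches the paper and is fine once (i) is repaired.
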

\begin{proof} {(i)} The fact that $g$ is Sobolev along a.e.~curve in $Y$ is
obvious, since this class of curves is smaller. It is also obvious
that $\weakgrado gY\leq\weakgrad f\ $ $\mm$-a.e.~in $Y$, so that to
conclude it is sufficient to prove the opposite inequality. Let
$G:X\to[0,\infty]$ be defined by
\begin{equation}
\label{eq:sembrafunzioni}
G(x):=\left\{
\begin{array}{ll}
\weakgrado gY (x)&\qquad\textrm{if }x\in Y,\\
+\infty&\qquad\textrm{ otherwise}.
\end{array}
\right.
\end{equation}
Then it is trivial from the definition that $G$ is a weak upper
gradient for $f$ in $X$. Thus, the fact that $\weakgrad f$ is the
minimal weak upper gradient gives that $\weakgrad f\leq G$
$\mm$-a.e.~in $X$, which is the thesis.\\* {(ii)} From the
hypothesis that $g$ is Sobolev along a.e. curve in $Y$ and supported
in a set having positive distance from $\partial Y$ it follows that
$f$ is Sobolev along a.e.~curve in $X$. To prove this, if $C$
denotes the support of $g$, notice first that for any absolutely
continuous curve $\gamma$ the set $L_r:=\{t\in [0,1]:\ {\rm
dist}(\gamma_t,C)=r\}$ is finite for a.e. $r$ (if $\gamma_t$ is
Lipschitz we can apply the coarea inequality, see for instance
\cite[Corollary~2.10.11]{Federer69}, in the general case we can
reparameterize $\gamma$). Now, setting $R:={\rm dist}(C,\partial
Y)>0$ and choosing $r\in (0,R)$ such that $L_r$ is finite, we can
use this set of times to split $\gamma$ in finitely many curves
contained in $Y$ and finitely many ones not intersecting $C$. The
equality $\weakgrad f=\nchi_Y\weakgrado {g}Y$ $\mm$-a.e.~in $X$ then
follows by locality (in $X\setminus Y$) and by $(i)$ (in $Y$, since
$g=f\restr{Y}$).
\\*{(iii)}
 Fix $r>0$, define
\[
Y_r:=\bigl\{x\in Y:\ \ \sfd(x,\partial Y)>r\bigr\},
\]
so that $Y_r\uparrow Y\setminus\partial Y$ as $r\downarrow 0$, and
let $\nchi_r:Y\to[0,1]$ be a Lipschitz cut-off function with support
contained in $Y\setminus Y_{r/2}$ and identically equal to 1 on
$Y_r$. Notice that, since $\mm(\partial Y)=0$, to prove the
quadratic property of $\C_Y$ it is sufficient to prove, for all
$r>0$, that $f\mapsto\int_{Y_r}\weakgrado f{Y}^2\,\d\mm_Y$ is
quadratic in the class of functions which are Sobolev along a.e.
curve in $Y$. By the previous points and the locality principle
\eqref{eq:weaklocal} we know that
\[
\weakgrado f{Y}=\weakgrad{(f\nchi_r)}=\weakgrad
f\qquad\text{$\mm$-a.e.~in $Y_r$,}
\]
so that the conclusion follows from \eqref{eq:cosimeglio} of
Theorem~\ref{thm:tangente}.
\end{proof}

\section{Riemannian Ricci bounds: definition}\label{sec:rcd}

Let $(X,\sfd,\mm)\in\X$. We say that $(X,\sfd,\mm)$ has
\emph{Riemannian Ricci curvature} bounded below by $K$ (in short, a
$\rcd K\infty$ space) if any of the 3 equivalent conditions of
Theorem~\ref{thm:mainriem} below is fulfilled. Basically, one adds
to the strong $CD(K,\infty)$ condition a linearity assumption on the
heat flow, stated either at the level of $\heatw_t$ or at the level
of $\heatl_t$. A remarkable fact is that all these conditions are
encoded in the $\EVI_K$ property of the gradient flow.

Before stating the theorem we observe that linearity at the level of
$\heatw_t$ will be understood as additivity, namely
$$
\heatw_t((1-\lambda)\mu+\lambda\nu)=(1-\lambda)\heatw_t(\mu)+\lambda\heatw_t(\nu)
\qquad\forall \mu,\,\nu\in\probt {X,\mm},\,\,\lambda\in [0,1].
$$

\begin{theorem}[3 general equivalences]\label{thm:mainriem}
Let $(X,\sfd,\mm)\in\X$. Then the following three properties are
equivalent.
\begin{itemize}
\item[(i)] $(X,\sfd,\mm)$ is a strong $CD(K,\infty)$ space and the semigroup $\heatw_t$ on $\probt {X,\mm}$
is additive.
\item[(ii)] $(X,\sfd,\mm)$  is a strong $CD(K,\infty)$ space and $\C$
  is a quadratic form on $L^2(X,\mm)$ according to \eqref{eq:18}.
\item[(iii)] $(X,\sfd,\mm)$ is a length space and any $\mu\in\probt {X,\mm}$ is the
starting point of an $\EVI_K$ gradient flow of $\entv$.
\end{itemize}
If any of these condition holds, the semigroups $\heatl_t$ and
$\heatw_t$ are also related for all $t\geq 0$ by
\begin{equation}\label{eq:heatlheatw}
(\heatl_t f)\mm=\int f(x)\,\heatw_t(\delta_x)\,\d\mm(x)
\qquad\forall f\in L^2(X,\mm),
\end{equation}
meaning that the signed measure $(\heatl_tf)\mm$ is the weighted
superposition, with weight $f(x)$, of the probability measures
$\heatw_t(\delta_x)$.
\end{theorem}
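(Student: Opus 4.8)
The plan is to prove the cyclic chain $(ii)\Rightarrow(iii)\Rightarrow(i)\Rightarrow(ii)$ and then to deduce \eqref{eq:heatlheatw} from these (now equivalent) hypotheses; the step $(ii)\Rightarrow(iii)$ is the analytic core and, I expect, the main obstacle. Assume $(ii)$. Strong $CD(K,\infty)$ implies $CD(K,\infty)$, so the space is a length space and the entropy gradient flow $\heatw_t$ exists and is unique (Theorem~\ref{thm:gfe}), with $\heatw_t(\rho\mm)=(\heatl_t\rho)\mm$ (Theorem~\ref{thm:heatgf}). Fix first $\mu=\rho\mm$ with $0<c\le\rho\le C$ and a target $\sigma=\sigma_1\mm$ with $\sigma_1$ bounded of bounded support; set $\mu_t=\heatw_t(\mu)=\rho_t\mm$ and let $\varphi_t$ be a Kantorovich potential from $\mu_t$ to $\sigma$. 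The maximum principle (Proposition~\ref{prop:proprheat}) keeps $c\le\rho_t\le C$, so Theorem~\ref{thm:derw2} gives $\frac{\d}{\dt}\tfrac12 W_2^2(\mu_t,\sigma)\le\eps^{-1}\big(\C(\rho_t-\eps\varphi_t)-\C(\rho_t)\big)$ for all $\eps>0$, while Theorem~\ref{thm:derentr}(b) gives $\entr{\sigma}{\mm}-\entr{\mu_t}{\mm}-\tfrac K2 W_2^2(\mu_t,\sigma)\ge\eps^{-1}\big(\C(\varphi_t)-\C(\varphi_t+\eps\rho_t)\big)$ for all $\eps>0$. Here quadraticity of $\C$ is used decisively: by Theorem~\ref{thm:tangente} the form $(u,v)\mapsto\int\nabla u\cdot\nabla v\,\d\mm$ is symmetric and bilinear, so $\C(u+\eps v)=\C(u)+\eps\int\nabla u\cdot\nabla v\,\d\mm+\eps^2\C(v)$ and, letting $\eps\downarrow0$, both difference quotients above converge to $-\int\nabla\rho_t\cdot\nabla\varphi_t\,\d\mm=-\int\nabla\varphi_t\cdot\nabla\rho_t\,\d\mm$ (finiteness of $\C(\rho_t)$ and $\C(\varphi_t)$ following from $\rho_t\in D(\C)$ for $t>0$ and from Proposition~\ref{prop:potkant} together with $\mu_t\ge c\mm$). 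Chaining the two limiting inequalities gives $\frac{\d}{\dt}\tfrac12 W_2^2(\mu_t,\sigma)+\tfrac K2 W_2^2(\mu_t,\sigma)+\entr{\mu_t}{\mm}\le\entr{\sigma}{\mm}$, which is \eqref{eq:defevi} with $z=\sigma$. Since measures $\sigma$ of this form are dense in energy for $\entv$ (truncate the density, restrict to a large ball), Proposition~\ref{prop:eviequiv}(i) upgrades this to the full $\EVI_K$ property of $t\mapsto\heatw_t(\mu)$; and since measures $\rho\mm$ with $0<c\le\rho\le C$ are $W_2$-dense in $\probt{X,\mm}$ (conditional expectations on a fine Borel partition of $\supp\mm$, then averaged with $\mm$), Proposition~\ref{prop:evipropr}(iii) yields $\EVI_K$ gradient flows of $\entv$ from every initial datum in $\probt{X,\mm}$. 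Together with the length property this is $(iii)$.

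$(iii)\Rightarrow(i)$. Existence of $\EVI_K$ flows of $\entv$ for all initial data forces, by Proposition~\ref{prop:danerisavare}, $K$-convexity of $\entv$ along every geodesic contained in $\overline{D(\entv)}=\probt{X,\mm}$ — in particular along all the weighted plans of Definition~\ref{def:strongcd} — so the space is strong $CD(K,\infty)$ (the length hypothesis and the $\EVI_K$ flows together providing the geodesic structure the definition presupposes). For additivity, fix $\mu,\nu\in\probt{X,\mm}$, $\lambda\in[0,1]$, and put $\mu_t=\heatw_t(\mu)$, $\nu_t=\heatw_t(\nu)$, $\sigma_t=(1-\lambda)\mu_t+\lambda\nu_t$; by joint convexity of $W_2^2$ and \eqref{eq:convw2} the curve $\sigma_t$ is locally absolutely continuous with $\sigma_t\to\sigma_0$ as $t\downarrow0$. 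Writing the integral form \eqref{eq:eviint} of $\EVI_K$ for $\mu_t$ and for $\nu_t$, taking the $\lambda$-convex combination, and bounding $W_2^2(\sigma_t,z)\le(1-\lambda)W_2^2(\mu_t,z)+\lambda W_2^2(\nu_t,z)$ on the left while $\entr{\sigma_t}{\mm}\le(1-\lambda)\entr{\mu_t}{\mm}+\lambda\entr{\nu_t}{\mm}$ on the right (using $\mathrm I_K(t-s)\ge0$), one gets that $\sigma_t$ satisfies \eqref{eq:eviint} for every $z$; hence, by Proposition~\ref{prop:eviequiv}, $\sigma_t$ is the $\EVI_K$ flow from $\sigma_0$, and uniqueness (Proposition~\ref{prop:evipropr}(ii)) gives $\heatw_t(\sigma_0)=\sigma_t$, i.e.\ additivity.

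$(i)\Rightarrow(ii)$. Strong $CD(K,\infty)$ is assumed, so only quadraticity of $\C$ must be shown. By Theorem~\ref{thm:heatgf}, additivity of $\heatw_t$ restricted to absolutely continuous probability measures is precisely the statement that $\heatl_t$ is affine on probability densities; together with the $1$-homogeneity of $\heatl_t$ over $\R$ (Proposition~\ref{prop:proprheat}(ii)) this makes $\heatl_t$ additive on the cone $\{f\in L^2(X,\mm):f\ge0\}$, and a short argument using the maximum principle and $\int\Deltam f\,\d\mm=0$ (from \eqref{eq:partilapl} with $g=\pm1$, which shows $\heatl_t$ commutes with adding constants) extends this to linearity of $\heatl_t$ on all of $L^2(X,\mm)$. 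A linear, self-adjoint gradient-flow semigroup of the convex, lower semicontinuous, $2$-homogeneous functional $\C$ forces $\C$ to be a quadratic form (classical Hilbertian theory, \cite{Brezis73}), i.e.\ \eqref{eq:18} holds.

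It remains to prove \eqref{eq:heatlheatw}. Under these conditions $\heatl_t$ is linear and self-adjoint, and by Proposition~\ref{prop:evipropr}(ii) the map $x\mapsto\heatw_t(\delta_x)$ is $\rme^{-Kt}$-Lipschitz from $\supp\mm$ into $(\probt X,W_2)$, so $\int f(x)\,\heatw_t(\delta_x)\,\d\mm(x)$ is a well-defined signed measure for $f\in L^1(X,\mm)$, linear in $f$. For a bounded probability density $f$ I would write $f\mm=\int f(x)\,\delta_x\,\d\mm(x)$, approximate this superposition in $W_2$ by finite convex combinations $\sum_i f(x_i)\mm(A_i)\,\delta_{x_i}$ over a fine Borel partition $\{A_i\}$ of $\supp\mm$, and pass to the limit using additivity and $W_2$-continuity of $\heatw_t$ and the Lipschitz dependence of $x\mapsto\heatw_t(\delta_x)$; since $\heatw_t(f\mm)=(\heatl_t f)\mm$ by Theorem~\ref{thm:heatgf}, this gives \eqref{eq:heatlheatw} for bounded densities, and the general case $f\in L^2(X,\mm)$ follows by linearity of both sides in $f$ and approximation. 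In this scheme the density and approximation arguments, and the semigroup algebra in $(i)\Rightarrow(ii)$, are routine; the genuinely delicate point is matching the two one-sided ``vertical'' estimates in $(ii)\Rightarrow(iii)$, which succeeds exactly because quadraticity of $\C$ renders the relevant bilinear form symmetric.
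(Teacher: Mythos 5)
Your overall architecture (the cycle $(ii)\Rightarrow(iii)\Rightarrow(i)\Rightarrow(ii)$ followed by the superposition formula) coincides with the paper's, and your treatments of $(ii)\Rightarrow(iii)$, of $(i)\Rightarrow(ii)$ and of \eqref{eq:heatlheatw} are essentially the arguments used there. However, the additivity part of $(iii)\Rightarrow(i)$ contains a genuine gap. You propose to take the $\lambda$-convex combination of the integral inequalities \eqref{eq:eviint} written for $\mu_t$ and $\nu_t$ with the \emph{same} reference measure $z$, and to conclude via $W_2^2(\sigma_t,z)\le(1-\lambda)W_2^2(\mu_t,z)+\lambda W_2^2(\nu_t,z)$. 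But the left-hand side of \eqref{eq:eviint} contains \emph{two} squared distances with opposite signs: $\tfrac{\rme^{K(t-s)}}2 W_2^2(\,\cdot_t,z)$ enters with a plus and $-\tfrac12 W_2^2(\,\cdot_s,z)$ with a minus. Joint convexity of $W_2^2$ lets you replace the convex combination by $W_2^2(\sigma_t,z)$ in the first term, but for the second term you would need the \emph{reverse} inequality $W_2^2(\sigma_s,z)\ge(1-\lambda)W_2^2(\mu_s,z)+\lambda W_2^2(\nu_s,z)$, which is false in general. Equivalently, at the level of derivatives: the convex combination dominates $t\mapsto W_2^2(\sigma_t,z)$ everywhere, but without equality at the time where you differentiate you cannot compare derivatives, so the combined inequality you write down is not the $\EVI_K$ for $\sigma_t$.

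The paper's proof repairs exactly this point by making the reference measure depend on $t$: for each fixed $t$ one chooses $\ggamma_t\in\opt{\sigma_t}{z}$ and decomposes $z=(1-\lambda)z^0_t+\lambda z^1_t$ with $z^0_t:=(\ggamma_t)_\sharp\mu_t$, $z^1_t:=(\ggamma_t)_\sharp\nu_t$ (Definition~\ref{def:pushplan}); by \eqref{eq:optpush} one then has the \emph{equality} $W_2^2(\sigma_t,z)=(1-\lambda)W_2^2(\mu_t,z^0_t)+\lambda W_2^2(\nu_t,z^1_t)$ at time $t$, together with the convexity inequality at time $t+h$, so the pointwise form \eqref{eq:evipoint} of the $\EVI_K$ (applied to $\mu_\cdot$ with reference $z^0_t$ and to $\nu_\cdot$ with reference $z^1_t$) can be summed. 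The entropy terms are then matched not by plain convexity of $\entv$ but by Proposition~\ref{prop:basegammapf}, which yields $\entr{\sigma_t}{\mm}-\entr{z}{\mm}\le(1-\lambda)\bigl(\entr{\mu_t}{\mm}-\entr{z^0_t}{\mm}\bigr)+\lambda\bigl(\entr{\nu_t}{\mm}-\entr{z^1_t}{\mm}\bigr)$. Without this $t$-dependent choice the argument does not close. A smaller gap of a similar nature: to deduce strong $CD(K,\infty)$ from $(iii)$ you need geodesics of $\probt{X,\mm}$ with finite entropy joining arbitrary points of $D(\entv)$ before Proposition~\ref{prop:danerisavare} can be applied to them; the paper's Lemma~\ref{le:EVISCD} constructs these by flowing almost-geodesics for a short time and extracting a limit via a refined Ascoli--Arzel\`a argument, a step your appeal to ``the geodesic structure the definition presupposes'' skips.
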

\begin{proof} $(i)\Rightarrow(ii)$. The additivity assumption on the heat semigroup,
the identification
Theorem~\ref{thm:heatgf} and the 1-homogeneity of the heat semigroup
in $L^2(X,\mm)$ (Proposition~\ref{prop:proprheat}), easily yield
that the heat semigroup $\heatl_t$ is linear in $L^2(X,\mm)$ as well
(see the proof of \eqref{eq:heatlheatw} below). This further implies
that its infinitesimal generator $-\Deltam$ is a linear operator, so
that $D(\Deltam)$ is a linear subspace of $L^2(X,\mm)$ and
$\Deltam:D(\Deltam)\to L^2$ is linear. Now, given $f\in D(\C)$, recall
that $t\mapsto \C(\heatl_t(f))$ is continuous on $[0,\infty)$ and
locally Lipschitz on $(0,\infty)$, goes to 0 as $t\to\infty$ and
$\tfrac{\d}{\d t}\C(\heatl_t(f))=-\|\Deltam \heatl_t(f)\|^2_2$ for
a.e.~$t>0$ (Theorem~\ref{thm:gfc}), thus
\[
\C(f)=\int_0^\infty\|\Deltam \heatl_t(f)\|^2_2\,\d t\qquad\forall
f\in D(\C).
\]
Now, recall that quadratic forms can be characterized in terms of
the parallelogram identity; thus $\C$, being on its domain an
integral of the quadratic forms $f\mapsto \|\Deltam
\heatl_t(f)\|^2_2$, is a quadratic form.

\noindent$(ii)\Rightarrow(iii)$. Using $(iii)$ of
Proposition~\ref{prop:evipropr}, to conclude it is sufficient to
show that $\heatw_t(\mu)$ is an $\EVI_K$ gradient flow for $\entv$
for any $\mu\ll\mm$ with density uniformly bounded away from 0 and
infinity. Thus, choose $\mu=\rho\mm\in\probt X$ such that
$0<c\leq\rho\leq C<\infty$ and define
$\mu_t:=\heatw_t(\mu)=\rho_t\mm$. By
Proposition~\ref{prop:eviequiv}, in order to check that $(\mu_t)$ is
an $\EVI_K$ gradient flow it is sufficient to pick reference
measures $\nu$ in \eqref{eq:defevi} of the form
$\nu=\sigma\mm\in\probt X$, with $\eta$ bounded and with bounded
support. For any $t>0$, choose $\ppi_t\in\gopt(\mu_t,\nu)$ given by
the strong $CD(K,\infty)$ condition and let $\varphi_t$ be a
Kantorovich potential for $(\mu_t,\nu)$. Now, recall that
Theorem~\ref{thm:derentr}(b) provides the lower bound
\[
\ent{\nu}-\ent{\mu_t}-\frac K2W_2^2(\mu_t,\nu)\geq
\lim_{\eps\downarrow
0}\frac{\C(\varphi_t)-\C(\varphi_t+\eps\rho_t)}{\eps},
\]
thus to conclude it is sufficient to show that for a.e.~$t>0$ it
holds
\begin{equation}
\label{eq:evilocal} \frac{\d}{\d t}\frac12W_2^2(\mu_t,\nu)\leq
\lim_{\eps\downarrow
0}\frac{\C(\varphi_t)-\C(\varphi_t+\eps\rho_t)}{\eps}.
\end{equation}
 By Theorem~\ref{thm:derw2} we know that for a.e.
$t>0$ it holds
\begin{equation}\label{eq:evilocalbis}
\frac{\d}{\d t}\frac12W_2^2(\mu_t,\nu)\leq\lim_{\eps\downarrow
0}\frac{\C(\rho_t-\eps\varphi_t)-\C(\rho_t)}{\eps}.
\end{equation}
In order to obtain \eqref{eq:evilocal} from \eqref{eq:evilocalbis}
we crucially use the hypothesis on Cheeger's energy: recalling
\eqref{eq:shift}, the fact that $\C$ is a quadratic form ensures the
identity
\[
 \frac{\C(\rho_t-\eps\varphi_t)-\C(\rho_t)}{\eps}
 =\frac{\C(\varphi_t)-\C(\varphi_t+\eps\rho_t)}{\eps}+O(\eps),
\]
(see \eqref{eq:shift}) and therefore \eqref{eq:evilocal} is proved.

\noindent$(iii)\Rightarrow(i)$. By Lemma~\ref{le:EVISCD} below we
deduce that $(X,\sfd,\mm)$ is a strong $CD(K,\infty)$ space.
 We turn to the additivity. Let $(\mu_t^0)$, $(\mu^1_t)$ be two
$\EVI_K$ gradient flows of the relative entropy and define
$\mu_t:=\lambda\mu^0_t+(1-\lambda)\mu^1_t$, $\lambda\in(0,1)$. To
conclude it is sufficient to show that $(\mu_t)$ is an $\EVI_K$
gradient flow of the relative entropy as well. By \eqref{eq:convw2}
we know that $(\mu_t)\subset\probt X$ is an absolutely continuous
curve, so we need only to show that
\begin{equation}
\label{eq:puntuale} \limsup_{h\downarrow0}\frac{\rme^{Kh}
W_2^2(\mu_{t+h},\nu)-W_2^2(\mu,\nu)}{2h}
+\entr{\mu_t}\mm\leq\entr\nu\mm,\qquad\forall t>0.
\end{equation}
Thus, given a reference measure $\nu$, for any $t>0$ let
$\ggamma_t\in\opt{\mu_t}{\nu}$, define
$\nu^0_t:=(\ggamma_t)_\sharp\mu^0_t$,
$\nu^1_t:=(\ggamma_t)_\sharp\mu^1_t$ (recall
Definition~\ref{def:pushplan} and notice that
$\mu^0_t,\mu^1_t\ll\mu_t=\pi^1_\sharp\ggamma_t$). By equation
\eqref{eq:optpush} we have
\[
\begin{split}
W_2^2(\mu_t,\nu)&=\int \sfd^2(x,y)\,\d\ggamma_t(x,y)=
\int \sfd^2(x,y)\Big((1-\lambda)\frac{\d\mu^0_t}{\d\mu_t}(x)+\lambda\frac{\d\mu^1_t}{\d\mu_t}(x)\Big)\d\ggamma_t(x,y)\\
&=(1-\lambda)\int \sfd^2(x,y)\frac{\d\mu^0_t}{\d\mu_t}(x)\,\d\ggamma_t(x,y)
+\lambda\int \sfd^2(x,y)\frac{\d\mu^1_t}{\d\mu_t}(x)\,\d\ggamma_t(x,y)\\
&=(1-\lambda)W_2^2(\mu^0_t,\nu^0_t)+\lambda W_2^2(\mu^1_t,\nu^1_t),
\end{split}
\]
while the convexity of $W_2^2$ yields
\[
W_2^2(\mu_{t+h},\nu)\leq(1-\lambda)W_2^2(\mu^0_{t+h},\nu^0_t)+\lambda W_2^2(\mu^1_{t+h},\nu^0_t),\qquad\forall h>0.
\]
Hence for any $t\geq 0$ we have
\begin{gather}
\notag \limsup_{h\downarrow0}
\frac{\rme^{Kh}\,W_2^2(\mu_{t+h},\nu)-W_2^2(\mu_t,\nu)}{2h} \leq
(1-\lambda) \limsup_{h\downarrow0}\frac{\rme^{Kh}\,
W_2^2(\mu^0_{t+h},\nu^0_t)-W_2^2(\mu^0_t,\nu^0_t)}{2h}
\\
+\lambda\limsup_{h\downarrow0}\frac{\rme^{Kh}\,
W_2^2(\mu_{t+h}^1,\nu^1_t)-W_2^2(\mu^1_t,\nu^1_t)}{2h}.
\label{eq:euno}
%\end{split}
\end{gather}
Now we use the assumption that $(\mu^0_t)$ and $(\mu^1_t)$ are
gradient flows in the $\EVI_K$ sense: fix $t$ and choose
respectively $\nu^0_t$ and $\nu^1_t$ as reference measures in
\eqref{eq:evipoint} to get
\begin{equation}
\label{eq:eddue}
\begin{split}
\limsup_{h\downarrow0}\frac{\rme^{Kh}\,
W_2^2(\mu^0_{t+h},\nu^0_t)-W_2^2(\mu^0_t,\nu^0_t)}{2h}
&\leq \entr{\nu^0_t}\mm-\entr{\mu^0_t}\mm,\\
\limsup_{h\downarrow0}\frac{\rme^{Kh}\,
W_2^2(\mu^1_{t+h},\nu^1_t)-W_2^2(\mu^1_t,\nu^1_t)}{2h} &\leq
\entr{\nu^1_t}\mm-\entr{\mu^1_t}\mm.
\end{split}
\end{equation}
Finally, from Proposition~\ref{prop:basegammapf} we have
\begin{equation}
\label{eq:ettre}
\entr{\mu_t}\mm-\entr\nu\mm\leq(1-\lambda)\Big(\entr{\mu^0_t}\mm-\entr{\nu^0_t}\mm\Big)
+\lambda\Big(\entr{\mu^1_t}\mm-\entr{\nu^1_t}\mm\Big).
\end{equation}
Inequalities \eqref{eq:euno}, \eqref{eq:eddue} and \eqref{eq:ettre} yield \eqref{eq:puntuale}.

Finally, we prove \eqref{eq:heatlheatw}. By linearity we can assume
that $f$ is a probability density. Notice that the additivity of the
semigroup $\heatw_t$ gives
$\heatw_t(\sum_ia_i\delta_{x_i})=\sum_ia_i\heatw_t(\delta_{x_i})$
whenever $a_i\geq 0$, $\sum_i a_i=1$ and $x_i\in\supp\mm$. Hence, if
$f$ is a continuous probability density in $\supp\mm$ with bounded
support, by a Riemann sum approximation we can use the continuity of
$\heatw_t$ to get
$$
\heatw_t(f\mm)=\int f(x)\heatw_t(\delta_x)\,\d\mm(x)
$$
and the identification of gradient flows provides
\eqref{eq:heatlheatw}. By a monotone class argument we extend the
validity of the formula from continuous to Borel functions $f\in
L^2(X,\mm)$.\end{proof}

\begin{lemma}
  \label{le:EVISCD}
  Any $(X,\sfd,\mm)\in\X$ satisfying
  condition $(iii)$ of Theorem~\ref{thm:mainriem} is a strong $CD(K,\infty)$ space.
\end{lemma}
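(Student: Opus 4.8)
The plan is to combine the last assertion of Proposition~\ref{prop:danerisavare} (geodesic $K$-convexity of $\entv$) with a compactness argument producing, for given endpoints in $D(\entv)$, an optimal geodesic plan concentrated on geodesics that stay inside $\supp\mm$; its reweightings are then themselves geodesics to which the convexity applies.

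First I would fix notation and record the preliminary facts. Since $D(\entv)\subset\probt{\supp\mm}=\probt{X,\mm}$ and the latter is $W_2$-closed, $\overline{D(\entv)}\subset\probt{\supp\mm}$; conversely the measures $\mm\res B/\mm(B)$ (with $\mm(B)>0$) and their finite convex combinations lie in $D(\entv)$ and are $W_2$-dense in $\probt{\supp\mm}$ (approximate any $\mu$ by a finitely supported measure with atoms in $\supp\mm$, then each $\delta_x$ by $\mm\res \Ball{r}{x}/\mm(\Ball{r}{x})$ as $r\downarrow0$), so $\overline{D(\entv)}=\probt{\supp\mm}$. Condition $(iii)$ provides $\EVI_K$ gradient flows of $\entv$ starting from every point of $\probt{X,\mm}=\overline{D(\entv)}$, so Proposition~\ref{prop:danerisavare} applies with $E=\entv$, $Y=\probt X$, and gives: \emph{$\entv$ is $K$-convex along every constant speed geodesic $(\nu_s)_{s\in[0,1]}$ taking values in $\probt{\supp\mm}$.}

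Next, fix $\mu_0,\mu_1\in D(\entv)\cap\probt X$. Because $(X,\sfd,\mm)$ is a length space, $(\supp\mm,\sfd)$ and hence $(\probt{\supp\mm},W_2)$ are length spaces, so for each $n$ there is a constant speed Lipschitz curve $\gamma^n:[0,1]\to\probt{\supp\mm}$ joining $\mu_0$ to $\mu_1$ with $\mathrm{Lip}(\gamma^n)^2\le W_2^2(\mu_0,\mu_1)+1/n$. By the superposition principle \cite{Lisini07} (applied in $\supp\mm$) each $\gamma^n$ is represented by a plan $\ppi_n\in\prob{\CC{[0,1]}{\supp\mm}}$ with $(\e_s)_\sharp\ppi_n=\gamma^n_s$ and $\iint_0^1|\dot\gamma_s|^2\,\d s\,\d\ppi_n=\int_0^1|\dot\gamma^n_s|^2\,\d s\le W_2^2(\mu_0,\mu_1)+1/n$. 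The initial marginals are all equal to $\mu_0$ and the $2$-actions are uniformly bounded, so $\{\ppi_n\}$ is tight in $\prob{\CC{[0,1]}{\supp\mm}}$ (via the estimate $\sfd(\gamma_0,\gamma_s)\le s^{1/2}\big(\int_0^1|\dot\gamma_r|^2\,\d r\big)^{1/2}$ and Markov's inequality); let $\ppi$ be a narrow limit of a subsequence. Then $(\e_0)_\sharp\ppi=\mu_0$, $(\e_1)_\sharp\ppi=\mu_1$, and by lower semicontinuity of the $2$-action, $\iint_0^1|\dot\gamma_s|^2\,\d s\,\d\ppi\le W_2^2(\mu_0,\mu_1)$; since also $\iint_0^1|\dot\gamma_s|^2\,\d s\,\d\ppi\ge\int\sfd^2(\gamma_0,\gamma_1)\,\d\ppi\ge W_2^2(\mu_0,\mu_1)$, equality holds throughout, which forces $\ppi$ to be concentrated on $\geo(X)$ with $(\e_0,\e_1)_\sharp\ppi$ optimal, i.e.\ $\ppi\in\gopt(\mu_0,\mu_1)$. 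Moreover, being a narrow limit of plans concentrated on the closed set $\CC{[0,1]}{\supp\mm}\subset\CC{[0,1]}X$, $\ppi$ is concentrated on geodesics with values in $\supp\mm$.

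Finally I would check that $\ppi$ is the plan required by Definition~\ref{def:strongcd}. Let $F:\geo(X)\to\R$ be Borel, bounded, nonnegative with $\int F\,\d\ppi=1$, put $\ppi_F:=F\ppi$ and $\muF s:=(\e_s)_\sharp\ppi_F$. Since $(\e_0,\e_1)_\sharp\ppi_F\ll(\e_0,\e_1)_\sharp\ppi$ and the latter has $c$-cyclically monotone support, the support of $(\e_0,\e_1)_\sharp\ppi_F$ is $c$-cyclically monotone; as it is an admissible plan from $\muF0$ to $\muF1$ with finite cost ($\le\|F\|_\infty W_2^2(\mu_0,\mu_1)$), it is optimal, so $\ppi_F\in\gopt(\muF0,\muF1)$ and $(\muF s)$ is a constant speed geodesic from $\muF0$ to $\muF1$. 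It takes values in $\probt{\supp\mm}$ because $\ppi_F$ is concentrated on curves with values in $\supp\mm$, and $\muF0\le\|F\|_\infty\mu_0$, $\muF1\le\|F\|_\infty\mu_1$ give $\muF0,\muF1\in D(\entv)$. Hence, by the $K$-convexity statement of Proposition~\ref{prop:danerisavare} recalled above,
\[
\entr{\muF s}{\mm}\le(1-s)\entr{\muF0}{\mm}+s\,\entr{\muF1}{\mm}-\frac K2 s(1-s)W_2^2(\muF0,\muF1)\qquad\forall s\in[0,1],
\]
which is exactly the defining inequality of a strong $CD(K,\infty)$ space. The main obstacle is the middle step: producing the optimal geodesic plan $\ppi$ and verifying that it can be chosen concentrated on geodesics that remain in $\supp\mm$ — this is where the length assumption in $(iii)$ is used and where the narrow-compactness/lower-semicontinuity bookkeeping lives; the rest is a routine manipulation with $c$-cyclical monotonicity and the already-established geodesic convexity coming from the $\EVI_K$ hypothesis.
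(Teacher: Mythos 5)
Your overall architecture is the same as the paper's: use condition $(iii)$ together with Proposition~\ref{prop:danerisavare} to get $K$-convexity of $\entv$ along every geodesic contained in $\overline{D(\entv)}=\probt{X,\mm}$, produce one optimal geodesic plan $\ppi\in\gopt(\mu_0,\mu_1)$ concentrated on geodesics lying in $\supp\mm$, and observe that each reweighting $\ppi_F$ induces a geodesic with endpoints in $D(\entv)$ to which that convexity applies. The first and last steps of your argument are correct and essentially identical to the paper's.

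The genuine gap is in the middle step, precisely where you claim that the plans $\ppi_n$ obtained from Lisini's theorem are tight because ``the initial marginals are all equal to $\mu_0$ and the $2$-actions are uniformly bounded.'' In a space that is not locally compact this is false: compact subsets of $\CC{[0,1]}{X}$ are characterized by equicontinuity \emph{plus} relative compactness of the sets $\{\gamma_s\}$ at each time $s$, and your action bound only controls equicontinuity. The intermediate marginals $(\e_s)_\sharp\ppi_n=\gamma^n_s$ form a $W_2$-bounded family, but a $W_2$-bounded family need not be tight (e.g.\ $\delta_{e_k}$ with $(e_k)$ an orthonormal basis of a Hilbert space). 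Since the paper explicitly works without local compactness, this step does not go through as written. The paper's proof supplies exactly the missing ingredient: it replaces the approximate geodesics $\mu^\eps_s$ by their heat-flow regularizations $\tilde\mu^\eps_s=\heatw_\eps(\mu^\eps_s)$, uses the regularization estimate \eqref{eq:6} to obtain a \emph{uniform entropy bound} on all interpolating measures, and then exploits the tightness of entropy sublevels together with the refined Ascoli--Arzel\`a theorem of \cite[Prop.~3.3.1]{Ambrosio-Gigli-Savare08} to extract the limit geodesic; only afterwards does one pass to an inducing optimal geodesic plan. Your proof would be repaired by inserting this regularization (or any other device producing tightness of the intermediate marginals) before invoking compactness of $\{\ppi_n\}$.
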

\begin{proof}
  Let us first prove that $(X,\sfd,\mm)$ is a $CD(K,\infty)$ space.
  We notice that, since by assumption $(\supp\mm,\sfd)$ is a length space, then
  $\probt{X,\mm}$ is a length metric space.
  Therefore, up to a suitable reparameterization,
  for every $\mu_0,\,\mu_1\in D(\entv)\subset\probt{X,\mm}$ and $\eps>0$
  there exists a $L_\eps$-Lipschitz curve
  $(\mu^\eps)\in \mathrm{Lip}([0,1];\probt{X,\mm})$ connecting $\mu_0$ to
  $\mu_1$ with $L_\eps^2\le W^2_2(\mu_0,\mu_1)+\eps^2.$

  We can thus set $\tilde\mu^\eps_s:=\heatw_\eps(\mu_\eps)$,
  where $\heatw_t(\mu)$ denotes the $K$-gradient flow starting from $\mu$,
  so that by \eqref{eq:6} of Proposition~\ref{prop:danerisavare} we
  get
  \begin{displaymath}
    \ent{\tilde\mu^\eps_s}\le (1-s)\ent{\mu_0}+s\ent{\mu_1}-
    \frac K2s(1-s)W_2^2(\mu_0,\mu_1)+\frac {\eps^2}{\mathrm I_K(\eps)}
  \end{displaymath}
  Since $ {\eps^2}/{\mathrm I_K(\eps)}\to 0$ as $\eps\downarrow 0$,
  the family $\{\tilde\mu^\eps_s\}$ has uniformly bounded entropy and
  therefore
  it is tight. By $(ii)$ of Proposition~\ref{prop:evipropr} we know that
  \begin{displaymath}
    W_2(\tilde\mu^\eps_r,\tilde\mu^\eps_s)\le
    \rme^{-K\eps}L_\eps|r-s|\quad\forevery r,s\in [0,1],\ \eps>0.
  \end{displaymath}
  Since $\limsup_{\eps\down0}L_\eps\le W_2(\mu_0,\mu_1)$,
  we can applying the refined Ascoli-Arzel\`a compactness theorem of
  \cite[Prop.~3.3.1]{Ambrosio-Gigli-Savare08} to find a vanishing
  sequence $\eps_n\downarrow0$ and a limit curve $(\mu_s)\subset
  \probt{X,\mm}$ connecting
  $\mu_0$ to $\mu_1$ such that
  \begin{displaymath}
    \mu^{\eps_n}_s\to\mu_s\quad\text{in }\prob{X},\quad
    W_2(\mu_r,\mu_s)\le W_2(\mu_0,\mu_1)|r-s|,\quad
    \ent{\mu_s}<\infty\quad\forevery r,s\in [0,1].
  \end{displaymath}
  It turns out that $(\mu_s)$ is a geodesic in $D(\entv)$ connecting
  $\mu_0$ to $\mu_1$ and therefore Proposition~\ref{prop:danerisavare}
  shows that $\entv$ is $K$-convex along $\mu_s$.

  The same Proposition shows that $\entv$ is $K$-convex along any
  geodesic contained in $\probt{X,\mm}$: in particular,
  taking any optimal geodesic plan $\ppi$ as the one induced by
  the geodesic obtained by the previous argument,
  $\entv$ satisfies the $K$-convexity inequality associated to
  $\ppi_F$ as in Definition~\ref{def:strongcd}, since all the measures
  $\mu_{F,t}$ belong to $D(\entv)$.
\end{proof}

\section{Riemannian Ricci bounds: properties}\label{sec:rcdprop}

\subsection{Heat Flow}\label{sub7}

In this section we study more in detail the properties of the
$L^2$-semigroup $\heatl_t$ in a $\rcd K\infty$ space
$(X,\sfd,\mm)\in\X$ and the additional informations that one can
obtain from the relation \eqref{eq:heatlheatw} with the
$W_2$-semigroup $\heatw_t$. First of all, let us remark that since
$\C$ is quadratic the operator $\Deltam$ is in fact the
infinitesimal generator of $\heatl_t$, and therefore is linear.
Furthermore, denoting by $\mathcal E(u,v):[D(\C)]^2\to\R$ the
Dirichlet form induced by $\C$, the relation ${\cal E}(u,v)=-\int
v\Deltam u\,\d\mm$ for $u\in D(\Deltam)$, $v\in D(\C)$ implies that
$\Deltam$ is self-adjoint in $L^2(X,\mm)$ and the same is true for
$\heatl_t$.

Also, again by Proposition~\ref{prop:evipropr}, and the definition
of $\rcd K\infty$ spaces, we know that for any $x\in\supp\mm$ there
exists a unique $\EVI_K$ gradient flow $\heatw_t(\delta_x)$ of
$\entv$ starting from $\delta_x$, related to $\heatl_t$ by
\eqref{eq:heatlheatw}.

Since $\entr{\ke xt}\mm<\infty$ for any $t>0$, it holds $\ke
xt\ll\mm$, so that $\ke xt$ has a density, that we shall denote by
$\ked xt$. The functions $\ked xt (y)$ are the so-called transition
probabilities of the semigroup. By standard measurable selection
arguments we can choose versions of these densities in such a way
that the map $(x,y)\mapsto\ked xt(y)$ is $\mm\times\mm$-measurable
for all $t>0$.

In the next theorem we prove additional properties of the flows. The
information on both benefits of the identification theorem: for
instance the symmetry property of transition probabilities is not at
all obvious when looking at $\heatw_t$ only from the optimal
transport point of view, and heavily relies on
\eqref{eq:heatlheatw}, whose proof in turn relies on the
identification Theorem~\ref{thm:heatgf} proved in
\cite{Ambrosio-Gigli-Savare11}. On the other hand, the regularizing
properties of $\heatl_t$ are deduced by duality by those of
$\heatw_t$, using in particular the contractivity estimate (see \eqref{eq:21})
\begin{equation}
\label{eq:contrw2} W_2(\heatw_t(\mu),\heatw_t(\nu))\leq
e^{-Kt}W_2(\mu,\nu)\qquad t\geq0,\ \mu,\,\nu\in\probt {X,\mm}.
\end{equation}
and the regularization estimates for the Entropy and its slope
(apply \eqref{eq:ultraEVI} with $z:=\mm$)
\begin{equation}
  \label{eq:24}
  \mathrm I_K(t)\ent{\heatw_t(\mu)}+\frac {(\mathrm I_K(t))^2}2|\nabla^-\entv|^2(\heatw_t(\mu))
  \le \frac 12 W_2^2(\mu,\mm).
\end{equation}
Notice also that \eqref{eq:contrw2} yields $
W_1(\heatw_t(\delta_x),\heatw_t(\delta_y))\le\rme^{-Kt}\sfd(x,y)$
for all $x,\,y\in\supp\mm$ and $t\geq 0$. This implies that $\rcd
K\infty$ spaces have Ricci curvature bounded from below by $K$
according to \cite{Ollivier09}, \cite{Joulin07}. Notice also that
using \eqref{eq:heatlheatw}, the identification of gradient flows
and a simple convexity argument, we can recover the inequality
$$
W_1(\heatw_t(\mu),\heatw_t(\nu))\leq\rme^{-Kt}W_1(\mu,\nu)
$$
first with when $\mu,\,\nu\in\probt{X,\mm}$ have densities in
$L^2(X,\mm)$ and then, by approximation, in the general case when
$\mu,\,\nu\in {\mathscr P}_1(X,\mm)$.

Notice also that, as a consequence of \cite[Theorem~1.3]{Rajala11}
and the density of Lipschitz functions one has the weak local
$(1,1)$-Poincar\'e inequality
$$
\int_{B_r(x)} |u-\bar u|\,\d\mm\leq 4r\int_{B_{2r}(x)}\weakgrad
u\,d\mm\quad\text{with}\quad \bar
u:=\frac{1}{\mm(B_r(x))}\int_{B_r(x)}u\,\d\mm
$$
for all $x\in\supp\mm$, $r>0$, $u\in W^{1,2}(X,\sfd,\mm)$, which
implies the standard weak local $(1,1)$-Poincar\'e inequality under
doubling assumptions on $\mm$.

Further relevant properties will be obtained in the next section.
\begin{theorem}[Regularizing properties of the heat flow]\label{thm:facili}
Let $(X,\sfd,\mm)\in\X$ be a $\rcd K\infty$ space. Then:
\begin{itemize}
\item[(i)] The transition probability densities are symmetric
\begin{equation}\label{eq:symmetrictp}
\ked xt (y)=\ked yt (x)\qquad\text{$\mm\times\mm$-a.e.~in $X\times
X$, for all $t>0,$}
\end{equation}
and satisfy for all $x\in X$ the Chapman-Kolmogorov formula:
\begin{equation}
\label{eq:chapman} \ked x{t+s}(y)=\int\ked xt (z)\ked zs
(y)\,\d\mm(z)\qquad\text{for $\mm$-a.e.~$y\in X$, for all $t,\,s\geq
0$.}
\end{equation}
\item[(ii)] The formula
\begin{equation}
\label{eq:l1} \tilde\heatl_tf(x):=\int f(y)\,\d\ke xt(y)\qquad
x\in\supp\mm
\end{equation}
provides a version of $\heatl_t f$ for every $f\in L^2(X,\mm)$, an
extension of $\heatl_t$ to a continuous contraction semigroup in
$L^1(X,\mm)$ which is pointwise everywhere defined if $f\in
L^\infty(X,\mm)$.
\item[(iii)] The semigroup $\tilde\heatl_t$ maps contractively $L^\infty(X,\mm)$ in
$\Cb(\supp\mm)$ and, in addition, $\tilde\heatl_tf(x)$ belongs to $
\Cb\bigl((0,\infty)\times\supp\mm\bigr)$.
\item[(iv)] If $f:\supp\mm\to\R$ is Lipschitz, then $\tilde\heatl_tf$ is Lipschitz on $\supp\mm$
as well and $\Lip(\tilde\heatl_tf)\leq e^{-Kt}\Lip(f)$.
\end{itemize}
\end{theorem}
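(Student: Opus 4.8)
The idea is that, once the superposition formula \eqref{eq:heatlheatw} is at hand, all four statements follow by combining it with the self-adjointness of $\heatl_t$ in $L^2(X,\mm)$ (recorded at the beginning of this section), the $W_2$-contraction \eqref{eq:contrw2}, the entropy regularization \eqref{eq:24}, and Kantorovich--Rubinstein duality for $W_1$. No further use of the $CD$ or $\EVI_K$ structure is needed beyond these facts and the fact, recalled above, that $\heatw_t(\delta_x)$ is the $\EVI_K$ flow from $\delta_x$ for every $x\in\supp\mm$ and that $(\heatw_t)$ is a semigroup.

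For \emph{(i)}, rewrite \eqref{eq:heatlheatw} at the level of densities as $\heatl_t f(y)=\int f(x)\,\ked xt(y)\,\d\mm(x)$ for $\mm$-a.e.\ $y$, for every $f\in L^2(X,\mm)$; this is legitimate since by Fubini the right-hand side has $L^1(X,\mm)$-norm $\|f\|_1$. Pairing with $g\in L^2(X,\mm)$ and using $\int g\,\heatl_t f\,\d\mm=\int f\,\heatl_t g\,\d\mm$, Fubini yields $\iint f(x)g(y)\,\ked xt(y)\,\d\mm\,\d\mm=\iint f(x)g(y)\,\ked yt(x)\,\d\mm\,\d\mm$, whence \eqref{eq:symmetrictp} by arbitrariness of $f,g$ (here $(x,y)\mapsto\ked xt(y)$ is the jointly measurable version selected above). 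For \eqref{eq:chapman}, use the semigroup property $\heatw_{t+s}(\delta_x)=\heatw_s(\heatw_t(\delta_x))=\heatw_s(\ked xt\mm)$ together with the $L^1$ version of \eqref{eq:heatlheatw}, namely $\heatw_s(\rho\mm)=\int\rho(z)\,\heatw_s(\delta_z)\,\d\mm(z)$ for any probability density $\rho$ of finite second moment (obtained from \eqref{eq:heatlheatw} by truncating $\rho$ and invoking $W_2$-continuity of $\heatw_s$); this gives $\ked x{t+s}(y)=\int\ked xt(z)\,\ked zs(y)\,\d\mm(z)$ for $\mm$-a.e.\ $y$, and the identity holds for every $x\in\supp\mm$ because both sides are then well-defined finite measures.

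For \emph{(ii)}, if $f\ge 0$ in $L^1(X,\mm)$ then Fubini and \eqref{eq:symmetrictp} give $\int\tilde\heatl_t f\,\d\mm=\int f(y)\big(\int\ked yt(x)\,\d\mm(x)\big)\,\d\mm(y)=\int f\,\d\mm$; hence $\tilde\heatl_t$ is a contraction on $L^1(X,\mm)$, finite $\mm$-a.e., and everywhere finite with $|\tilde\heatl_t f(x)|\le\|f\|_\infty$ when $f\in L^\infty(X,\mm)$. It coincides with $\heatl_t$ on $L^2(X,\mm)$ by (i), strong $L^1$-continuity as $t\downarrow 0$ follows from $L^2$-continuity and density of $L^2$ in $L^1$, and $\tilde\heatl_{t+s}=\tilde\heatl_t\circ\tilde\heatl_s$ is a Fubini reformulation of \eqref{eq:chapman}. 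For \emph{(iii)}, $\|\tilde\heatl_t f\|_\infty\le\|f\|_\infty$ is immediate. Given $f\in L^\infty(X,\mm)$ and $(t_n,x_n)\to(t,x)$ in $(0,\infty)\times\supp\mm$, the bound $W_2(\ke{x_n}{t_n},\ke x{t_n})\le\rme^{-Kt_n}\sfd(x_n,x)$ from \eqref{eq:contrw2} together with the $W_2$-continuity of $s\mapsto\ke xs$ gives $\ke{x_n}{t_n}\to\ke xt$ in $W_2$, hence in duality with $\Cb(X)$. By \eqref{eq:24}, $\ent{\ke{x_n}{t_n}}\le\frac1{2\mathrm I_K(t_n)}W_2^2(\delta_{x_n},\mm)$, which is bounded since $t_n\to t>0$ and $\sup_n\int\sfd^2(x_n,\cdot)\,\d\mm<\infty$; superlinearity of $z\mapsto z\log z$ then makes $\{\ked{x_n}{t_n}\}_n$ uniformly $\mm$-integrable, hence relatively weakly compact in $L^1(X,\mm)$ by Dunford--Pettis. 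Any weak-$L^1$ limit point $\rho$ satisfies $\rho\mm=\ke xt$ (test against $\Cb(X)$), so $\ked{x_n}{t_n}\rightharpoonup\ked xt$ weakly in $L^1(X,\mm)$ along the whole sequence; testing against $f\in L^\infty(X,\mm)=(L^1)^\ast$ gives $\tilde\heatl_{t_n}f(x_n)\to\tilde\heatl_t f(x)$. This proves $\tilde\heatl_t:L^\infty(X,\mm)\to\Cb(\supp\mm)$ contractively and the joint continuity of $\tilde\heatl_t f(x)$ on $(0,\infty)\times\supp\mm$.

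Finally, for \emph{(iv)}: a $\Lip(f)$-Lipschitz $f$ on $\supp\mm$ lies in $L^2(X,\mm)$ and is $\ke xt$-integrable for every $x$ (finite first moment of $\ke xt$), so $\tilde\heatl_t f(x)=\int f\,\d\ke xt$ is the version from (ii), and Kantorovich--Rubinstein duality, $W_1\le W_2$ and \eqref{eq:contrw2} give
\[
|\tilde\heatl_t f(x)-\tilde\heatl_t f(x')|\le\Lip(f)\,W_1(\ke xt,\ke{x'}{t})\le\Lip(f)\,W_2(\ke xt,\ke{x'}{t})\le\rme^{-Kt}\Lip(f)\,\sfd(x,x').
\]
The one step that is not mere bookkeeping is the continuity in (iii) for merely measurable $f$: $W_2$-convergence of the kernels cannot be tested against $L^\infty$ functions, and it is precisely the uniform entropy bound from \eqref{eq:24} — through uniform integrability and Dunford--Pettis — that upgrades it to weak $L^1$-convergence of the $\ked{x_n}{t_n}$ (for $f\in\Cb(X)$ this upgrade is unnecessary). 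I also expect minor care to be needed in justifying the $L^1$ version of \eqref{eq:heatlheatw} used for (i) and in fixing the jointly measurable choice of the transition densities, but both are routine.
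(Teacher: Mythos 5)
Your proposal is correct and follows essentially the same route as the paper's proof: symmetry via self-adjointness of $\heatl_t$ tested against the superposition formula \eqref{eq:heatlheatw}, Chapman--Kolmogorov from the semigroup property, mass preservation from symmetry for the $L^1$-contraction, joint continuity from the $W_2$-contraction combined with the entropy regularization \eqref{eq:ultraEVI}/\eqref{eq:24} and equi-integrability, and the Lipschitz estimate from Kantorovich--Rubinstein duality with $W_1\le W_2$. The extra detail you supply (Dunford--Pettis, the $L^1$ form of the superposition identity) only makes explicit what the paper leaves implicit.
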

\begin{proof} $(i)$. Fix $f,\,g\in \Cb(X)$ and notice that \eqref{eq:heatlheatw}
gives
$$
\int g\,\heatl_t f\,\d\mm=\int f(x)\int g\,\d\ke xt\,\d\mm(x)=
\int\int f(x)g(y)\ked xt (y)\,\d\mm(y)\,\d\mm(x).
$$
Reversing the roles of $f$ and $g$ and using the fact that
$\heatl_t$ is self-adjoint it follows that $\int\int (\ked xt
(y)-\ked yt (x))f(x)g(y)\,\d\mm\,\d\mm$ vanishes, and since $f$ and
$g$ are arbitrary we obtain \eqref{eq:symmetrictp}. Formula
\eqref{eq:chapman} is a direct consequence of the semigroup property
$\heatw_{t+s}(\delta_x)=\heatw_t(\heatw_s(\delta_x))$.

\noindent $(ii)$ Using the symmetry of transition probabilities, for
$f\in L^1(X,\mm)$ nonnegative we get $\|\tilde\heatl_tf\|_1=\int\int
f(y)\ked yt (x)\,\d\mm(x)\,\d\mm(y)=\|f\|_1$. By linearity this
shows that $\tilde\heatl_t$ is well defined $\mm$-a.e. and defines a
contraction semigroup in $L^1(X,\mm)$. The fact that the right hand
side in \eqref{eq:l1} provides a version of $\heatl_tf$ follows once
more from \eqref{eq:heatlheatw} and the symmetry of transition
probabilities.

\noindent (iii) Contractivity of $\tilde\heatl_t$ in
$L^\infty(X,\mm)$ is straightforward. By \eqref{eq:contrw2} we get
that $\ke ys\to\ke xt$ in duality with $\Cb(X)$ when $y\to x$ in $X$
and $s\to t$. Also, the a priori estimate \eqref{eq:ultraEVI} shows
that $(t,y)\mapsto\entr{\ke yt}\mm$ is bounded on sets of the form
$(\epsilon,\infty)\times B$, with $B$ bounded and $\epsilon>0$. Thus
the family $\{\ked yt\}_{y\in B,t\geq\epsilon}$ is equi-integrable.
This shows that $\ked ys\to\ked xt$ weakly in $L^1(X,\mm)$ when
$(y,s)\to (x,t)\in X\times (0,\infty)$ and proves the continuity of
$\tilde\heatl_tf(x)$.

\noindent $(iv)$ By \eqref{eq:l1} we get $|\tilde\heatl_t
f(x)-\tilde\heatl_t f(y)|\leq {\rm Lip}(f)W_1(\ke xt,\ke yt)\leq
{\rm Lip}(f)W_2(\ke xt,\ke yt)$. We can now use \eqref{eq:contrw2}
to conclude (see \cite{Kuwada10} for a generalization of this
duality argument).
\end{proof}

Using Lemma~\ref{le:sug} below we can refine $(iv)$ of
Theorem~\ref{thm:facili} and prove by a kind of duality argument
\cite{Kuwada10} a Bakry-Emery estimate in $\rcd K\infty$ spaces.

\begin{theorem}[Bakry-Emery in $\rcd K\infty$ spaces]\label{thm:bakryemery}
For any $f\in D(\C)$ and $t>0$ we have
\begin{equation}\label{eq:bakryemery}
\weakgrad{(\heatl_tf)}^2\leq
\rme^{-2Kt}\heatl_t(\weakgrad{f}^2)\qquad\text{$\mm$-a.e.~in $X$.}
\end{equation}
In addition, if
 $\weakgrad{f}\in L^\infty(X,\mm)$ and $t>0$, then
$\rme^{-Kt}\bigl(\tilde\heatl_t\weakgrad{f}^2\bigr)^{1/2}$ is an
upper gradient of $\tilde\heatl_tf$ on $\supp\mm$, so that
\begin{equation}
\text{$|\nabla\tilde\heatl_tf|\leq\rme^{-Kt}
\bigl(\tilde\heatl_t\weakgrad{f}^2\bigr)^{1/2}$\quad pointwise in
$\supp\mm$,} \label{eq:17}
\end{equation}
and $f$ has a Lipschitz version $\tilde{f}:X\to\R$,
with ${\rm Lip}(\tilde{f})\leq\|\weakgrad{f}\|_\infty$.
\end{theorem}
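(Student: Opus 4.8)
The plan is to implement Kuwada's duality between the Wasserstein contraction \eqref{eq:contrw2} and pointwise gradient estimates, using Lemma~\ref{le:sug}, and then to remove the regularity assumptions on $f$ by the approximation \eqref{eq:nontrivialatall}. \emph{Step 1 (curvewise estimate for Lipschitz $f$).} Fix $t>0$, a bounded $f\in\Lip(X)$ and a nonconstant absolutely continuous curve $\gamma$ with values in $\supp\mm$, and look at the curve of measures $\mu_r:=\ke{\gamma_r}t$, $r\in[0,1]$; by \eqref{eq:contrw2} it satisfies $W_2(\mu_r,\mu_{r'})\le\rme^{-Kt}\sfd(\gamma_r,\gamma_{r'})$, so $(\mu_r)$ is absolutely continuous in $(\probt X,W_2)$ with metric speed $\le\rme^{-Kt}|\dot\gamma_r|$ a.e. Applying Lemma~\ref{le:sug} with $\psi=f$ and the identity $\int|\nabla f|^2\,\d\ke xt=\tilde\heatl_t(|\nabla f|^2)(x)$ coming from \eqref{eq:l1} gives
\[
\bigl|\tilde\heatl_tf(\gamma_1)-\tilde\heatl_tf(\gamma_0)\bigr|\le\rme^{-Kt}\int_0^1\bigl(\tilde\heatl_t(|\nabla f|^2)(\gamma_r)\bigr)^{1/2}|\dot\gamma_r|\,\d r,
\]
so that $\rme^{-Kt}(\tilde\heatl_t|\nabla f|^2)^{1/2}$, which is continuous on $\supp\mm$ by Theorem~\ref{thm:facili}(iii), is an upper gradient of $\tilde\heatl_tf$ there; in particular $\weakgrad{(\heatl_tf)}^2\le\rme^{-2Kt}\heatl_t(|\nabla f|^2)$ $\mm$-a.e.

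\emph{Step 2 (passage to $W^{1,2}$).} For general $f\in W^{1,2}(X,\sfd,\mm)$, a truncation reduces to bounded $f$; then I would choose bounded $f_n\in\Lip(X)$ with $f_n\to f$ in $L^2(X,\mm)$ and $|\nabla f_n|\to\weakgrad f$ strongly in $L^2(X,\mm)$ (which follows from \eqref{eq:nontrivialatall}, the stability of weak upper gradients \cite[Theorem~5.12]{Ambrosio-Gigli-Savare11}, and the parallelogram/lower-semicontinuity argument of Proposition~\ref{prop:lipdense}). Then $f_n\to f$ in $W^{1,2}$, hence $\heatl_tf_n\to\heatl_tf$ in $W^{1,2}$ and $\weakgrad{(\heatl_tf_n)}\to\weakgrad{(\heatl_tf)}$ in $L^2$, while $|\nabla f_n|^2\to\weakgrad f^2$ in $L^1$ forces $\heatl_t(|\nabla f_n|^2)\to\heatl_t(\weakgrad f^2)$ in $L^1$ because $\heatl_t$ is an $L^1$-contraction (Theorem~\ref{thm:facili}(ii)); passing to the limit in the Step~1 inequality proves \eqref{eq:bakryemery}.

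\emph{Step 3 ($L^\infty$-gradient and Lipschitz version).} Now let $\weakgrad f\in L^\infty(X,\mm)$. I would run the curve argument of Step~1 once more, with $\weakgrad f^2$ in place of $|\nabla f|^2$, the goal being that the bounded continuous function $\rme^{-Kt}(\tilde\heatl_t\weakgrad f^2)^{1/2}$ is an actual upper gradient of $\tilde\heatl_tf$ on $\supp\mm$: this I would obtain by approximating $f$ by bounded Lipschitz $f_n$ with $f_n$ bounded in $L^\infty$, $f_n\weakto f$ in $L^2$ and suitable control of $\|\,|\nabla f_n|\,\|_\infty$, using that then $\tilde\heatl_tf_n(x)=\int f_n\,\d\ke xt\to\tilde\heatl_tf(x)$ for every $x\in\supp\mm$ (since $\ked xt\in L^1(X,\mm)$, even though $\ked xt\notin L^2$ in the infinite-dimensional case), and likewise for $(\tilde\heatl_t|\nabla f_n|^2)^{1/2}$, so that dominated convergence against the finite measure $|\dot\gamma_r|\,\d r$ transfers the Step~1 inequality to the limit. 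Granting this, \eqref{eq:17} follows because an upper gradient continuous at $x$ dominates the local Lipschitz constant there (here $\supp\mm$ being a length space is used); moreover $\rme^{-Kt}(\tilde\heatl_t\weakgrad f^2)^{1/2}\le\rme^{-Kt}\|\weakgrad f\|_\infty$ by the maximum principle, whence $\tilde\heatl_tf$ is $\rme^{-Kt}\|\weakgrad f\|_\infty$-Lipschitz on $\supp\mm$; finally, letting $t\downarrow0$ and combining $\heatl_tf\to f$ in $L^2$ with the uniform Lipschitz bound (a Cauchy argument on a full-measure, hence dense, subset of $\supp\mm$ makes $\tilde\heatl_{t_n}f$ converge pointwise everywhere to a $\|\weakgrad f\|_\infty$-Lipschitz function $\tilde f$ agreeing $\mm$-a.e.\ with $f$) produces the Lipschitz version $\tilde f$ with $\Lip(\tilde f)\le\|\weakgrad f\|_\infty$.

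\emph{Main obstacle.} The delicate point is exactly the extension in Step~3 of the curvewise upper-gradient inequality from Lipschitz $f$ to $f$ with merely $\weakgrad f\in L^\infty$: the curve $r\mapsto\ke{\gamma_r}t$ need not have uniformly bounded density (no ultracontractivity in infinite dimensions), so Lemma~\ref{le:sug} and its weak-upper-gradient analogue apply only to Lipschitz functions, and the natural Lipschitz approximants coming from \eqref{eq:nontrivialatall} do not carry a uniform $L^\infty$ bound on $|\nabla f_n|$ — such a bound would essentially amount to the Sobolev-to-Lipschitz property one is trying to establish. Setting up the limiting procedure so that the upper-gradient inequality survives, using only weak $L^2$-convergence together with $L^\infty$-boundedness and the integrability $\ked xt\in L^1$, is the technical heart of the argument and the place where the $\rcd K\infty$ structure (via the contraction \eqref{eq:contrw2} and the length property) is genuinely exploited.
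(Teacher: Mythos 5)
Your Steps 1 and 2 are correct and coincide with the paper's argument: the curvewise Kuwada duality via Lemma~\ref{le:sug} applied to $r\mapsto\ke{\gamma_r}t$ together with the contraction \eqref{eq:contrw2} gives the estimate for Lipschitz $f$, and the strong $W^{1,2}$ approximation by Lipschitz functions with $|\nabla f_n|\to\weakgrad f$ in $L^2$ (available by \eqref{eq:nontrivialatall} and the Hilbertian argument of Proposition~\ref{prop:lipdense}) upgrades it to \eqref{eq:bakryemery} for all $f\in D(\C)$. The gap is exactly where you locate it: Step 3 as written does not close, because you try to rerun the \emph{single-curve} argument for a merely Sobolev $f$, and this forces you to look for Lipschitz approximants with a uniform $L^\infty$ bound on the slopes --- which, as you say yourself, is essentially the conclusion you are after. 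No limiting procedure of the kind you sketch (weak $L^2$ convergence plus $\ked xt\in L^1$) is supplied, and the "Main obstacle" paragraph is an acknowledgment rather than a proof.

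The missing idea is to abandon individual curves in favour of \emph{test plans with bounded compression}, so that the already-established $\mm$-a.e.\ inequality \eqref{eq:bakryemery} can be used directly on $f_t:=\heatl_tf$ instead of any Lipschitz surrogate for $f$. Concretely: since \eqref{eq:bakryemery} gives $\|\weakgrad{f_t}\|_\infty\le L\rme^{-Kt}$ with $L=\|\weakgrad f\|_\infty$, one fixes $x,y\in\supp\mm$ and $r>0$ and takes a geodesic plan $\ppi$ from $\mm(B_r(x))^{-1}\mm\res B_r(x)$ to $\mm(B_r(y))^{-1}\mm\res B_r(y)$; by the interpolation bound \eqref{eq:8} of Proposition~\ref{prop:intpropr} the interpolating densities are uniformly bounded, so $\ppi$ is a test plan and the weak upper gradient inequality \eqref{eq:inweak} for $f_t$ holds $\ppi$-a.e. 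Integrating it against $\ppi$ gives
\begin{equation*}
\Bigl|\media_{B_r(x)}f_t\,\d\mm-\media_{B_r(y)}f_t\,\d\mm\Bigr|
\le\int\int_0^1\weakgrad{f_t}(\gamma_s)|\dot\gamma_s|\,\d s\,\d\ppi(\gamma),
\end{equation*}
and since $(\e_s)_\sharp\ppi\ll\mm$ the $\mm$-a.e.\ bound $\weakgrad{f_t}\le\rme^{-Kt}\bigl(\heatl_t\weakgrad f^2\bigr)^{1/2}$ may be inserted in the right-hand side; letting $r\downarrow0$ and using the continuity of $\tilde\heatl_tf$ and of $\tilde\heatl_t\weakgrad f^2$ (Theorem~\ref{thm:facili}(iii)) yields the pointwise estimate \eqref{eq:17} and the upper gradient property. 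Your final step --- letting $t\downarrow0$ along a sequence with $\mm$-a.e.\ convergence and extending from a full-measure set --- is correct and is how the paper produces the Lipschitz version $\tilde f$.
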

\begin{proof} With no loss of generality we can assume, by a truncation argument,
that $f\in L^\infty(X,\mm)$.

Given $f:X\to\R$ Lipschitz and $x,\,y\in \supp\mm$, let $\gamma_s\in
\AC2{[0,1]}{\supp\mm}$ be connecting $x$ to $y$. Given $t>0$, we can
then apply Lemma~\ref{le:sug} with $\mu_s:=\ke {\gamma_s}t$ to get
\begin{eqnarray}\label{eq:intermediate}
|\tilde\heatl_t f(x)-\tilde\heatl_t f(y)|&=&\biggl|\int f\,\d\ke
xt-\int f\,\d\ke yt\biggr|\leq\int_0^1\bigl(\int |\nabla
f|^2\,\d\mu_s\bigr)^{1/2} |\dot\mu_s|\,\d s\\&\leq&
\rme^{-Kt}\int_0^1\bigl(\int |\nabla
f|^2\,\d\mu_s\bigr)^{1/2}|\dot\gamma_s|\,\d s
=\rme^{-Kt}\int_0^1\bigl(\tilde\heatl_t( |\nabla
f|^2)(\gamma_s)\bigr)^{1/2}|\dot\gamma_s|\,\d s\nonumber
\end{eqnarray}
(in the last inequality we used the contractivity property, which
provides the upper bound on $|\dot\mu_s|$). Notice that we can use
the length property of $\supp\mm$ to get, by a limiting argument,
\begin{equation}\label{eq:Alica2}
|\tilde\heatl_tf(x)-\tilde\heatl_t f(y)|\leq \sfd(x,y)\sup\left\{
\rme^{-Kt}\big(\tilde\heatl_t(|\nabla f|^2)\big)^{1/2}(z):\
\sfd(z,x)\leq 2\sfd(x,y)\right\}
\end{equation}
for all $x,\,y\in\supp\mm$. Taking the continuity of
$\tilde\heatl_t|\nabla f|^2$ into account, this implies the
Lipschitz Bakry-Emery estimate
$$
|\nabla\tilde\heatl_t f|^2\leq\rme^{-2Kt}\tilde\heatl_t|\nabla f|^2
\qquad\text{in $\supp\mm$.}
$$
To prove \eqref{eq:bakryemery} for functions $f\in D(\C)$ we
approximate $f$ in the strong $W^{1,2}$ topology by Lipschitz
functions $f_n$ in such a way that $|\nabla f_n|\to\weakgrad{f}$ in
$L^2(X,\mm)$ and use the stability properties of weak upper
gradients.

Now, assume that $L:=\|\weakgrad{f}\|_\infty$ is finite. From
\eqref{eq:bakryemery} we obtain that for all $t>0$ the continuous
function $f_t:=\tilde\heatl_t f$ satisfy
$\|\weakgrad{f_t}\|_\infty\leq L\rme^{-Kt}$. Given
$x,\,y\in\supp\mm$, fix $r>0$ and apply \eqref{eq:8} to find a
geodesic test plan $\ppi$ connecting $\mm(B_r(x))^{-1} \mm\res
B_r(x)$ to $\mm(B_r(y))^{-1} \mm\res B_r(y)$; the weak upper
gradient property then gives
$$
\biggl|\media_{B_r(x)}f_t \,\d\mm-
\media_{B_r(y)}f_t\,\d\mm\biggr|\leq \int
\int_0^1\weakgrad{f_t}(\gamma_s)|\dot\gamma_s|\,\d
s\,\d\ppi(\gamma).
$$
Now, since $(\e_t)_\sharp\ppi\ll\mm$ we can use
\eqref{eq:bakryemery} to estimate the right hand side as follows:
$$
\biggl| \media_{B_r(x)}f_t\,\d\mm-
\media_{B_r(y)}f_t\,\d\mm\biggr|\leq
(2r+\sfd(x,y))\rme^{-Kt}\sup\left\{\bigl(\tilde\heatl_t\weakgrad{f}^2\bigr)^{1/2}(z):\
\sfd(z,x)\leq 2r+\sfd(x,y) \right\}.
$$
We can now let $r\to 0$ to get
$$
\bigl|f_t(x)-f_t(y)|\leq\sfd(x,y)
\rme^{-Kt}\sup\left\{\bigl(\tilde\heatl_t\weakgrad{f}\bigr)^{1/2}(z):\
\sfd(z,x)\leq 2\sfd(x,y)\right\}.
$$
This provides the Lipschitz estimate on $f_t$, the upper gradient
property and \eqref{eq:17}. Finally, choosing a sequence
$(t_i)\downarrow 0$ such that $f_{t_i}\to f$ $\mm$-a.e. we obtain a
set $Y\subset\supp\mm$ of full $\mm$-measure such that $f\vert_{Y}$
is $L$-Lipschitz; $\tilde{f}$ is any $L$-Lipschitz extension of
$f\vert_{Y}$ to $X$.
\end{proof}

\begin{remark}[Weak Bochner inequality] {\rm
Following \emph{verbatim} the proof in
\cite[Theorem~4.6]{GigliKuwadaOhta10}, relative to the Alexandrov
case, one can use the Leibnitz rule of Theorem~\ref{thm:tangente}
and the Bakry-Emery estimate to prove Bochner's inequality (in the
case $N=\infty$)
$$
\frac12\Delta (\nabla f\cdot\nabla f)-\nabla(\Deltam f)\cdot\nabla
f\geq K\nabla f\cdot\nabla f.
$$
in a weak form. Precisely, for all $f\in D(\Deltam)$ with $\Delta
f\in W^{1,2}(X,\sfd,\mm)$ and all $g\in D(\Deltam)$ bounded and
nonnegative, with $\Deltam g\in L^\infty(X,\mm)$, it holds:
$$
\frac12\int_X\Delta g\weakgrad f^2\,\d\mm-\int_X g\nabla(\Deltam
f)\cdot\nabla f\,\d\mm\geq K\int_X g\weakgrad f^2\,\d\mm.
$$}\fr
\end{remark}

\begin{remark}[Lipschitz continuity of $\heatl_tf$ and $\heatw_t(\mu)$]
{\rm If we assume the stronger $L^1\mapsto L^p$ regularization
property
\begin{equation}\label{eq:stronger}
\|\heatl_t f\|_p \leq C(t)\|f\|_1\qquad\forevery f\in
L^2(X,\mm),\,\,t>0
\end{equation}
for some $p>1$, then we can improve \eqref{eq:bakryemery} to a
pointwise inequality as follows:
$$
|\nabla (\tilde\heatl_t f)|^2\leq
\rme^{-2Kt}\tilde\heatl_t(\weakgrad{f}^2)\qquad\text{in $\supp\mm$,
for all $f\in L^2(X,\mm)$.}
$$
Indeed, we can first use \eqref{eq:stronger} to get, by
approximation, $\|\ked xt\|_p\leq C(t)$ for all $x\in\supp\mm$.
Using the Young inequality for linear semigroups, this gives the
implication
$$
\Vert \heatl_t f\Vert_{q^*}\leq C(t)\|f\|_q\qquad\text{whenever
$\frac{1}{q^*}+1\leq\frac{1}{q}+\frac{1}{p}$.}
$$
Then, choosing $N\geq 1$ so large that $p\geq N/(N-1)$, by iterating
this estimate $N$ times the semigroup property yields the
$L^1\mapsto L^\infty$ regularization
\begin{equation}\label{eq:stronger1}
\sup_{\supp\mm}|\tilde\heatl_t f| \leq
(C(t/N))^N\|f\|_1\qquad\forall f\in L^1(X,\mm).
\end{equation}
Now we can apply the first part of Theorem~\ref{thm:bakryemery} to
$u_s:=\heatl_{t-s}f$, whose minimal weak upper gradient is in
$L^\infty(X,\mm)$, to obtain that
$\rme^{-Ks}(\tilde\heatl_s(\weakgrad{{u_s}}^2))^{1/2}$ is an upper
gradient of $\tilde\heatl_tf$ and then pass to the limit as
$s\downarrow 0$ to obtain that
$(\tilde\heatl_t(\weakgrad{f}^2))^{1/2}$ is an upper gradient of
$\tilde\heatl_tf$ on $\supp\mm$. Using the length property as in the
proof of Theorem~\ref{thm:bakryemery}, from this estimate the bound
on the slope of $\tilde\heatl_t f$ follows.

In particular we obtain that $\tilde\heatl_t f$ is Lipschitz on
$\supp\mm$ for all $t>0$. Using again the inequality $\|\ked
xt\|_\infty\leq C(t)$ for all $x\in\supp\mm$ and the semigroup
property we obtain that also $\heatw_t(\mu)$ has a Lipschitz density
for all $\mu\in\probt{X,\mm}$.

The stronger regularizing property \eqref{eq:stronger} is known to
be true, for instance, if doubling and Poincar\'e hold in
$(X,\sfd,\mm)$, see \cite[Corollary~4.2]{Sturm96}. Notice also that
Theorem~\ref{thm:lipreg} below ensures in any case the Lipschitz
regularization, starting from bounded functions.}\fr
\end{remark}

\subsection{Dirichlet form and Brownian
motion}\label{sub8}

In this section we fix a $\rcd K\infty$ space $(X,\sfd,\mm)$.
Recalling that the associated Cheeger's energy is a quadratic form,
we will denote by $\mathcal E$ the associated \emph{Dirichlet} form
as in Section~\ref{sec:quadratic}. In particular $\C$ satisfies all
the properties stated in Theorem~\ref{thm:tangente}.

Notice also that (see for instance
\cite[Theorem~5.2.3]{Fukushima80}) it is not difficult to compute
$[f]$ in terms of $\heatl_t$ or in terms of $\ke xt$ by
\begin{equation}\label{eq:fuku3}
[f]=\lim_{t\downarrow
0}\frac{1}{2t}(f^2+\heatl_tf^2-2f\heatl_tf),\qquad
[f](\varphi)=\lim_{t\downarrow 0}\frac{1}{2t}\int\int
(f(x)-f(y))^2\varphi(y)\,\d\ke xt (y)\,\d\mm(y).
\end{equation}

A direct application of the theory of Dirichlet forms yields the
existence of a Brownian motion in $(X,\sfd,\mm)$ with continuous
sample paths. Continuity of sample paths depends on a locality
property, which in our context holds in a particularly strong form,
see \eqref{eq:strongloc}.

\begin{theorem}[Brownian motion]\label{thm:brownian}
Let $(X,\sfd,\mm)$ be a $\rcd K\infty$ space. There exists a unique
(in law) Markov process $\{{\mathbf X}_t\}_{\{t\geq 0\}}$ in
$(\supp\mm,\sfd)$ with continuous sample paths in $[0,\infty)$ and
transition probabilities $\heatw_t(\delta_x)$, i.e.
\begin{equation}\label{eq:transitionmp}
{\mathbf P}\bigl({\mathbf X}_{s+t}\in A\bigl|{\mathbf
X}_s=x\bigr)=\ke xt(A) \qquad\forall s,\,t\geq 0,\,\,\text{$A$
Borel}
\end{equation}
for $\mm$-a.e. $x\in\supp\mm$.
\end{theorem}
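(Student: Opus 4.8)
The plan is to deduce the existence of the Brownian motion from the general theory of Dirichlet forms, once one checks that the Dirichlet form $\mathcal E$ associated to $\C$ (which is quadratic on an $\rcd K\infty$ space) satisfies the hypotheses needed to associate a continuous $\mm$-symmetric Hunt process whose transition semigroup is $\heatl_t$. Concretely, I would first recall that $\mathcal E$ is a symmetric, closed, Markovian form on $L^2(X,\mm)$ (these are exactly the properties recorded after Proposition~\ref{prop:lipdense}: closedness from $L^2$-lower semicontinuity of $\C$, the Markov property from the chain rule \eqref{eq:chainrule}), that it is \emph{strongly local} by Proposition~\ref{prop:stronglocal}, and that it is \emph{regular} in the sense of Dirichlet form theory, i.e.\ it admits a core of continuous functions with compact support — or, in the non locally compact setting, a \emph{quasi-regular} form in the sense of \cite{Rockner92}. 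Here the density of Lipschitz functions in $W^{1,2}(X,\sfd,\mm)$ from Proposition~\ref{prop:lipdense}, together with the separability of $W^{1,2}(X,\sfd,\mm)$, is exactly what one needs to produce a nest of sets and verify quasi-regularity.

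Given quasi-regularity and strong locality, I would invoke the standard construction (see \cite{Fukushima-Oshima-Takeda11} for the locally compact case and \cite{Rockner92}, or \cite[A.4]{Fukushima-Oshima-Takeda11}, for the general infinite-dimensional situation): there exists an $\mm$-symmetric diffusion process $\{{\mathbf X}_t\}_{t\ge 0}$ with state space $\supp\mm$, properly associated to $\mathcal E$, meaning that for every $f\in L^2(X,\mm)\cap \Cb(\supp\mm)$ the map $x\mapsto {\mathbf E}_x[f({\mathbf X}_t)]$ is a quasi-continuous version of $\heatl_t f$. Strong locality of $\mathcal E$ upgrades this to a process with \emph{continuous} sample paths on $[0,\infty)$ (no jumps and no killing inside $\supp\mm$, the latter because $\mathcal E$ has no killing part, as it is given purely by the energy measure $[f]=\weakgrad f^2\mm$ with no zeroth-order term). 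Finally, to match the transition probabilities with $\heatw_t(\delta_x)$, I would use the identification \eqref{eq:heatlheatw}: for $f\in \Cb(\supp\mm)$ one has $\int f\,\d\ke xt = \tilde\heatl_t f(x)$ for $\mm$-a.e.\ $x$ by Theorem~\ref{thm:facili}(ii), and since Theorem~\ref{thm:facili}(iii) gives that $\tilde\heatl_t f$ is continuous on $\supp\mm$, the quasi-continuous version of $\heatl_t f$ agrees with $\tilde\heatl_t f$ everywhere; hence the transition function of ${\mathbf X}_t$ is exactly $\ke xt$, which proves \eqref{eq:transitionmp}.

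For uniqueness in law, I would argue that any Markov process with continuous paths and transition probabilities $\heatw_t(\delta_x)$ has one-dimensional marginals, and hence (by the Markov property and Chapman--Kolmogorov, Theorem~\ref{thm:facili}(i)) all finite-dimensional marginals, determined by the kernels $\ke xt$; since finite-dimensional distributions together with path continuity determine the law on $\CC{[0,\infty)}{\supp\mm}$, the law is unique. This is the standard uniqueness statement for processes associated to a Dirichlet form and requires no new input beyond the semigroup property and continuity of paths.

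The main obstacle is the \textbf{quasi-regularity} (or regularity) verification: without local compactness one cannot directly exhibit a core of compactly supported continuous functions, so one must instead check Röckner's axioms — existence of an $\mathcal E$-nest of compact sets, density of a countable family of quasi-continuous functions, and $\mathcal E$-quasi-continuity separating points — and this is where the density of Lipschitz functions (Proposition~\ref{prop:lipdense}), the regularization estimate \eqref{eq:ultraEVI} yielding tightness-type bounds on $\ke xt$ (used already in Theorem~\ref{thm:facili}(iii)), and the tightness of sublevels of $\entv$ enter in an essential way. Once quasi-regularity and strong locality are in hand, everything else is a direct citation of the Fukushima/Röckner machinery, and the matching of transition kernels with $\heatw_t$ is immediate from \eqref{eq:heatlheatw} and the continuity statement of Theorem~\ref{thm:facili}.
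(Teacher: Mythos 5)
Your proposal follows essentially the same route as the paper: verify that $\mathcal E$ is a strongly local, quasi-regular Dirichlet form (with the tightness/nest of compact sets being the only nontrivial point in the absence of local compactness), invoke the Fukushima--R\"ockner construction of an associated diffusion with continuous paths, identify the transition kernels with $\heatw_t(\delta_x)$ via \eqref{eq:heatlheatw}, and get uniqueness from the finite-dimensional distributions. The only cosmetic difference is that the paper obtains the tightness property by the distance-function argument of \cite{Ambrosio-Savare-Zambotti09} using $[\sfd(\cdot,x)]\le\mm$, whereas you point to the entropy regularization estimates; both are adequate.
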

\begin{proof} Uniqueness in law is obvious, since all
finite-dimensional distributions are uniquely determined by
\eqref{eq:transitionmp}, \eqref{eq:chapman} and the Markov property.

First, in the case when $(X,\sfd)$ is not locally compact, we prove
a tightness property arguing exactly as in
\cite[Theorem~1.2]{Ambrosio-Savare-Zambotti09},
\cite[Proposition~IV.4.2]{Rockner92} (the construction therein uses
only distance functions and the inequality $[\sfd(\cdot,x)]\leq\mm$)
to prove a tightness property, namely the existence of a
nondecreasing sequence of compact sets $F_n\subset\supp\mm$
satisfying ${\rm cap}_{{\mathcal E}}(\supp\mm\setminus F_n)\to 0$
(here ${\rm cap}_{{\mathcal E}}$ is the capacity associated to
${\mathcal E}$).

Since ${\mathcal E}$ is a strongly local Dirichlet form, and
Lipschitz functions are dense in $D({\cal E})$ for the $W^{1,2}$
norm (Proposition~\ref{prop:lipdense}) we may apply
\cite[Theorem~4.5.3]{Fukushima80} in the locally compact case or
\cite[Theorem~IV.3.5, Theorem~V.1.5]{Rockner92} in the general case
to obtain a Markov family $\{{\mathbf P}_x\}_{x\in\supp\mm}$ of
probability measures in $\mathrm C\bigl([0,\infty);X\bigr)$
satisfying
$$
\tilde\heatl_t f(x)=\int f(\gamma_t)\,\d{\mathbf
P}_x(\gamma)\qquad\text{for all $t\geq 0$, $f\in\Cb(X)$, $x\in
X\setminus N$}
$$
with $\mm(N)=0$. Then we can take the law ${\mathbb P}:=\int
{\mathbb P}_x\,\d\mm(x)$ in $\mathrm C\bigl([0,\infty);X\bigr)$ and
consider the canonical process ${\mathbf X}_t(\gamma)=\gamma(t)$ to
obtain the result.
\end{proof}

As a further step we consider the distance induced by the bilinear
form $\mathcal E$
\begin{equation}\label{eq:fuku2}
\sfd_{{\mathcal E}}(x,y):=\sup\left\{|\tilde g(x)-\tilde g(y)|:\
g\in D({\mathcal E}),\,\,[g]\leq\mm\right\}\qquad\forall (x,y)\in
\supp\mm\times\supp\mm,
\end{equation}
which we identify in Theorem~\ref{thm:idistances} with $\sfd$ (the
function $\tilde{g}$ is the continuous representative in the
Lebesgue class of $g$, see Theorem~\ref{thm:bakryemery}).

\begin{remark}\label{rem:KoskelaZhou}{\rm
In \cite{Koskela-Zhou11} the techniques of
\cite{GigliKuwadaOhta10,Ambrosio-Gigli-Savare11} are applied to a
case slightly different than the one considered here. The starting
point of \cite{Koskela-Zhou11} is a Dirichlet form $\mathcal E$ on a
measure space $(X,\mm)$ and $X$ is endowed with the distance
$\sfd_{\mathcal E}$. Assuming compactness of $(X,\sfd_{\mathcal
E})$, $K$-geodesic convexity of $\entv$ in $\probt{X}$ with cost
function $c=d_{\mathcal E}^2$, doubling, weak $(1,2)$-Poincar\'e
inequality and the validity of the so-called Newtonian property, the
authors prove that the $L^2(X,\mm)$ heat flow induced by $\mathcal
E$ coincides with $\heatw_t$. The authors also analyze some
consequences of this identification, as Bakry-Emery estimates and
the short time asymptotic of the heat kernel (a theme discussed
neither here nor in \cite{GigliKuwadaOhta10}). As a consequence of
\cite[Theorem~5.1]{Koskela-Zhou11} and
\cite[Theorem~9.3]{Ambrosio-Gigli-Savare11} the Dirichlet form
coincides with the Cheeger energy of $(X,\sfd_{\mathcal E},\mm)$
(because their flows coincide). This is a non trivial property,
because as shown in \cite{Sturm97}, a Dirichlet form is not uniquely
determined by its intrinsic distance, see also the next
result.\fr}\end{remark}

\begin{theorem}[Identification of $\sfd_{{\mathcal E}}$ and $\sfd$]\label{thm:idistances}
The function $\sfd_{{\mathcal E}}$ in \eqref{eq:fuku2} coincides
with $\sfd$ on $\supp\mm\times\supp\mm$.
\end{theorem}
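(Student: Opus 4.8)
\emph{Strategy.} The assertion is an equality of two distances on $\supp\mm\times\supp\mm$, so the plan is to prove the two inequalities $\sfd_{{\mathcal E}}\le\sfd$ and $\sfd_{{\mathcal E}}\ge\sfd$ separately. Since $(X,\sfd,\mm)$ is $\rcd K\infty$, the Cheeger energy $\C$ is quadratic and Theorem~\ref{thm:tangente}(iv) applies, giving $[g]=\weakgrad g^2\mm$ for $g\in D(\mathcal E)$. Hence the constraint $[g]\le\mm$ in \eqref{eq:fuku2} is equivalent to $\weakgrad g\le1$ $\mm$-a.e., i.e.\ $\|\weakgrad g\|_{L^\infty(X,\mm)}\le1$. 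This is the bridge between the two sides.

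\emph{The bound $\sfd_{{\mathcal E}}\le\sfd$.} I would invoke the Lipschitz regularization in Theorem~\ref{thm:bakryemery}: if $g\in D(\mathcal E)=W^{1,2}(X,\sfd,\mm)$ satisfies $[g]\le\mm$, then $\weakgrad g\in L^\infty(X,\mm)$ with $\|\weakgrad g\|_\infty\le1$, so $g$ admits a Lipschitz representative $\tilde g\colon X\to\R$ with $\Lip(\tilde g)\le\|\weakgrad g\|_\infty\le1$. Therefore $|\tilde g(x)-\tilde g(y)|\le\sfd(x,y)$ for all $x,\,y\in\supp\mm$, and taking the supremum over admissible $g$ in \eqref{eq:fuku2} yields $\sfd_{{\mathcal E}}(x,y)\le\sfd(x,y)$.

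\emph{The bound $\sfd_{{\mathcal E}}\ge\sfd$.} Fix $x,\,y\in\supp\mm$ and set $g(z):=\min\{\sfd(z,x),\sfd(x,y)\}$. This function is bounded, hence in $L^2(X,\mm)$ because $\mm\in\probt X$, and it is $1$-Lipschitz, so its local Lipschitz constant $|\nabla g|$ is an upper gradient with $|\nabla g|\le1$; thus $\weakgrad g\le1$ $\mm$-a.e., $\C(g)\le\tfrac12<\infty$, so $g\in D(\mathcal E)$ and $[g]=\weakgrad g^2\mm\le\mm$ by Theorem~\ref{thm:tangente}(iv). Since $g$ is continuous it coincides with its Lebesgue representative $\tilde g$, and $|\tilde g(x)-\tilde g(y)|=|g(x)-g(y)|=|0-\sfd(x,y)|=\sfd(x,y)$. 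Hence $g$ is admissible in \eqref{eq:fuku2} and $\sfd_{{\mathcal E}}(x,y)\ge\sfd(x,y)$. Combining the two inequalities gives $\sfd_{{\mathcal E}}=\sfd$ on $\supp\mm\times\supp\mm$.

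\emph{Main obstacle.} Once Theorem~\ref{thm:tangente} and Theorem~\ref{thm:bakryemery} are available there is no deep difficulty: the $\ge$ direction is the standard truncated-distance test function, and the only genuinely nontrivial input is the implication ``$\weakgrad g\le1$ $\mm$-a.e.\ $\Rightarrow$ $g$ has a $1$-Lipschitz version'', which is precisely the $L^\infty$--$\mathrm{Lip}$ Feller regularization extracted from the Bakry-Emery estimate in $\rcd K\infty$ spaces. Any argument avoiding the full Bakry-Emery machinery would still have to reprove this continuity statement, so I would simply cite Theorem~\ref{thm:bakryemery}.
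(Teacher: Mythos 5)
Your proposal is correct and follows essentially the same route as the paper: the lower bound via a distance-type test function with $[g]=\weakgrad{g}^2\mm\le\mm$ (the paper uses $g(z)=\sfd(z,x)$ directly, which already lies in $L^2(X,\mm)$ since $\mm\in\probt X$, so your truncation is a harmless extra precaution), and the upper bound via the Lipschitz representative supplied by Theorem~\ref{thm:bakryemery}. No gaps.
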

\begin{proof} Choosing $g(z)=\sfd(z,x)$, since $[g]=\weakgrad{g}^2\mm\leq\mm$
we obtain immediately that $\sfd_{{\mathcal E}}(x,y)\geq\sfd(x,y)$
on $\supp\mm\times\supp\mm$. In order to prove the converse
inequality we notice that $[g]\leq\mm$ implies, by
Theorem~\ref{thm:bakryemery}, that the continuous representative
$\tilde g$ has Lipschitz constant less than $1$ in $X$, hence
$|\tilde{g}(x)-\tilde{g}(y)|\leq\sfd(x,y)$.
\end{proof}

We conclude this section with an example of application, following
the ideas of \cite{Bakry06}, of the calculus tools developed in
Section~\ref{sec:quadratic} combined with lower Ricci curvature
bounds, in particular with the Bakry-Emery estimate
\eqref{thm:bakryemery}.

\begin{theorem}[Lipschitz regularization]\label{thm:lipreg}
If $f\in L^2(X,\mm)$ then $\heatl_t f\in D(\cE)$ for
    every $t>0$ and
    \begin{equation}
      \label{eq:19}
      2\, \mathrm I_{2K}(t)\weakgrad {\heatl_t f }^2\le \heatl_t
      f^2\quad\mm\text{-a.e.\ in $X$};
    \end{equation}
    in particular,
  if $f\in L^\infty(X,\mm)$ then $\tilde\heatl_t f\in \Lip(\supp\mm)$ for every
  $t>0$ with
  \begin{equation}
    \label{eq:28}
    \sqrt{2\,\mathrm I_{2K}(t)}\,{\rm Lip}(\tilde\heatl_t f)\le
    \|f\|_\infty\quad\forevery t>0.
  \end{equation}
\end{theorem}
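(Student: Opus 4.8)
The plan is to run the classical Bakry–Émery Lipschitz‑regularization argument, exploiting the diffusion (Leibnitz/chain‑rule) calculus available here because $\C$ is a quadratic form (Theorem~\ref{thm:tangente}) together with the Bakry–Émery gradient estimate \eqref{eq:bakryemery}. First note that the assertion $\heatl_tf\in D(\cE)$ for $t>0$ and $f\in L^2(X,\mm)$ is already contained in Theorem~\ref{thm:gfc}, since $\heatl_tf\in D(\Deltam)\subset D(\C)=W^{1,2}(X,\sfd,\mm)=D(\cE)$; and since $\mm\in\probt X$ we have $L^2(X,\mm)\subset L^1(X,\mm)$, so all quantities below lie in $L^1$ and $\tilde\heatl_t$ acts on them.

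For \eqref{eq:19}, fix $t>0$ and assume first $f\in D(\C)\cap L^\infty(X,\mm)$. Write $u_r:=\heatl_rf$ and set, for $s\in[0,t]$, $\Phi(s):=\heatl_s(u_{t-s}^2)$, so $\Phi(0)=(\heatl_tf)^2$ and $\Phi(t)=\heatl_t(f^2)$. Testing against a nonnegative $\varphi\in D(\cE)\cap L^\infty(X,\mm)$, using $\heatl_s$ self‑adjoint, $\tfrac{\d}{\d s}u_{t-s}=-\Deltam u_{t-s}$ in $L^2$, the integration‑by‑parts formula for $\Deltam$, the Leibnitz rule \eqref{eq:leibniz} and the chain rule \eqref{eq:chainfacile} (which together give, weakly, $\Deltam(u^2)=2u\,\Deltam u+2\weakgrad u^2$), a short computation shows that the cross terms cancel and
\[
\frac{\d}{\d s}\int_X\varphi\,\Phi(s)\,\d\mm=2\int_X\varphi\,\heatl_s\big(\weakgrad{u_{t-s}}^2\big)\,\d\mm\qquad\text{for a.e.\ }s\in(0,t).
\]
Since the right‑hand side is bounded by $4\|\varphi\|_\infty\,\C(\heatl_{t-s}f)\le 4\|\varphi\|_\infty\,\C(f)$ (monotonicity of the energy along the flow) and $s\mapsto\int_X\varphi\,\Phi(s)\,\d\mm$ is continuous up to the endpoints (by the $L^1$‑continuity of $\tilde\heatl_s$ and boundedness of $f$), it is Lipschitz on $[0,t]$, so, as an identity in $L^1(X,\mm)$,
\[
\heatl_t(f^2)-(\heatl_tf)^2=2\int_0^t\heatl_s\big(\weakgrad{u_{t-s}}^2\big)\,\d s .
\]
Now apply \eqref{eq:bakryemery} with the semigroup run for time $s$ to $u_{t-s}\in D(\C)$: $\weakgrad{\heatl_tf}^2=\weakgrad{\heatl_s(u_{t-s})}^2\le\rme^{-2Ks}\heatl_s(\weakgrad{u_{t-s}}^2)$ $\mm$‑a.e.; hence $\rme^{2Ks}\weakgrad{\heatl_tf}^2\le\heatl_s(\weakgrad{u_{t-s}}^2)$ for a.e.\ $s$, and integrating over $s\in[0,t]$, using $\int_0^t\rme^{2Ks}\,\d s=\mathrm I_{2K}(t)$, the previous identity and $(\heatl_tf)^2\ge0$, gives $2\,\mathrm I_{2K}(t)\weakgrad{\heatl_tf}^2\le\heatl_t(f^2)$.

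To pass to a general $f\in L^2(X,\mm)$, approximate by $f_n\in W^{1,2}(X,\sfd,\mm)\cap L^\infty(X,\mm)$ with $f_n\to f$ in $L^2$ (this subspace is dense, by truncation via \eqref{eq:chainrule} and Proposition~\ref{prop:lipdense}). Then $\heatl_tf_n\to\heatl_tf$ in $L^2$, and \eqref{eq:19} for $f_n$ together with $\int\tilde\heatl_t(f_n^2)\,\d\mm=\int f_n^2\,\d\mm$ (symmetry of transition probabilities, Theorem~\ref{thm:facili}) bounds $\heatl_tf_n$ in $W^{1,2}$, so $\heatl_tf_n\weakto\heatl_tf$ weakly in $W^{1,2}$; moreover $f_n^2\to f^2$ in $L^1$, so $\tilde\heatl_t(f_n^2)\to\tilde\heatl_t(f^2)$ in $L^1$. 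Testing \eqref{eq:19} for $f_n$ against an arbitrary nonnegative bounded $\varphi$ and passing to the limit — using the $L^2$‑lower semicontinuity of $g\mapsto\int\varphi\weakgrad g^2\,\d\mm$, which follows from the stability of minimal weak upper gradients — yields \eqref{eq:19} for $f$. Finally, if $f\in L^\infty(X,\mm)$ then $f^2\le\|f\|_\infty^2$, so the maximum principle (Proposition~\ref{prop:proprheat}) gives $\tilde\heatl_t(f^2)\le\|f\|_\infty^2$ $\mm$‑a.e., whence \eqref{eq:19} forces $\weakgrad{\heatl_tf}\le\|f\|_\infty/\sqrt{2\,\mathrm I_{2K}(t)}$ $\mm$‑a.e.; by the last statement of Theorem~\ref{thm:bakryemery}, $\tilde\heatl_tf$ has a Lipschitz representative on $\supp\mm$ with $\Lip(\tilde\heatl_tf)\le\|\weakgrad{\heatl_tf}\|_\infty$, i.e.\ \eqref{eq:28}.

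The step I expect to be the real obstacle is the rigorous justification of the differentiation formula for $s\mapsto\int_X\varphi\,\Phi(s)\,\d\mm$ — differentiating under $\heatl_s$ and carrying out the Leibnitz/chain‑rule manipulations with the integrability actually at hand, and controlling $\int_0^t\heatl_s(\weakgrad{u_{t-s}}^2)\,\d s$ near $s=t$ (where $u_{t-s}\notin D(\Deltam)$). Working in the weak, tested formulation and using $\C(\heatl_{t-s}f)\le\C(f)$ is what makes this go through; the subsequent invocation of \eqref{eq:bakryemery}, the approximation argument, and the final Lipschitz bound are routine.
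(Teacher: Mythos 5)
Your proposal is correct and follows essentially the same route as the paper: the interpolation $s\mapsto\int\varphi\,\heatl_s\big((\heatl_{t-s}f)^2\big)\,\d\mm$ (the paper's $G(s)=\int(\heatl_{t-s}f)^2\,\heatl_s\varphi\,\d\mm$, the same quantity by self-adjointness), differentiated via the Leibnitz/energy-measure identity of Theorem~\ref{thm:tangente} so that the cross terms cancel, then estimated with the Bakry--Emery bound \eqref{eq:bakryemery} and integrated in $s$, with the Lipschitz conclusion from Theorem~\ref{thm:bakryemery}. Your explicit approximation from bounded $W^{1,2}\cap L^\infty$ data to general $f\in L^2(X,\mm)$ fills in a step the paper leaves implicit, but it does not change the argument.
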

\begin{proof}
  Let us consider two bounded Lipschitz functions $f,\,\varphi$
  with $\varphi$ nonnegative, and let us set
  \begin{equation}
    \label{eq:27}
    G(s):=\int \big(\heatl_{t-s} f\big)^2\,\heatl_s \varphi\,\d\mm,\quad
    G(0)=\int \big(\heatl_{t} f\big)^2\, \varphi\,\d\mm,\quad
    G(t)=\int f^2\heatl_t\varphi\,\d\mm.
  \end{equation}
  It is easy to check that $G$ is of class $C^1$
  and, evaluating the derivative of $G$, we obtain
  thanks to \eqref{eq:fuku1}
  \begin{align*}
    G'(s)&=-\mathcal E\big((\heatl_{t-s} f)^2,\heatl_s
    \varphi\big)-2\int \heatl_{t-s} f\, \Deltam \heatl_{t-s} f\,
    \heatl_{s} \varphi\,\d\mm \\&= -\mathcal E\big((\heatl_{t-s}
    f)^2,\heatl_s \varphi\big)+2\mathcal E \bigl(\heatl_{t-s} f,
    \heatl_{t-s} f\, \heatl_{s} \varphi\bigr)\,\d\mm=
    2\int
    \weakgrad{(\heatl_{t-s} f)}^2 \heatl_s \varphi\,\d\mm.
  \end{align*}
  Using the fact that $\heatl_t$ is
  selfadjoint and applying the Bakry-Emery estimate \eqref{eq:bakryemery}
  we get
  \begin{align*}
    G'(s)&={\color{blue}2}\int \heatl_s\Big(\weakgrad{(\heatl_{t-s} f)}^2\Big)
    \varphi\,\d\mm \ge 2\,\rme^{2K s}\int \weakgrad{(\heatl_{t}
      f)}^2\,\varphi\,\d\mm
  \end{align*}
  and an integration in time yields
  \begin{equation}
    \label{eq:29}
    \int \biggl(\heatl_t f^2-\big(\heatl_t f\big)^2-{\color{blue}2}\,\mathrm I_{2K}(t)
    \weakgrad{(\heatl_{t} f)}^2\biggr)\varphi\,\d\mm\ge0.
  \end{equation}
  Since $\varphi$ is arbitrary nonnegative, neglecting the term $(\heatl_t f)^2$ we get
  the bound \eqref{eq:19}.
  We can now use
  Theorem~\ref{thm:bakryemery} to obtain \eqref{eq:28}.
\end{proof}

By duality one immediately gets:

\begin{corollary} [$W_1$-$L^1$ regularization]
  For every $x,\,y\in\supp\mm$ and $t>0$ we have
  \begin{equation}
    \label{eq:16}
    \sqrt{\mathrm I_{2K}(t)}\int \Big|\ked xt(z)-\ked yt(z)\Big|\,\d\mm(z)\le
    \sfd(x,y).
  \end{equation}
  More generally, the map $\sfh_t:\mu\mapsto
  {\d\heatw_t(\mu)}/{\d\mm}$ satisfies
  \begin{equation}
    \label{eq:23}
    \sqrt{\mathrm I_{2K}(t)}\,\|\sfh_t\mu-\sfh_t\nu\|_{L^1(X,\mm)}\le
    W_1(\mu,\nu).
  \end{equation}
\end{corollary}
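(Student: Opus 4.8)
The plan is to derive \eqref{eq:23} by a duality argument, transferring the Lipschitz regularization of $\tilde\heatl_t$ established in Theorem~\ref{thm:lipreg} to the densities $\sfh_t\mu$ by means of the relation \eqref{eq:heatlheatw}. The first step I would isolate is the ``adjoint'' identity: for every bounded Borel function $\phi$ on $X$ and every $\mu\in\probt{X,\mm}$,
\begin{equation}\label{eq:dualitypf}
\int_X\phi\,\d\heatw_t(\mu)=\int_X\tilde\heatl_t\phi\,\d\mu .
\end{equation}
When $\mu=\delta_x$ with $x\in\supp\mm$ this is exactly \eqref{eq:l1}; when $\mu=\rho\mm$ with $\rho$ a bounded probability density it follows from \eqref{eq:heatlheatw}, \eqref{eq:l1} and Fubini's theorem (legitimate since $\rho\in L^1(X,\mm)$ and, for each $x$, $\ked xt\ge0$ integrates to $1$ against $\mm$); the general case $\mu\in\probt{X,\mm}$, and then $\mu\in\mathscr P_1(X,\mm)$, are obtained by the same density/approximation arguments already used in Section~\ref{sub7}. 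Recall also that $\tilde\heatl_t\phi$ is pointwise everywhere defined and continuous on $\supp\mm$ for $\phi\in L^\infty(X,\mm)$ by Theorem~\ref{thm:facili}(ii)--(iii), so these manipulations are meaningful.

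Granting \eqref{eq:dualitypf}, the core estimate is short. Fix $t>0$, $\mu,\nu\in\probt{X,\mm}$ and a Borel $\phi$ with $\|\phi\|_\infty\le1$. By Theorem~\ref{thm:lipreg}, $\tilde\heatl_t\phi$ has a Lipschitz representative on $\supp\mm$ with $\Lip(\tilde\heatl_t\phi)\le\bigl(2\,\mathrm I_{2K}(t)\bigr)^{-1/2}$, hence, using \eqref{eq:dualitypf} and the Kantorovich--Rubinstein duality for $W_1$,
\[
\Big|\int_X\phi\,(\sfh_t\mu-\sfh_t\nu)\,\d\mm\Big|=\Big|\int_X\tilde\heatl_t\phi\,\d\mu-\int_X\tilde\heatl_t\phi\,\d\nu\Big|\le \Lip(\tilde\heatl_t\phi)\,W_1(\mu,\nu)\le \frac{W_1(\mu,\nu)}{\sqrt{2\,\mathrm I_{2K}(t)}}.
\]
Taking the supremum over all such $\phi$ and invoking $L^1$--$L^\infty$ duality, $\|g\|_{L^1(X,\mm)}=\sup\{\int_X g\phi\,\d\mm:\ \|\phi\|_\infty\le1\}$, gives \eqref{eq:23} (with in fact the slightly better constant $\sqrt{2\,\mathrm I_{2K}(t)}$, of which we only keep $\sqrt{\mathrm I_{2K}(t)}$). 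Finally, \eqref{eq:16} is the special case $\mu=\delta_x$, $\nu=\delta_y$ for $x,y\in\supp\mm$: there $\heatw_t(\delta_x)=\ked xt\,\mm$, $\heatw_t(\delta_y)=\ked yt\,\mm$ and $W_1(\delta_x,\delta_y)=\sfd(x,y)$, so the same chain of inequalities applies verbatim (the use of \eqref{eq:l1} in place of the Fubini step makes the Dirac case even cleaner than the absolutely continuous one).

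I do not expect a genuine obstacle here: the corollary is essentially a one-page consequence of Theorem~\ref{thm:lipreg}. The only points needing (routine) care are the verification of \eqref{eq:dualitypf} for non--absolutely-continuous measures and for $\mathscr P_1$ data --- handled by Fubini together with the $W_2$-to-$L^1$ Lipschitz continuity of $\mu\mapsto\sfh_t\mu$ that \eqref{eq:23} itself provides on the dense class of measures with bounded density --- and the bookkeeping of the exact constant coming from \eqref{eq:28}.
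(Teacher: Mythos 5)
Your proof is correct and is exactly the argument the paper intends: the corollary is stated with the single line ``By duality one immediately gets'', and the duality in question is precisely your chain (self-adjointness/\eqref{eq:heatlheatw} giving $\int\phi\,\d\heatw_t(\mu)=\int\tilde\heatl_t\phi\,\d\mu$, the Lipschitz bound \eqref{eq:28} from Theorem~\ref{thm:lipreg}, Kantorovich--Rubinstein for $W_1$, and $L^1$--$L^\infty$ duality). Your remark that the argument actually yields the sharper constant $\sqrt{2\,\mathrm I_{2K}(t)}$ is also accurate.
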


\subsection{Stability}

Here we prove that the Riemannian Ricci curvature bounds are stable
w.r.t. $\D$-convergence. Notice that we will prove this by showing
that condition $(iii)$ of Theorem~\ref{thm:mainriem}, namely the
$\EVI$ property, is stable w.r.t. $\D$-convergence.

\begin{theorem}[Stability]
Let $(X_n,\sfd_n,\mm_n)\in\X$, $n\in\N$, be $\rcd K\infty$ spaces.
If
\[
\lim_{n\to\infty}\D\big((X_n,\sfd_n,\mm_n),(X,\sfd,\mm)\big)=0,
\]
then $(X,\sfd,\mm)$ is a $\rcd K\infty$ space as well.
\end{theorem}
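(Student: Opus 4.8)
The plan is to show that the limit $(X,\sfd,\mm)$ satisfies condition $(iii)$ of Theorem~\ref{thm:mainriem} --- the characterization that behaves well under $\D$-convergence --- by exploiting the general stability of $\EVI_K$ gradient flows; note that since Lemma~\ref{le:EVISCD} derives the strong $CD(K,\infty)$ property from $(iii)$, we never need to know that strong $CD(K,\infty)$ is itself stable. First I would realize the $\D$-convergence concretely: by the Proposition of Section~\ref{sub3} there are a complete separable metric space $(Y,\sfd_Y)$ and isometric embeddings $f_n\colon\supp\mm_n\to Y$, $f\colon\supp\mm\to Y$ with $W_2((f_n)_\sharp\mm_n,f_\sharp\mm)\to0$; identifying each space with its image we may assume $\mm_n,\mm\in\probt Y$, $W_2(\mm_n,\mm)\to0$, and that the optimal couplings $\ggamma_n$ become elements of $\prob{Y\times Y}$ with marginals $\mm,\mm_n$ and $\int\sfd_Y^2\,\d\ggamma_n=\D^2\to0$. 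In particular $\{\mm_n\}$ is $W_2$-precompact, hence uniformly $2$-integrable and tight, and $\supp\mm_n\to\supp\mm$. Since each $(X_n,\sfd_n,\mm_n)$ is in particular a $CD(K,\infty)$ space, stability of $CD(K,\infty)$ under $\D$-convergence (\cite{Sturm06I}) gives that $(X,\sfd,\mm)$ is $CD(K,\infty)$, hence a length space: this is the first requirement in $(iii)$.

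For the second requirement, by Proposition~\ref{prop:evipropr}$(iii)$ it suffices to produce an $\EVI_K$ gradient flow of $\entv$ starting from every $\mu$ in a $W_2$-dense subset of $\probt{X,\mm}$, and I would take the set of $\mu=\rho\mm$ with $\rho$ bounded, say $\rho\le M$ (density follows by replacing $\mu$ on small balls by renormalized restrictions of $\mm$). Fix such a $\mu$ and set $\mu_n:=(\ggamma_n)_\sharp\mu\in\probt{X_n,\mm_n}$. By Proposition~\ref{prop:basegammasharp} we have $\mu_n\le M\mm_n$ and $W_2^2(\mu_n,\mu)\le M\int\sfd_Y^2\,\d\ggamma_n\to0$, while Proposition~\ref{prop:basegammapf}$(i)$ (with $(\ggamma_n)_\sharp\mm=\mm_n$) gives $\entr{\mu_n}{\mm_n}\le\entr\mu\mm$; combined with the joint narrow lower semicontinuity of the relative entropy this yields $\entr{\mu_n}{\mm_n}\to\entr\mu\mm$. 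Let $\mu^n_t$ denote the $\EVI_K$ gradient flow of $\entr\cdot{\mm_n}$ starting from $\mu_n$, which exists because $(X_n,\sfd_n,\mm_n)$ is $\rcd K\infty$. The identification Theorem~\ref{thm:heatgf} and the maximum principle Proposition~\ref{prop:proprheat}$(i)$ show that $\mu^n_t=\rho_{n,t}\mm_n$ with $0\le\rho_{n,t}\le M$ for all $t\ge0$ and all $n$.

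This uniform $L^\infty$ bound is the crux. Since $\rho_{n,t}\le M$ and $\{\mm_n\}$ is $W_2$-precompact, the family $\{\mu^n_t:n\in\N,\,t\in[0,T]\}$ is uniformly $2$-integrable and tight, hence $W_2$-relatively compact; moreover $\entr{\mu^n_t}{\mm_n}$ is bounded (nonincreasing in $t$, bounded at $t=0$), \eqref{eq:ultraEVI} applied with $z=\mm_n$ bounds $|\nabla^-\entr\cdot{\mm_n}|(\mu^n_t)=|\dot\mu^n_t|$ uniformly for $t\ge\eps>0$ (so the curves are equi-Lipschitz on $[\eps,T]$), and \eqref{eq:eviint} with $z=\mu_n$, $s=0$ gives $W_2^2(\mu^n_t,\mu_n)\le 2\rme^{-Kt}\mathrm I_K(t)\,\entr{\mu_n}{\mm_n}$, a uniform modulus of continuity at $t=0$. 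By the refined Ascoli--Arzel\`a theorem \cite[Prop.~3.3.1]{Ambrosio-Gigli-Savare08} (as in Lemma~\ref{le:EVISCD}), a subsequence satisfies $\mu^n_t\to\mu_t$ in $W_2$ for every $t\ge0$, uniformly on $[0,T]$, with $\mu_0=\mu$, $t\mapsto\mu_t$ locally Lipschitz on $(0,\infty)$, $\mu_t\le M\mm$ (weak$^*$ limit of $\rho_{n,t}\mm_n\le M\mm_n$), so $\mu_t\in\probt{X,\mm}$, and $\entr{\mu_t}\mm\le\liminf_n\entr{\mu^n_t}{\mm_n}<\infty$. Finally, to pass $\EVI$ to the limit, fix $z=\sigma\mm$ with $\sigma$ bounded and put $z_n:=(\ggamma_n)_\sharp z$ (so $z_n\to z$ in $W_2$ and $\entr{z_n}{\mm_n}\to\entr z\mm$ as above); writing \eqref{eq:eviint} for $\mu^n_\cdot$ and $z_n$ as
\[
\frac{\rme^{K(t-s)}}2W_2^2(\mu^n_t,z_n)+\mathrm I_K(t-s)\,\entr{\mu^n_t}{\mm_n}\le\frac12W_2^2(\mu^n_s,z_n)+\mathrm I_K(t-s)\,\entr{z_n}{\mm_n},
\]
the left-hand side is lower semicontinuous along the subsequence (using $W_2$-convergence at time $t$, joint narrow lower semicontinuity of the entropy, and $\mathrm I_K(t-s)>0$) and the right-hand side converges, so the inequality passes to $\mu_\cdot$ and $z$. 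Since such $z$ are dense in energy in $\overline{D(\entv)}$ (truncation), Proposition~\ref{prop:eviequiv} shows that $(\mu_t)$ is an $\EVI_K$ gradient flow of $\entv$ starting from $\mu$; together with the length property this gives condition $(iii)$ of Theorem~\ref{thm:mainriem}, so $(X,\sfd,\mm)$ is $\rcd K\infty$.

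I expect the main obstacle to be exactly this uniform compactness/equicontinuity package for the curves $\mu^n_\cdot$ --- and above all upgrading narrow convergence to $W_2$-convergence, which is what makes the passage to the limit in \eqref{eq:eviint} legitimate --- a point secured here precisely by the uniform $L^\infty$ density bound coming from the maximum principle together with the $2$-uniform integrability of the $\mm_n$'s. A secondary but essential ingredient is the joint narrow lower semicontinuity of the relative entropy and the recovery-sequence bound $\entr{\mu_n}{\mm_n}\le\entr\mu\mm$; the latter is obtained for free via push-forward through a plan (Proposition~\ref{prop:basegammapf}) rather than by an independent $\Gamma$-limsup construction, and both facts are standard (see \cite[\S4]{Sturm06I}).
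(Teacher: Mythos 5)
Your proposal is correct and follows essentially the same route as the paper: both pass to the limit in condition $(iii)$ of Theorem~\ref{thm:mainriem} by pushing the initial datum forward through the optimal couplings $\ggamma_n$, running the $\EVI_K$ flows in the approximating spaces, using the maximum principle to get the uniform $L^\infty$ density bound that yields $W_2$-compactness, extracting a limit curve by (refined) Ascoli--Arzel\`a, and passing to the limit in the integral form \eqref{eq:eviint} against reference measures with bounded density. The only deviations are cosmetic — you work in a common embedding space $Y$ where the paper pulls the flows back to $X$ via $(\ggamma_n)^{-1}_\sharp$, and you derive equicontinuity from \eqref{eq:ultraEVI}/\eqref{eq:eviint} where the paper uses the energy dissipation identity \eqref{eq:ede} — and your explicit verification of the length property of the limit is if anything slightly more careful than the paper's.
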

\begin{proof}
We pass to the limit in  $(iii)$ of Theorem~\ref{thm:mainriem}. By
Proposition~\ref{prop:evipropr} it is sufficient to prove that for
any measure $\mu=\rho\mm$ with $\rho\in L^\infty(X,\mm)$, there
exists a continuous curve $(\mu_t)$ on $[0,\infty)$ starting from
$\mu$ which is locally absolutely continuous on $(0,\infty)$ and
satisfies
\begin{equation}
\label{eq:claimstab} \frac{\rme^{K(s-t)}}2 W_2^2(\mu_s,\nu)-\frac
12W_2^2(\mu_t,\nu)+ \mathrm I_K(s-t)\entr{\mu_s}{\mm} \leq \mathrm
I_K(s-t)\entr\nu{\mm}\qquad\forall t\leq s,
\end{equation}
for any $\nu\in\probt{X}$ with bounded density. Let
$C:=\|\rho\|_{\infty}$, choose optimal couplings $({\bf
d}_n,\ggamma_n)\in\opt{(\sfd,\mm)}{(\sfd_n,\mm_n)}$ and define
$\mu^n:=(\ggamma_n)_\sharp\mu\in\probt{X_n}$. Since
$(X_n,\sfd_n,\mm_n)$ is a $\rcd K\infty$ space, we know that there
exists a curve $t\mapsto\mu^n_t\in\probt{X_n}$ starting from $\mu^n$
such that
\begin{equation}
\label{eq:eviappr} \frac{\rme^{K(s-t)}}2 W_2^2(\mu^n_s,\nu^n)-\frac
12W_2^2(\mu^n_t,\nu^n)+ \mathrm I_K(s-t)\,\entr{\mu^n_s}{\mm} \leq
\mathrm I_K(s-t)\,\entr{\nu^n}{\mm}\qquad\forall t\leq s,
\end{equation}
where $\nu^n:=(\ggamma_n)_\sharp\nu$. By the maximum principle
(Proposition \ref{prop:proprheat}) we get $\mu^n_t\leq C\mm_n$ for
any $n,\,t$. Also, the energy dissipation equality \eqref{eq:ede}
yields that
\begin{equation}
\label{eq:equicont} \frac12\int_t^s|\dot{\mu^n_r}|^2dr\leq
\entr{\mu^n}{\mm^n}\leq C\log C,
\end{equation}
so that the curves $(\mu^n_t)$ are equi-absolutely continuous.

Now, define
$\tilde\mu^n_t:=(\ggamma_n)^{-1}_\sharp\mu^n_t\in\probt{X}$ for any
$n,\,t$ and notice that by $(i)$ of
Proposition~\ref{prop:basegammasharp} we have $\tilde\mu^n_t\leq
C\mm$ for any $n,\,t$.

We claim that the set of measures in $\probt X$ which are absolutely
continuous w.r.t. $\mm$ and with density bounded bounded above by
$C$ is compact w.r.t. $W_2$. Indeed the measure $\mm$ is tight and,
since it has finite second moment, also 2-uniformly integrable. Thus
the same is true for the set of measures less than $C\mm$, which is
therefore compact (see \cite{Ambrosio-Gigli-Savare08} Section 5.1
for the relevant definitions and properties).

By a diagonal argument we obtain a subsequence $n_k\uparrow\infty$
such that $\tilde\mu^{n_k}_t\to\mu_t$ in $(\probt X,W_2)$ as
$k\to\infty$ for any $t\in \Q\cap[0,\infty)$ and some
$\mu_t\in\probt X$. The equi-absolute continuity of the
$(\mu^n_t)$'s granted by \eqref{eq:equicont}, the uniform bound on
the densities and the equicontinuity of $(\ggamma_n)^{-1}_\sharp$
($(ii)$ of Proposition~\ref{prop:basegammasharp}) grant that there
is convergence for all times to a limit curve $(\mu_t)\subset\probt
X$ which is absolutely continuous as well.

To conclude, notice that by \eqref{eq:boundlimitato} we have
$W_2(\mu^{n_k}_t,\nu^{n_k})\to W_2(\mu_t,\nu)$ for any
$t\in[0,\infty)$, that lower semicontinuity and marginal
monotonicity of the entropy yield
\[
\entr{\mu_t}\mm\leq\liminf_{k\to\infty}\entr{\tilde\mu^{n_k}_t}\mm\leq\entr{\mu^{n_k}_t}{\mm_{n_k}}
\]
and $\entr{\nu^n}{\mm_n}\leq\entr\nu\mm$. Thus, we can pass to the
limit in \eqref{eq:eviappr} to get \eqref{eq:claimstab}.
\end{proof}

We remark that it looks much harder to pass to the limit in $(ii)$
of Theorem~\ref{thm:mainriem}, because in general we gain no
information about convergence of Cheeger's energies by the
$\D$-convergence of the spaces. To see why, just observe that in
\cite{Sturm06I} it has been proved that any space
$(X,\sfd,\mm)\in\X$ can be $\D$-approximated by a sequence of finite
spaces and that in these spaces Cheeger's energy is trivially null.

\subsection{Tensorization}

In this section we shall prove the following tensorization property
of $\rcd K\infty$ spaces:

\begin{theorem}[Tensorization]\label{thm:tensor}
Let $(X,\sfd_X,\mm_X)$, $(Y,\sfd_Y,\mm_Y)\in\X$ and define the
product space $(Z,\sfd,\mm)\in\X$ as $Z:=X\times Y$,
$\mm:=\mm_X\times\mm_Y$ and
\[
\sfd\big((x,y),(x',y')\big):=\sqrt{\sfd_X^2(x,x')+\sfd_Y^2(y,y')}.
\]
Assume that both $(X,\sfd_X,\mm_X)$ and $(Y,\sfd_Y,\mm_Y)$ are $\rcd
K\infty$ and nonbranching. Then $(Z,\sfd,\mm)$ is $\rcd K\infty$ and
non branching as well.
\end{theorem}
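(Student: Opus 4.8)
The plan is to verify condition $(ii)$ of Theorem~\ref{thm:mainriem} for the product, namely that $(Z,\sfd,\mm)$ is a strong $CD(K,\infty)$ space whose Cheeger energy $\C_Z$ is a quadratic form on $L^2(Z,\mm)$; together with the (elementary) nonbranching of $Z$ this yields the claim. First I would dispose of the two cheap ingredients. Nonbranching of $Z$ is immediate: a constant speed geodesic in $Z$ is a pair $(\gamma^X,\gamma^Y)$ with $\gamma^X\in\geo(X)$, $\gamma^Y\in\geo(Y)$, where a constant speed geodesic that is constant on a subinterval is constant on all of $[0,1]$; hence two geodesics of $Z$ agreeing on some $[0,s]$ have projections on $X$ and on $Y$ agreeing on $[0,s]$, so by nonbranching of the factors the projections coincide and thus the two geodesics of $Z$ coincide. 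Since $X$, $Y$ are nonbranching $CD(K,\infty)$ spaces, Sturm's tensorization theorem \cite[Theorem~4.17]{Sturm06I} gives that $Z$ is $CD(K,\infty)$, and then Remark~\ref{re:nonbr} (a nonbranching $CD(K,\infty)$ space is strong $CD(K,\infty)$) shows that $Z$ is a strong $CD(K,\infty)$ space. This is exactly the point where the nonbranching hypothesis on the factors is used.

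The heart of the matter is then the quadraticity of $\C_Z$, which I would derive from a tensorization identity for minimal weak upper gradients: writing $f^y:=f(\cdot,y)$ and $f_x:=f(x,\cdot)$, for every $f\in W^{1,2}(Z,\sfd,\mm)$ the slices $f^y$, $f_x$ are Sobolev along almost every curve in $X$, resp.\ $Y$, for $\mm$-a.e.\ point, and
\[
\weakgrado{f}{Z}^2(x,y)=\weakgrado{f^y}{X}^2(x)+\weakgrado{f_x}{Y}^2(y)\qquad\text{$\mm$-a.e.\ in }Z.
\]
For the inequality ``$\le$'' one builds an upper gradient of $f$ on $Z$ out of the slice gradients: along $\gamma=(\gamma^X,\gamma^Y)\in\AC2{[0,1]}Z$ one has $|\dot\gamma_t|^2=|\dot\gamma^X_t|^2+|\dot\gamma^Y_t|^2$, and bounding the variation of $t\mapsto f(\gamma^X_t,\gamma^Y_t)$ through the restriction inequality \eqref{eq:restrizione} on each factor together with Cauchy--Schwarz shows that $\sqrt{\weakgrado{f^y}{X}(x)^2+\weakgrado{f_x}{Y}(y)^2}$, as a function of $(x,y)$, is a weak upper gradient on $Z$ (the needed Sobolev regularity of the slices and of $t\mapsto f(\gamma_t)$ being a Fubini consequence of $f\in W^{1,2}(Z)$). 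For ``$\ge$'' one lifts test plans from the factors: given $\ppi_X\in\calT$ on $X$ and a density $\theta\in L^\infty(Y,\mm_Y)$, the image of $\ppi_X\otimes(\theta\mm_Y)$ under $(\gamma^X,y)\mapsto(t\mapsto(\gamma^X_t,y))$ is a test plan in $Z$; feeding it into the weak upper gradient property of $\weakgrado{f}{Z}$, using Fubini and letting $\ppi_X$, $\theta$ vary gives $\weakgrado{f^y}{X}\le\weakgrado{f}{Z}(\cdot,y)$ $\mm$-a.e., and symmetrically in $Y$. Granted the full identity, since $\C_X$ and $\C_Y$ are quadratic, \eqref{eq:cosimeglio} of Theorem~\ref{thm:tangente} gives that $g\mapsto\weakgrado g{X}^2$ and $h\mapsto\weakgrado h{Y}^2$ satisfy the pointwise parallelogram law a.e.\ in the factors; summing the two slice identities shows $f\mapsto\weakgrado f{Z}^2$ satisfies it $\mm$-a.e., so $\C_Z$ is a quadratic form. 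Condition $(ii)$ of Theorem~\ref{thm:mainriem} is thus verified and $(Z,\sfd,\mm)$ is $\rcd K\infty$.

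The main obstacle is the sharp lower bound in the weak upper gradient tensorization, i.e.\ the Pythagorean orthogonality of the $X$- and $Y$-directions: the split--test-plan argument only delivers $\weakgrado{f^y}{X},\weakgrado{f_x}{Y}\le\weakgrado{f}{Z}$, and upgrading this to $\weakgrado{f}{Z}^2\ge\weakgrado{f^y}{X}^2+\weakgrado{f_x}{Y}^2$ requires a genuinely two-dimensional input. I expect this to be obtained by exploiting that the ``$\le$'' half already gives $\C_Z$ one of the two parallelogram inequalities, and combining it with the locality and chain rules \eqref{eq:weaklocal}--\eqref{eq:chainrule} and an approximation of curves in $Z$ by concatenations of pieces moving in only one factor at a time, or equivalently a carr\'e-du-champ computation $\nabla_Xf\cdot\nabla_Yf=0$ using the calculus of Section~\ref{sec:quadratic}. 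Everything else is either elementary or a quotation of earlier results.
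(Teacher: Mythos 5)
Your overall architecture is the right one (verify condition $(ii)$ of Theorem~\ref{thm:mainriem}: Sturm's tensorization plus nonbranching gives the strong $CD(K,\infty)$ part, and the real work is the identity $\weakgrad{f}^2(x,y)=\weakgrad{f^y}^2(x)+\weakgrad{f^x}^2(y)$), and your treatment of the nonbranching of $Z$ and of the strong $CD(K,\infty)$ property matches the paper. However, both halves of the key identity are gapped. For the inequality ``$\le$'', your plan to apply the restriction inequality \eqref{eq:restrizione} to each slice and conclude by Cauchy--Schwarz does not go through: the weak upper gradient of $f^y$ controls the variation of $f$ only along curves that move in $X$ with $y$ \emph{frozen}, and only for $\mm_Y$-a.e.\ $y$, whereas along a curve $\gamma=(\gamma^X,\gamma^Y)$ in $Z$ both coordinates move simultaneously; there is no Fubini argument that lets you split the increment of $t\mapsto f(\gamma^X_t,\gamma^Y_t)$ into two one-dimensional increments controlled by the slice gradients. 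The paper needs two nontrivial steps here: first a slope-level inequality for \emph{Lipschitz} $f$ (Lemma~\ref{lem:Levico1}, itself resting on a delicate one-dimensional lemma about distributional derivatives from \cite{Ambrosio-Gigli-Savare08}), and then --- crucially --- the Bakry--Emery estimate of Theorem~\ref{thm:bakryemery} in each factor, applied to the product heat semigroup, to replace the slopes of the slices by their minimal weak upper gradients (Lemma~\ref{lem:Levico2}). This is a second, independent place where the $\rcd K\infty$ hypothesis on the factors is used, which your proposal misses.

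For the inequality ``$\ge$'' you candidly state that the split--test-plan argument only yields $\max\{\weakgrad{f^y}(x),\weakgrad{f^x}(y)\}\le\weakgrad{f}(x,y)$ and that a genuinely two-dimensional input is needed, but what you offer in its place (one of the parallelogram inequalities, concatenation of coordinate-wise curves, or an orthogonality relation $\nabla_Xf\cdot\nabla_Yf=0$) is speculation rather than a proof, and none of it is how the obstruction is actually overcome. The paper's resolution (Proposition~\ref{prop:Levico1}) is an energy-dissipation argument: one proves an \emph{improved} Hamilton--Jacobi subsolution property $\frac{\d^+}{\dt}Q_tg+\tfrac12\cartgrad{Q_tg}^2\le 0$ for the Hopf--Lax semigroup on the product (Lemma~\ref{lem:hlimproved}, exploiting that minimizing sequences for $Q_tg$ project onto minimizing sequences for the iterated one-variable problems), feeds it into a Kuwada-type lemma bounding the Wasserstein speed of the gradient flow of the cartesian energy $\C^c$ by the cartesian Fisher information (Lemma~\ref{le:keyimproved}), and then runs the entropy-dissipation comparison of \cite[Theorem~6.2]{Ambrosio-Gigli-Savare11} to conclude $\C^c(f)\le\C(f)$. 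Without this (or an equivalent substitute) your proof of the quadraticity of $\C_Z$, and hence of the theorem, is incomplete.
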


The proof of this result is not elementary. Before turning to the
details, we comment on the statement of the theorem: the non
branching assumption is needed in particular because, up to now, it
is not known whether the $CD(K,\infty)$ tensorizes or not: what is
known is that the product of two \emph{nonbranching} $CD(K,\infty)$
spaces is $CD(K,\infty)$ \cite[Proposition~4.16]{Sturm06I}. Thus,
the result follows combining this tensorization property with
another tensorization property at the level of Cheeger's energies,
proved in Theorem~\ref{thm:tensorquadratic}, that ensures that
Cheeger's energy in $Z$ is a quadratic form. Finally we use the
nonbranching assumption once more to show that $(Z,\sfd)$ is
nonbranching as well and therefore strong $CD(K,\infty)$ holds.

Throughout this section we assume that the base spaces
$(X,\sfd_X,\mm_X)$, $(Y,\sfd_Y,\mm_Y)$ are $\rcd K\infty$, even
though for the proof some intermediate results suffice weaker
assumptions.

Keeping the notation of Theorem~\ref{thm:tensor} in mind, given
$f:Z\to\R$ we shall denote $f^x$ the function $f(x,\cdot)$ and by
$f^y$ the function $f(\cdot,y)$. Having in mind Beppo-Levi's
pioneering paper \cite{BeppoLevi}, we denote by
$BL^{1,2}(Z,\sfd,\mm)$ the space of functions $f\in L^2(Z,\mm)$
satisfying:
\begin{itemize}
\item[(a)] $f^x\in D(\C^Y)$ for $\mm_X$-a.e.~$x\in X$ and $f^y\in
D(\C^X)$ for $\mm_Y$-a.e.~$y\in X$.
\item[(b)] $\weakgrad{f^y}^2(x)\in L^1(Z,\mm)$ and $\weakgrad{f^x}^2(y)\in L^1(Z,\mm)$.
\end{itemize}
For any $f\in BL^{1,2}(Z,\sfd,\mm)$ the \emph{cartesian} gradient
$$
\cartgrad{f}(x,y):=\sqrt{\weakgrad{f^y}^2(x)+\weakgrad{f^x}^2(y)}
$$
is well defined and belongs to $L^2(Z,\mm)$.

Accordingly, we shall denote by $\C^c:L^2(Z,\mm)\to [0,\infty)$ the
quadratic form associated to $\cartgrad{f}$, namely
$$
\C^c (f):= \int\C^X(f^y)\,\d\mm(y)+\int\C^Y(f^x)\,\d\mm(x)=
\frac12\int \cartgrad{f}^2(x,y)\,\d\mm(x,y),
$$
if $f\in BL^{1,2}(Z,\sfd,\mm)$, $+\infty$ otherwise. It is not hard
to show that the two terms which define $\C^c$ are
$L^2(Z,\mm)$-lower semicontinuous, which implies in particular that
$\C^c$ is lower semicontinuous: indeed, considering for instance
$\int\C^Y(f^x)\,\d\mm(x)$, suffices to check the lower
semicontinuity on (fastly) converging sequences satisfying
$\sum_n\|f_n-f\|_2^2<\infty$. By Fubini's theorem these sequences
satisfy $\sum_n\|f_n^x-f^x\|_2^2\,\d\mm(x)<\infty$, so that
$f^n_x\to f^x$ in $L^2(Y,\mm_Y)$ for $\mm$-a.e.~$x\in X$; then, the
lower semicontinuity of Cheeger's functional $\C^Y$ in the base
space $Y$ and Fatou's lemma provide the lower semicontinuity (the
same argument applies to $\int\C^X(f^y)\,\d\mm(y)$).

\begin{lemma}\label{lem:Levico1}
If $f\in {\rm Lip}(Z)$ then $\weakgrad{f}\leq\sqrt{|\nabla
f^x|^2+|\nabla f^y|^2}$ $\mm$-a.e.~in $Z$. In particular
\begin{equation}\label{eq:Levico6}
\weakgrad{f}\leq \sqrt{g_1^2+g_2^2}\qquad\text{$\mm$-a.e.~in $Z$}
\end{equation}
whenever $g_1,\,g_2:Z\to\R$ are bounded Borel functions such that
$g_1(x,\cdot)$ is a upper semicontinuous upper gradient of $f^x$ and
$g_2(\cdot,y)$ is a upper semicontinuous upper gradient of $f^y$.
\end{lemma}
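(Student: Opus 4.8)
The plan is to prove the stated pointwise inequalities by exhibiting, in each case, a \emph{weak} upper gradient of $f$ on $Z$ smaller than or equal to the right--hand side; the $\mm$-a.e.\ minimality of $\weakgrad f$ then gives the conclusion. Since $f$ is Lipschitz it is Sobolev along a.e.\ curve of $Z$, and each slice $f^x$ (a function on $Y$) and $f^y$ (a function on $X$) is Lipschitz, so all the quantities involved are finite. Two elementary facts will be used repeatedly: any $\mm$-measurable function dominating an upper gradient is again an upper gradient; and for the $\ell^2$-product distance the metric speed of $\gamma=(\alpha,\beta)\in\AC2{[0,1]}Z$ splits as $|\dot\gamma_t|^2=|\dot\alpha_t|^2+|\dot\beta_t|^2$ for a.e.\ $t$ — this is immediate from $\sfd(\gamma_{t+h},\gamma_t)^2=\sfd_X(\alpha_{t+h},\alpha_t)^2+\sfd_Y(\beta_{t+h},\beta_t)^2$ and the definition of metric speed, noting that $\alpha,\beta$ are absolutely continuous.

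I would first reduce the general statement \eqref{eq:Levico6} to the case in which $g_1,g_2$ are \emph{continuous} on $Z$. Given bounded $g_1,g_2$ as in the statement, set $g_i^{(n)}(z):=\sup_{w\in Z}\bigl(g_i(w)-n\,\sfd(z,w)\bigr)$: these functions are $n$-Lipschitz, they dominate $g_i$ pointwise — so $g_1^{(n)}(x,\cdot)$ is still an upper gradient of $f^x$ and $g_2^{(n)}(\cdot,y)$ of $f^y$ — and $g_i^{(n)}\downarrow g_i$ pointwise precisely because $g_i$ is already upper semicontinuous. If the continuous case yields $\weakgrad f\le\sqrt{(g_1^{(n)})^2+(g_2^{(m)})^2}$ $\mm$-a.e.\ for all $n,m$, then letting $n,m\to\infty$ gives $\weakgrad f\le\sqrt{g_1^2+g_2^2}$ $\mm$-a.e.

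The core is the continuous case. Fix a test plan $\ppi$ on $Z$ and, for $\ppi$-a.e.\ $\gamma=(\alpha,\beta)$, put $v(t):=f(\gamma_t)$, which is absolutely continuous. The key estimate is $|v'(t)|\le g_1(\gamma_t)|\dot\beta_t|+g_2(\gamma_t)|\dot\alpha_t|$ for a.e.\ $t$. To obtain it, at a.e.\ $t_0$ I write the telescoping
\[
v(t_0+h)-v(t_0)=\bigl[f(\alpha_{t_0},\beta_{t_0+h})-f(\alpha_{t_0},\beta_{t_0})\bigr]+\bigl[f(\alpha_{t_0+h},\beta_{t_0+h})-f(\alpha_{t_0},\beta_{t_0+h})\bigr],
\]
bound the first bracket by $\int_{t_0}^{t_0+h}g_1(\alpha_{t_0},\beta_r)|\dot\beta_r|\,\d r$ via the upper gradient property of $g_1(\alpha_{t_0},\cdot)$ along the curve $\beta|_{[t_0,t_0+h]}$ in $Y$, and the second bracket by $\int_{t_0}^{t_0+h}g_2(\alpha_r,\beta_{t_0+h})|\dot\alpha_r|\,\d r$ via the upper gradient property of $g_2(\cdot,\beta_{t_0+h})$ along $\alpha|_{[t_0,t_0+h]}$ in $X$. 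Dividing by $h$, letting $h\to0$ and using the \emph{joint} continuity of $g_1,g_2$ together with $t_0$ being a Lebesgue point of $r\mapsto|\dot\alpha_r|$ and $r\mapsto|\dot\beta_r|$, the two terms tend to $g_1(\gamma_{t_0})|\dot\beta_{t_0}|$ and $g_2(\gamma_{t_0})|\dot\alpha_{t_0}|$. Integrating $|v'|$ over $[0,1]$ and applying Cauchy--Schwarz with the speed splitting gives
\[
|v(1)-v(0)|\le\int_0^1\bigl(g_1(\gamma_t)|\dot\beta_t|+g_2(\gamma_t)|\dot\alpha_t|\bigr)\,\d t\le\int_0^1\sqrt{g_1^2+g_2^2}(\gamma_t)\,|\dot\gamma_t|\,\d t,
\]
which is exactly the weak upper gradient inequality for $\sqrt{g_1^2+g_2^2}$; minimality of $\weakgrad f$ concludes. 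Finally, the first assertion is the special case $g_1=|\nabla f^x|$, $g_2=|\nabla f^y|$, the slopes of the Lipschitz slices being upper gradients of $f^x$, $f^y$; the per-slice upper semicontinuity needed to invoke \eqref{eq:Levico6} is supplied by the same sup-convolution reduction (approximating these slopes from above by Lipschitz-in-$Z$ functions).

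The main obstacle is the limit in the ``cross'' term $\int_{t_0}^{t_0+h}g_2(\alpha_r,\beta_{t_0+h})|\dot\alpha_r|\,\d r$: here \emph{both} coordinates of the integrand move with $h$ (the $Y$-level drifts to $\beta_{t_0+h}$, the $X$-point sweeps $\alpha_r$), so no reordering of the two telescoping steps can avoid needing $g_2$ to be jointly continuous — whence the necessity of the preliminary reduction from merely upper semicontinuous to continuous data. Arranging the telescoping so that each of the two steps uses an upper gradient in the correct base space while keeping this passage to the limit legitimate is the delicate point of the argument.
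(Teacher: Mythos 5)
Your proof of \eqref{eq:Levico6} is correct, and it takes a genuinely different route from the paper's. You prove the inequality directly for \emph{continuous} $g_1,\,g_2$ via the telescoping decomposition (handling the cross term by joint continuity) and then relax to upper semicontinuous data by sup-convolution from above; the paper instead proves the first, slope inequality directly and then deduces \eqref{eq:Levico6} from the elementary remark that an upper semicontinuous upper gradient dominates the slope pointwise (the base spaces being length spaces). Your core estimate $|\frac{\d}{\dt}(f\circ\gamma)|\le g_1(\gamma_t)|\dot\beta_t|+g_2(\gamma_t)|\dot\alpha_t|$, the speed splitting, the Cauchy--Schwarz step and the passage from ``for every test plan'' to the $\mm$-a.e.\ bound on $\weakgrad f$ are all sound, and since Lemma~\ref{lem:Levico2} only ever invokes \eqref{eq:Levico6} with continuous $g_i$, your argument would suffice for the rest of the tensorization section.

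The gap is in your last sentence, i.e.\ in deriving the first assertion $\weakgrad{f}\leq\sqrt{|\nabla f^x|^2+|\nabla f^y|^2}$ from \eqref{eq:Levico6}. The slope of a Lipschitz function is in general \emph{not} upper semicontinuous, and a decreasing limit of continuous (or u.s.c.) majorants is necessarily u.s.c.; hence the sup-convolutions of the sliced slopes can only decrease to the \emph{upper semicontinuous envelopes} of $(x,y)\mapsto|\nabla f^x|(y)$ and $(x,y)\mapsto|\nabla f^y|(x)$, which may strictly exceed the slopes on a set of positive $\mm$-measure (already on $\R$: for $f(x)=\int_0^x\nchi_E(t)\,\d t$ with $E$ open, dense and of small measure, $|\nabla f|$ vanishes a.e.\ on the complement of $E$ while its u.s.c.\ envelope is identically $1$). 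So your scheme proves only the weaker bound with the envelopes in place of the slopes, and no approximation-from-above can repair this. The paper obtains the sharp version by a different device, namely \cite[Lemma~4.3.4]{Ambrosio-Gigli-Savare08}: for a.e.\ $t$,
\[
\Big|\frac{\d}{\d t}(f\circ\gamma)\Big|(t)\le\limsup_{h\downarrow0}\frac{|f(\gamma^X_{t-h},\gamma^Y_t)-f(\gamma^X_t,\gamma^Y_t)|}{h}+\limsup_{h\downarrow0}\frac{|f(\gamma^X_t,\gamma^Y_{t+h})-f(\gamma^X_t,\gamma^Y_t)|}{h},
\]
where both incremental quotients are anchored at the \emph{same} point $\gamma_t$; each $\limsup$ is then bounded by slope times metric speed by the very definition of the slope, with no continuity of the slope required. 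This is exactly the tool that removes the cross-term obstruction you correctly identify, and it is what your proposal is missing for the first claim.
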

\begin{proof} We will prove that the cartesian slope $\sqrt{|\nabla
f^x|^2+|\nabla f^y|^2}$ is a weak upper gradient for Lipschitz
functions $f$. If $\gamma=(\gamma^X,\gamma^Y)\in \AC2{[0,1]}Z$ we
need to prove that for a.e.~$t$ the inequality
\begin{equation}\label{eq:Levico1}
|\frac{\d}{\d t} (f\circ\gamma)|(t)\leq \sqrt{|\nabla
f^{\gamma^X_t}|^2(\gamma^Y_t)+|\nabla f^{\gamma^Y_t}|^2(\gamma^X_t)}
\sqrt{|\dot\gamma^X_t|^2+|\dot\gamma^Y_t|^2}
\end{equation}
holds. A pointwise proof of this inequality seems not to be easy, on
the other hand, working at the level of distributional derivatives,
in \cite[Lemma~4.3.4]{Ambrosio-Gigli-Savare08} it is proved that
a.e.~in $[0,1]$ it holds
$$
|\frac{\d}{\d t} (f\circ\gamma)|(t)\leq\limsup_{h\downarrow
0}\frac{|f(\gamma^X_{t-h},\gamma^Y_t)-f(\gamma^X_t,\gamma^Y_t)|}{h}+
\limsup_{h\downarrow
0}\frac{|f(\gamma^X_t,\gamma^Y_{t+h})-f(\gamma^X_t,\gamma^Y_t)|}{h},
$$
so that
$$
|\frac{\d}{\d t} (f\circ\gamma)|(t)\leq |\nabla
f^{\gamma^Y_t}|(\gamma^X_t)|\dot\gamma^X_t|+ |\nabla
f^{\gamma^X_t}|(\gamma^Y_t)|\dot\gamma^Y_t|\qquad\text{a.e.~in
$[0,1]$,}
$$
from which \eqref{eq:Levico1} readily follows. The estimate
\eqref{eq:Levico6} follows noticing that any upper semicontinuous
upper gradient bounds the slope from below.
\end{proof}
In the next lemma we will improve the inequality
$\weakgrad{f}\leq\sqrt{|\nabla f^x|^2+|\nabla f^y|^2}$ obtaining
$\cartgrad{f}$ in the right hand side. To this aim we consider, as a
regularizing operator, the product semigroup $\tilde\heatl_t^c$ in
$L^2(Z,\mm)$, pointwise defined by
\begin{equation}\label{eq:Levico10}
\tilde\heatl_t^c f(x,y):=\int\int
f(x',y')\rho_t^X[x](x')\rho_t^Y[y](y')\,\d\mm_X(x')\,\d\mm_Y(y')
\end{equation}
where $\rho_t^X[x](x')$ and $\rho_t^Y[y](y')$ are the transition
probability densities in the base spaces (see also
\eqref{eq:Levico7} below for an equivalent description in terms of
iterated operators). It is easy to show that $\tilde\heatl_t$
retains the same properties of its ``factors'' $\tilde\heatl_t^X$,
$\tilde\heatl_t^Y$ in the base spaces, in particular it is mass
preserving, self-adjoint, satisfies the maximum principle,
regularizes from $L^\infty(Z,\mm)$ to $\Cb(Z)$ and leaves ${\rm
Lip}(Z)$ invariant. In addition, $\tilde\heatl_t$ can also be viewed
as the $L^2(Z,\mm)$-gradient flow of $\C^c$, namely the solution to
\begin{equation}\label{eq:Prato2}
\frac{\d}{\d t}f_t=\Deltamc f_t
\end{equation}
where the linear operator $\Deltamc$ is defined in terms of the
Laplacians in the base spaces $\Delta_X$, $\Delta_Y$ by $\Deltamc
f(x,y):=\Delta_X f^y(x)+\Delta_Y f^x(y)$.

\begin{lemma}\label{lem:Levico2}
For all $f\in{\rm Lip}(Z)$ it holds $\weakgrad{f}\leq\cartgrad{f}$
$\mm$-a.e.~in $Z$.
\end{lemma}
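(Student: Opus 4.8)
The plan is to regularize $f$ by the product semigroup $\tilde\heatl_t^c$ of \eqref{eq:Levico10}, push the Bakry-Emery estimate through the base spaces, and then let $t\downarrow 0$. So first, fix $t>0$ and set $F:=\tilde\heatl_t^c f$, which belongs to $\Lip(Z)$ since $\tilde\heatl_t^c$ leaves $\Lip(Z)$ invariant. Writing $h_{t,y}(x'):=\tilde\heatl_t^Y(f^{x'})(y)=\int f(x',y')\,\rho_t^Y[y](y')\,\d\mm_Y(y')$, Fubini gives $F^y=\tilde\heatl_t^X h_{t,y}$ and $F^x=\tilde\heatl_t^Y k_{t,x}$, where $k_{t,x}(y'):=\tilde\heatl_t^X(f^{y'})(x)$; moreover $|h_{t,y}(x_1')-h_{t,y}(x_2')|\le\Lip(f)\,\sfd_X(x_1',x_2')$, so $h_{t,y}\in\Lip(X)$ with $\weakgrad{h_{t,y}}\le\Lip(f)\in L^\infty(X,\mm_X)$, and symmetrically for $k_{t,x}$. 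Applying Lemma~\ref{lem:Levico1} to the Lipschitz function $F$ and the pointwise Bakry-Emery estimate \eqref{eq:17} of Theorem~\ref{thm:bakryemery} to $h_{t,y}$ and $k_{t,x}$ (whose hypotheses hold), I would obtain, $\mm$-a.e.\ in $Z$,
\[
\weakgrad{F}^2(x,y)\le |\nabla F^y|^2(x)+|\nabla F^x|^2(y)\le \rme^{-2Kt}\Big(\tilde\heatl_t^X\big(\weakgrad{h_{t,y}}^2\big)(x)+\tilde\heatl_t^Y\big(\weakgrad{k_{t,x}}^2\big)(y)\Big).
\]

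Next, the key step is the fibrewise inequality
\[
\weakgrad{h_{t,y}}^2(x')\le\int \weakgrad{f^{y'}}^2(x')\,\rho_t^Y[y](y')\,\d\mm_Y(y')\qquad\text{for }\mm_X\text{-a.e. }x',
\]
together with the analogous bound for $k_{t,x}$. To prove it I would fix a test plan $\ppi\in\calT$ on $X$; for each fixed $y'$ the Lipschitz function $f^{y'}$ satisfies $|f^{y'}(\eta_1)-f^{y'}(\eta_0)|\le\int_0^1\weakgrad{f^{y'}}(\eta_s)|\dot\eta_s|\,\d s$ for $\ppi$-a.e.\ $\eta$, and a Fubini argument on $\CC{[0,1]}X\times Y$ with the product measure $\ppi\otimes\big(\rho_t^Y[y]\,\mm_Y\big)$ shows that for $\ppi$-a.e.\ $\eta$ this holds for $\rho_t^Y[y]\,\mm_Y$-a.e.\ $y'$. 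For such an $\eta$, combining $|h_{t,y}(\eta_1)-h_{t,y}(\eta_0)|\le\int |f^{y'}(\eta_1)-f^{y'}(\eta_0)|\,\rho_t^Y[y](y')\,\d\mm_Y(y')$ with Fubini in $(s,y')$ and Cauchy-Schwarz against the probability $\rho_t^Y[y]\,\mm_Y$ gives $|h_{t,y}(\eta_1)-h_{t,y}(\eta_0)|\le\int_0^1 G(\eta_s)|\dot\eta_s|\,\d s$ with $G(x'):=\big(\int\weakgrad{f^{y'}}^2(x')\,\rho_t^Y[y](y')\,\d\mm_Y(y')\big)^{1/2}$; since $\ppi\in\calT$ is arbitrary, $G$ is a weak upper gradient of $h_{t,y}$ and minimality yields the claim. (This uses a jointly $\mm_X\times\mm_Y$-measurable version of $(x',y')\mapsto\weakgrad{f^{y'}}(x')$, available by measurable selection exactly as for the transition kernels.) Inserting the fibrewise bounds into the estimate of the first paragraph and recognising the double integrals as $\tilde\heatl_t^c$ acting on $(x'',y'')\mapsto\weakgrad{f^{y''}}^2(x'')$ and on $(x'',y'')\mapsto\weakgrad{f^{x''}}^2(y'')$, whose sum is $\cartgrad{f}^2$, I obtain
\[
\weakgrad{(\tilde\heatl_t^c f)}^2\le \rme^{-2Kt}\,\tilde\heatl_t^c\big(\cartgrad{f}^2\big)\qquad\text{$\mm$-a.e.\ in }Z,\ \ \forall t>0.
\]

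Finally, I would let $t\downarrow 0$. Since $f\in\Lip(Z)$ and $\mm(Z)=1$, $\cartgrad{f}^2\in L^1(Z,\mm)\cap L^\infty(Z,\mm)$, so $\tilde\heatl_t^c\big(\cartgrad{f}^2\big)\to\cartgrad{f}^2$ in $L^1$ by strong continuity of the semigroup, while $\tilde\heatl_t^c f\to f$ in $L^2$. As $\{\weakgrad{(\tilde\heatl_t^c f)}\}_{t>0}$ is bounded in $L^2(Z,\mm)$, along a sequence $t_n\downarrow 0$ it converges weakly in $L^2$ to some $G^*$, and by stability of weak upper gradients under $L^2$-strong convergence of functions and $L^2$-weak convergence of gradients (\cite[Theorem~5.12]{Ambrosio-Gigli-Savare11}) one gets $\weakgrad f\le G^*$ $\mm$-a.e. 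Testing the last display against an arbitrary nonnegative $\varphi\in L^1(Z,\mm)\cap L^\infty(Z,\mm)$ and using self-adjointness of $\tilde\heatl_t^c$, $\int\weakgrad{(\tilde\heatl_{t_n}^c f)}^2\varphi\,\d\mm\le\rme^{-2Kt_n}\int\cartgrad{f}^2\,\tilde\heatl_{t_n}^c\varphi\,\d\mm\to\int\cartgrad{f}^2\varphi\,\d\mm$; weak lower semicontinuity of $g\mapsto\int g^2\varphi\,\d\mm$ then gives $\int(G^*)^2\varphi\,\d\mm\le\int\cartgrad{f}^2\varphi\,\d\mm$ for all such $\varphi$, hence $G^*\le\cartgrad{f}$ $\mm$-a.e.; combined with $\weakgrad f\le G^*$ this yields $\weakgrad f\le\cartgrad f$ $\mm$-a.e.\ in $Z$.

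The hardest part is the fibrewise estimate in the second step — converting the weak upper gradient of the $Y$-averaged function $h_{t,y}$ into the $Y$-average of the fibrewise weak upper gradients of $f$. The delicacy is that the exceptional set of curves in the weak upper gradient inequality for $f^{y'}$ depends on $y'$, which forces the argument through Fubini on the product of the curve space with $Y$ (and the attendant joint measurability check); this is precisely the step that genuinely upgrades the slope bound of Lemma~\ref{lem:Levico1} (with $|\nabla f^x|$, $|\nabla f^y|$) to the weak upper gradient bound claimed here.
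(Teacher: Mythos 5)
Your proof is correct and follows essentially the same route as the paper's: regularize by $\tilde\heatl_t^c$, combine Lemma~\ref{lem:Levico1} with the Bakry--Emery estimate in each factor, bound the weak gradient of the fibrewise average by the average of the fibrewise weak gradients, and let $t\downarrow 0$ using the stability of weak upper gradients. The only difference is one of detail: the fibrewise step that you prove via test plans and Fubini is exactly what the paper invokes as ``the convexity of $g\mapsto\weakgrad{g}$'', and your careful treatment of the limit $t\downarrow0$ fills in what the paper leaves to the reader.
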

\begin{proof} Set $F_1(x,y):=\weakgrad{f^y}(x)$ and
$F_2(x,y):=\weakgrad{f^x}(y)$. We consider the regularization
$f_t:=\tilde\heatl^c_t f$ of $f$. Writing
\begin{equation}\label{eq:Levico7}
f_t(x,y)=\tilde\heatl_t^X G(\cdot,y)(x)
\end{equation}
with $G(x',y):=\tilde\heatl_t^Y f(x',\cdot)(y)$, we can use first
Theorem~\ref{thm:bakryemery} and then the convexity of
$g\mapsto\weakgrad{g}$ to get
\begin{eqnarray*}
|\nabla f_t^y|(x)&\leq&
\rme^{-Kt}\bigl(\tilde\heatl_t^X\weakgrad{G(\cdot,y)}^2\bigr)^{1/2}(x)
\\&\leq&
\rme^{-Kt}\bigl(\tilde\heatl_t^X\tilde\heatl_t^YF_1^2\bigr)^{1/2}(x,y)\\
&=&\rme^{-Kt}\bigl(\tilde\heatl^c_t F_1^2\bigr)^{1/2}(x,y).
\end{eqnarray*}
Analogously, reversing the role of the variables we get
$$
|\nabla f_t^x|(y)\leq \rme^{-Kt}\bigl(\tilde\heatl^c_t
F_2^2\bigr)^{1/2}.
$$
So, we may take $g_1(x,y):=\rme^{-Kt}(\tilde\heatl_t^c F_1^2)^{1/2}$
and $g_2:=\rme^{-Kt}(\tilde\heatl_t^c F_2^2)^{1/2}$ in
Lemma~\ref{lem:Levico1} to get
$$
\weakgrad{f_t}^2\leq \rme^{-2Kt}\tilde\heatl_t^c
(F_1^2+F_2^2)=\tilde\heatl_t^c\cartgrad{f}^2 \qquad\text{$\mm$-a.e.
in $Z$.}
$$
Letting $t\downarrow 0$ the stability property of weak upper
gradients and the strong continuity of the semigroup provide the
result.
\end{proof}

The proof of the converse inequality is more involved. It rests
mainly in an improvement in product spaces of the Hamilton-Jacobi
inequality satisfied by the Hopf-Lax semigroup (see
Lemma~\ref{lem:hlimproved} below) and on its consequence, an
improved metric derivative that we obtain in
Lemma~\ref{le:keyimproved} along solutions to the
$L^2(Z,\mm)$-gradient flow of $\C^c$ defined in \eqref{eq:Levico10}
or, equivalently, in \eqref{eq:Prato2}.

In \cite[Section~3]{Ambrosio-Gigli-Savare11}, a very detailed
analysis of the differentiability properties of the Hopf-Lax
semigroup
\begin{equation}\label{eq:Prato6}
Q_tg(w):=\inf_{w'\in W} g(w')+\frac{1}{2t}\sfd_W^2(w',w)
\end{equation}
in a metric space $(W,\sfd_W)$ has been made. The analysis is based
on the quantities
$$
D^+_g(w,t):=\sup\limsup_{n\to\infty} \sfd_W(w,w_n'),\qquad
D^-_g(w,t):=\inf\liminf_{n\to\infty} \sfd_W(w,w_n),
$$
where the supremum and the infimum run among all minimizing
sequences $(w_n)$ in \eqref{eq:Prato6}. These quantities reduce
respectively to the maximum and minimum distance from $w$ of
minimizers in the locally compact case. Confining for simplicity our
discussion to the case of bounded functions, which suffices for our
purposes, it has been shown that $D^+_g$ and $D^-_g$ are
respectively upper and lower semicontinuous in $W\times (0,\infty)$,
that $D^-_g(\cdot,t)/t$ is an upper gradient of $Q_tg$ and that the
following pointwise equality holds:
\begin{equation}\label{eq:Prato4}
\frac{\d^+}{\d t}Q_tg(w)+\frac{(D^+_g(w,t))^2}{2t^2}=0,
\end{equation}
where we recall that $\d^+/\d t$ stands for right derivative (part
of the statement is its existence at every point). Notice that,
since $D^+_g(\cdot,t)/t\geq D^-_g(\cdot,t)/t$ is an upper
semicontinuous upper gradient of $Q_t g$, \eqref{eq:Prato4} implies
the Hamilton-Jacobi subsolution property $\tfrac{\d^+}{\d
t}Q_tg+|\nabla Q_tg|^2/2\leq 0$, but in the sequel we shall need the
sharper form \eqref{eq:Prato4}.

\begin{lemma}\label{lem:hlimproved}
Let $g:Z\to\R$ be a bounded function. Then, for all $t>0$ the
function $Q_tg$ satisfies
\begin{equation}\label{eq:Prato1}
\frac{\d^+}{\dt}Q_tg+\frac{1}{2}\cartgrad{Q_t g}^2\leq
0\qquad\text{$\mm$-a.e.~in $Z$.}
\end{equation}
\end{lemma}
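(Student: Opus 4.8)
The plan is to exploit the product structure of the Hopf--Lax semigroup and reduce everything to the one-dimensional analysis recalled before the statement. The key elementary fact is the splitting identity: since $\sfd^2=\sfd_X^2+\sfd_Y^2$, performing the infimum in \eqref{eq:Prato6} first in one variable and then in the other gives, for every bounded $g$ and every $t>0$,
\[
Q_tg(x,y)=Q_t^X\big[(Q_t^Yg)(\cdot,y)\big](x)=Q_t^Y\big[(Q_t^Xg)(x,\cdot)\big](y),
\]
where $Q_t^X,\,Q_t^Y$ denote the Hopf--Lax semigroups of the base spaces. In particular, for fixed $y_0$ the slice $(Q_tg)^{y_0}$ is the Hopf--Lax evolution \emph{in $X$} at time $t$ of the bounded function $G:=(Q_t^Yg)(\cdot,y_0)$, and for fixed $x_0$ the slice $(Q_tg)^{x_0}$ is the Hopf--Lax evolution \emph{in $Y$} of $H:=(Q_t^Xg)(x_0,\cdot)$. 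Since the base spaces are length spaces these slices are Lipschitz, so $Q_tg\in BL^{1,2}(Z,\sfd,\mm)$ and $\cartgrad{Q_tg}$ is well defined.

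Next I would apply, in the base space $(X,\sfd_X,\mm_X)$, the scalar Hamilton--Jacobi facts of \cite[Section~3]{Ambrosio-Gigli-Savare11} recalled above: $D^-_G(\cdot,t)/t$ is an upper gradient of $Q_t^XG$, hence a weak upper gradient, so $\weakgrad{(Q_tg)^{y_0}}\le D^-_G(\cdot,t)/t$ $\mm_X$-a.e.; symmetrically $\weakgrad{(Q_tg)^{x_0}}\le D^-_H(\cdot,t)/t$ $\mm_Y$-a.e. A Fubini argument (relying on the joint Borel measurability of the Hopf--Lax functionals) upgrades this to
\[
\cartgrad{Q_tg}^2(x_0,y_0)\le t^{-2}\Big(\big(D^-_{G}(x_0,t)\big)^2+\big(D^-_{H}(y_0,t)\big)^2\Big)\qquad\text{for $\mm$-a.e.\ }(x_0,y_0).
\]

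The crucial point is then the Pythagorean-type inequality
\[
\big(D^-_{G}(x_0,t)\big)^2+\big(D^-_{H}(y_0,t)\big)^2\le \big(D^+_g((x_0,y_0),t)\big)^2 .
\]
To get it I would observe, using the splitting identity in both orders, that if $(x_n,y_n)$ is any minimizing sequence for $Q_tg(x_0,y_0)$ then $(x_n)$ is a minimizing sequence for $Q_t^XG$ at $x_0$ and $(y_n)$ is one for $Q_t^YH$ at $y_0$ --- this is immediate because $G(x_n)+\tfrac1{2t}\sfd_X^2(x_n,x_0)$ is squeezed between $Q_t^XG(x_0)$ and $g(x_n,y_n)+\tfrac1{2t}\sfd^2((x_n,y_n),(x_0,y_0))\to Q_tg(x_0,y_0)=Q_t^XG(x_0)$, and similarly for $y_n$. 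Choosing the minimizing sequence so that $\limsup_n\sfd^2((x_n,y_n),(x_0,y_0))$ is as close as desired to $\big(D^+_g((x_0,y_0),t)\big)^2$, and recalling that $D^-_G$ and $D^-_H$ are defined as infima of $\liminf$'s over minimizing sequences, one obtains $D^-_G(x_0,t)\le\liminf_n\sfd_X(x_0,x_n)$ and $D^-_H(y_0,t)\le\liminf_n\sfd_Y(y_0,y_n)$; adding these and using $\liminf\alpha_n+\liminf\beta_n\le\liminf(\alpha_n+\beta_n)$ with $\alpha_n=\sfd_X^2(x_0,x_n)$, $\beta_n=\sfd_Y^2(y_0,y_n)$ gives exactly the displayed inequality. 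Finally, combining the last three displays with the pointwise equality $\frac{\d^+}{\dt}Q_tg(w)=-\big(D^+_g(w,t)\big)^2/(2t^2)$ of \eqref{eq:Prato4} yields $\frac{\d^+}{\dt}Q_tg+\tfrac12\cartgrad{Q_tg}^2\le0$ $\mm$-a.e., which is \eqref{eq:Prato1}.

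I expect the main obstacle to be technical bookkeeping rather than a conceptual difficulty: first, establishing cleanly that a minimizing sequence for the product problem projects to minimizing sequences of the two factor problems, which genuinely needs the splitting identity in both orders; and second, the measurability/Fubini passage from the slicewise a.e.\ gradient estimates to an a.e.\ estimate on $Z$, since the null sets for the slices depend on the frozen coordinate. Once these are in place, the statement is a direct consequence of the scalar Hopf--Lax theory in the two base spaces.
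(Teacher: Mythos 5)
Your proposal is correct and follows essentially the same route as the paper: the splitting identity $Q_tg=Q_t^X\circ Q_t^Y$, the reduction via \eqref{eq:Prato4} to the pointwise Pythagorean inequality $[D^-_{L_{t,y}}(x,t)]^2+[D^-_{R_{t,x}}(y,t)]^2\le [D^+_g((x,y),t)]^2$, and the observation that a minimizing sequence for the product problem projects onto minimizing sequences for the two factor problems. The only difference is that you spell out the $\liminf$/$\limsup$ bookkeeping and the Fubini passage, which the paper leaves implicit.
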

\begin{proof} Taking \eqref{eq:Prato4} into account and the definition of
$\cartgrad{Q_tg}$ (recall the notation $f^x(y)=f(x,y)=f^y(x)$),
suffices to show that for all $t>0$ it holds
\begin{equation}\label{eq:Prato5}
\frac{[D^+_g((x,y),t)]^2}{t^2}\geq
\weakgrad{(Q_tg)^y}^2(x)+\weakgrad{(Q_tg)^y}^2(x) \qquad
\text{$\mm$-a.e.~in $Z$.}
\end{equation}
In order to prove \eqref{eq:Prato5}, notice that we can minimize
first in one variable and then in the other one to get
\begin{equation}\label{eq:Prato7}
(Q_tg)^y(x)=Q_t^X(L_{t,y})(x),\qquad (Q_tg)^x(y)=Q_t^Y(R_{t,x})(y),
\end{equation}
where $L_{t,y}(x'):=Q_t^Yg(x',\cdot)(y)$ and
$R_{t,x}(y'):=Q_t^Xg(\cdot,y')(x)$. Since $D^-(\cdot,t)/t$ is is an
upper gradient, we see that \eqref{eq:Prato5} is a consequence of
the pointwise inequality
\begin{equation}\label{eq:Prato8}
[D^+_g((x,y),t)]^2\geq
[D^-_{L_{t,y}}(x,t)]^2+[D^-_{R_{t,x}}(y,t)]^2.
\end{equation}
In order to prove \eqref{eq:Prato8}, let us consider a minimizing
sequence $(x_n,y_n)$ for $Q_tg (x,y)$; since
\begin{eqnarray*}
Q_tg(x,y)&=&\lim_{n\to\infty}
g(x_n,y_n)+\frac{1}{2t}\sfd_Y^2(y_n,y)+\frac{1}{2t}\sfd_X^2(x_n,x)\\&\geq&
\liminf_{n\to\infty}L^y_t(x_n)+\frac{1}{2t}\sfd_X^2(x_n,x)\geq
Q^X_t(L_{t,y})(x)
\end{eqnarray*}
we can use \eqref{eq:Prato7} to obtain that all inequalities are
equalities: this implies that the liminf is a limit and that and
that $(x_n)$ is a minimizing sequence for $Q_t^X\varphi(x)$, with
$\varphi(x)=L_{t,y}(x)$. Analogously, $(y_n)$ is a minimizing
sequence for $Q_t^Y\psi(y)$, where $\psi(y)=R_{t,x}(y)$. Taking into
account the definitions of $D^\pm$, this yields \eqref{eq:Prato8}.
\end{proof}

\begin{lemma}[Kuwada's lemma in product spaces]\label{le:keyimproved}
Let $f\in L^\infty(Z,\mm)$ be a probability density and let $f_t$ be
the solution of the $L^2$-gradient flow of $\C^c$ starting from $f$.
Then $\mu_t=f_t\mm\in\probt{X}$ for all $t\geq 0$ and
\begin{equation}\label{eq:Levico11}
|\dot\mu_t|^2\leq
\int_{\{f_t>0\}}\frac{\cartgrad{f_t}^2}{f_t}\,\d\mm\qquad\text{for
a.e.~$t>0$.}
\end{equation}
\end{lemma}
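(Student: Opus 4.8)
The plan is to run the duality argument of Kuwada for the curve $\mu_t=f_t\mm$, with the improved product Hamilton--Jacobi inequality of Lemma~\ref{lem:hlimproved} providing the crucial extra room. First I would record the trivial facts: since $\tilde\heatl^c_t$ is mass preserving and obeys the maximum principle, $0\le f_t\le\|f\|_\infty$ $\mm$-a.e., so $\mu_t\le\|f\|_\infty\mm$ and, because $\mm\in\probt Z$, $\mu_t\in\probt Z$ for all $t\geq0$. Set $I(r):=\int_{\{f_r>0\}}\cartgrad{f_r}^2/f_r\,\d\mm$; note $\cartgrad{f_r}=0$ $\mm$-a.e.\ on $\{f_r=0\}$ by the locality \eqref{eq:weaklocal} applied in the two base spaces together with Fubini, so $I(r)=\int_Z\cartgrad{f_r}^2/f_r\,\d\mm$ whenever it makes sense. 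We will also use that $\C^c$ is a quadratic form (being a sum of integrals of the quadratic forms $\C^X,\C^Y$, which are quadratic since $X,Y$ are $\rcd K\infty$), denote by $\mathcal E^c$ its polar bilinear form, and record the Cauchy--Schwarz bound $|\mathcal E^c(u,v)|\le\int_Z\cartgrad u\,\cartgrad v\,\d\mm$: it follows by combining the pointwise inequality \eqref{eq:boundpuntuale} of Theorem~\ref{thm:tangente} in each base space with Cauchy--Schwarz in $\R^2$.

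The core estimate is that for every bounded Lipschitz $\phi:Z\to\R$ and all $0\le s<t$,
\[
\int_Z Q_{t-s}\phi\,\d\mu_t-\int_Z\phi\,\d\mu_s\le\frac12\int_s^t I(r)\,\d r.
\]
To prove it, set $\varphi_r:=Q_{r-s}\phi$ for $r\in[s,t]$ (Hopf--Lax semigroup); these functions are bounded and equi-Lipschitz on $[s,t]$, hence lie in $BL^{1,2}(Z,\sfd,\mm)\subset D(\C^c)$ with $\cartgrad{\varphi_r}$ equibounded. Consider $\Psi(r):=\int_Z\varphi_r f_r\,\d\mm$, so $\Psi(s)=\int\phi\,\d\mu_s$ and $\Psi(t)=\int Q_{t-s}\phi\,\d\mu_t$. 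Using the $L^2$-differentiability of $r\mapsto f_r$ from Theorem~\ref{thm:gfc} for the flow of $\C^c$, and the differentiability analysis of the Hopf--Lax semigroup from \cite[\S3]{Ambrosio-Gigli-Savare11} (the $r$-integrability of $|\partial_r\varphi_r|$ coming from $r\mapsto Q_{r-s}\phi(w)$ being monotone and bounded), $\Psi$ is absolutely continuous with
\[
\Psi'(r)=\int_Z\frac{\partial^+}{\partial r}\varphi_r\,\d\mu_r+\int_Z\varphi_r\,\Deltamc f_r\,\d\mm\qquad\text{for a.e.\ }r.
\]
For the first term, Lemma~\ref{lem:hlimproved} gives $\tfrac{\partial^+}{\partial r}\varphi_r+\tfrac12\cartgrad{\varphi_r}^2\le0$ $\mm$-a.e.; since $\mu_r\le\|f\|_\infty\mm$ and $r\mapsto Q_{r-s}\phi$ is nonincreasing and uniformly Lipschitz, the inequality passes to $\mu_r$ (reverse Fatou), so this term is $\le-\tfrac12\int\cartgrad{\varphi_r}^2 f_r\,\d\mm$. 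For the second term, $\int\varphi_r\,\Deltamc f_r\,\d\mm=-\mathcal E^c(\varphi_r,f_r)\le\int\cartgrad{\varphi_r}\cartgrad{f_r}\,\d\mm$ by the Cauchy--Schwarz bound above. Writing $\cartgrad{f_r}=\sqrt{f_r}\,\bigl(\cartgrad{f_r}/\sqrt{f_r}\bigr)$ on $\{f_r>0\}$, Cauchy--Schwarz and then Young's inequality yield $\Psi'(r)\le\tfrac12 I(r)$; integrating over $[s,t]$ gives the displayed estimate, and taking the supremum over $\phi$ in the Kantorovich duality $\tfrac1{2(t-s)}W_2^2(\mu,\nu)=\sup_\phi\{\int Q_{t-s}\phi\,\d\nu-\int\phi\,\d\mu\}$ gives $W_2^2(\mu_s,\mu_t)\le(t-s)\int_s^t I(r)\,\d r$.

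Finally I would upgrade this to the sharp bound $W_2(\mu_s,\mu_t)\le\int_s^t\sqrt{I(r)}\,\d r$: apply the previous inequality to consecutive nodes of a partition of $[s,t]$, sum using the triangle inequality and Cauchy--Schwarz, and let the mesh tend to $0$; the resulting Riemann-type sums converge to $\int_s^t\sqrt{I(r)}\,\d r$ because $|\sqrt a-\sqrt b|\le\sqrt{|a-b|}$ and the interval-averages of $I$ converge to $I$ in $L^1$. This shows that $\mu_t$ is locally absolutely continuous (and trivially so on any interval where $\sqrt I\notin L^1$) with $|\dot\mu_t|^2\le I(t)$ at every Lebesgue point, which is \eqref{eq:Levico11}.

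The main obstacle is the rigorous differentiation of $\Psi$ together with the justification that $\tfrac{\partial^+}{\partial r}Q_{r-s}\phi$ may be moved inside the integral against $\mu_r$: one has to match the $L^2$-differentiability of the $\C^c$-flow with the pointwise Hamilton--Jacobi identity, absorb the (integrable) singularity of $|\partial_r Q_{r-s}\phi|$ near $r=s$, and control the product structure. The single genuinely new analytic input, the inequality $\tfrac{\partial^+}{\partial r}Q_r\phi+\tfrac12\cartgrad{Q_r\phi}^2\le0$, is already supplied by Lemma~\ref{lem:hlimproved}; everything else—the vanishing of $\cartgrad{f_r}$ on $\{f_r=0\}$, the Cauchy--Schwarz bound for $\mathcal E^c$, and the partition limit—is routine.
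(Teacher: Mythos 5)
Your argument is correct and is essentially the paper's own proof: the paper simply declares that one should follow \emph{verbatim} the duality proof of Kuwada's lemma from \cite[Lemma~6.1]{Ambrosio-Gigli-Savare11}, replacing $\weakgrad{f_t}$ by $\cartgrad{f_t}$ and using the improved Hamilton--Jacobi inequality \eqref{eq:Prato1} together with the calculus rules \eqref{eq:Prato3}, and what you have written out (the curve $r\mapsto\int Q_{r-s}\phi\,f_r\,\d\mm$, the Hamilton--Jacobi plus integration-by-parts plus Young estimate, and Kantorovich duality) is exactly that argument made explicit. The only cosmetic difference is your final partition/Riemann-sum upgrade to $W_2(\mu_s,\mu_t)\le\int_s^t\sqrt{I(r)}\,\d r$, where the standard route just divides $W_2^2(\mu_s,\mu_t)\le(t-s)\int_s^t I(r)\,\d r$ by $(t-s)^2$ and passes to the limit at Lebesgue points of $I$; both are fine.
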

\begin{proof} The proof can be achieved following \emph{verbatim}
the proof of the analogous result
\cite[Lemma~6.1]{Ambrosio-Gigli-Savare11}, this time working with
$\cartgrad{f_t}$ in place of $\weakgrad{f_t}$: this replacement is
possible in view of the improved Hamilton-Jacobi inequality
\eqref{eq:Prato1} and of the calculus rules
\begin{equation}\label{eq:Prato3}
-\int g\Deltamc
f\,\d\mm\leq\int\cartgrad{f}\cartgrad{g}\,\d\mm,\qquad -\int
\phi(f)\Deltamc f\,\d\mm=\int\phi'(f)\cartgrad{f}^2\,\d\mm,
\end{equation}
which follow immediately by the analogous properties of the partial
Laplacians.
\end{proof}

\begin{proposition}\label{prop:Levico1}
We have $D(\C)\subset BL^{1,2}(Z,\sfd,\mm)$. In addition, for all
$f\in D(\C)$ there exist $f_n\in D(\C^c)$ converging to $f$ in
$L^2(Z,\mm)$ and satisfying
\begin{equation}\label{eq:Levico3}
\limsup_{n\to\infty}\C^c(f_n)\leq\C(f).
\end{equation}
\end{proposition}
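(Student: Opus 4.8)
I would argue the two assertions in turn; the first is essentially a Fubini argument, while the second is where the heat-flow machinery of the section is used.

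\emph{The inclusion $D(\C)\subset BL^{1,2}(Z,\sfd,\mm)$.} Given $f\in D(\C)$, use the approximation formula \eqref{eq:nontrivialatall} together with the $L^2$-lower semicontinuity of weak upper gradients to pick $f_h\in\Lip(Z)$ with $f_h\to f$ and $|\nabla f_h|\to\weakgrad f$ in $L^2(Z,\mm)$. Every Lipschitz function lies in $BL^{1,2}(Z,\sfd,\mm)$: for fixed $x$ the slice $f_h^x$ is Lipschitz on $Y$, hence in $D(\C^Y)$, and since the restriction of $\sfd$ to a horizontal slice equals $\sfd_Y$ one has the pointwise bound $|\nabla f_h^x|(y)\le|\nabla f_h|(x,y)$, and symmetrically $|\nabla f_h^y|(x)\le|\nabla f_h|(x,y)$. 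Integrating, $\int_X\C^Y(f_h^x)\,\d\mm_X(x)\le\tfrac12\int_Z|\nabla f_h|^2\,\d\mm$ and likewise for the $X$-term, so both are bounded uniformly in $h$ by a quantity tending to $\C(f)$. By Fubini a subsequence satisfies $f_h^x\to f^x$ in $L^2(Y,\mm_Y)$ for $\mm_X$-a.e.\ $x$; then the $L^2$-lower semicontinuity of $\C^Y$ and Fatou's lemma give $f^x\in D(\C^Y)$ for a.e.\ $x$ with $\int_X\C^Y(f^x)\,\d\mm_X(x)\le\C(f)$, and with the symmetric statement this yields condition $(b)$, i.e.\ $f\in BL^{1,2}(Z,\sfd,\mm)=D(\C^c)$ (and, as a byproduct, the crude bound $\C^c(f)\le2\C(f)$).

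\emph{The recovery estimate: setup.} Since $\C^c$ is $L^2$-lower semicontinuous, the claim is equivalent to $\C^c(f)\le\C(f)$, but the sequence realising it will be a heat-flow regularisation, which explains the stated formulation. After the customary reductions (truncation, affine normalisation, and a further approximation to make $f$ bounded away from $0$ — all legitimate since $\mm$ is a probability measure and both $\C,\C^c$ are $2$-homogeneous quadratic forms, $\C^c$ being quadratic because $\C^X,\C^Y$ are by the $\rcd$ hypothesis on the factors and Theorem~\ref{thm:mainriem}) I may assume $f$ is a bounded probability density and set $f_t:=\tilde\heatl_t^c f$, the $L^2(Z,\mm)$-gradient flow of $\C^c$ (equivalently the product semigroup \eqref{eq:Levico10}), and $\mu_t:=f_t\mm\in\probt Z$. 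Two facts are then in force. First, the calculus rules \eqref{eq:Prato3} for the partial Laplacian $\Deltamc$, applied with $\phi(z)=z\log z$, give the product-space analogue of \eqref{eq:edissrateflow},
\[
\frac{\d}{\d t}\ent{\mu_t}=-\int_{\{f_t>0\}}\frac{\cartgrad{f_t}^2}{f_t}\,\d\mm\qquad\text{for a.e.\ }t>0.
\]
Second, Kuwada's lemma in product spaces (Lemma~\ref{le:keyimproved}) gives $|\dot\mu_t|^2\le\int_{\{f_t>0\}}\cartgrad{f_t}^2/f_t\,\d\mm$ for a.e.\ $t>0$.

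\emph{Conclusion.} By Lemma~\ref{lem:Levico2}, extended from Lipschitz functions to $D(\C^c)$ by density and the quadraticity of $\C^c$, one has $\weakgrad g\le\cartgrad g$ $\mm$-a.e., and \eqref{eq:slopeFisher} together with the chain rule \eqref{eq:chainrule} identify $\int\weakgrad{f_t}^2/f_t\,\d\mm$ with $|\nabla^-\entv|^2(\mu_t)$ on $Z$. Since $Z$ is a $CD(K,\infty)$ space (product of non-branching $CD(K,\infty)$ factors, \cite[Proposition~4.16]{Sturm06I}), $|\nabla^-\entv|$ is an upper gradient along the absolutely continuous curve $(\mu_t)$, so \eqref{eq:boundtuttecurve} and the two facts above chain into
\begin{align*}
-\int\frac{\cartgrad{f_t}^2}{f_t}\,\d\mm&=\frac{\d}{\d t}\ent{\mu_t}\ge-\tfrac12|\dot\mu_t|^2-\tfrac12|\nabla^-\entv|^2(\mu_t)\\
&\ge-\tfrac12\int\frac{\cartgrad{f_t}^2}{f_t}\,\d\mm-\tfrac12\int\frac{\weakgrad{f_t}^2}{f_t}\,\d\mm\ge-\int\frac{\cartgrad{f_t}^2}{f_t}\,\d\mm,
\end{align*}
so every inequality is an equality. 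In particular $\int\weakgrad{f_t}^2/f_t=\int\cartgrad{f_t}^2/f_t$ which, with $\weakgrad{f_t}\le\cartgrad{f_t}$ pointwise, forces $\cartgrad{f_t}=\weakgrad{f_t}$ $\mm$-a.e.\ for a.e.\ $t>0$; moreover the chain being tight shows that $(\mu_t)$ is a curve of maximal slope for $\entv$, hence, by uniqueness of the gradient flow (Theorem~\ref{thm:gfe}) and the identification Theorem~\ref{thm:heatgf}, $\mu_t=(\heatl_t f)\mm$, i.e.\ $f_t=\heatl_t f$. Therefore $\C^c(f_t)=\C(f_t)=\C(\heatl_t f)\to\C(f)$ as $t\downarrow0$ by Theorem~\ref{thm:gfc}, and choosing $t_n\downarrow0$ in the full-measure set of good times produces the required sequence $f_n:=\heatl_{t_n}f\in D(\C^c)$.

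\emph{Main obstacle.} Re-deriving the calculus rules and the Kuwada estimate in the product is routine once the partial-Laplacian structure is in place; the genuinely delicate points I anticipate are, first, upgrading $\weakgrad g\le\cartgrad g$ from Lipschitz $g$ to the heat-regularised $f_t$, since the stability of $\cartgrad{\cdot}$ under $L^2$-convergence is less transparent than that of $\weakgrad{\cdot}$, and second, the essential reliance on $Z$ being $CD(K,\infty)$ — hence on the non-branching hypothesis — which is exactly what makes the entropy slope a legitimate upper gradient on $(\probt Z,W_2)$, the ingredient that closes the chain of inequalities above.
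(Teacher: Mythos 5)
Your proof takes a genuinely different route from the paper's for the key inequality $\C^c(f)\le\C(f)$, and the difference is not merely cosmetic. The paper also runs the $\C^c$-gradient flow (started from $h=f^2$, with $f$ bounded away from $0$ and $\infty$) and also combines the dissipation identity \eqref{eq:Levico12} with Kuwada's Lemma~\ref{le:keyimproved} through Young's inequality; but where you invoke the metric slope of the entropy, the paper uses the elementary convexity bound $\int (h\log h-h_t\log h_t)\,\d\mm\le\int \log h\,(h-h_t)\,\d\mm$ together with the ``horizontal'' estimate obtained by representing the flow by a test plan and using that $h^{-1}\weakgrad h=\weakgrad{(\log h)}$ is a weak upper gradient of $\log h$ in $Z$; this yields $\int_0^t\int \cartgrad{h_s}^2/h_s\,\d\mm\,\d s\le \int_0^t\int \weakgrad{h}^2h^{-2}h_s\,\d\mm\,\d s$ directly, and one concludes by the chain rule letting $t\down0$. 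The payoff of that choice is that no curvature assumption on the product $Z$ is ever used: the proposition holds for products of $\rcd K\infty$ factors with no non-branching hypothesis. Your argument instead needs $Z$ to be a $CD(K,\infty)$ space in three essential places --- the slope--Fisher identity \eqref{eq:slopeFisher}, the upper-gradient property \eqref{eq:boundtuttecurve} of $|\nabla^-\entv|$, and the uniqueness/identification Theorems~\ref{thm:gfe} and~\ref{thm:heatgf} that give $f_t=\heatl_t f$ and hence $\C(f_t)\to\C(f)$ (without this identification you only get $\C^c(f_t)\le\C^c(f)$, which is useless here) --- and $Z\in CD(K,\infty)$ is only known, via Sturm's tensorization, under the additional non-branching hypothesis, which is not among the standing assumptions of this section nor of the statement. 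So as written your proof establishes the proposition only in the non-branching case: harmless for Theorem~\ref{thm:tensor}, where non-branching is assumed anyway, but strictly weaker than what is stated. Two minor points: the extension of Lemma~\ref{lem:Levico2} from $\Lip(Z)$ to $D(\C^c)$ that you need is legitimate and non-circular (it is exactly the step performed later in Theorem~\ref{thm:tensorquadratic}, via the invariance of $\Lip(Z)$ under $\heatl_t^c$, Lemma~\ref{lem:invariant} and the stability of weak upper gradients, none of which relies on the present proposition); and your direct Fubini/slicing proof of the inclusion $D(\C)\subset BL^{1,2}(Z,\sfd,\mm)$ with the crude constant $2$ is correct, the paper obtaining the inclusion instead as a byproduct of the recovery estimate and the lower semicontinuity of $\C^c$.
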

\begin{proof} We argue exactly as in
\cite[Theorem~6.2]{Ambrosio-Gigli-Savare11}, where we identify weak
upper gradients and relaxed gradients, the only difference being the
use of the gradient flow of $\C^c$ and the improved estimate
\eqref{eq:Levico11}.

Pick $f\in D(\C)$. With a truncation argument, we can assume that
$c^{-1}\ge f\ge c>0$ $\mm$-almost everywhere in $Z$ with $\int
f^2\,\d\mm=1$. We consider the gradient flow $(h_t)$ of $\C^c$ with
initial datum $h:=f^2$, setting $\mu_t=h_t\mm$, and we apply
Lemma~\ref{le:keyimproved}. The maximum principle yields $c^{-1}\geq
f_t\geq c$ and a standard argument based on \eqref{eq:Prato2} and
\eqref{eq:Prato3} yields the energy dissipation identity
\begin{equation}\label{eq:Levico12}
\frac{\d}{\d t}\int f_t\log
f_t\,\d\mm=-\int\frac{\cartgrad{f_t}^2}{f_t}\,\,\d\mm.
\end{equation}

Let $g=h^{-1}\weakgrad h$, notice that by the chain rule we know
that $\log h$ is Sobolev along almost every curve and use the same
argument of \cite[Theorem~6.2]{Ambrosio-Gigli-Savare11} to get
$$
\int\big(h\log h-h_t \log h_t\big)\,\d\mm\leq\int \log
h(h-h_t)\,\d\mm\leq \Big(\int_0^t\int g^2 h_s\,\d\mm\,\d s
\Big)^{1/2}\Big(
  \int_0^t|\dot \mu_s|^2\,\d s\Big)^{1/2}.
$$
Now, inequality \eqref{eq:Levico11} gives
\begin{eqnarray*}
  \int\big(h\log h-h_t \log h_t\big)\,\d\mm&\le&
  \frac 12 \int_0^t\int  g^2 h_s\,\d\mm\,\d s+
  \frac 12 \int_0^t |\dot\mu_s|^2\,\d s
  \\&\le&
  \frac 12 \int_0^t\int  g^2 h_s\,\d\mm\,\d s+
  \frac 12 \int_0^t \int\frac{\cartgrad{h_s}^2}{h_s}\,\d\mm\,\d s.
\end{eqnarray*}
Recalling the entropy dissipation formula \eqref{eq:Levico12} we
obtain
$$
  \int_0^t \int\frac {\cartgrad{h_s}^2}{h_s}\,\d\mm\,\d s
  \le \int_0^t\int g^2h_s\,\d\mm\,\d s.
$$
Now, the chain rule and the identity $g=2f^{-1}\weakgrad f$ give
$\int_0^t\C^c(\sqrt{h_s})\,\d s\leq\int_0^t\int\weakgrad f^2f^{-2}
h_s\,\d\mm\,\d s$, so that dividing by $t$ and passing to the limit
as $t\downarrow0$ we get \eqref{eq:Levico3}, since $\sqrt{h_s}$ are
equibounded and converge strongly to $f$ in $L^2(Z,\mm)$ as
$s\downarrow0$.
\end{proof}
\begin{theorem}\label{thm:tensorquadratic}
Let $f\in L^2(Z,\mm)$. Then $f\in D(\C)$ if and only if $f\in
D(\C^c)$ and $\weakgrad{f}=\cartgrad{f}$ $\mm$-a.e.~in $Z$. In
particular $\C=\C^c$ is a quadratic form.
\end{theorem}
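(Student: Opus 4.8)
The plan is to identify the functionals $\C$ and $\C^c$ on $L^2(Z,\mm)$ first, as functionals, and only afterwards to upgrade this to the pointwise identity $\weakgrad f=\cartgrad f$; the order is forced, because the pointwise step uses that $\C$ is quadratic, and quadraticity of $\C$ is itself a consequence of the identification $\C=\C^c$.

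For the inclusion $D(\C)\subseteq D(\C^c)$ with $\C^c\le\C$ I would simply invoke Proposition~\ref{prop:Levico1}: it gives $f\in BL^{1,2}(Z,\sfd,\mm)=D(\C^c)$ and approximants $f_n\in D(\C^c)$ with $f_n\to f$ in $L^2(Z,\mm)$ and $\limsup_n\C^c(f_n)\le\C(f)$, whence $\C^c(f)\le\C(f)$ by the $L^2(Z,\mm)$-lower semicontinuity of $\C^c$. For the opposite inclusion $D(\C^c)\subseteq D(\C)$ with $\C\le\C^c$, let $f\in D(\C^c)$; truncating (the chain rule \eqref{eq:chainrule}, applied to the slices $f^x$ and $f^y$, shows $f^N:=\max\{\min\{f,N\},-N\}\in D(\C^c)$ with $\cartgrad{(f^N)}\le\cartgrad f$, and $f^N\to f$ in $L^2(Z,\mm)$), we may assume $f$ bounded. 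Then $f_t:=\tilde\heatl^c_t f$ is the $L^2(Z,\mm)$-gradient flow of $\C^c$ started at $f$ (see \eqref{eq:Prato2}); freezing one variable, $x\mapsto f_t(x,y)$ is $\tilde\heatl^X_t$ applied to a function of sup-norm at most $\|f\|_\infty$, hence Lipschitz with constant $\le\|f\|_\infty/\sqrt{2\mathrm I_{2K}(t)}$ uniformly in $y$ by Theorem~\ref{thm:lipreg}, and symmetrically in $x$; thus $f_t\in\Lip(Z)$. Since $\C^c$ is nonnegative, lower semicontinuous and convex, Theorem~\ref{thm:gfc} applied with $\C^c$ in place of $\C$ gives $\C^c(f_t)\to\C^c(f)$ and $f_t\to f$ in $L^2(Z,\mm)$ as $t\downarrow0$; Lemma~\ref{lem:Levico2} gives $\C(f_t)\le\C^c(f_t)$, and lower semicontinuity of $\C$ yields $\C(f)\le\liminf_{t\downarrow0}\C(f_t)\le\C^c(f)<\infty$.

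At this point $D(\C)=D(\C^c)$ and $\C=\C^c$ on $L^2(Z,\mm)$; since $\C^X,\C^Y$ are quadratic forms (the base spaces being $\rcd K\infty$), the functional $\C^c(f)=\int\C^X(f^y)\,\d\mm(y)+\int\C^Y(f^x)\,\d\mm(x)$ is a quadratic form, hence so is $\C$. For the pointwise identity I would then use that, $\C$ being quadratic, $W^{1,2}(Z,\sfd,\mm)$ is a Hilbert space in which Lipschitz functions are dense (Proposition~\ref{prop:lipdense}). Given $f\in D(\C)=D(\C^c)$ and $f_n\in\Lip(Z)$ with $f_n\to f$ in $W^{1,2}(Z,\sfd,\mm)$, one has $\|f_n-f\|_2\to0$, $\C(f_n-f)\to0$ and therefore $\C^c(f_n-f)\to0$ since $\C^c=\C$; subadditivity of $\weakgrad\cdot$ and of $\cartgrad\cdot$ (the latter from subadditivity of the base minimal weak upper gradients together with the triangle inequality in $\ell^2$) gives $|\weakgrad{f_n}-\weakgrad f|\le\weakgrad{(f_n-f)}$ and $|\cartgrad{f_n}-\cartgrad f|\le\cartgrad{(f_n-f)}$, so $\weakgrad{f_n}\to\weakgrad f$ and $\cartgrad{f_n}\to\cartgrad f$ in $L^2(Z,\mm)$. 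By Lemma~\ref{lem:Levico2}, $\weakgrad{f_n}\le\cartgrad{f_n}$ $\mm$-a.e., hence $\weakgrad f\le\cartgrad f$ $\mm$-a.e.\ (along a subsequence); combined with $\int\weakgrad f^2\,\d\mm=2\C(f)=2\C^c(f)=\int\cartgrad f^2\,\d\mm<\infty$ this forces $\weakgrad f=\cartgrad f$ $\mm$-a.e.

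The substance of the argument is entirely in the two inputs Lemma~\ref{lem:Levico2} and Proposition~\ref{prop:Levico1}, which is where the lower Ricci bound on the base spaces really enters (through the Bakry--Émery estimate in the first and through Kuwada's lemma, Lemma~\ref{le:keyimproved}, in the second). Granting these, the only point requiring care is respecting the order of the four steps above: the density of Lipschitz functions in $W^{1,2}(Z,\sfd,\mm)$, on which the pointwise identity rests, is available only because the earlier steps have already shown that $\C$ coincides with the quadratic form $\C^c$.
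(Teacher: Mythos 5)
Your proof is correct, and it rests on the same two pillars as the paper's — Proposition~\ref{prop:Levico1} for $D(\C)\subset D(\C^c)$ with $\C^c\le\C$, and Lemma~\ref{lem:Levico2} for $\weakgrad{f}\le\cartgrad{f}$ on Lipschitz functions — but you organize the passage from Lipschitz functions to general elements of $D(\C^c)$ differently. The paper does this in a single step: since $\C^c$ is quadratic from the outset (the base Cheeger energies being quadratic) and $\Lip(Z)$ is $L^2$-dense and invariant under $\tilde\heatl_t^c$, Lemma~\ref{lem:invariant} produces, for every $f\in D(\C^c)$, Lipschitz approximants $f_n$ with $\C^c(f-f_n)\to 0$, and passing to the limit in $\weakgrad{f_n}\le\cartgrad{f_n}$ yields the functional inequality $\C\le\C^c$ and the pointwise inequality simultaneously. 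You instead split the task in two: first you obtain only $\C(f)\le\C^c(f)$ via the explicit regularization $\tilde\heatl_t^c f$, whose Lipschitz regularity you derive factorwise from Theorem~\ref{thm:lipreg}, combined with the continuity of $t\mapsto\C^c(\tilde\heatl_t^c f)$ at $t=0$ (Theorem~\ref{thm:gfc} for the convex l.s.c.\ functional $\C^c$) and the lower semicontinuity of $\C$; only afterwards, once $\C=\C^c$ is known to be quadratic, do you invoke Proposition~\ref{prop:lipdense} to get $W^{1,2}$-dense Lipschitz approximants and deduce the pointwise identity by passing to the limit in $\weakgrad{f_n}\le\cartgrad{f_n}$ and comparing the integrals. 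Both routes are sound; yours is a bit longer but makes explicit the invariance of $\Lip(Z)$ under the product semigroup and the continuity of the energy along its flow, facts that the paper's appeal to Lemma~\ref{lem:invariant} uses only implicitly, and your subadditivity estimates $|\weakgrad{f_n}-\weakgrad{f}|\le\weakgrad{(f_n-f)}$ and $|\cartgrad{f_n}-\cartgrad{f}|\le\cartgrad{(f_n-f)}$ correctly replace the stability-of-weak-gradients argument needed to pass to the limit.
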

\begin{proof} By Proposition~\ref{prop:Levico1} we obtain that $f\in D(\C)$
implies $f\in D(\C^c)$ and $\C^c(f)\leq\C(f)$. If $f\in {\rm
Lip}(Z)$, Lemma \ref{lem:Levico2} yields $\weakgrad f\leq\cartgrad
f$ $\mm$-a.e.~in $Z$ and the converse inequality $\C(f)\leq\C^c(f)$.
It follows that the functionals and the gradients coincide in ${\rm
Lip}(Z)$. Since ${\rm Lip}(Z)$ is a $L^2(Z,\mm)$-dense and invariant
subset for $\heatl_t^c$, for all $f\in D(\C^c)$ we can apply
Lemma~\ref{lem:invariant} to obtain $(f_n)\subset {\rm Lip}(Z)$
satisfying $\C^c(f-f_n)\to 0$ and we can pass to the limit as
$n\to\infty$ in the inequality $\weakgrad{f_n}\leq\cartgrad{f_n}$ to
get $\weakgrad{f}\leq\cartgrad{f}$. Hence, $\C(f)=\C^c(f)$ and the
respective gradients coincide.
\end{proof}

\begin{proof} (\emph{of Theorem~\ref{thm:tensor}}) By \cite[Proposition~4.16]{Sturm06I}
we know that $(Z,\sfd,\mm)$ is $CD(K,\infty)$, while
Theorem~\ref{thm:tensorquadratic} ensures that Cheeger's energy in
this space is a quadratic form.

The proof that $(Z,\sfd,\mm)$ is nonbranching is simple, and we just
sketch the argument. It is immediately seen that the non branching
property is implied by the stability of constant speed geodesics
under projections, namely if $\gamma=(\gamma^X,\gamma^Y)\in\geo(Z)$,
then $\gamma^X\in\geo(X)$ and $\gamma^Y\in\geo(Y)$. This stability
property can be shown as follows: in any metric space, constant
speed geodesics are characterized by
$$
\int_0^1|\dot\gamma_t|^2\,\d t= \sfd^2(\gamma_0,\gamma_1),
$$
while for all other curves the inequality $\geq$ holds. Since
$|\dot\gamma_t|^2=|\dot\gamma^X_t|^2+|\dot\gamma^Y_t|^2$ wherever
the metric derivatives of the components exist, we obtain
$\int_0^1|\dot\gamma_t^X|^2\,\d t= \sfd^2_X(\gamma^X_0,\gamma^X_1)$
and $\int_0^1|\dot\gamma^Y_t|^2\,\d t=
\sfd^2_Y(\gamma^Y_0,\gamma^Y_1)$, so that both $\gamma^X$ and
$\gamma^Y$ are constant speed geodesics.

Finally, we prove the strong $CD(K,\infty)$ property. Since the
space is nonbranching, by Remark~\ref{re:nonbr} it is sufficient to
prove that it is a $CD(K,\infty)$ space. To prove this, we argue
exactly as in \cite[Lemma 4.7 and Proposition 4.16]{Sturm06I},
taking into account the tightness of the sublevels of $\entv$ to
remove the compactness assumption. We omit the details.\end{proof}

\subsection{Locality}

Here we study the locality properties of $\rcd K\infty$ spaces. As
for the tensorization, we will adopt the point of view of the
definition coming from the Dirichlet form, rather than the ones
coming from the properties of the heat flow. The reason is simple.
On one side, the heat flow does not localize at all: even on $\R^d$
to know how the heat flow behaves on the whole space gives little
information about the behavior of the flow on a bounded region (we
recall that, with our definitions, the heat flow that we consider
reduces to the classical one with homogeneous Neumann boundary
condition). On the other hand, Cheeger's energy comes out as a local
object, and we will see that the analysis carried out in
Section~\ref{sec:quadratic} and Section~\ref{sub8} will allow us to
quickly derive the locality properties we are looking for.

There are two questions we want to answer. The first one is: say
that we have a $\rcd K\infty$ space and a convex subregion, can we
say that this subregion - endowed with the restricted distance and
measure - is a $\rcd K\infty$ space as well? The second one is:
suppose that a space is covered by subregions, each one being a
$\rcd K\infty$ space, can we say that the whole space is $\rcd
K\infty$?

The first question has a simple answer: yes. The second one is more
delicate, the problem coming from proving the convexity of the
entropy. The analogous question for $CD(K,\infty)$ spaces has, as of
today, two different answers. On one side there is Sturm's result
\cite[Theorem~4.17]{Sturm06I} saying that this local-to-global
property holds if the space is nonbranching and the domain of the
entropy is geodesically convex. On the other side there is Villani's
result \cite[Theorem~30.42]{Villani09}) which still requires the
space to be nonbranching, but replaces the global convexity of the
domain on the entropy, with a local one one, roughly speaking
``$(X,\sfd,\mm)$ is finite-dimensional near to every point'' (in a
sense which we won't specify).

Our answer to the local-to-global question in the $\rcd K\infty$
setting will be based on the following assumptions, besides the
obvious one that the covering subregions are $\rcd K\infty$: the
space is nonbranching and $CD(K,\infty)$, so that independently from
the approach one has at disposal to prove the local to global for
$CD(K,\infty)$, as soon as the space is nonbranching, $\rcd K\infty$
globalizes as well.

We say that a subset $Y$ of a metric space $(X,\sfd)$ is
\emph{convex} if, for any $x,\,y\in Y$, there exists a geodesic
$\gamma$ connecting $x$ to $y$ is contained in $Y$.

\begin{theorem}[Global to Local]
Let $(X,\sfd,\mm)\in\X$ be a $\rcd K\infty$ space and let $Y\subset
X$ be a closed convex set such that $\mm(\partial Y)=0$ and
$\mm(Y)>0$. Then $(Y,\sfd,\mm_Y)$ is a $\rcd K\infty$ space as well,
where $\mm_Y:=(\mm(Y))^{-1}\mm\res Y$.
\end{theorem}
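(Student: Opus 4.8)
The plan is to verify condition~$(ii)$ of Theorem~\ref{thm:mainriem} for $(Y,\sfd,\mm_Y)$, i.e.\ to show separately that $(Y,\sfd,\mm_Y)$ is a strong $CD(K,\infty)$ space and that its Cheeger energy is a quadratic form. First one checks that $(Y,\sfd,\mm_Y)\in\X$: $Y$, being closed in the complete and separable space $X$, is complete and separable; $Y$ is geodesic because it is convex; and $\mm_Y\in\probt Y$, since $\int_Y\sfd^2(\cdot,x_0)\,\d\mm_Y\le(\mm(Y))^{-1}\int_X\sfd^2(\cdot,x_0)\,\d\mm<\infty$. We may and do assume $\supp\mm=X$. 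The quadraticity of Cheeger's energy is then immediate: since $(X,\sfd,\mm)$ is $\rcd K\infty$, $\C$ is a quadratic form on $L^2(X,\mm)$, and as $\mm(\partial Y)=0$ and $\mm(Y)>0$ we may apply Theorem~\ref{thm:gradristretti}$(iii)$ to conclude that $\C_Y$ is a quadratic form on $L^2(Y,\mm_Y)$.

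For the strong $CD(K,\infty)$ property the key elementary remark is the identity
\[
\entr{\mu}{\mm_Y}=\entr{\mu}{\mm}+\log\mm(Y)\qquad\text{for all }\mu\in\prob X\text{ with }\supp\mu\subset Y,
\]
which follows at once from $\mm_Y=(\mm(Y))^{-1}\mm\res Y$ and $\int_Y(\d\mu/\d\mm_Y)\,\d\mm_Y=1$. Since the two relative entropies differ by a finite constant, the convexity inequality of Definition~\ref{def:strongcd} written for $\entr{\cdot}{\mm_Y}$ along any curve valued in $\probt Y$ is equivalent to the same inequality for $\entr{\cdot}{\mm}$. Fix $\mu_0,\mu_1\in D(\entr{\cdot}{\mm_Y})\cap\probt Y$. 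Because $Y$ is geodesic, $(\probt Y,W_2)$ is geodesic as well — and the Wasserstein distance computed on $Y$ coincides with the one computed on $X$, all couplings being concentrated on $Y\times Y$ — so by the plan representation of Wasserstein geodesics recalled in Section~\ref{sub2} there is an optimal geodesic plan $\ppi$ concentrated on $\geo(Y)\subset\geo(X)$ with $(\e_0)_\sharp\ppi=\mu_0$ and $(\e_1)_\sharp\ppi=\mu_1$; being concentrated on a $c$-cyclically monotone set, $(\e_0,\e_1)_\sharp\ppi$ is optimal in $X$ too, so $\ppi$ is an optimal geodesic plan for $(X,\sfd,\mm)$ and will be the plan witnessing the strong $CD(K,\infty)$ condition for $(Y,\sfd,\mm_Y)$.

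It then remains to prove $K$-convexity of $\entr{\cdot}{\mm_Y}$ along every weighted plan $\ppi_F:=F\ppi$ with $F:\geo(Y)\to[0,\infty)$ bounded Borel and $\int F\,\d\ppi=1$. Such $\ppi_F$ is again concentrated on $\geo(Y)\subset\geo(X)$, and $(\e_0,\e_1)_\sharp\ppi_F\ll(\e_0,\e_1)_\sharp\ppi$ is still concentrated on a $c$-cyclically monotone set, hence optimal; therefore $\muF t:=(\e_t)_\sharp\ppi_F$ is a constant speed geodesic in $(\probt X,W_2)$ with $\supp\muF t\subset Y\subset\supp\mm$, so $(\muF t)_{t\in[0,1]}$ is a geodesic contained in $\probt{X,\mm}=\overline{D(\entr{\cdot}{\mm})}$ (the last equality because finite-entropy measures are $W_2$-dense in $\probt{X,\mm}$, e.g.\ via the heat flow). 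Since $(X,\sfd,\mm)$ is $\rcd K\infty$, $\EVI_K$ gradient flows of $\entr{\cdot}{\mm}$ exist from every point of $\overline{D(\entr{\cdot}{\mm})}$, and Proposition~\ref{prop:danerisavare} guarantees that $\entr{\cdot}{\mm}$ is $K$-convex along $(\muF t)$; by the entropy identity above, $\entr{\cdot}{\mm_Y}$ is $K$-convex along $(\muF t)$ as well. This is exactly the inequality required in Definition~\ref{def:strongcd}, so $(Y,\sfd,\mm_Y)$ is a strong $CD(K,\infty)$ space, and together with the quadraticity of $\C_Y$ and Theorem~\ref{thm:mainriem} this proves that $(Y,\sfd,\mm_Y)$ is $\rcd K\infty$.

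The step I expect to require the most care — though it remains routine in this framework — is the construction of the optimal geodesic plan concentrated on geodesics contained in $Y$: this is the only point where convexity of $Y$ genuinely enters, and it can be obtained either from the geodesicity of $(\probt Y,W_2)$ combined with the plan representation of Wasserstein geodesics, or by a Borel selection of geodesics of $Y$ joining the endpoints of an optimal coupling of $\mu_0$ and $\mu_1$ followed by a push-forward.
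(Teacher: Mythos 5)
Your proof is correct and follows essentially the same route as the paper: the constant shift $\entr{\mu}{\mm_Y}=\entr{\mu}{\mm}+\log\mm(Y)$, the convexity of $Y$ to produce Wasserstein geodesics supported in $Y$, the $K$-convexity of $\entv$ along \emph{all} such geodesics coming from Proposition~\ref{prop:danerisavare}, and Theorem~\ref{thm:gradristretti}$(iii)$ for the quadraticity of $\C_Y$, concluding via condition $(ii)$ of Theorem~\ref{thm:mainriem}. The only difference is that you spell out the verification for the weighted plans $\ppi_F$ explicitly, which the paper leaves implicit in the phrase ``$K$-geodesically convex on any Wasserstein geodesic supported in $Y$''.
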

\begin{proof} Since $Y$ is closed, $(Y,\sfd,\mm_Y)\in\X$. Let us first remark
that for every $\mu\in \probt X$
\begin{equation}
  \label{eq:32}
  \begin{gathered}
    \entr\mu{\mm_Y}<\infty\qquad\Leftrightarrow\qquad \supp\mu\subset
    Y,\quad  \ent\mu<\infty,
 \end{gathered}
\end{equation}
and in this case $\entr\mu{\mm_Y}=c_Y +\ent\mu$, where
$c_Y=\log(\mm(Y))$. Therefore, thanks to the $RCD(K,\infty)$
property of $(X,\sfd,\mm)$, the functional $\mathrm{Ent}_{\mm_Y}$ is
$K$-geodesically convex on any Wasserstein geodesic $(\mu_s)$ with
$\supp\mu_s\subset Y$ for all $s\in [0,1]$. Such a geodesic exists
since $(Y,\sfd)$ and thus $(\probt Y,W_2)$ are geodesic spaces: in
particular, $(Y,\sfd_Y,\mm_Y)$ is a strong $CD(K,\infty)$ space.
Thus, to conclude we simply apply $(iii)$ of
Theorem~\ref{thm:gradristretti}.
\end{proof}

The previous result is similar to the following lower Ricci
curvature bound for weighted spaces:

\begin{proposition}[Weighted spaces]
  \label{prop:weighted}
  Let $(X,\sfd,\mm)\in\X$ be a $RCD(K,\infty)$ space and let
  $V:X\mapsto \R$ be a continuous $H$-geodesically convex function
  bounded from below with $\int \rme^{-V}\,\d\mm=1$. Then
  $(X,\sfd,\rme^{-V}\mm)$ is a $RCD(K+H,\infty)$ space.
\end{proposition}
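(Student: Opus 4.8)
The plan is to deduce the proposition from characterization $(ii)$ of Theorem~\ref{thm:mainriem}: it suffices to verify that $(X,\sfd,\mm_V)$, with $\mm_V:=\rme^{-V}\mm$, is a strong $CD(K+H,\infty)$ space and that its Cheeger energy is a quadratic form. The preliminaries are elementary: $V$ real-valued and continuous gives $\rme^{-V}>0$ everywhere, so $\mm_V$ and $\mm$ are mutually absolutely continuous with $\supp\mm_V=\supp\mm$; and $V$ bounded below, say $V\ge c_0$, gives $\rme^{-V}\le\rme^{-c_0}$, whence $\mm_V\in\probt X$ and $\mm_V\le\rme^{-c_0}\mm$. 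The backbone of the argument is the entropy splitting
\[
\entr{\mu}{\mm_V}=\entr{\mu}{\mm}+\int V\,\d\mu\qquad\text{for every }\mu\in\probt X,
\]
meaningful in $(-\infty,+\infty]$ because $V\ge c_0$; in particular $D(\entr{\cdot}{\mm_V})\subset D(\entv)$.

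For the strong $CD(K+H,\infty)$ property I would take, for $\mu_0,\mu_1\in D(\entr{\cdot}{\mm_V})\cap\probt X$, the optimal geodesic plan $\ppi\in\gopt(\mu_0,\mu_1)$ furnished by the strong $CD(K,\infty)$ property of $(X,\sfd,\mm)$, and check that the \emph{same} $\ppi$ works for $\mm_V$. For an admissible weight $F$ and $\muF t:=(\e_t)_\sharp(F\ppi)$, the term $\entr{\muF t}{\mm}$ obeys the $K$-convexity inequality by hypothesis, while $t\mapsto\int V\,\d\muF t=\int V(\gamma_t)\,\d(F\ppi)(\gamma)$ is controlled $F\ppi$-a.e.\ geodesic by geodesic through the $H$-geodesic convexity of $V$; integrating and using that $(\e_0,\e_1)_\sharp(F\ppi)$ is concentrated on a $c$-cyclically monotone set, hence optimal, so that $\int\sfd^2(\gamma_0,\gamma_1)\,\d(F\ppi)=W_2^2(\muF0,\muF1)$, one gets the $H$-convexity inequality for $t\mapsto\int V\,\d\muF t$ with the same Wasserstein term. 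Adding the two yields the $(K+H)$-convexity of $\entr{\cdot}{\mm_V}$ along $\ppi_F$; the same pointwise bound on $V(\gamma_t)$, together with $W_2^2(\mu_0,\mu_1)<\infty$ and boundedness of $F$, also shows $\muF t\in D(\entr{\cdot}{\mm_V})$ for all $t$, so the inequality is meaningful.

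For the quadraticity of the Cheeger energy $\C_{\mm_V}$ the crucial point is the identity $\weakgrado{f}{\mm_V}=\weakgrado{f}{\mm}$ $\mm$-a.e.\ for every $f$ Sobolev along a.e.\ curve. The inequality $\weakgrado{f}{\mm_V}\le\weakgrado{f}{\mm}$ is immediate, because $\mm_V\le\rme^{-c_0}\mm$ makes every test plan for $\mm_V$ a test plan for $\mm$, so that $\calT$-negligibility with respect to $\mm$ implies $\calT$-negligibility with respect to $\mm_V$ and a weak upper gradient for $\mm$ is one for $\mm_V$. For the converse I would use the continuity of $V$: for every $\gamma\in\CC{[0,1]}X$ the function $V\circ\gamma$ is continuous, hence bounded, so an arbitrary test plan $\ppi$ for $\mm$ is, up to renormalization, the increasing limit of its restrictions $\ppi_n$ to the sets $\{\gamma:\ \sup_{[0,1]}V\circ\gamma\le n\}$, and on $\{V\le n\}$ one has $\mm\le\rme^{n}\mm_V$, so each $\ppi_n$ is a test plan for $\mm_V$; letting $n\to\infty$ shows that a weak upper gradient for $\mm_V$ is one for $\mm$, hence the reverse inequality. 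Consequently $\C_{\mm_V}(f)=\tfrac12\int\rme^{-V}\weakgrado{f}{\mm}^2\,\d\mm$, and since $(X,\sfd,\mm)$ is $\rcd K\infty$ its Cheeger energy is quadratic, so Theorem~\ref{thm:tangente} applies: the pointwise parallelogram identity \eqref{eq:cosimeglio} for $\weakgrado{\cdot}{\mm}$ and property \eqref{eq:31} (with $h=\rme^{-V}\in L^\infty(X,\mm)$ nonnegative) give, after a truncation/localization step to pass from $D(\C)$ to all of $L^2(X,\mm_V)$, the parallelogram identity for $\C_{\mm_V}$. Having verified both halves of Theorem~\ref{thm:mainriem}(ii) with $K+H$ in place of $K$, $(X,\sfd,\mm_V)$ is $\rcd{K+H}\infty$.

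The hard part will be the quadraticity step: reconciling the weak-gradient calculi of $\mm$ and $\mm_V$ and pushing the parallelogram identity to the whole $L^2(X,\mm_V)$. Since $\rme^{-V}$ need not be bounded away from $0$, the two classes of test plans, the two Sobolev domains and the two $L^2$-spaces do not literally coincide; the exhaustion of test plans by those concentrated on curves with bounded $V$, and the truncation needed to invoke \eqref{eq:31} on functions that may fail to belong to $D(\C)$, are exactly where care is required. The convexity half, by contrast, is bookkeeping once the entropy splitting and the optimality of $(\e_0,\e_1)_\sharp(F\ppi)$ are in place.
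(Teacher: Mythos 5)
Your proof is correct and follows essentially the same route as the paper's, whose entire argument is to cite Sturm's Proposition~4.14 for the strong $CD(K+H,\infty)$ property, the invariance of weak upper gradients under the multiplicative perturbation of the reference measure, and the quadraticity results of Theorem~\ref{thm:tangente}: you have simply unpacked those three citations (via condition $(ii)$ of Theorem~\ref{thm:mainriem}), correctly handling the two points they gloss over, namely that $\rme^{-V}$ is bounded above but not away from zero, and the resulting mismatch of Sobolev domains. The only caveat, inherited from the statement rather than introduced by you, is that the step ``controlled $F\ppi$-a.e.\ geodesic by geodesic'' needs $V$ to satisfy the $H$-convexity inequality along \emph{every} geodesic (which is the hypothesis of Sturm's Proposition~4.14), not merely along some geodesic between each pair of points as the paper's definition of $K$-geodesic convexity would literally allow.
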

The proof follows by the same arguments, applying
\cite[Proposition~4.14]{Sturm06I} (showing that
$(X,\sfd,\rme^{-V}\mm)$ is a strong $CD(K+H,\infty)$ space),
\cite[Lemma~4.11]{Ambrosio-Gigli-Savare11} for the invariance of
weak gradients with respect to the multiplicative perturbation, and
$(iii)$ of Theorem~\ref{thm:tangente}.

We conclude this section with the globalization result.
\begin{theorem}[Local to Global]
Let $(X,\sfd,\mm)\in\X$ and let $\{Y_i\}_{i\in I}$ be a cover of $X$
made of finitely or countably many closed sets of positive
$\mm$-measure, with $\mm_i:=[\mm(Y_i)]^{-1}\mm\res{Y_i}$. Assume
that the Cheeger functional associated to $(Y_i,\sfd,\mm_i)$ is
quadratic (in particular when $(Y_i,\sfd,\mm_i)$ is a $\rcd K\infty$
space) for every $i\in I$. Assume also that $(X,\sfd,\mm)$ is
nonbranching and $CD(K,\infty)$. Then $(X,\sfd,\mm)$ is a $\rcd
K\infty$ space.
\end{theorem}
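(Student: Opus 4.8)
The plan is to verify condition $(ii)$ of Theorem~\ref{thm:mainriem}, namely that $(X,\sfd,\mm)$ is a strong $CD(K,\infty)$ space whose Cheeger energy $\C$ is a quadratic form on $L^2(X,\mm)$. Half of this is immediate: since $(X,\sfd,\mm)$ is nonbranching and $CD(K,\infty)$, Remark~\ref{re:nonbr} already supplies the strong $CD(K,\infty)$ property. So the whole content reduces to showing that $\C$ is quadratic on $L^2(X,\mm)$. As $\C$ is convex, $2$-homogeneous and $L^2(X,\mm)$-lower semicontinuous, by \cite[Proposition~11.9]{DalMaso93} this is equivalent to the parallelogram law \eqref{eq:18}, and since $D(\C)$ is a vector space \eqref{eq:18} follows by integration from the pointwise identity
\[
\weakgrad{(f+g)}^2+\weakgrad{(f-g)}^2=2\weakgrad f^2+2\weakgrad g^2\qquad\text{$\mm$-a.e.\ in }X,\quad\text{for all }f,g\in D(\C).
\]

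First I would localize this identity on the cover. Fix $f,g\in D(\C)$ and $i\in I$, and note that $f+g$, $f-g$, $f$, $g$ are all Sobolev along almost every curve of $X$. Applying Theorem~\ref{thm:gradristretti}(i) in the metric measure space $(Y_i,\sfd,\mm_i)$ — which is admissible because $Y_i$ is closed with $\mm(Y_i)>0$, and note that part (i) requires nothing about $\partial Y_i$ — the restrictions of these four functions to $Y_i$ are Sobolev along a.e.\ curve of $Y_i$, and their minimal weak upper gradients computed in $(Y_i,\sfd,\mm_i)$ coincide $\mm$-a.e.\ on $Y_i$ with the corresponding $\weakgrad\cdot$. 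In particular $\int_{Y_i}\weakgrado f{Y_i}^2\,\d\mm=\int_{Y_i}\weakgrad f^2\,\d\mm\le 2\C(f)<\infty$, and similarly for $g$, so $f\restr{Y_i}$ and $g\restr{Y_i}$ belong to the finiteness domain of Cheeger's energy of $(Y_i,\sfd,\mm_i)$ (the rescaling factor $\mm(Y_i)^{-1}$ between $\mm$ and $\mm_i$ being harmless).

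By hypothesis that energy is quadratic, so Theorem~\ref{thm:tangente}(ii) applies in $(Y_i,\sfd,\mm_i)$ and yields identity \eqref{eq:cosimeglio} for $\weakgrado\cdot{Y_i}$, evaluated on the pair $(f\restr{Y_i},g\restr{Y_i})$, holding $\mm$-a.e.\ on $Y_i$. Applying Theorem~\ref{thm:gradristretti}(i) once more to each of $f+g$, $f-g$, $f$, $g$ — whose restrictions to $Y_i$ are precisely $f\restr{Y_i}+g\restr{Y_i}$, $f\restr{Y_i}-g\restr{Y_i}$, $f\restr{Y_i}$, $g\restr{Y_i}$ — I would replace every $\weakgrado\cdot{Y_i}$ by the corresponding $\weakgrad\cdot$, obtaining the target identity $\mm$-a.e.\ on $Y_i$. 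Since $\{Y_i\}_{i\in I}$ is a finite or countable cover of $X$, the identity holds $\mm$-a.e.\ on $X$; integrating over $X$ gives \eqref{eq:18}, so $\C$ is quadratic, and Theorem~\ref{thm:mainriem}$(ii)$ concludes that $(X,\sfd,\mm)$ is a $\rcd K\infty$ space. I do not expect a genuine obstacle here: the key realization is that quadraticity of $\C$ is \emph{a priori} a pointwise, hence local, property of minimal weak upper gradients, so once Theorem~\ref{thm:gradristretti}(i) identifies the restricted gradients the local identities glue automatically on the overlaps $Y_i\cap Y_j$ with no further compatibility hypothesis, while the $CD(K,\infty)$ and nonbranching assumptions intervene only through Remark~\ref{re:nonbr}.
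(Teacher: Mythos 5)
Your proof is correct and follows essentially the same route as the paper's: both arguments obtain the strong $CD(K,\infty)$ property from nonbranching plus $CD(K,\infty)$ via Remark~\ref{re:nonbr}, and both reduce quadraticity of $\C$ to Theorem~\ref{thm:gradristretti}(i) (identification of restricted and global weak gradients on each $Y_i$) combined with Theorem~\ref{thm:tangente} applied in $(Y_i,\sfd,\mm_i)$. The only cosmetic difference is that you glue the pointwise identity \eqref{eq:cosimeglio} directly over the overlapping cover, whereas the paper disjointifies the cover into $X_i:=Y_i\setminus\bigcup_{j<i}Y_j$ and sums the localized quadratic forms $f\mapsto\int_{X_i}\weakgrado f{Y_i}^2\,\d\mm$; the substance is identical.
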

\begin{proof}
We start proving that $\C$ is a quadratic form. Notice that it holds
\[
2\C(f)=\int_X\weakgrad f^2\,\d\mm=\sum_i \int_{X_i}\weakgrad
f^2\,\d\mm
\]
where $X_i:=Y_i\setminus\cup_{j<i}Y_j$. Let $f_i:=f\restr{Y_i}$ and
recall that by Theorem~\ref{thm:gradristretti} we know that
$\weakgrado{f_i}{Y_i}=\weakgrad f$ $\mm$-a.e.~on $Y_i$. Also, by
Theorem~\ref{thm:tangente} we have that for $i$ and any Borel subset
$A$ of $Y_i$, the map $f\mapsto\int_A\weakgrado f{Y_i}^2\,\d\mm$ is
quadratic. Choosing $A=X_i$ the conclusion follows.

The fact that $(X,\sfd,\mm)$ is a strong $CD(K,\infty)$ space
follows from the fact that it is nonbranching and $CD(K,\infty)$, as
in Remark~\ref{re:nonbr}.
\end{proof}

\def\cprime{$'$}

% \bibliographystyle{siam}
% \bibliography{bibliografia2011bis}

\begin{thebibliography}{10}

\bibitem{Ambrosio-Gigli11}
{\sc L.~Ambrosio and N.~Gigli}, {\em User's guide to optimal transport theory},
  To appear in the CIME Lecture Notes in Mathematics, B.Piccoli and F.Poupaud
  Eds.

\bibitem{Ambrosio-Gigli-Savare11}
{\sc L.~Ambrosio, N.~Gigli, and G.~Savar{\'e}}, {\em Calculus and heat flows in
  metric measure spaces with {R}icci curvature bounded from below}, Submitted
  paper, arXiv:1106.2090.

\bibitem{Ambrosio-Gigli-Savare08}
\leavevmode\vrule height 2pt depth -1.6pt width 23pt, {\em Gradient flows in
  metric spaces and in the space of probability measures}, Lectures in
  Mathematics ETH Z\"urich, Birkh\"auser Verlag, Basel, second~ed., 2008.

\bibitem{Ambrosio-Savare-Zambotti09}
{\sc L.~Ambrosio, G.~Savar\'e, and L.~Zambotti}, {\em {Existence and stability
  for Fokker-Planck equations with log-concave reference measure.}}, Probab.
  Theory Relat. Fields, 145 (2009), pp.~517--564.

\bibitem{Bakry06}
{\sc D.~Bakry}, {\em Functional inequalities for {M}arkov semigroups}, in
  Probability measures on groups: recent directions and trends, Tata Inst.
  Fund. Res., Mumbai, 2006, pp.~91--147.

\bibitem{Brezis73}
{\sc H.~Br{\'e}zis}, {\em Op\'erateurs maximaux monotones et semi-groupes de
  contractions dans les espaces de {H}ilbert}, North-Holland Publishing Co.,
  Amsterdam, 1973.
\newblock North-Holland Mathematics Studies, No. 5. Notas de Matem\'atica (50).

\bibitem{Burago-Burago-Ivanov01}
{\sc D.~Burago, Y.~Burago, and S.~Ivanov}, {\em A course in metric geometry},
  vol.~33 of Graduate Studies in Mathematics, American Mathematical Society,
  Providence, RI, 2001.

\bibitem{Cheeger00}
{\sc J.~Cheeger}, {\em Differentiability of {L}ipschitz functions on metric
  measure spaces}, Geom. Funct. Anal., 9 (1999), pp.~428--517.

\bibitem{Cheeger-Colding97}
{\sc J.~Cheeger and T.~Colding}, {\em On the structure of spaces with ricci
  curvature bounded below i}, J. Diff. Geom., 46 (1997), pp.~37--74.

\bibitem{Clement-Desch10}
{\sc P.~Cl{\'e}ment and W.~Desch}, {\em Some remarks on the equivalence between
  metric formulations of gradient flows}, Boll. Unione Mat. Ital. (9), 3
  (2010), pp.~583--588.

\bibitem{Cordero-McCann-Schmuckenschlager01}
{\sc D.~Cordero-Erausquin, R.~J. McCann, and M.~Schmuckenschl{\"a}ger}, {\em A
  {R}iemannian interpolation inequality \`a la {B}orell, {B}rascamp and
  {L}ieb}, Invent. Math., 146 (2001), pp.~219--257.

\bibitem{DalMaso93}
{\sc G.~{Dal Maso}}, {\em An Introduction to ${\Gamma}$-Convergence}, vol.~8 of
  Progress in Nonlinear Differential Equations and Their Applications,
  Birkh\"auser, Boston, 1993.

\bibitem{Daneri-Savare08}
{\sc S.~Daneri and G.~Savar\'e}, {\em {E}ulerian calculus for the displacement
  convexity in the {W}asserstein distance}, SIAM J. Math. Anal., 40 (2008),
  pp.~1104--1122.

\bibitem{Federer69}
{\sc H.~Federer}, {\em Geometric measure theory}, Die Grundlehren der
  mathematischen Wissenschaften, Band 153, Springer-Verlag New York Inc., New
  York, 1969.

\bibitem{Fukaya87}
{\sc K.~Fukaya}, {\em Collapsing of riemannian manifolds and eigenvalues of the
  laplace operator}, Invent. Math., 87 (1987), pp.~517--547.

\bibitem{Fukushima80}
{\sc M.~Fukushima}, {\em Dirichlet forms and {M}arkov processes}, vol.~23 of
  North-Holland Mathematical Library, North-Holland Publishing Co., Amsterdam,
  1980.

\bibitem{Fukushima-Oshima-Takeda11}
{\sc M.~Fukushima, Y.~Oshima, and M.~Takeda}, {\em Dirichlet forms and
  symmetric {M}arkov processes}, vol.~19 of de Gruyter Studies in Mathematics,
  Walter de Gruyter \& Co., Berlin, extended~ed., 2011.

\bibitem{Gigli10}
{\sc N.~Gigli}, {\em On the heat flow on metric measure spaces: existence,
  uniqueness and stability}, Calc. Var. Partial Differential Equations, 39
  (2010), pp.~101--120.

\bibitem{GigliKuwadaOhta10}
{\sc N.~Gigli, K.~Kuwada, and S.~Ohta}, {\em Heat flow on {A}lexandrov spaces},
  Submitted paper,  (2010).

\bibitem{Gigli-Ohta10}
{\sc N.~Gigli and S.~Ohta}, {\em First variation formula in {W}asserstein
  spaces over compact {A}lexandrov spaces}, Canad.\ Math.\ Bull., to appear,
  (2010).

\bibitem{Joulin07}
{\sc A.~Joulin}, {\em A new {P}oisson-type deviation inequality for {M}arkov
  jump processes with positive {W}asserstein curvature}, Bernoulli, 15 (2009),
  pp.~532--549.

\bibitem{Koskela-Zhou11}
{\sc P.~Koskela and Y.~Zhou}, {\em Geometry and {A}nalysis of {D}irichlet
  forms}, Preprint, 2011.

\bibitem{Kuwada10}
{\sc K.~Kuwada}, {\em Duality on gradient estimates and wasserstein controls},
  Journal of Functional Analysis, 258 (2010), pp.~3758--3774.

\bibitem{BeppoLevi}
{\sc B.~Levi}, {\em Sul principio di {D}irichlet}, Rendiconti del Circolo
  Matematico di Palermo,  (1906), pp.~293--359.

\bibitem{Lisini07}
{\sc S.~Lisini}, {\em Characterization of absolutely continuous curves in
  {W}asserstein spaces}, Calc. Var. Partial Differential Equations, 28 (2007),
  pp.~85--120.

\bibitem{Lott-Villani-Poincare}
{\sc J.~Lott and C.~Villani}, {\em Weak curvature bounds and functional
  inequalities}, J. Funct. Anal., 245 (2007), pp.~311--333.

\bibitem{Lott-Villani09}
{\sc J.~Lott and C.~Villani}, {\em Ricci curvature for metric-measure spaces
  via optimal transport}, Ann. of Math. (2), 169 (2009), pp.~903--991.

\bibitem{Rockner92}
{\sc Z.-M. Ma and M.~R\"ockner}, {\em Introduction to the Theory of
  (Non-Symmetric) Dirichlet Forms}, Springer, Heidelberg, 1992.

\bibitem{Ohta09}
{\sc S.-i. Ohta}, {\em Gradient flows on {W}asserstein spaces over compact
  {A}lexandrov spaces}, Amer. J. Math., 131 (2009), pp.~475--516.

\bibitem{Sturm-Ohta-CPAM}
{\sc S.-I. Ohta and K.-T. Sturm}, {\em Heat flow on {F}insler manifolds}, Comm.
  Pure Appl. Math., 62 (2009), pp.~1386--1433.

\bibitem{Ollivier09}
{\sc Y.~Ollivier}, {\em Ricci curvature of {M}arkov chains on metric measure
  spaces}, J. Functional Analysis, 256 (2009), pp.~810--864.

\bibitem{Petrunin}
{\sc A.~Petrunin}, {\em Alexandrov meets lott-villani-sturm}, M\"unster J. of
  Mathematics, to appear,  (2009).

\bibitem{Rajala11}
{\sc T.~Rajala}, {\em Local {P}oincar\'e inequalities from stable curvature
  conditions in metric spaces},  (2011).

\bibitem{Savare07}
{\sc G.~Savar{\'e}}, {\em Gradient flows and diffusion semigroups in metric
  spaces under lower curvature bounds}, C. R. Math. Acad. Sci. Paris, 345
  (2007), pp.~151--154.

\bibitem{Savare10}
{\sc G.~Savar\'e}, {\em Gradient flows and evolution variational inequalities
  in metric spaces}, In preparation,  (2011).

\bibitem{Shanmugalingam00}
{\sc N.~Shanmugalingam}, {\em Newtonian spaces: an extension of {S}obolev
  spaces to metric measure spaces}, Rev. Mat. Iberoamericana, 16 (2000),
  pp.~243--279.

\bibitem{Sturm96}
{\sc K.~T. Sturm}, {\em Analysis on local {D}irichlet spaces. {III}. {T}he
  parabolic {H}arnack inequality}, J. Math. Pures Appl. (9), 75 (1996),
  pp.~273--297.

\bibitem{Sturm97}
{\sc K.-T. Sturm}, {\em Is a diffusion process determined by its intrinsic
  metric?}, Chaos, Solitons \& Fractals, 8 (1997), pp.~1855 -- 1860.
\newblock Relativity, Locality and Random Fractals in Quantum Theory.

\bibitem{Sturm06I}
\leavevmode\vrule height 2pt depth -1.6pt width 23pt, {\em On the geometry of
  metric measure spaces. {I}}, Acta Math., 196 (2006), pp.~65--131.

\bibitem{Sturm06II}
\leavevmode\vrule height 2pt depth -1.6pt width 23pt, {\em On the geometry of
  metric measure spaces. {II}}, Acta Math., 196 (2006), pp.~133--177.

\bibitem{Sturm-Ohta10}
{\sc K.-T. Sturm and S.-i. Ohta}, {\em Non-contraction of heat flow on
  {M}inkowski spaces},  (2010).

\bibitem{Sturm-VonRenesse05}
{\sc K.-T. Sturm and M.-K. von Renesse}, {\em Transport inequalities, gradient
  estimates, entropy, and {R}icci curvature}, Comm. Pure Appl. Math., 58
  (2005), pp.~923--940.

\bibitem{Villani09}
{\sc C.~Villani}, {\em Optimal transport. Old and new}, vol.~338 of Grundlehren
  der Mathematischen Wissenschaften, Springer-Verlag, Berlin, 2009.

\bibitem{ZZ1}
{\sc H.-C. Zhang and X.-P. Zhu}, {\em Ricci curvature on {A}lexandrov spaces
  and rigidity theorems}, Comm. Anal. Geom., 18 (2010), pp.~503--553.

\end{thebibliography}
\end{document}